\newtheorem{theorem}{Theorem}
\theoremstyle{plain}
\newtheorem{corollary}{Corollary}
\newtheorem{definition}{Definition}
\newtheorem{lemma}{Lemma}
\newtheorem{proposition}{Proposition}
\newtheorem{remark}{Remark}
\numberwithin{equation}{section}
\begin{document}
\title[Integro-differential problems]{On the Cauchy problem for
integro-differential equations in the scale of spaces of generalized
smoothness}
\author{R. Mikulevi\v{c}ius and C. Phonsom}
\address{University of Southern California, Los Angeles}
\date{May 20, 2017}
\subjclass{35R09, 60J75, 35B65}
\keywords{non-local parabolic integro-differential equations, L\'{e}vy
processes}

\begin{abstract}
Parabolic integro-differential model Cauchy problem is considered in the
scale of $L_{p}$ -spaces of functions whose regularity is defined by a
scalable Levy measure. Existence and uniqueness of a solution is proved by
deriving apriori estimates. Some rough probability density function
estimates of the associated Levy process are used as well.
\end{abstract}

\maketitle
\tableofcontents

\section{Introduction}

Let $\sigma \in \left( 0,2\right) $ and $\mathfrak{A}^{\sigma }$ be the
class of all nonnegative measures $\pi $\ on $\mathbf{R}_{0}^{d}=\mathbf{R}%
^{d}\backslash \left\{ 0\right\} $ such that $\int \left\vert y\right\vert
^{2}\wedge 1d\pi <\infty $ and 
\begin{equation*}
\sigma =\inf \left\{ \alpha <2:\int_{\left\vert y\right\vert \leq
1}\left\vert y\right\vert ^{\alpha }d\mathfrak{\pi }<\infty \right\} .
\end{equation*}%
In addition, we assume that for $\pi \in \mathfrak{A}^{\sigma },$ 
\begin{eqnarray*}
\int_{\left\vert y\right\vert >1}\left\vert y\right\vert d\pi &<&\infty 
\text{ if }\sigma \in \left( 1,2\right) , \\
\int_{R<\left\vert y\right\vert \leq R^{\prime }}yd\pi &=&0\text{ if }\sigma
=1\text{ for all }0<R<R^{\prime }<\infty .\text{ }
\end{eqnarray*}

In this paper we consider the parabolic Cauchy problem with $\lambda \geq 0$%
\begin{eqnarray}
\partial _{t}u(t,x) &=&Lu(t,x)-\lambda u\left( t,x\right) +f(t,x)\text{ in }%
E=[0,T]\times \mathbf{R}^{d},  \label{1'} \\
u(0,x) &=&g\left( x\right) ,  \notag
\end{eqnarray}%
and integro-differential operator 
\begin{equation*}
L\varphi \left( x\right) =L^{\pi }\varphi \left( x\right) =\int \left[
\varphi (x+y)-\varphi \left( x\right) -\chi _{\sigma }\left( y\right) y\cdot
\nabla \varphi \left( x\right) \right] \pi \left( dy\right) ,\varphi \in
C_{0}^{\infty }\left( \mathbf{R}^{d}\right) ,
\end{equation*}%
where $\pi \in \mathfrak{A}^{\sigma },$ $\chi _{\sigma }\left( y\right) =0$
if $\sigma \in \lbrack 0,1),\chi _{\sigma }\left( y\right) =1_{\left\{
\left\vert y\right\vert \leq 1\right\} }\left( y\right) $ if $\sigma =1$ and 
$\chi _{\sigma }\left( y\right) =1$ if $\sigma \in (1,2).$ The symbol of $L$
is 
\begin{equation*}
\mathfrak{\psi }\left( \xi \right) =\psi ^{\pi }\left( \xi \right) =\int %
\left[ e^{i2\pi \xi \cdot y}-1-i2\pi \chi _{\sigma }\left( y\right) \xi
\cdot y\right] \pi \left( dy\right) ,\xi \in \mathbf{R}^{d}.
\end{equation*}%
Note that $\pi \left( dy\right) =dy/\left\vert y\right\vert ^{d+\sigma }\in 
\mathfrak{A}^{\sigma }$ and, in this case, $L=L^{\pi }=c\left( \sigma
,d\right) \left( -\Delta \right) ^{\sigma /2}$, where $\left( -\Delta
\right) ^{\sigma /2}$\ is a fractional Laplacian. The equation (\ref{1'}) is
backward Kolmogorov equation for the Levy process associated to $\psi ^{\pi }
$. Let $\mu \in \mathfrak{A}^{\sigma }$ and 
\begin{equation}
c_{1}\left\vert \psi ^{\mu }\left( \xi \right) \right\vert \leq |\psi ^{\pi
}\left( \xi \right) |\leq c_{2}\left\vert \psi ^{\mu }\left( \xi \right)
\right\vert ,\xi \in \mathbf{R}^{d},  \label{3'}
\end{equation}%
for some $0<c_{1}\leq c_{2}$. Given $\mu \in \mathfrak{A}^{\sigma },p\in
\lbrack 1,\infty ),s\in \mathbf{R}$, we denote $H_{p}^{s}\left( E\right)
=H_{p}^{\mu ;s}\left( E\right) $ the closure in $L_{p}\left( E\right) $ of $%
C_{0}^{\infty }\left( E\right) $ with respect to the norm 
\begin{equation*}
\left\vert f\right\vert _{H_{p}^{\mu ;s}\left( E\right) }=\left\vert 
\mathcal{F}^{-1}\left( 1-\func{Re}\psi ^{\mu }\right) ^{s}\mathcal{F}%
f\right\vert _{L_{p}\left( \mathbf{R}^{d}\right) },\,
\end{equation*}%
where $\mathcal{F}$ is the Fourier transform in space variable. In this
paper, under certain "scalability" and nondegeneracy assumptions (see
assumptions \textbf{D}$\left( \kappa ,l\right) ,$ \textbf{B}$\left( \kappa
,l\right) $ below), we prove the existence and uniqueness of solutions to (%
\ref{1'}) in $H_{p}^{\mu ;s}\left( \mathbf{R}^{d}\right) $). An apriori
estimate is derived for $u$. For example, if $p\geq 2$ the following
estimate holds:%
\begin{equation}
\left\vert u\right\vert _{H_{p}^{\mu ;s+1}\left( E\right) }\leq C\left[
\left\vert f\right\vert _{H_{p}^{\mu ;s}\left( E\right) }+\left\vert
g\right\vert _{H_{p}^{\mu ;s+1-1/p}\left( \mathbf{R}^{d}\right) }\right] .
\label{4}
\end{equation}%
This paper is a continuation of \cite{MPh}, where (\ref{1'}) with $g=0$ in
the case $s=0$ was considered. Here, we solve (\ref{1'}) in the scale of
spaces $H_{p}^{\mu ;s}$ under slightly different conditions than the ones in 
\cite{MPh}. The symbol $\psi ^{\pi }\left( \xi \right) $ is not smooth in $%
\xi $ and the standard Fourier multiplier results do not apply in this case.
In order to prove (\ref{4}), we apply Calderon-Zygmund theorem by
associating to $L^{\pi }$ a family of balls and verifying H\"{o}rmander
condition (see (\ref{f23}) below) for it. A different \ splitting of the
integral in (\ref{f23}) is used (cf. \cite{kk2}). As an example, we consider 
$\pi \in \mathfrak{A}^{\sigma }$ defined in radial and angular coordinates $%
r=\left\vert y\right\vert ,w=y/r,$ as 
\begin{equation}
\pi \left( \Gamma \right) =\int_{0}^{\infty }\int_{\left\vert w\right\vert
=1}\chi _{\Gamma }\left( rw\right) a\left( r,w\right) j\left( r\right)
r^{d-1}S\left( dw\right) dr,\Gamma \in \mathcal{B}\left( \mathbf{R}%
_{0}^{d}\right) ,  \label{5}
\end{equation}%
where $S\left( dw\right) $ is a finite measure on the unit sphere on $%
\mathbf{R}^{d}$. In \cite{xz}, (\ref{1'}) was considered, with $\pi $ in the
form (\ref{5}) with $a=1,j\left( r\right) =r^{-d-\sigma },$ and such that 
\begin{eqnarray*}
&&\int_{0}^{\infty }\int_{\left\vert w\right\vert =1}\chi _{\Gamma }\left(
rw\right) r^{-1-\sigma }\rho _{0}\left( w\right) S\left( dw\right) dr \\
&\leq &\pi \left( \Gamma \right) =\int_{0}^{\infty }\int_{\left\vert
w\right\vert =1}\chi _{\Gamma }\left( rw\right) r^{-1-\sigma }a\left(
r,w\right) S\left( dw\right) dr \\
&\leq &\int_{0}^{\infty }\int_{\left\vert w\right\vert =1}\chi _{\Gamma
}\left( rw\right) r^{-1-\sigma }S\left( dw\right) dr,\Gamma \in \mathcal{B}%
\left( \mathbf{R}_{0}^{d}\right) ,
\end{eqnarray*}%
and (\ref{3'}) holds with $\psi ^{\mu }\left( \xi \right) =\left\vert \xi
\right\vert ^{\sigma },\xi \in \mathbf{R}^{d}$. In this case, $H_{p}^{\mu
;1}\left( E\right) =H_{p}^{\sigma }\left( E\right) $ is the standard
fractional Sobolev space. The solution estimate (\ref{4}) for (\ref{1'}) was
derived in \cite{xz}, using $L^{\infty }$-$BMO$ type estimate. In \cite{kk},
an elliptic problem in the whole space with $L^{\pi }$ was studied for $\pi $
in the form (\ref{5}) with $S\left( dw\right) =dw$ being a Lebesgue measure
on the unit sphere in $\mathbf{R}^{d}$, with $0<c_{1}\leq a\leq c_{2}$, and
a set of technical assumptions on $j\left( r\right) $. A sharp function
estimate based on the solution H\"{o}lder norm estimate (following the idea
in \cite{KimDong}) was used in \cite{kk}.

The paper is organized as follows. In Section 2, the main theorem is stated,
an example of the form (\ref{5}) considered. In Section 3, we introduce
various equivalent norms of the spaces in which (\ref{1'}) is solved. Note
that for $s>0$ $H_{p}^{\mu ;s}$ are spaces of generalized smoothness (see 
\cite{ka}, \cite{kali} and the references therein). The main theorem is
proved in Section 4.

\section{Notation, function spaces and main results}

\subsection{Notation}

The following notation will be used in the paper.

Let $\mathbf{N}=\{1,2,\ldots \},\mathbf{N}_{0}=\left\{ 0,1,\ldots \right\} ,%
\mathbf{R}_{0}^{d}=\mathbf{R}^{d}\backslash \{0\}.$ If $x,y\in \mathbf{R}%
^{d},$\ we write 
\begin{equation*}
x\cdot y=\sum_{i=1}^{d}x_{i}y_{i},\,|x|=\sqrt{x\cdot x}.
\end{equation*}

We denote by $C_{0}^{\infty }(\mathbf{R}^{d})$ the set of all infinitely
differentiable functions on $\mathbf{R}^{d}$ with compact support.

We denote the partial derivatives in $x$ of a function $u(t,x)$ on $\mathbf{R%
}^{d+1}$ by $\partial _{i}u=\partial u/\partial x_{i}$, $\partial
_{ij}^{2}u=\partial ^{2}u/\partial x_{i}\partial x_{j}$, etc.; $Du=\nabla
u=(\partial _{1}u,\ldots ,\partial _{d}u)$ denotes the gradient of $u$ with
respect to $x$; for a multiindex $\gamma \in \mathbf{N}_{0}^{d}$ we denote%
\begin{equation*}
D_{x}^{{\scriptsize \gamma }}u(t,x)=\frac{\partial ^{|{\scriptsize \gamma |}%
}u(t,x)}{\partial x_{1}^{{\scriptsize \gamma _{1}}}\ldots \partial x_{d}^{%
{\scriptsize \gamma _{d}}}}.
\end{equation*}%
For $\alpha \in (0,2]$ and a function $u(t,x)$ on $\mathbf{R}^{d+1}$, we
write 
\begin{equation*}
\partial ^{{\scriptsize \alpha }}u(t,x)=-\mathcal{F}^{-1}[|\xi |^{%
{\scriptsize \alpha }}\mathcal{F}u(t,\xi )](x),
\end{equation*}%
where 
\begin{equation*}
\mathcal{F}h(t,\xi )=\hat{h}\left( \xi \right) =\int_{\mathbf{R}^{d}}\,%
\mathrm{e}^{-i2\pi \xi \cdot x}h(t,x)dx,\mathcal{F}^{-1}h(t,\xi )=\int_{%
\mathbf{R}^{d}}\,\mathrm{e}^{i2\pi \xi \cdot x}h(t,\xi )d\xi .
\end{equation*}

For $\mu \in \mathfrak{A}^{\sigma }$, we denote $Z_{t}^{\mu },t\geq 0,$ the
Levy process associated to $L^{\mu }$, i.e., $Z^{\mu }$ is cadlag with
independent increments and its characteristic function%
\begin{equation*}
\mathbf{E}e^{i2\pi \xi \cdot Z_{t}^{\mu }}=\exp \left\{ \psi ^{\mu }\left(
\xi \right) t\right\} ,\xi \in \mathbf{R}^{d},t\geq 0.
\end{equation*}%
The letters $C=C(\cdot ,\ldots ,\cdot )$ and $c=c(\cdot ,\ldots ,\cdot )$
denote constants depending only on quantities appearing in parentheses. In a
given context the same letter will (generally) be used to denote different
constants depending on the same set of arguments.

\subsection{Function spaces\label{test}}

Let $\mathcal{S}(\mathbf{R}^{d})$ be the Schwartz space of smooth
real-valued rapidly decreasing functions. The space of tempered
distributions we denote by $\mathcal{S}^{\prime }(\mathbf{R}^{d})$. For $p>1$%
, let $L_{p}\left( \mathbf{R}^{d}\right) $ be the space of all measurable
functions $f$ such that 
\begin{equation*}
\left\vert f\right\vert _{L_{p}}=\left\vert f\right\vert _{L_{p}\left( 
\mathbf{R}^{d}\right) }=\left( \int \left\vert f\left( x\right) \right\vert
^{p}dx\right) ^{1/p}<\infty .
\end{equation*}

We fix $\mu \in \mathfrak{A}^{\sigma }$. Obviously, $\func{Re}\psi ^{\mu
}=\psi ^{\mu _{sym}}$, where 
\begin{equation*}
\mu _{sym}\left( dy\right) =\frac{1}{2}\left[ \mu \left( dy\right) +\mu
\left( -dy\right) \right] .
\end{equation*}%
Let%
\begin{equation*}
Jv=J_{\mu }v=(I-L^{\mu _{sym}})v=v-L^{\mu _{sym}}v,v\in \mathcal{S}\left( 
\mathbf{R}^{d}\right) .
\end{equation*}%
For $s\in \mathbf{R}$ set 
\begin{equation*}
J^{s}v=\left( I-L^{\mu _{sym}}\right) ^{s}v=\mathcal{F}^{-1}[(1-\psi ^{\mu
_{sym}})^{s}\hat{v}],v\in \mathcal{S}\left( \mathbf{R}^{d}\right) .
\end{equation*}

For $p\in \lbrack 1,\infty ),s\in \mathbf{R,}$ we define, following \cite%
{fjs}, the Bessel potential space $H_{p}^{s}\left( \mathbf{R}^{d}\right)
=H_{p}^{\mu ;s}\left( \mathbf{R}^{d}\right) $ as the closure of $\mathcal{S}%
\left( \mathbf{R}^{d}\right) $ in the norm%
\begin{eqnarray*}
\left\vert v\right\vert _{H_{p}^{s}} &=&\left\vert J^{s}v\right\vert
_{L_{p}\left( \mathbf{R}^{d}\right) }=\left\vert \mathcal{F}^{-1}[(1-\psi
^{\mu _{sym}})^{s}\hat{v}]\right\vert _{L_{p}\left( \mathbf{R}^{d}\right) }
\\
&=&\left\vert \left( I-L^{\mu _{sym}}\right) ^{s}v\right\vert _{L_{p}\left( 
\mathbf{R}^{d}\right) },v\in \mathcal{S}\left( \mathbf{R}^{d}\right) .
\end{eqnarray*}%
According to Theorem 2.3.1 in \cite{fjs}, $H_{p}^{t}\left( \mathbf{R}%
^{d}\right) \subseteq H_{p}^{s}\left( \mathbf{R}^{d}\right) \,\ $is
continuously embedded if $p\in \left( 1,\infty \right) ,s<t$, $%
H_{p}^{0}\left( \mathbf{R}^{d}\right) =L_{p}\left( \mathbf{R}^{d}\right) $.
For $s\geq 0,p\in \lbrack 1,\infty ),$ the norm $\left\vert v\right\vert
_{H_{p}^{s}}$ is equivalent to (see Theorem 2.2.7 in \cite{fjs})%
\begin{equation*}
\left\vert \left\vert v\right\vert \right\vert _{H_{p}^{s}}=\left\vert
v\right\vert _{L_{p}}+\left\vert \mathcal{F}^{-1}\left[ (-\psi ^{\mu
_{sym}})^{s}\mathcal{F}v\right] \right\vert _{L_{p}}.
\end{equation*}

Further, for a characterization of our function spaces we will use the
following construction (see \cite{bl}, \cite{tr1}). We fix a continuous
function $\kappa :(0,\infty )\rightarrow (0,\infty )$ such that $%
\lim_{R\rightarrow 0}\kappa \left( R\right) =0,\lim_{R\rightarrow \infty
}\kappa \left( R\right) =\infty $. Assume there is a nondecreasing
continuous function $l\left( \varepsilon \right) ,\varepsilon >0,$ such that 
$\lim_{\varepsilon \rightarrow 0}l\left( \varepsilon \right) =0$ and 
\begin{equation*}
\kappa \left( \varepsilon r\right) \leq l\left( \varepsilon \right) \kappa
(r),r>0,\varepsilon >0.
\end{equation*}%
We say $\kappa $ is a \emph{scaling function }and call $l\left( \varepsilon
\right) ,\varepsilon >0,$ a \emph{scaling factor} of $\kappa $. Fix an
integer $N$ so that $l\left( N^{-1}\right) <1.$

\begin{remark}
\label{rem3}For an integer $N>1$ there exists a function $\phi =\phi ^{N}\in
C_{0}^{\infty }(\mathbf{R}^{d})$ (see Lemma 6.1.7 in \cite{bl}), such that $%
\mathrm{supp}\,\phi =\{\xi :\frac{1}{N}\leqslant |\xi |\leqslant N\}$, $\phi
(\xi )>0$ if $N^{-1}<|\xi |<N$ and 
\begin{equation*}
\sum_{j=-\infty }^{\infty }\phi (N^{-j}\xi )=1\quad \text{if }\xi \neq 0.
\end{equation*}%
Let%
\begin{equation}
\tilde{\phi}\left( \xi \right) =\phi \left( N\xi \right) +\phi \left( \xi
\right) +\phi \left( N^{-1}\xi \right) ,\xi \in \mathbf{R}^{d}.  \label{pp1}
\end{equation}%
Note that supp$~\tilde{\phi}\subseteq \left\{ N^{-2}\leq \left\vert \xi
\right\vert \leq N^{2}\right\} $ and $\tilde{\phi}\phi =\phi $. Let $\varphi
_{k}=\varphi _{k}^{N}=\mathcal{F}^{-1}\phi \left( N^{-k}\cdot \right) ,k\geq
1,$ and $\varphi _{0}=\varphi _{0}^{N}\in \mathcal{S}\left( \mathbf{R}%
^{d}\right) $ is defined as%
\begin{equation*}
\varphi _{0}=\mathcal{F}^{-1}\left[ 1-\sum_{k=1}^{\infty }\phi \left(
N^{-k}\cdot \right) \right] .
\end{equation*}%
Let $\phi _{0}\left( \xi \right) =\mathcal{F}\varphi _{0}\left( \xi \right) ,%
\tilde{\phi}_{0}\left( \xi \right) =\mathcal{F}\varphi _{0}\left( \xi
\right) +\mathcal{F\varphi }_{1}\left( \xi \right) ,\xi \in \mathbf{R}^{d}%
\mathbf{,}\tilde{\varphi}=\mathcal{F}^{-1}\tilde{\phi},\varphi =\mathcal{F}%
^{-1}\phi ,$ and 
\begin{equation*}
\tilde{\varphi}_{k}=\sum_{l=-1}^{1}\varphi _{k+l},k\geq 1,\tilde{\varphi}%
_{0}=\varphi _{0}+\varphi _{1}
\end{equation*}%
that is%
\begin{eqnarray*}
\mathcal{F\tilde{\varphi}}_{k} &=&\phi \left( N^{-k+1}\xi \right) +\phi
\left( N^{-k}\xi \right) +\phi \left( N^{-k-1}\xi \right) \\
&=&\tilde{\phi}\left( N^{-k}\xi \right) ,\xi \in \mathbf{R}^{d},k\geq 1.
\end{eqnarray*}%
Note that $\varphi _{k}=\tilde{\varphi}_{k}\ast \varphi _{k},k\geq 0$.
Obviously, $f=\sum_{k=0}^{\infty }f\ast \varphi _{k}$ in $\mathcal{S}%
^{\prime }\left( \mathbf{R}^{d}\right) $ for $f\in \mathcal{S}\left( \mathbf{%
R}^{d}\right) .$
\end{remark}

Let $s\in \mathbf{R}$ and $p,q\geqslant 1$. For $\mu \in \mathfrak{A}%
^{\sigma }$, we introduce the Besov space $B_{pq}^{s}=B_{pq}^{\mu ,N;s}(%
\mathbf{R}^{d})$ as the closure of $\mathcal{S}\left( \mathbf{R}^{d}\right) $
in the norm%
\begin{equation*}
|v|_{B_{pq}^{s}(\mathbf{R}^{d})}=|v|_{B_{pq}^{\mu ,N;s}(\mathbf{R}%
^{d})}=\left( \sum_{j=0}^{\infty }|J^{s}\varphi _{j}\ast v|_{L_{p}\left( 
\mathbf{R}^{d}\right) }^{q}\right) ^{1/q},
\end{equation*}%
where $J=J_{\mu }=I-L^{\mu _{sym}}.$

Similarly we introduce the corresponding spaces of generalized functions on $%
E=[0,T]\times \mathbf{R}^{d}$ $.$ The spaces $B_{pq}^{\mu ,N;s}(E)$ (resp. $%
H_{p}^{\mu ;s}(E)$) consist of all measurable $B_{pq}^{\mu ,N;s}(\mathbf{R}%
^{d})$ (resp. $H_{p}^{\mu ;s}(\mathbf{R}^{d})$) -valued functions $f$ on $%
[0,T]$ with finite corresponding norms:%
\begin{eqnarray}
|f|_{B_{pq}^{s}(E)} &=&|f|_{B_{pq}^{\mu ,N;s}(E)}=\left(
\int_{0}^{T}|f(t,\cdot )|_{B_{pq}^{\mu ,N;s}(\mathbf{R}^{d})}^{q}dt\right)
^{1/q},  \notag \\
|f|_{H_{p}^{s}(E)} &=&|f|_{H_{p}^{\mu ;s}(E)}=\left( \int_{a}^{b}|f(t,\cdot
)|_{H_{p}^{\mu ,s}(\mathbf{R}^{d})}^{p}dt\right) ^{1/p}.  \label{norm11}
\end{eqnarray}

\subsection{Main results}

We introduce an auxiliary Levy measure $\mu ^{0}$ on $\mathbf{R}_{0}^{d}$
such that the following assumption holds.

\textbf{Assumption A}$_{0}\left( \sigma \right) $. \emph{Let} $\mu ^{0}\in 
\mathfrak{A,}\chi _{\left\{ \left\vert y\right\vert \leq 1\right\} }\mu
^{0}\left( dy\right) =\mu ^{0}\left( dy\right) $, \emph{and}%
\begin{equation*}
\int \left\vert y\right\vert ^{2}\mu ^{0}\left( dy\right) +\int \left\vert
\xi \right\vert ^{4}[1+\lambda \left( \xi \right) ]^{d+3}\exp \left\{ -\psi
_{0}\left( \xi \right) \right\} d\xi \leq n_{0},
\end{equation*}

\emph{where} 
\begin{eqnarray*}
\psi _{0}\left( \xi \right) &=&\int_{\left\vert y\right\vert \leq 1}\left[
1-\cos \left( 2\pi \xi \cdot y\right) \right] \mu ^{0}\left( dy\right) , \\
\lambda \left( \xi \right) &=&\int_{\left\vert y\right\vert \leq 1}\chi
_{\sigma }\left( y\right) \left\vert y\right\vert [\left( \left\vert \xi
\right\vert \left\vert y\right\vert \right) \wedge 1]\mu ^{0}\left(
dy\right) ,\xi \in \mathbf{R}^{d}.
\end{eqnarray*}%
\emph{In addition, we assume that for any} $\xi \in S_{d-1}=\left\{ \xi \in 
\mathbf{R}^{d}:\left\vert \xi \right\vert =1\right\} ,$ 
\begin{equation*}
\int_{\left\vert y\right\vert \leq 1}\left\vert \xi \cdot y\right\vert
^{2}\mu ^{0}\left( dy\right) \geq c_{1}>0.
\end{equation*}

For $\pi \in \mathfrak{A}=\cup _{\sigma \in \left( 0,2\right) }\mathfrak{A}%
^{\sigma }$ and $R>0$, we denote%
\begin{equation*}
\pi _{R}\left( \Gamma \right) =\int \chi _{\Gamma }\left( y/R\right) \pi
\left( dy\right) ,\Gamma \in \mathcal{B}\left( \mathbf{R}_{0}^{d}\right) .
\end{equation*}

\begin{definition}
We say that a continuous function $\kappa :(0,\infty )\rightarrow (0,\infty
) $ is a scaling function if $\lim_{R\rightarrow 0}\kappa \left( R\right)
=0,\lim_{R\rightarrow \infty }\kappa \left( R\right) =\infty $ and there is
a nondecreasing continuous function $l\left( \varepsilon \right)
,\varepsilon >0,$ such that $\lim_{\varepsilon \rightarrow 0}l\left(
\varepsilon \right) =0$ and 
\begin{equation*}
\kappa \left( \varepsilon r\right) \leq l\left( \varepsilon \right) \kappa
(r),r>0,\varepsilon >0.
\end{equation*}%
We call $l\left( \varepsilon \right) ,\varepsilon >0,$ a scaling factor of $%
\kappa $.
\end{definition}

For a scaling function $\kappa $ with a scaling factor $l$ and $\pi \in 
\mathfrak{A}^{\sigma }$ we introduce the following assumptions.

\textbf{D}$\left( \kappa ,l\right) $\textbf{. }\emph{For every} $R>0,$ 
\begin{equation*}
\tilde{\pi}_{R}\left( dy\right) =\kappa \left( R\right) \pi _{R}\left(
dy\right) \geq 1_{\left\{ \left\vert y\right\vert \leq 1\right\} }\mu
^{0}\left( dy\right) ,
\end{equation*}%
\emph{with }$\mu ^{0}=\mu ^{0;\pi }$\emph{\ satisfying Assumption }\textbf{A}%
$_{0}\left( \sigma \right) $. \emph{If }$\sigma =1$ \emph{we, in addition
assume that }$\int_{R<\left\vert y\right\vert \leq R^{\prime }}y\mu
^{0}\left( dy\right) =0$ \emph{for any} $0<R<R^{\prime }\leq 1.$ \emph{\
Here }$\tilde{\pi}_{R}\left( dy\right) =\kappa \left( R\right) \pi
_{R}\left( dy\right) .$

\textbf{B}$\left( \kappa ,l\right) $\textbf{. }\emph{There exist }$\alpha
_{1}$\emph{\ and }$\alpha _{2}$\emph{\ and a constant }$N_{0}>0$ \emph{such
that}%
\begin{equation*}
\int_{\left\vert z\right\vert \leq 1}\left\vert z\right\vert ^{\alpha _{1}}%
\tilde{\pi}_{R}(dz)+\int_{\left\vert z\right\vert >1}\left\vert z\right\vert
^{\alpha _{2}}\tilde{\pi}_{R}(dz)\leq N_{0}\text{ }\forall R>0,
\end{equation*}%
\emph{where }$\alpha _{1},\alpha _{2}\in (0,1]\text{ \emph{if} }\sigma \in
(0,1)\text{; }\alpha _{1},\alpha _{2}\in (1,2]\text{ \emph{if} }\sigma \in
(1,2)$\emph{; }$\alpha _{1}\in (1,2]$\emph{\ and }$\alpha _{2}\in \lbrack
0,1)$\emph{\ if }$\sigma =1$\emph{.}

The main result for (\ref{1'}) is the following statement.

\begin{theorem}
\label{t1}Let $\pi ,\mu \in \mathfrak{A}^{\sigma }$, \thinspace $p\in \left(
1,\infty \right) ,s\in \mathbf{R}$. Assume there is a scaling function $%
\kappa $ with scaling factor $l$ such that \textbf{D}$\left( \kappa
,l\right) $ and \textbf{B}$\left( \kappa ,l\right) $\textbf{\ }hold for
both, $\pi $ and $\mu $. Let $\gamma \left( t\right) =\inf \{t>0:l\left(
r\right) \geq t\},t>0.$ 

Assume%
\begin{equation*}
\int_{1}^{\infty }\frac{dt}{t\gamma \left( t\right) ^{1\wedge \alpha _{2}}}%
<\infty .
\end{equation*}%
and there are $\beta _{1},\beta _{2}>0$ such that%
\begin{equation*}
\int_{0}^{1}\gamma \left( t\right) ^{-\beta _{1}}dt+\int_{0}^{1}l\left(
t\right) ^{\beta _{2}}\frac{dt}{t}<\infty \text{ if }p>2.
\end{equation*}

Then for each $f\in H_{p}^{\mu ;s}(E),g\in B_{pp}^{\mu ,N;s+1-1/p}\left( 
\mathbf{R}^{d}\right) $ there is a unique $u\in H_{p}^{\mu ;s+1}\left(
E\right) $ solving (\ref{1'}). Moreover, there is $C=C\left( d,p,\kappa
,l,n_{0},N_{0},c_{1}\right) $ such that%
\begin{eqnarray*}
\left\vert L^{\mu }u\right\vert _{H_{p}^{\mu ;s}\left( E\right) } &\leq &C
\left[ \left\vert f\right\vert _{H_{p}^{\mu ;s}\left( E\right) }+\left\vert
g\right\vert _{B_{pp}^{\mu ,N;s+1-1/p}\left( \mathbf{R}^{d}\right) }\right] ,
\\
\left\vert u\right\vert _{H_{p}^{\mu ;s}\left( E\right) } &\leq &\rho
_{\lambda }\left\vert f\right\vert _{H_{p}^{\mu ;s}\left( E\right) }+\rho
_{\lambda }^{1/p}\left\vert g\right\vert _{H_{p}^{\mu ;s}\left( \mathbf{R}%
^{d}\right) },
\end{eqnarray*}%
where $\rho _{\lambda }=\frac{1}{\lambda }\wedge T.$
\end{theorem}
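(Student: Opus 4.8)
The strategy I would follow is the classical one for parabolic Cauchy problems: reduce to a priori estimates plus a continuation/approximation argument for existence. First I would treat the model case where the equation is solved by the semigroup of the L\'{e}vy process $Z^{\mu}$, and build the general solution by a perturbation/continuity argument linking $L^{\mu}$ to $L^{\pi}$. Concretely, I would proceed in the following order.

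\emph{Step 1 (The key multiplier / Calder\'{o}n--Zygmund estimate).} The heart of the matter is the a priori bound $\left\vert L^{\mu}u\right\vert_{H_{p}^{\mu;s}(E)}\leq C\left\vert (\partial_{t}-L^{\mu}+\lambda)u\right\vert_{H_{p}^{\mu;s}(E)}$ for smooth compactly supported $u$ with $u(0,\cdot)=0$. Since $J^{s}=(I-L^{\mu_{sym}})^{s}$ commutes with $L^{\mu}$ and with $\partial_{t}$, it suffices to prove this for $s=0$, i.e.\ in $L_{p}(E)$. As the introduction indicates, the symbol $\psi^{\pi}$ is not smooth, so ordinary Fourier-multiplier theorems fail; instead I would realize $L^{\mu}(\partial_{t}-L^{\mu}+\lambda)^{-1}$ as a singular integral operator, associate to $L^{\mu}$ the family of balls determined through the scaling function $\kappa$ (the natural ``parabolic'' balls adapted to the measure $\mu$), and verify the H\"{o}rmander integral condition \eqref{f23} for the corresponding kernel. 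The nondegeneracy in Assumption \textbf{A}$_{0}(\sigma)$ and the moment bounds in \textbf{B}$(\kappa,l)$ are exactly what is needed to estimate the kernel and its differences; the convergence of $\int_{1}^{\infty}\frac{dt}{t\gamma(t)^{1\wedge\alpha_{2}}}$ controls the tail (large-scale) part of the H\"{o}rmander splitting, while $\int_{0}^{1}\gamma(t)^{-\beta_{1}}dt$ and $\int_{0}^{1}l(t)^{\beta_{2}}\frac{dt}{t}$ control the small-scale part when $p>2$. The rough density estimates for $Z^{\mu}$ alluded to in the abstract enter here, in bounding the kernel $\partial_{t}p_{t}^{\mu}$ and its spatial differences. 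I expect \textbf{this to be the main obstacle}: carrying out the splitting of \eqref{f23} so that both regimes close under precisely the stated integrability hypotheses.

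\emph{Step 2 (Trace / initial-data term).} To handle $g\neq0$ I would split $u=u_{1}+u_{2}$, where $u_{2}(t,\cdot)=P_{t}^{\mu}g$ is the semigroup solution of the homogeneous problem $\partial_{t}u_{2}=L^{\mu}u_{2}-\lambda u_{2}$, $u_{2}(0,\cdot)=g$, and $u_{1}$ solves the zero-initial-data problem with right-hand side $f$. The estimate for $u_{1}$ is Step 1. For $u_{2}$ the task is the trace estimate $\left\vert L^{\mu}P_{\cdot}^{\mu}g\right\vert_{H_{p}^{\mu;s}(E)}\leq C\left\vert g\right\vert_{B_{pp}^{\mu,N;s+1-1/p}(\mathbf{R}^{d})}$. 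This is where the Besov space $B_{pp}^{\mu,N;s+1-1/p}$ appears as the correct trace space: using the Littlewood--Paley decomposition $\{\varphi_{j}\}$ from Remark \ref{rem3} and the subordination of the heat flow to the $\mu$-scale, the $t$-integral of $\left\vert L^{\mu}P_{t}^{\mu}\varphi_{j}\ast g\right\vert_{L_{p}}^{p}$ reproduces exactly the $B_{pp}$-norm with the loss of $1/p$ derivatives.

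\emph{Step 3 (Uniqueness, existence, and the resolvent bound).} Uniqueness follows immediately from the a priori estimate applied to the difference of two solutions (with $f=0$, $g=0$). For existence I would use the method of continuity: the estimate of Steps 1--2 is uniform along the family $L^{\tau}=(1-\tau)L^{\mu}+\tau L^{\pi}$, $\tau\in[0,1]$ (both satisfy \textbf{D} and \textbf{B} with the \emph{same} $\kappa,l$ by hypothesis), and solvability for $L^{\mu}$ itself is given by the semigroup $P_{t}^{\mu}$; hence solvability propagates to $\tau=1$, i.e.\ to $L^{\pi}$. Finally, the zeroth-order bound $\left\vert u\right\vert_{H_{p}^{\mu;s}(E)}\leq\rho_{\lambda}\left\vert f\right\vert_{H_{p}^{\mu;s}(E)}+\rho_{\lambda}^{1/p}\left\vert g\right\vert_{H_{p}^{\mu;s}(\mathbf{R}^{d})}$ with $\rho_{\lambda}=\frac{1}{\lambda}\wedge T$ is obtained directly: apply $J^{s}$, test the equation, and use that $\int_{0}^{t}e^{-\lambda(t-r)}dr\leq\rho_{\lambda}$ together with Minkowski's integral inequality in $L_{p}(\mathbf{R}^{d})$, the power $1/p$ on the $g$-term arising from the $L_{p}(0,T)$ integration of the decaying semigroup contribution. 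All constants depend only on $d,p,\kappa,l,n_{0},N_{0},c_{1}$, as the quantitative inputs are confined to Assumptions \textbf{A}$_{0}$, \textbf{D}, and \textbf{B}.
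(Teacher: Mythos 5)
Your Steps 1--2 capture the two main estimates the paper actually proves (a Calder\'on--Zygmund/H\"ormander bound for the Duhamel term, a Littlewood--Paley trace bound for the initial datum), but your plan establishes them only for the \emph{model} operator $L^{\mu}$, and the device you propose for passing to $L^{\pi}$ --- the method of continuity along $L^{\tau}=(1-\tau)L^{\mu}+\tau L^{\pi}$ --- has a genuine gap. The method of continuity requires the a priori estimate to hold, with a constant uniform in $\tau$, for solutions of $\partial_{t}u=L^{\tau}u-\lambda u+f$ for \emph{every} $\tau\in[0,1]$; your Step 1 gives it only at $\tau=0$, and it cannot be transferred to $\tau>0$ by perturbation because $L^{\pi}-L^{\mu}$ is neither lower order nor small: the only available comparison (Lemma \ref{prol}) is $\left\vert L^{\pi}v\right\vert _{L_{p}}\leq CM\left\vert L^{\mu}v\right\vert _{L_{p}}$ with a constant that cannot be made small, so the error term $\tau(L^{\pi}-L^{\mu})u$ cannot be absorbed into the left-hand side. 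What is actually needed --- and what constitutes the core of the paper's proof --- is the H\"ormander estimate for the \emph{mixed} kernel $e^{-\lambda t}L^{\mu _{sym}}p^{\pi ^{\ast }}(t,\cdot )$, i.e.\ the operator of one measure applied to the transition density of the \emph{other} process $Z^{\pi}$; this is Lemma \ref{mainl} (stated there with the roles of the two measures interchanged), and it works precisely because \textbf{D} and \textbf{B} hold for both measures with the same $\kappa ,l$. Once that mixed estimate is available, no continuity method is needed at all: for smooth data the solution of the $\pi$-equation is given explicitly by the Duhamel/It\^{o} formula (Lemma \ref{le0}), general data are handled by approximation, and uniqueness is proved by duality against the backward problem for $\pi ^{\ast }$ --- not, as you assert, ``immediately'' from the a priori bound, which at that stage is only known for smooth (or constructed) solutions rather than for an arbitrary element of $H_{p}^{\mu ;s+1}(E)$.

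Two secondary misattributions. First, the conditions $\int_{0}^{1}\gamma (t)^{-\beta _{1}}dt+\int_{0}^{1}l(t)^{\beta _{2}}\frac{dt}{t}<\infty $ for $p>2$ play no role in the H\"ormander splitting: the Calder\'on--Zygmund step is $p$-independent once the $L_{2}$ bound and condition (\ref{f23}) hold. In the paper they are used only in the estimate of the initial-data term $I_{\lambda }g$ when $p>2$, where H\"older's inequality in the Littlewood--Paley sum requires exactly these integrals. Second, your Step 2 should treat $e^{-\lambda t}\mathbf{E}g\left( x+Z_{t}^{\pi }\right) $ --- the semigroup of $\pi$ hit with $L^{\mu }$ --- rather than $P_{t}^{\mu }g$; this is the same mixed-operator issue as above, and it is the reason the trace estimate (\ref{h80}) again rests on density estimates for $Z^{\pi}$ combined with the equivalent-norm machinery for the $\mu$-scale spaces.
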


\begin{remark}
1. Assumptions \textbf{D}$\left( \kappa ,l\right) ,$ \textbf{B}$\left(
\kappa ,l\right) $ holds for both, $\pi ,\mu $, means that $\kappa ,l,$ and
the parameters $\alpha _{1},\alpha _{2},n_{0},c_{1},N_{0}$ are the same ($%
\mu ^{0}$ could be different).

2. For every $\varepsilon >0$, $B_{pp}^{\mu ,N;s+\varepsilon }\left( \mathbf{%
R}^{d}\right) $ is continuously embedded into $H_{p}^{\mu ;s}\left( \mathbf{R%
}^{d}\right) ,p>1$ (see Remark \ref{renew1} below); for \thinspace $p\geq 2$%
, $H_{p}^{\mu ;s}\left( \mathbf{R}^{d}\right) $ is continuously embedded
into $B_{pp}^{\mu ,N;s}\left( \mathbf{R}^{d}\right) .$
\end{remark}

\subsection{Example}

Let $\Lambda \left( dt\right) $ be a measure on $\left( 0,\infty \right) $
such that $\int_{0}^{\infty }\left( 1\wedge t\right) \Lambda \left(
dt\right) <\infty $, and let 
\begin{equation*}
\phi \left( r\right) =\int_{0}^{\infty }\left( 1-e^{-rt}\right) \Lambda
\left( dt\right) ,r\geq 0,
\end{equation*}%
be a Bernstein function (see \cite{vo}, \cite{kk}). Let%
\begin{equation*}
j\left( r\right) =\int_{0}^{\infty }\left( 4\pi t\right) ^{-\frac{d}{2}}\exp
\left( -\frac{r^{2}}{4t}\right) \Lambda \left( dt\right) ,r>0.
\end{equation*}%
We consider $\pi \in \mathfrak{A}=\cup _{\sigma \in \left( 0,2\right) }%
\mathfrak{A}^{\sigma }\,\ $defined in radial and angular coordinates $%
r=\left\vert y\right\vert ,w=y/r,$ as%
\begin{equation}
\pi \left( \Gamma \right) =\int_{0}^{\infty }\int_{\left\vert w\right\vert
=1}\chi _{\Gamma }\left( rw\right) a\left( r,w\right) j\left( r\right)
r^{d-1}S\left( dw\right) dr,\Gamma \in \mathcal{B}\left( \mathbf{R}%
_{0}^{d}\right) ,  \label{fe1}
\end{equation}%
where $S\left( dw\right) $ is a finite measure on the unite sphere on $%
\mathbf{R}^{d}$. If $S\left( dw\right) =dw$ is the Lebesgue measure on the
unit sphere, then%
\begin{equation*}
\pi \left( \Gamma \right) =\pi ^{J,a}\left( \Gamma \right) =\int_{\mathbf{R}%
^{d}}\chi _{\Gamma }\left( y\right) a\left( \left\vert y\right\vert
,y/\left\vert y\right\vert \right) J\left( y\right) dy,\Gamma \in \mathcal{B}%
\left( \mathbf{R}_{0}^{d}\right) ,
\end{equation*}%
where $J\left( y\right) =j\left( \left\vert y\right\vert \right) ,y\in 
\mathbf{R}^{d}.$ Let $\mu =\pi ^{J,1},$ i.e.,%
\begin{equation}
\mu \left( \Gamma \right) =\int_{\mathbf{R}^{d}}\chi _{\Gamma }\left(
y\right) J\left( y\right) dy,\Gamma \in \mathcal{B}\left( \mathbf{R}%
_{0}^{d}\right) .  \label{fe2}
\end{equation}

We assume

\textbf{H. }(i) \emph{There is} $N>0$ \emph{so that}%
\begin{equation*}
N^{-1}\phi \left( r^{-2}\right) r^{-d}\leq j\left( r\right) \leq N\phi
\left( r^{-2}\right) r^{-d},r>0.
\end{equation*}

(ii) \emph{There are} $0<\delta _{1}\leq \delta _{2}<1$ \emph{and }$N>0$%
\emph{\ so that for} $0<r\leq R$%
\begin{equation*}
N^{-1}\left( \frac{R}{r}\right) ^{\delta _{1}}\leq \frac{\phi \left(
R\right) }{\phi \left( r\right) }\leq N\left( \frac{R}{r}\right) ^{\delta
_{2}}.
\end{equation*}

\textbf{G. }\emph{There is} $\rho _{0}\left( w\right) \geq 0,\left\vert
w\right\vert =1,$ \emph{such that} $\rho _{0}\left( w\right) \leq a\left(
r,w\right) \leq 1,r>0,\left\vert w\right\vert =1,$ \emph{and for all} $%
\left\vert \xi \right\vert =1,$ 
\begin{equation*}
\int_{\left\vert w\right\vert =1}\left\vert \xi \cdot w\right\vert ^{2}\rho
_{0}\left( w\right) S\left( dw\right) \geq c>0
\end{equation*}%
\emph{for some} $c>0$.

For example, in \cite{vo} and \cite{kk} among others the following specific
Bernstein functions satisfying \textbf{H} are listed:

(0) $\phi \left( r\right) =\sum_{i=1}^{n}r^{\alpha _{i}},\alpha _{i}\in
\left( 0,1\right) ,i=1,\ldots ,n;$

(1) $\phi \left( r\right) =\left( r+r^{\alpha }\right) ^{\beta },\alpha
,\beta \in \left( 0,1\right) ;$

(2) $\phi \left( r\right) =r^{\alpha }\left( \ln \left( 1+r\right) \right)
^{\beta },\alpha \in \left( 0,1\right) ,\beta \in \left( 0,1-\alpha \right)
; $

(3) $\phi \left( r\right) =\left[ \ln \left( \cosh \sqrt{r}\right) \right]
^{\alpha },\alpha \in \left( 0,1\right) .$

All the assumptions of Theorem \ref{t1} hold under \textbf{H, G.}

Indeed, \textbf{H} implies that there are $0<c\leq C$ so that 
\begin{eqnarray*}
cr^{-d-2\delta _{1}} &\leq &j\left( r\right) \leq Cr^{-d-2\delta _{2}},r\leq
1, \\
cr^{-d-2\delta _{2}} &\leq &j\left( r\right) \leq Cr^{-d-2\delta _{1}},r>1.
\end{eqnarray*}%
Hence $2\delta _{1}\leq \sigma \leq 2\delta _{2}.$ In this case $\kappa
\left( R\right) =j\left( R\right) ^{-1}R^{-d},R>0,$ is a scaling function,
and $\kappa \left( \varepsilon R\right) \leq l\left( \varepsilon \right)
\kappa \left( R\right) ,\varepsilon ,R>0,$ with%
\begin{equation*}
l\left( \varepsilon \right) =\left\{ 
\begin{array}{cc}
C_{1}\varepsilon ^{2\delta _{1}} & \text{if }\varepsilon \leq 1, \\ 
C_{1}\varepsilon ^{2\delta _{2}} & \text{if }\varepsilon >1%
\end{array}%
\right.
\end{equation*}%
for some $C_{1}>0$. Hence 
\begin{equation*}
\gamma \left( t\right) =l^{-1}\left( t\right) =\left\{ 
\begin{array}{c}
C_{1}^{-1/2\delta _{1}}t^{1/2\delta _{1}}\text{ if }t\leq C_{1}, \\ 
C_{1}^{-1/2\delta _{2}}t^{1/2\delta _{2}}\text{ if }t>C_{1}.%
\end{array}%
\right.
\end{equation*}%
We see easily that $\alpha _{1}$ is any number $>2\delta _{2}$ and $\alpha
_{2}$ is any number $<2\delta _{1}.$ The measure $\mu ^{0}$ for $\pi $ is%
\begin{equation*}
\mu ^{0}\left( dy\right) =\mu ^{0,\pi }\left( dy\right) =c_{1}\int \chi
_{dy}\left( rw\right) \chi _{\left\{ r\leq 1\right\} }r^{-1-2\delta
_{1}}\rho _{0}\left( w\right) S\left( dw\right) dr;
\end{equation*}%
and $\mu ^{0}$ for $\mu $ is%
\begin{equation*}
\mu ^{0}\left( dy\right) =\mu ^{0,\mu }\left( dy\right) =c_{1}^{\prime }\int
\chi _{dy}\left( rw\right) \chi _{\left\{ r\leq 1\right\} }r^{-1-2\delta
_{1}}dwdr
\end{equation*}%
with some $c_{1},c_{1}^{\prime }.$ Integrability conditions easily follow.

\section{Function spaces, equivalent norms}

\subsection{Function spaces}

Let $\tilde{C}^{\infty }\left( \mathbf{R}^{d}\right) $ be the space of all
functions $f$ on $\mathbf{R}^{d}$ such that for any multiindex $\gamma \in 
\mathbf{N}_{0}^{d}$ and for all $1\leq p<\infty $%
\begin{equation*}
\sup_{x\in \mathbf{R}^{d}}\left\vert D^{\gamma }f\left( x\right) \right\vert
+\left\vert D^{\gamma }f\right\vert _{L_{p}\left( R^{d}\right) }<\infty .
\end{equation*}%
Let $\tilde{C}_{p}^{\infty }\left( \mathbf{R}^{d}\right) $ be the space of
all functions $f$ on $\mathbf{R}^{d}$ such that for any multiindex $\gamma
\in \mathbf{N}_{0}^{d}$ 
\begin{equation*}
\sup_{x\in \mathbf{R}^{d}}\left\vert D^{\gamma }f\left( x\right) \right\vert
+\left\vert D^{\gamma }f\right\vert _{L_{p}\left( R^{d}\right) }<\infty .
\end{equation*}

For a separable Hilbert space $\mathbf{G}$ and $r\geq 1$, we denote $%
l_{r}\left( \mathbf{G}\right) $ the space of all sequences $a=\left(
a_{j}\right) ,a_{j}\in \mathbf{G}$, with finite norm%
\begin{equation*}
\left\vert a\right\vert _{l_{r}\left( \mathbf{G}\right) }=\left(
\sum_{j=0}^{\infty }\left\vert a_{j}\right\vert _{\mathbf{G}}^{r}\right)
^{1/r}.
\end{equation*}%
We denote $l_{r}=l_{r}\left( \mathbf{R}\right) $. Let $L_{p}\left( \mathbf{R}%
^{d};\mathbf{G}\right) $ be the space of all $\mathbf{G}$-valued measurable
functions $f$ such that 
\begin{equation*}
\left\vert f\right\vert _{L_{p}\left( \mathbf{R}^{d};\mathbf{G}\right)
}=\left( \int \left\vert f\left( x\right) \right\vert _{\mathbf{G}%
}^{p}dx\right) ^{1/p}<\infty .
\end{equation*}

Let $\mu \in \mathfrak{A}^{\sigma }$, $H_{p}^{s}\left( \mathbf{R}%
^{d};l_{2}\right) =H_{p}^{\mu ;s}\left( \mathbf{R}^{d};l_{2}\right) $ be the
space of all sequences $v=\left( v_{k}\right) _{k\geq 0}$ with $v_{k}\in
H_{p}^{\mu ;s}\left( \mathbf{R}^{d}\right) $ and finite norm%
\begin{eqnarray*}
\left\vert v\right\vert _{H_{p}^{s}\left( \mathbf{R}^{d};l_{2}\right) }
&=&\left\vert \left( \sum_{k=0}^{\infty }\left\vert J^{s}v_{k}\right\vert
^{2}\right) ^{1/2}\right\vert _{L_{p}\left( \mathbf{R}^{d}\right)
}=\left\vert \left( \sum_{k=0}^{\infty }\left\vert \mathcal{F}^{-1}[(1-\psi
^{\mu _{sym}})^{s}\hat{v}_{k}]\right\vert ^{2}\right) ^{1/2}\right\vert
_{L_{p}\left( \mathbf{R}^{d}\right) } \\
&=&\left\vert \left( \sum_{k=0}^{\infty }\left\vert \left( I-L^{\mu
_{sym}}\right) ^{s}v_{k}\right\vert ^{2}\right) ^{1/2}\right\vert
_{L_{p}\left( \mathbf{R}^{d}\right) }.
\end{eqnarray*}

For a scaling function $\kappa $ with a scaling factor $l\left( \varepsilon
\right) ,\varepsilon >0,$ and integer $N>1$ such that $l\left( N^{-1}\right)
<1$ and $s\in \mathbf{R}$, we introduce Besov spaces $\tilde{B}_{pq}^{s}=%
\tilde{B}_{pq}^{\kappa ,N;s}=\tilde{B}_{pq}^{\kappa ,N;s}(\mathbf{R}^{d})$
of generalized functions $v\in \mathcal{S}^{\prime }(\mathbf{R}^{d})$ with
finite norm%
\begin{equation*}
|v|_{\tilde{B}_{pq}^{s}(\mathbf{R}^{d})}=|v|_{\tilde{B}_{pq}^{\kappa ,N;s}(%
\mathbf{R}^{d})}=\left( \sum_{j=0}^{\infty }\kappa \left( N^{-j}\right)
^{-sq}|\varphi _{j}\ast v|_{L_{p}}^{q}\right) ^{1/q},
\end{equation*}%
where $\varphi _{j}=\varphi _{j}^{N},j\geq 0,$ is the system of functions
defined in Remark \ref{rem3}. Let $\tilde{H}_{p}^{s}=\tilde{H}_{p}^{\kappa
,N;s}(\mathbf{R}^{d})$ $\ $\ be the space of $v\in \mathcal{S}^{\prime }(%
\mathbf{R}^{d})$ with finite norm%
\begin{equation}
|v|_{\tilde{H}_{p}^{s}(\mathbf{R}^{d})}=|v|_{\tilde{H}_{p}^{\kappa ,N;s}(%
\mathbf{R}^{d})}=\left\vert \left( \sum_{j=0}^{\infty }\left\vert \kappa
\left( N^{-j}\right) ^{-s}\varphi _{j}\ast v\right\vert ^{2}\right)
^{1/2}\right\vert _{L_{p}\left( \mathbf{R}^{d}\right) }.
\end{equation}

Let $\tilde{H}_{p}^{s}\left( \mathbf{R}^{d};l_{2}\right) =\tilde{H}%
_{p}^{\kappa ,N;s}(\mathbf{R}^{d};l_{2})$ be the space of all sequences $%
v=\left( v_{k}\right) _{k\geq 0}$ with $v_{k}\in \tilde{H}_{p}^{s}\left( 
\mathbf{R}^{d}\right) $ and finite norm%
\begin{equation*}
\left\vert v\right\vert _{\tilde{H}_{p}^{s}\left( \mathbf{R}%
^{d};l_{2}\right) }=\left\vert \left( \sum_{k,j=0}^{\infty }\left\vert
\kappa \left( N^{-j}\right) ^{-s}\varphi _{j}\ast v_{k}\right\vert
^{2}\right) ^{1/2}\right\vert _{L_{p}\left( \mathbf{R}^{d}\right) }.
\end{equation*}

\subsection{Norm equivalence and embedding}

In this section we consider some equivalent norms in $H_{p}^{s}=H_{p}^{\mu
;s}$ and $B_{pq}^{s}=B_{pq}^{\mu ,N;s}.$

Let $\mu \in \mathfrak{A}^{\sigma }$, and $Z_{t}^{\mu },t\geq 0,$ be the
Levy process associated to $L^{\mu }$, i.e., $Z^{\mu }$ is cadlag with
independent increments and its characteristic function%
\begin{equation*}
\mathbf{E}e^{i2\pi \xi \cdot Z_{t}^{\mu }}=\exp \left\{ \psi ^{\mu }\left(
\xi \right) t\right\} ,\xi \in \mathbf{R}^{d},t\geq 0.
\end{equation*}%
By Corollary 5 in \cite{MPh}, for any $g\in \tilde{C}^{\infty }\left( 
\mathbf{R}^{d}\right) $%
\begin{equation*}
g\left( x\right) =\int_{0}^{\infty }e^{-t}\mathbf{E}\left( I-L^{\mu }\right)
g\left( x+Z_{t}^{\mu }\right) dt=\left( I-L^{\pi }\right) \int_{0}^{\infty
}e^{-t}\mathbf{E}g\left( x+Z_{t}^{\mu }\right) dt,x\in \mathbf{R}^{d},
\end{equation*}%
i.e. $I-L^{\mu }:\tilde{C}^{\infty }\left( \mathbf{R}^{d}\right) \rightarrow 
\tilde{C}^{\infty }\left( \mathbf{R}^{d}\right) $ is bijective and%
\begin{equation}
\left( I-L^{\mu }\right) ^{-1}g\left( x\right) =\int_{0}^{\infty }e^{-t}%
\mathbf{E}g\left( x+Z_{t}^{\mu }\right) dt,x\in \mathbf{R}^{d}.  \label{cf2}
\end{equation}

\begin{remark}
\label{rp}Let $p\in \left( 1,\infty \right) $, and $\tilde{C}_{p}^{\infty
}\left( \mathbf{R}^{d}\right) $ be the space of all functions $f$ on $%
\mathbf{R}^{d}$ such that for any multiindex $\gamma \in \mathbf{N}_{0}^{d}$ 
\begin{equation*}
\sup_{x\in \mathbf{R}^{d}}\left\vert D^{\gamma }f\left( x\right) \right\vert
+\left\vert D^{\gamma }f\right\vert _{L_{p}\left( R^{d}\right) }<\infty .
\end{equation*}%
Then for $\mu \in \mathfrak{A}^{\sigma }$ the mapping $I-L^{\mu }:\tilde{C}%
_{p}^{\infty }\left( \mathbf{R}^{d}\right) \rightarrow \tilde{C}_{p}^{\infty
}\left( \mathbf{R}^{d}\right) $ is bijective and%
\begin{equation}
(I-L^{\mu })^{-1}g\left( x\right) =\int_{0}^{\infty }e^{-t}\mathbf{E}g\left(
x+Z_{t}^{\mu }\right) dt,x\in \mathbf{R}^{d}.  \label{cf3}
\end{equation}%
Indeed, if $g\in \tilde{C}_{p}^{\infty }\left( \mathbf{R}^{d}\right) $, then
by Ito formula,%
\begin{equation*}
u\left( x\right) =\int_{0}^{\infty }e^{-t}\mathbf{E}g\left( x+Z_{t}^{\mu
}\right) dt,x\in \mathbf{R}^{d},
\end{equation*}%
is a classical solution to the equation $(I-L^{\mu })u=g$ and $u\in \tilde{C}%
_{p}^{\infty }\left( \mathbf{R}^{d}\right) .$
\end{remark}

We will prove the following statement about $B_{pq}^{\mu ,N;s}.$

\begin{proposition}
\label{pro1}Let \textbf{D}$\left( \kappa ,l\right) $ and \textbf{B}$\left(
\kappa ,l\right) $ hold for $\mu \in \mathfrak{A}^{\sigma }$ with scaling
function $\kappa $ and scaling factor $l$. Let $s\in \mathbf{R},p,q\in
\left( 1,\infty \right) ,N>1,l\left( N^{-1}\right) <1$. Then $\tilde{B}%
_{pq}^{\kappa ,N;s}\left( \mathbf{R}^{d}\right) =B_{pq}^{\mu ,N;s}\left( 
\mathbf{R}^{d}\right) $ and the norms are equivalent.
\end{proposition}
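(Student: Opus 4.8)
I would prove the equality of spaces by establishing a \emph{termwise} comparison of the two Littlewood--Paley characterizations and then passing to closures. Concretely, the goal is the two-sided bound, uniform in $j\ge0$ and $v$,
\[
C^{-1}\kappa\left(N^{-j}\right)^{-s}\left\vert \varphi_{j}\ast v\right\vert_{L_p}\le\left\vert J^{s}\varphi_{j}\ast v\right\vert_{L_p}\le C\,\kappa\left(N^{-j}\right)^{-s}\left\vert \varphi_{j}\ast v\right\vert_{L_p};
\]
call this estimate $(\ast)$. Raising $(\ast)$ to the $q$-th power and summing over $j$ yields $C^{-1}|v|_{\tilde{B}_{pq}^{\kappa,N;s}}\le|v|_{B_{pq}^{\mu,N;s}}\le C|v|_{\tilde{B}_{pq}^{\kappa,N;s}}$ for $v\in\mathcal{S}$; since $\mathcal{S}\left(\mathbf{R}^{d}\right)$ is dense in both spaces (for $p,q\in(1,\infty)$), the two completions coincide. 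So the whole proposition reduces to $(\ast)$.

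To obtain $(\ast)$ I use $\varphi_{j}=\tilde{\varphi}_{j}\ast\varphi_{j}$, i.e. $\phi\left(N^{-j}\xi\right)=\tilde{\phi}\left(N^{-j}\xi\right)\phi\left(N^{-j}\xi\right)$. Writing $J^{s}\varphi_{j}\ast v=\mathcal{F}^{-1}[(1-\psi^{\mu_{sym}})^{s}\phi\left(N^{-j}\cdot\right)\hat{v}]$ and inserting the factor $\tilde{\phi}\left(N^{-j}\cdot\right)$, the two inequalities in $(\ast)$ become the assertion that the Fourier multipliers
\[
m_{j}^{+}\left(\xi\right)=\kappa\left(N^{-j}\right)^{s}(1-\psi^{\mu_{sym}}(\xi))^{s}\tilde{\phi}\left(N^{-j}\xi\right),\quad m_{j}^{-}\left(\xi\right)=\kappa\left(N^{-j}\right)^{-s}(1-\psi^{\mu_{sym}}(\xi))^{-s}\tilde{\phi}\left(N^{-j}\xi\right)
\]
have $L_{p}\left(\mathbf{R}^{d}\right)$-multiplier norms bounded uniformly in $j$ (apply $m_{j}^{+}$ to $\varphi_{j}\ast v$ and $m_{j}^{-}$ to $J^{s}\varphi_{j}\ast v$). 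Because the $L_{p}$-multiplier norm is invariant under the dilation $\xi\mapsto N^{j}\xi$, and because the scaling identity $\kappa\left(R\right)(1-\psi^{\mu_{sym}}(\eta/R))=\kappa\left(R\right)+\int[1-\cos\left(2\pi\eta\cdot z\right)]\tilde{\mu}_{R}(dz)=:\Psi_{R}\left(\eta\right)$ holds (here $\tilde{\mu}_{R}=\tilde{\pi}_{R}$ for $\pi=\mu$), it suffices to bound, uniformly in $R>0$, the multiplier norms of $\Psi_{R}\left(\eta\right)^{\pm s}\tilde{\phi}\left(\eta\right)$, whose symbol now lives on the fixed annulus $N^{-2}\le\left\vert \eta\right\vert \le N^{2}$.

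On that annulus $\Psi_{R}\asymp1$ uniformly in $R$: the lower bound $\Psi_{R}\left(\eta\right)\ge\int_{\left\vert z\right\vert \le1}[1-\cos\left(2\pi\eta\cdot z\right)]\mu^{0}(dz)\ge c>0$ follows from the nondegeneracy in \textbf{D}$(\kappa,l)$, namely $\tilde{\mu}_{R}\ge1_{\left\{ \left\vert y\right\vert \le1\right\} }\mu^{0}$ together with $\int_{\left\vert y\right\vert \le1}\left\vert \xi\cdot y\right\vert ^{2}\mu^{0}\ge c_{1}$; the upper bound follows by splitting the defining integral at $\left\vert z\right\vert =1$ and using \textbf{B}$(\kappa,l)$, since $1-\cos\le C\left\vert \eta\right\vert ^{2}\left\vert z\right\vert ^{2}\le C\left\vert z\right\vert ^{\alpha_{1}}$ for $\left\vert z\right\vert \le1$ (as $\alpha_{1}\le2$) and $1-\cos\le2\le2\left\vert z\right\vert ^{\alpha_{2}}$ for $\left\vert z\right\vert >1$. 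The same splitting shows the small-jump part $\Psi_{R}^{(1)}\left(\eta\right)=\kappa\left(R\right)+\int_{\left\vert z\right\vert \le1}[1-\cos\left(2\pi\eta\cdot z\right)]\tilde{\mu}_{R}(dz)$ is smooth with all $\eta$-derivatives bounded uniformly on the annulus (each differentiation inserts a factor $z$, and $\left\vert z\right\vert ^{m}\le\left\vert z\right\vert ^{\alpha_{1}}$ for $\left\vert z\right\vert \le1$), so $(\Psi_{R}^{(1)})^{\pm s}\tilde{\phi}$ is a uniformly good Mikhlin multiplier.

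The main obstacle is the large-jump part $\Phi_{R}^{(2)}\left(\eta\right)=\int_{\left\vert z\right\vert >1}[1-\cos\left(2\pi\eta\cdot z\right)]\tilde{\mu}_{R}(dz)$: since \textbf{B}$(\kappa,l)$ only controls $\int_{\left\vert z\right\vert >1}\left\vert z\right\vert ^{\alpha_{2}}\tilde{\mu}_{R}$ with possibly $\alpha_{2}<1$ (e.g. when $\sigma\le1$), differentiating under the integral produces factors $\left\vert z\right\vert ^{m}$ that need not be $\tilde{\mu}_{R}$-integrable, so $\Psi_{R}^{\pm s}$ is genuinely non-smooth and the classical Mikhlin theorem fails; this is exactly the difficulty flagged in the introduction. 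I would bypass pointwise derivative bounds and instead prove a uniform $L_{1}$ bound on the convolution kernel $K_{R}^{\pm}=\mathcal{F}^{-1}[\Psi_{R}^{\pm s}\tilde{\phi}]$ through the weighted estimate $\left\vert K_{R}^{\pm}\right\vert _{L_{1}}\le C(\left\vert K_{R}^{\pm}\right\vert _{L_{2}}+\left\vert \left\vert \cdot\right\vert ^{m}K_{R}^{\pm}\right\vert _{L_{2}})$ with $m>d/2$, treating the rough factor $\Phi_{R}^{(2)}$ as a bounded perturbation (its operator is convolution against the finite measure $1_{\left\{ \left\vert z\right\vert >1\right\} }\tilde{\mu}_{R}$ of mass $\le N_{0}$) and supplying the needed oscillation/smoothing via the rough transition-density estimates for $Z^{\mu_{sym}}$, with a careful splitting of the integral (cf. \cite{kk2}). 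Granting $\left\vert K_{R}^{\pm}\right\vert _{L_{1}}\le C$ uniformly, Young's inequality gives uniform bounds on the multiplier norms of $m_{j}^{\pm}$, hence $(\ast)$, and the $\ell_{q}$-summation plus density argument of the first paragraph concludes. The step I expect to be hardest is precisely this uniform-in-$R$ control of the non-smooth large-jump contribution to $\Psi_{R}^{\pm s}$.
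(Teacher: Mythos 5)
Your reduction to the termwise two-sided estimate $(\ast)$, the dilation to a fixed annulus, and the uniform bound $\Psi_{R}\asymp 1$ there are exactly the paper's starting point (the lower bound is essentially Lemma 7 of \cite{MPh}). But the core of your argument has a genuine gap. To get $\left\vert K_{R}^{\pm }\right\vert _{L_{1}}\leq C$ from the weighted estimate $\left\vert K_{R}^{\pm }\right\vert _{L_{1}}\leq C(\left\vert K_{R}^{\pm }\right\vert _{L_{2}}+\left\vert \left\vert \cdot \right\vert ^{m}K_{R}^{\pm }\right\vert _{L_{2}})$ with $m>d/2$, you need, by Plancherel, $L_{2}$ bounds on roughly $m>d/2$ derivatives of $\Psi _{R}^{\pm s}\tilde{\phi}$; but every derivative falling on the large-jump part produces moments $\int_{\left\vert z\right\vert >1}\left\vert z\right\vert ^{k}d\tilde{\mu}_{R}$ with $k\geq 1$, which assumption \textbf{B} does not control when $\alpha _{2}<1$ (e.g. $\sigma \leq 1$). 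So the framework you chose is precisely the one that fails for these rough symbols. Nor can this be repaired by treating the large-jump part as a ``bounded perturbation'': for non-integer $s$ the fractional power $\Psi _{R}^{\pm s}=(\Psi _{R}^{(1)}+\Phi _{R}^{(2)})^{\pm s}$ does not split additively, so the perturbation argument (convolution against a finite measure) only makes sense at $s=\pm 1$. You flag this step as the hardest and defer it to ``rough transition-density estimates ... with a careful splitting''; that deferred step is exactly the missing proof.

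What the paper does instead, and what closes the gap: it never takes fractional powers of the rough symbol. It proves $(\ast)$ only for $s=1$ (Lemma \ref{l2}: the kernels $\kappa (N^{-j})\tilde{\zeta}-L^{\tilde{\mu}_{N^{-j}}}\tilde{\zeta}$, where the large-jump part can legitimately be split off additively) and for $s=-1$ (Lemma \ref{l10}: the resolvent kernels $\int_{0}^{\infty }e^{-\kappa (N^{-j})t}\mathbf{E}\tilde{\zeta}(\cdot +Z_{t}^{j})\,dt$, estimated through the probability density bounds of Lemma \ref{auxl1}). In both cases the uniform vector-valued $L_{p}$ bound comes from Lemma \ref{l1} and Corollary \ref{c1}, a H\"{o}rmander-type condition requiring only a weighted $L_{1}$ bound with the small weight $\left\vert x\right\vert ^{\alpha _{2}}$ (with $\alpha _{2}$ possibly $<1$) together with $L_{1}$-continuity of the kernels --- far less regularity than your $m>d/2$ weighted-$L_{2}$ scheme demands, and exactly what the large jumps permit. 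Integer $s$ then follows by iterating the cases $s=\pm 1$, and all real $s$ by complex interpolation, $H_{p}^{s}=\left[ H_{p}^{k},H_{p}^{k+1}\right] _{\theta }$ (Theorem 2.4.6 in \cite{fjs} together with Theorem 1.9.3 in \cite{tr1}). If you want to rescue your plan, replace the weighted-$L_{2}$ step by this weaker H\"{o}rmander condition, restrict the kernel estimates to $s=\pm 1$, and interpolate; as written, your key estimate is unproved and the stated method would not prove it.
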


We will use some equivalent norms on $H_{p}^{\mu ;s}$ as well.

\begin{proposition}
\label{pro2}Let \textbf{D}$\left( \kappa ,l\right) $ and \textbf{B}$\left(
\kappa ,l\right) $ hold for $\mu \in \mathfrak{A}^{\sigma }$ with scaling
function $\kappa $ and scaling factor $l$. Let $s\in \mathbf{R},p\in \left(
1,\infty \right) ,N>1,l\left( N^{-1}\right) <1$. Then $\tilde{H}_{p}^{\kappa
,N;s}\left( \mathbf{R}^{d};l_{2}\right) =H_{p}^{\mu ;s}\left( \mathbf{R}%
^{d};l_{2}\right) $ and the norms are equivalent.
\end{proposition}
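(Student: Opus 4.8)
The plan is to prove the equivalence by an $l_2$-valued Littlewood--Paley argument combined with a uniform Fourier multiplier estimate on the $N$-adic annuli. First I would reduce the two-sided comparison to the boundedness of a single diagonal multiplier operator. Using $\tilde{\phi}\phi =\phi$ (so that $\varphi _{j}=\tilde{\varphi}_{j}\ast \varphi _{j}$ and $\tilde{\phi}(N^{-j}\cdot )=1$ on $\mathrm{supp}\,\phi (N^{-j}\cdot )$), one has for each $j,k$
\[
J^{s}(\varphi _{j}\ast v_{k})=\mathcal{F}^{-1}\big[(1-\psi ^{\mu _{sym}})^{s}\tilde{\phi}(N^{-j}\cdot )\big]\ast \varphi _{j}\ast v_{k},
\]
hence $\kappa (N^{-j})^{s}J^{s}(\varphi _{j}\ast v_{k})=\mathcal{F}^{-1}[m_{j}]\ast (\varphi _{j}\ast v_{k})$ with $m_{j}(\xi )=\kappa (N^{-j})^{s}(1-\psi ^{\mu _{sym}}(\xi ))^{s}\tilde{\phi}(N^{-j}\xi )$. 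Thus the target equivalence amounts to the uniform boundedness on $L_{p}(\mathbf{R}^{d};l_{2})$ of the diagonal operators with entries $m_{j}$ and of their inverse counterparts with entries $\tilde{m}_{j}=\kappa (N^{-j})^{-s}(1-\psi ^{\mu _{sym}})^{-s}\tilde{\phi}(N^{-j}\cdot )$, together with the standard $l_{2}$-valued Littlewood--Paley equivalence $|(\sum_{k}|w_{k}|^{2})^{1/2}|_{L_{p}}\approx |(\sum_{k,j}|\varphi _{j}\ast w_{k}|^{2})^{1/2}|_{L_{p}}$ applied to $w_{k}=J^{s}v_{k}$.

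The heart of the matter is the key lemma that $\{m_{j}\}$ and $\{\tilde{m}_{j}\}$ satisfy Mikhlin--H\"ormander bounds uniformly in $j$. After the substitution $\eta =N^{-j}\xi$, the quantity $\kappa (N^{-j})[1-\psi ^{\mu _{sym}}(\xi )]$ equals $\kappa (N^{-j})-\psi ^{(\tilde{\mu}_{N^{-j}})_{sym}}(\eta )$, where the rescaled symbol $-\psi ^{(\tilde{\mu}_{R})_{sym}}(\eta )=\int [1-\cos (2\pi \eta \cdot z)](\tilde{\mu}_{R})_{sym}(dz)$ is evaluated on the fixed annulus $\{N^{-2}\leq |\eta |\leq N^{2}\}$. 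On this annulus I would establish the two-sided bound $1-\psi ^{\mu _{sym}}(\xi )\approx \kappa (N^{-j})^{-1}$: the lower bound from the nondegeneracy $\int_{|y|\leq 1}|\xi \cdot y|^{2}\mu ^{0}(dy)\geq c_{1}$ of Assumption $\mathbf{A}_{0}(\sigma )$ through $\tilde{\mu}_{R}\geq 1_{\{|y|\leq 1\}}\mu ^{0}$ in $\mathbf{D}(\kappa ,l)$, and the upper bound from the moment control of $\mathbf{B}(\kappa ,l)$ after splitting $1-\cos (2\pi \xi \cdot y)$ at $|y|\sim |\xi |^{-1}$; this yields the uniform size bounds $|m_{j}|,|\tilde{m}_{j}|\leq C$. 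The regularity bounds for $\eta \mapsto \psi ^{(\tilde{\mu}_{R})_{sym}}(\eta )$ on the annulus come from the same uniform moments; where the moments needed for higher-order derivatives are unavailable (the symbol is non-smooth, as flagged in the introduction), I would instead verify a H\"ormander integral condition by splitting the defining integral over $\{|z|\leq |\eta |^{-1}\}$ and its complement and integrating by parts in the oscillatory part, as in the splitting of the integral in (\ref{f23}). The resolvent representation (\ref{cf3}) gives an alternative route, realizing negative powers of $J$ as convolution against the resolvent density of $Z^{\mu }$ and reducing the bounds to the rough density estimates announced in the abstract.

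With the uniform scalar bounds in hand, the operator-valued symbol $M(\xi )$ acting diagonally on $l_{2}$ with entries $m_{j}(\xi )$ satisfies $\||\xi |^{|\gamma |}D^{\gamma }M(\xi )\|_{\mathcal{L}(l_{2})}=\sup_{j}|\xi |^{|\gamma |}|D^{\gamma }m_{j}(\xi )|\leq C$, so the operator-valued (vector-valued) Mikhlin multiplier theorem on $L_{p}(\mathbf{R}^{d};l_{2})$ applies and gives boundedness in both directions. Since $\tilde{m}_{j}m_{j}=1$ on $\mathrm{supp}\,\phi (N^{-j}\cdot )$, composing the two bounds recovers each norm from the other, and combining with the $l_{2}$-valued Littlewood--Paley equivalence yields $\tilde{H}_{p}^{\kappa ,N;s}(\mathbf{R}^{d};l_{2})=H_{p}^{\mu ;s}(\mathbf{R}^{d};l_{2})$ with equivalent norms. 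The argument runs parallel to Proposition \ref{pro1}, the difference being that the Besov case needs only scalar $L_{p}$ multiplier bounds summed in $l_{q}$, whereas here the square-function structure forces the vector-valued theorem.

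I expect the main obstacle to be precisely these uniform regularity bounds for the rescaled symbols: because $\psi ^{\mu _{sym}}$ is not smooth and $\mathbf{B}(\kappa ,l)$ supplies only moments of order $\alpha _{1}$ near the origin and $\alpha _{2}\leq 2$ at infinity, one cannot differentiate under the integral sign, and the H\"ormander condition must be verified by a splitting that is uniform in $j$; the endpoint case $\sigma =1$, where the cancellation $\int_{R<|y|\leq R^{\prime }}y\mu ^{0}(dy)=0$ is needed to control the first-order term, requires separate bookkeeping.
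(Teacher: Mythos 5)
The skeleton of your reduction is sound, and your uniform size bounds $|m_{j}|\leq C$ are correct (the two-sided estimate $\kappa (N^{-j})(1-\psi ^{\mu _{sym}}(\xi ))\in \lbrack c,C]$ on the annulus follows from \textbf{D}, \textbf{A}$_{0}$ and \textbf{B} exactly as you say; it is essentially Lemma 7 of \cite{MPh}). The gap is in the step you yourself flag as the obstacle, and the fallback you offer does not close it. The operator-valued Mikhlin theorem needs uniform bounds on $D^{\gamma }m_{j}$ up to order roughly $\left[ d/2\right] +1$, and these derivatives do not exist: under \textbf{B} the rescaled measures have only $\alpha _{1}$-moments near the origin and $\alpha _{2}\leq 2$-moments at infinity, so one cannot even differentiate $\psi ^{(\tilde{\mu}_{R})_{sym}}$ once under the integral sign in general. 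Your substitute --- verifying a H\"{o}rmander condition by ``splitting the defining integral over $\{|z|\leq |\eta |^{-1}\}$ and integrating by parts in the oscillatory part'' --- cannot be executed as stated, because $\mu $ is an arbitrary measure in $\mathfrak{A}^{\sigma }$ with no density, so there is nothing to integrate by parts against in $z$. More fundamentally, the H\"{o}rmander condition is a kernel-side statement, and the way the paper obtains it is to move all smoothness off the symbol and onto fixed bump functions: for $s=1$ the multiplier $\kappa (N^{-j})(1-\psi ^{\mu _{sym}})\tilde{\phi}(N^{-j}\cdot )$ is realized as the Fourier transform of $N^{jd}\Phi _{j}(N^{j}\cdot )$ with $\Phi _{j}=\kappa (N^{-j})\tilde{\varphi}-L^{\tilde{\mu}_{N^{-j}}}\tilde{\varphi}$ (Lemma \ref{l2}), and for $s=-1$ via the resolvent kernels $\int_{0}^{\infty }e^{-\kappa (N^{-j})t}\mathbf{E}\tilde{\varphi}(\cdot +Z_{t}^{j})\,dt$ (Lemma \ref{l10}, using the pdf estimates of Lemma \ref{auxl1}). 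Assumption \textbf{B} then gives uniform moment bounds $\int |x|^{\alpha _{2}}|\Phi _{j}|\,dx\leq A$ and $L_{1}$-continuity, and Lemma \ref{l1} converts these into the summed H\"{o}rmander condition $\sum_{j}\int_{|x|>4|y|}|K_{j}(x+y)-K_{j}(x)|\,dx\leq B$ required by the vector-valued Calder\'{o}n--Zygmund theorem (Corollary \ref{c1}). None of this kernel-level work appears in your plan, and without it the multiplier bounds you need are unsupported.

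A second, independent gap is the treatment of non-integer $s$. Your multipliers involve genuine fractional powers $(1-\psi ^{\mu _{sym}})^{s}$, for which neither the operator representation through $L^{\tilde{\mu}_{R}}$ nor the resolvent formula (\ref{cf3}) is available (the resolvent gives only $s=-1$; a subordination formula would give $s\in (-1,0)$ but you do not develop it, and positive fractional powers would still remain). The paper sidesteps this entirely: it proves the two-sided bounds only for $s=\pm 1$, iterates to all integers, and then reaches every real $s$ by complex interpolation, using the coretraction $S:f\mapsto (f\ast \varphi _{j})_{j}$ and retraction $R:(f_{j})\mapsto \sum_{j}f_{j}\ast \tilde{\varphi}_{j}$ together with $H_{p}^{s}=[H_{p}^{m},H_{p}^{m+1}]_{\theta }$ and $[L_{p}(\mathbf{R}^{d};l_{2}^{0}),L_{p}(\mathbf{R}^{d};l_{2}^{1})]_{\theta }=L_{p}(\mathbf{R}^{d};l_{2}^{\theta })$, and finally lifts the scalar statement to the $l_{2}$-valued one by Gaussian randomization (Lemma \ref{gm}). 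Your plan contains no interpolation step and no worked-out substitute, so it has no route from integer to general $s$. If you repair the first gap by adopting kernel-side estimates for $s=\pm 1$ and then add the interpolation and randomization steps, you recover essentially the paper's proof; as written, the proposal asserts the two estimates that constitute the actual content of Proposition \ref{pro2}.
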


First we will present some technical auxiliary results that are used in the
proof of Propositions \ref{pro1}, \ref{pro2} that follows afterwards. The
spaces $\tilde{H}_{p}^{\kappa ,N;s},\tilde{B}_{pq}^{\kappa ,N;s}$ belong to
the class of spaces of generalized smoothness studied e.g. in \cite{ka} and 
\cite{kali} (see references therein as well). This allows to characterize $%
H_{p}^{\mu ;s}$ and $B_{pq}^{\mu ;s}$ using differences. This and embedding
into the space of continuous functions is discussed at the end of this
section.

\subsubsection{Auxiliary results}

We start with

\begin{lemma}
\label{l1}Let $N>1$, and $\Phi _{j}\left( x\right) ,x\in \mathbf{R}%
^{d},j\geq 0,$ be a sequence of measurable functions. Assume

(i) There is $\beta >0$ so that%
\begin{equation*}
\int \left\vert x\right\vert ^{\beta }\left\vert \Phi _{j}\left( x\right)
\right\vert dx\leq A,j\geq 0.
\end{equation*}

(ii) There is a nonnegative increasing function $w\left( r\right) ,r\in
\lbrack 0,1],$ so that $\sum_{k=0}^{\infty }w\left( N^{-k}\right) <\infty $
and 
\begin{equation*}
\int \left\vert \Phi _{j}\left( x+y\right) -\Phi _{j}\left( x\right)
\right\vert dx\leq w\left( \left\vert y\right\vert \right) ,\left\vert
y\right\vert \leq 1,j\geq 0.
\end{equation*}

Then for $K_{j}\left( x\right) =N^{jd}\Phi _{j}\left( N^{j}x\right) ,x\in 
\mathbf{R}^{d},j\geq 0$, we have 
\begin{equation}
\sum_{j=0}^{\infty }\int_{\left\vert x\right\vert >4\left\vert y\right\vert
}\left\vert K_{j}\left( x+y\right) -K_{j}\left( x\right) \right\vert dx\leq
B,y\in \mathbf{R}^{d},  \label{0}
\end{equation}%
for some constant $B.$
\end{lemma}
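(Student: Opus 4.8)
The plan is to exploit the self-similar structure of the kernels $K_j$. First I would undo the dilation by the substitution $u = N^j x$ (so that $du = N^{jd}\,dx$). Since the prefactor $N^{jd}$ in $K_j(x) = N^{jd}\Phi_j(N^j x)$ is precisely the $L_1$-preserving normalization of the dilation, a direct computation gives
\[
\int_{|x|>4|y|}|K_j(x+y)-K_j(x)|\,dx = \int_{|u|>4|v_j|}|\Phi_j(u+v_j)-\Phi_j(u)|\,du =: I_j,\qquad v_j := N^j y,
\]
so the whole problem reduces to bounding $\sum_{j\ge 0} I_j$ uniformly in $y$. The point is that after rescaling the shift $v_j = N^j y$ grows geometrically in $j$ while the integrand is always the \emph{same-scale} function $\Phi_j$, which is exactly what lets hypotheses (i) and (ii) interact with the geometric parameter.

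The key idea is then a dyadic split of the sum according to whether the shift $v_j$ is small or large. Let $j_0 = j_0(y)$ be the largest integer $j \ge 0$ with $N^j|y| \le 1$ (with $j_0 = -1$ when $|y|>1$). For the small shifts $0 \le j \le j_0$ I would discard the restriction $|u|>4|v_j|$ (the integrand is nonnegative) and apply assumption (ii) directly, so $I_j \le w(|v_j|) = w(N^j|y|)$. Since $N^{j_0}|y|\le 1$ we have $N^j|y| \le N^{-(j_0-j)}$, and because $w$ is increasing this gives $I_j \le w(N^{-(j_0-j)})$; reindexing by $k = j_0 - j$ bounds this part of the sum by $\sum_{k\ge 0} w(N^{-k})$, finite by (ii).

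For the large shifts $j > j_0$ (where $N^j|y| > 1$) the difference can no longer be controlled by continuity, so I would bound the two terms separately using the moment condition (i). On $\{|u|>4|v_j|\}$ the tail estimate $\int_{|u|>R}|\Phi_j|\,du \le R^{-\beta}\int |u|^\beta|\Phi_j|\,du \le A\,R^{-\beta}$ with $R = 4|v_j|>1$ controls $\int_{|u|>4|v_j|}|\Phi_j(u)|\,du$; for the shifted term I would substitute $u' = u+v_j$ and note that $|u|>4|v_j|$ forces $|u'|>3|v_j|$, so the same tail estimate applies with $R = 3|v_j|$. Hence $I_j \le C_\beta A\,|v_j|^{-\beta}$ with $|v_j| = N^j|y|$, and since $N^{j_0+1}|y|>1$ the sum $\sum_{j>j_0}(N^j|y|)^{-\beta}$ is dominated by the geometric series $\sum_{m\ge 0} N^{-m\beta} = (1-N^{-\beta})^{-1}<\infty$. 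Adding the two contributions yields the constant $B$, independent of $y$.

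I do not expect any single step to be a genuine obstacle; the one point requiring care is matching the geometric decay of the shifts against the summability hypothesis on $w$ in the small-shift regime (the reindexing $k = j_0 - j$ combined with monotonicity of $w$), and ensuring the substitution-and-tail argument in the large-shift regime, as well as the initial change of variables, are genuinely uniform in $y$ — in particular covering the degenerate cases $y = 0$ and $|y|>1$.
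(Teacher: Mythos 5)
Your proof is correct and takes essentially the same approach as the paper's: after undoing the dilation, both arguments split the sum according to whether the rescaled shift $N^j|y|$ is at most $1$ (where hypothesis (ii) plus monotonicity and summability of $w$ apply) or exceeds $1$ (where the triangle inequality and the Chebyshev-type tail bound from hypothesis (i) give geometric decay). The only difference is organizational: the paper extends the sum to all integers $k$, uses the scale invariance $G(Ny)=G(y)$ to reduce to the annulus $1/N\leq |y|\leq 1$, and then splits at $k=0$, whereas you split directly at the $y$-dependent index $j_{0}(y)$; the estimates in the two regimes are identical.
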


\begin{proof}
For any $y\in \mathbf{R}^{d},$%
\begin{eqnarray*}
&&\sum_{k=0}^{\infty }\int_{\left\vert x\right\vert >4\left\vert
y\right\vert }\left\vert K_{k}\left( x+y\right) -K_{k}\left( x\right)
\right\vert dx \\
&=&\sum_{k=0}^{\infty }\int_{\left\vert x\right\vert >N^{k}4\left\vert
y\right\vert }\left\vert \Phi _{k}\left( x+N^{k}y\right) -\Phi _{k}\left(
x\right) \right\vert dx \\
&\leq &\sum_{k=0}^{\infty }\sup_{j\geq 0}\int_{\left\vert x\right\vert
>N^{k}4\left\vert y\right\vert }\left\vert \Phi _{j}\left( x+N^{k}y\right)
-\Phi _{j}\left( x\right) \right\vert dx=\sum_{k=0}^{\infty }F\left(
N^{k}y\right) ,
\end{eqnarray*}%
where%
\begin{equation*}
F(z)=\sup_{j\geq 0}\int_{\left\vert x\right\vert >4\left\vert z\right\vert
}\left\vert \Phi _{j}\left( x+z\right) -\Phi _{j}\left( x\right) \right\vert
dx,z\in \mathbf{R}^{d}.
\end{equation*}%
Let 
\begin{equation*}
G\left( y\right) =\sum_{k=-\infty }^{\infty }F\left( N^{k}y\right) ,y\in 
\mathbf{R}^{d}.
\end{equation*}%
Since $G\left( Ny\right) =G\left( y\right) ,y\in \mathbf{R}^{d}$, it is
enough to prove that 
\begin{equation}
G\left( y\right) \leq B,1/N\leq \left\vert y\right\vert \leq 1,  \label{f2}
\end{equation}%
for some $B>0.$\ We split the sum%
\begin{eqnarray*}
G\left( y\right) &=&\sum_{k=-\infty }^{\infty }F\left( N^{k}y\right)
=\sum_{k=0}^{\infty }...+\sum_{k=-\infty }^{-1}... \\
&=&G_{1}\left( y\right) +G_{2}\left( y\right) ,1/N\leq \left\vert
y\right\vert \leq 1.
\end{eqnarray*}%
With $1/N\leq \left\vert y\right\vert \leq 1,k\geq 0,$ by Chebyshev
inequality,%
\begin{eqnarray*}
&&\int_{\left\vert x\right\vert >N^{k}4\left\vert y\right\vert }\left\vert
\Phi _{j}\left( x+N^{k}y\right) -\Phi _{j}\left( x\right) \right\vert dx \\
&\leq &\int_{\left\vert x\right\vert >N^{k}4\left\vert y\right\vert
}\left\vert \Phi _{j}\left( x+N^{k}y\right) \right\vert dx+\int_{\left\vert
x\right\vert >N^{k}4\left\vert y\right\vert }\left\vert \Phi _{j}\left(
x\right) \right\vert dx \\
&\leq &C\int_{\left\vert x\right\vert >N^{k}3\left\vert y\right\vert
}\left\vert \Phi _{j}\left( x\right) \right\vert dx\leq C\int_{\left\vert
x\right\vert >N^{k-1}3}\left\vert \Phi _{j}\left( x\right) \right\vert dx \\
&\leq &CN^{-k\beta }\int \left\vert x\right\vert ^{\beta }\left\vert \Phi
_{j}\left( x\right) \right\vert dx\leq CAN^{-k\beta },
\end{eqnarray*}%
and 
\begin{equation*}
G_{1}\left( y\right) \leq CA\sum_{k=0}^{\infty }N^{-k\beta },1/N\leq
\left\vert y\right\vert \leq 1.
\end{equation*}%
For $1/N\leq \left\vert y\right\vert \leq 1,k<0,$%
\begin{eqnarray*}
&&\int_{\left\vert x\right\vert >N^{k}4\left\vert y\right\vert }\left\vert
\Phi _{j}\left( x+N^{k}y\right) -\Phi _{j}\left( x\right) \right\vert dx \\
&\leq &\int \left\vert \Phi _{j}\left( x+N^{k}y\right) -\Phi _{j}\left(
x\right) \right\vert dx\leq w\left( N^{k}\right) ,
\end{eqnarray*}%
and%
\begin{equation*}
G_{2}\left( y\right) \leq \sum_{k=-\infty }^{-1}w\left( N^{k}\right)
,1/N\leq \left\vert y\right\vert \leq 1.
\end{equation*}%
The claim is proved.
\end{proof}

\begin{corollary}
\label{c1}Let the assumptions of Lemma \ref{l1} hold and $\sup_{j,\xi
}\left\vert \hat{\Phi}_{j}\left( \xi \right) \right\vert <\infty $, and let $%
\mathbf{G}$ be a separable Hilbert space. Then

(i) For any $1<p,r<\infty $ there is a constant $C_{p,r}$ so that%
\begin{equation*}
\left\vert \left( \sum_{j}\left\vert K_{j}\ast f_{j}\right\vert ^{r}\right)
^{1/r}\right\vert _{L_{p}\left( \mathbf{R}^{d}\right) }\leq
C_{p,r}\left\vert \left( \sum_{j}\left\vert f_{j}\right\vert ^{r}\right)
^{1/r}\right\vert _{L_{p}\left( \mathbf{R}^{d}\right) }.
\end{equation*}%
for all $f=\left( f_{j}\right) \in L_{p}\left( \mathbf{R}^{d},l_{r}\right) .$

(ii) For any $1<p<\infty $ there is a constant $C>0$ such that%
\begin{equation*}
\left\vert \left( \sum_{j}\left\vert K_{j}\ast f_{j}\right\vert _{\mathbf{G}%
}^{2}\right) ^{1/2}\right\vert _{L_{p}\left( \mathbf{R}^{d}\right) }\leq
C_{p}\left\vert \left( \sum_{j}\left\vert f_{j}\right\vert _{\mathbf{G}%
}^{2}\right) ^{1/2}\right\vert _{L_{p}\left( \mathbf{R}^{d}\right) }
\end{equation*}%
for all $f=\left( f_{j}\right) \in L_{p}\left( \mathbf{R}^{d};l_{2}\left( 
\mathbf{G}\right) \right) $.
\end{corollary}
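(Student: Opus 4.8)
The plan is to read $Tf=(K_{j}\ast f_{j})_{j\geq 0}$ as a convolution singular integral with an \emph{operator-valued} kernel, namely the diagonal operator $\mathbf{K}(x)=\mathrm{diag}(K_{j}(x))$, and then invoke a vector-valued Calder\'on--Zygmund theorem of Benedek--Calder\'on--Panzone type. The H\"{o}rmander condition that such a theorem requires is exactly what Lemma \ref{l1} delivers: the operator norm of a diagonal operator on $l_{r}$ (or on $l_{2}(\mathbf{G})$) is the supremum of its entries, so for every $y$,
\[
\int_{|x|>4|y|}\sup_{j}|K_{j}(x-y)-K_{j}(x)|\,dx\leq \sum_{j}\int_{|x|>4|y|}|K_{j}(x-y)-K_{j}(x)|\,dx\leq B,
\]
by \eqref{0} after the change $y\mapsto -y$. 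Thus the only genuine input still needed is a strong-type bound at one exponent, after which interpolation with the weak $(1,1)$ estimate and duality complete the range $1<p<\infty$.

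For part (ii) I would take the Hilbert target $l_{2}(\mathbf{G})$ and produce the initial bound by Plancherel. The scaling $K_{j}(x)=N^{jd}\Phi_{j}(N^{j}x)$ gives $\widehat{K_{j}}(\xi)=\hat{\Phi}_{j}(N^{-j}\xi)$, so the standing hypothesis $\sup_{j,\xi}|\hat{\Phi}_{j}(\xi)|=:M<\infty$ yields
\[
\Big|\big(\textstyle\sum_{j}|K_{j}\ast f_{j}|_{\mathbf{G}}^{2}\big)^{1/2}\Big|_{L_{2}}^{2}=\int\sum_{j}|\widehat{K_{j}}(\xi)|^{2}|\hat{f}_{j}(\xi)|_{\mathbf{G}}^{2}\,d\xi\leq M^{2}\Big|\big(\textstyle\sum_{j}|f_{j}|_{\mathbf{G}}^{2}\big)^{1/2}\Big|_{L_{2}}^{2},
\]
i.e. $T$ is bounded on $L_{2}(\mathbf{R}^{d};l_{2}(\mathbf{G}))$. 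Combined with the H\"{o}rmander bound above, the Hilbert-space valued theorem gives weak type $(1,1)$ and boundedness on $L_{p}(\mathbf{R}^{d};l_{2}(\mathbf{G}))$ for $1<p\leq 2$; since the adjoint $T^{\ast}$ is the diagonal convolution by $\tilde{K}_{j}(x)=K_{j}(-x)$, which satisfies the same hypotheses, duality extends this to $2\leq p<\infty$.

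For part (i) the target $l_{r}$ is not Hilbertian, so Plancherel is unavailable and I would instead establish the initial estimate at the exponent $p=r$. Each $K_{j}$ is a scalar Calder\'on--Zygmund kernel with constants independent of $j$: its Fourier transform is bounded by $M$, giving a uniform $L_{2}$ bound for $K_{j}\ast$, and each individual term in \eqref{0} is $\leq B$, giving a uniform H\"{o}rmander condition, so scalar theory bounds $K_{j}\ast$ on every $L_{q}(\mathbf{R}^{d})$, $1<q<\infty$, with a constant $C_{q}=C_{q}(M,B,d)$ independent of $j$. Because $T$ acts diagonally,
\[
\Big|\big(\textstyle\sum_{j}|K_{j}\ast f_{j}|^{r}\big)^{1/r}\Big|_{L_{r}}^{r}=\sum_{j}|K_{j}\ast f_{j}|_{L_{r}}^{r}\leq C_{r}^{\,r}\sum_{j}|f_{j}|_{L_{r}}^{r}=C_{r}^{\,r}\Big|\big(\textstyle\sum_{j}|f_{j}|^{r}\big)^{1/r}\Big|_{L_{r}}^{r},
\]
so $T$ is bounded on $L_{r}(\mathbf{R}^{d};l_{r})$. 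With this initial bound and the operator-norm H\"{o}rmander condition for the $l_{r}\to l_{r}$ kernel $\mathbf{K}$, the vector-valued theorem gives weak type $(1,1)$ and hence, by interpolation with the $L_{r}$ bound, boundedness on $L_{p}(\mathbf{R}^{d};l_{r})$ for $1<p\leq r$; the adjoint $T^{\ast}$ is the diagonal convolution by $\tilde{K}_{j}$, which is of the same type and bounded on $L_{r^{\prime}}(\mathbf{R}^{d};l_{r^{\prime}})$, so duality covers $r\leq p<\infty$.

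The step I expect to require the most care is not the H\"{o}rmander verification, which is immediate from \eqref{0}, but the bookkeeping that turns these ingredients into a legitimate application of the theorem: one must check that on a dense class of finitely supported sequences of nice functions $T$ is represented off the diagonal by $\int \mathbf{K}(x-z)f(z)\,dz$ and then extended by the a priori $L_{2}$ (respectively $L_{r}$) bound, and that the adjoint $T^{\ast}$ genuinely falls under the same hypotheses so that the range can be completed to all $1<p<\infty$. The uniform-in-$j$ scalar estimates used for (i) are then routine once the single bound $M$ and the single constant $B$ from Lemma \ref{l1} are in hand.
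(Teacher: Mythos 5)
Your proposal is correct and is essentially the paper's own approach: the paper verifies the H\"{o}rmander-type condition (\ref{0}) through Lemma \ref{l1} and then simply cites Theorem V.3.11 of \cite{gr} for (i) and Theorem V.3.9 of \cite{gr} for (ii) (using that $\mathbf{G}$ is isomorphic to $l_{2}$), which are precisely the vector-valued Calder\'{o}n--Zygmund theorems you reconstruct. Your Plancherel $L_{2}$ bound, the $L_{r}(l_{r})$ decoupling at $p=r$, and the weak $(1,1)$/interpolation/duality steps are exactly the internal content of those cited results, so the route is the same, merely unpacked rather than quoted.
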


\begin{proof}
(i) Since (\ref{0}) holds according to Lemma \ref{l1}, the statement follows
by Theorem V.3.11 in \cite{gr}.

(ii) Since $\mathbf{G}$ is isomorphic to $l_{2}$, the statement follows by
Theorem V.3.9 in \cite{gr}.
\end{proof}

As the first application we have

\begin{corollary}
\label{c2}Let $\zeta ,\zeta _{0}\in \mathcal{S}\left( \mathbf{R}^{d}\right) $%
, $\tilde{\zeta}=\mathcal{F}^{-1}\zeta ,j\geq 1,\tilde{\zeta}_{0}=\mathcal{F}%
^{-1}\zeta _{0}$. Let $N>1,\tilde{\zeta}_{j}\left( x\right) =N^{jd}\tilde{%
\zeta}\left( N^{j}x\right) ,x\in \mathbf{R}^{d},j\geq 1.$ Then for each $%
1<p,r<\infty $ there is a constant $C_{p,r}$ so that for all $f=\left(
f_{j}\right) \in L_{p}\left( \mathbf{R}^{d},l_{r}\right) $ 
\begin{equation}
\left\vert \left( \sum_{j}\left\vert f_{j}\ast \tilde{\zeta}_{j}\right\vert
^{r}\right) ^{1/r}\right\vert _{L_{p}\left( \mathbf{R}^{d}\right) }\leq
C_{p,r}\left\vert \left( \sum_{j}\left\vert f_{j}\right\vert ^{r}\right)
^{1/r}\right\vert _{L_{p}\left( \mathbf{R}^{d}\right) }.  \label{ff3}
\end{equation}%
If $r=2$, then (\ref{ff3}) holds for a separable Hilbert space $\mathbf{G}$%
-valued sequences $f=\left( f_{j}\right) \in L_{p}\left( \mathbf{R}%
^{d},l_{r}\left( \mathbf{G}\right) \right) $ (simply absolute value in (\ref%
{ff3}) is replaced by $\mathbf{G}$-norm).
\end{corollary}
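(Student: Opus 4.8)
The plan is to recognize (\ref{ff3}) as a direct instance of Corollary \ref{c1} once the convolution kernels $\tilde{\zeta}_j$ are written in the scaled form $K_j(x)=N^{jd}\Phi_j(N^jx)$ appearing there. I would set $\Phi_0=\tilde{\zeta}_0$ and $\Phi_j=\tilde{\zeta}$ for $j\geq 1$; then $K_j=\tilde{\zeta}_j$ for every $j\geq 0$ (the $j=0$ case being trivial since $N^0=1$), so that the left-hand side of (\ref{ff3}) is exactly $|(\sum_j|K_j\ast f_j|^r)^{1/r}|_{L_p}$. It then remains only to check that this particular sequence $(\Phi_j)$ meets the hypotheses of Lemma \ref{l1} together with the uniform boundedness of the Fourier transforms required by Corollary \ref{c1}.

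Since $\zeta,\zeta_0\in\mathcal{S}(\mathbf{R}^d)$, their inverse Fourier transforms $\tilde{\zeta},\tilde{\zeta}_0$ again lie in $\mathcal{S}(\mathbf{R}^d)$, and the sequence $(\Phi_j)$ takes only the two values $\tilde{\zeta},\tilde{\zeta}_0$. For hypothesis (i) of Lemma \ref{l1}, the rapid decay of Schwartz functions gives $\int|x|^\beta|\Phi_j(x)|dx\leq A$ for any fixed $\beta>0$, uniformly in $j$. For hypothesis (ii), I would use the elementary bound $\int|\Phi_j(x+y)-\Phi_j(x)|dx\leq|y|\int|\nabla\Phi_j(x)|dx\leq C|y|$, valid for all $y$ by writing the difference as $\int_0^1 y\cdot\nabla\Phi_j(x+ty)\,dt$ and applying Fubini and translation invariance; thus $w(r)=Cr$ works, and $\sum_k w(N^{-k})=C\sum_k N^{-k}<\infty$ because $N>1$. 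Finally, $\hat{\Phi}_j$ equals $\zeta$ (for $j\geq 1$) or $\zeta_0$ (for $j=0$), so $\sup_{j,\xi}|\hat{\Phi}_j(\xi)|\leq\max(\sup_\xi|\zeta(\xi)|,\sup_\xi|\zeta_0(\xi)|)<\infty$, which is the remaining condition in Corollary \ref{c1}.

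With all hypotheses verified, Corollary \ref{c1}(i) yields (\ref{ff3}) for every $1<p,r<\infty$, and Corollary \ref{c1}(ii) yields the same inequality for $\mathbf{G}$-valued sequences when $r=2$. There is no genuine obstacle here: the analytic content is entirely carried by Corollary \ref{c1} (itself resting on the H\"{o}rmander-type bound (\ref{0}) of Lemma \ref{l1} and the vector-valued Calder\'{o}n--Zygmund theorems cited from \cite{gr}). The only point demanding a little care is the separate treatment of the unscaled $j=0$ term $\tilde{\zeta}_0$, which is absorbed painlessly by allowing the defining sequence $(\Phi_j)$ to be non-constant in its zeroth entry.
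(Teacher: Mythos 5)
Your proposal is correct and coincides with the paper's own proof: the paper likewise applies Corollary \ref{c1} with $\Phi_0=\tilde{\zeta}_0$, $\Phi_j=\tilde{\zeta}$ for $j\geq 1$, verifying the moment condition from Schwartz decay, the $L_1$-modulus bound $\int|\Phi_j(x+y)-\Phi_j(x)|\,dx\leq |y|\int|\nabla\Phi_j|\,dx$, and the uniform bound on $\hat{\Phi}_j\in\{\zeta,\zeta_0\}$. No gaps; your treatment of the $j=0$ term matches the paper's.
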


\begin{proof}
We apply previous Corollary \ref{c1} with $\Phi _{0}=\tilde{\zeta}_{0},\Phi
_{j}\left( x\right) =\Phi \left( x\right) =\tilde{\zeta}\left( x\right)
,j\geq 1,K_{j}\left( x\right) =N^{jd}\Phi _{j}\left( N^{j}x\right) ,x\in 
\mathbf{R}^{d},j\geq 0.$ Obviously, 
\begin{eqnarray*}
\sup_{\xi }\left[ |\zeta \left( \xi \right) |+\left\vert \zeta _{0}\left(
\xi \right) \right\vert \right] &<&\infty , \\
\int \left\vert x\right\vert \left[ \left\vert \Phi \left( x\right)
\right\vert +\left\vert \Phi _{0}\left( x\right) \right\vert \right] dx
&<&\infty ,
\end{eqnarray*}%
and%
\begin{eqnarray*}
\int \left\vert \Phi \left( x+y\right) -\Phi \left( x\right) \right\vert dx
&\leq &\int \int_{0}^{1}\left\vert \nabla \Phi \left( x+sy\right)
\right\vert ds\left\vert y\right\vert dx \\
&\leq &\left\vert y\right\vert \int \left\vert \nabla \Phi \left( x\right)
\right\vert dx,y\in \mathbf{R}^{d}.
\end{eqnarray*}%
\ Similarly,%
\begin{equation*}
\int \left\vert \Phi _{0}\left( x+y\right) -\Phi _{0}\left( x\right)
\right\vert dx\leq \left\vert y\right\vert \int \left\vert \nabla \Phi
_{0}\left( x\right) \right\vert dx,y\in \mathbf{R}^{d}.
\end{equation*}%
The statement follows by Corollary \ref{c1}.
\end{proof}

We will need the following auxiliary statement.

\begin{lemma}
\label{auxl1}Let \textbf{D}$\left( \kappa ,l\right) $ and \textbf{B}$\left(
\kappa ,l\right) $ hold for $\pi \in \mathfrak{A}^{\sigma }$ with scaling
function $\kappa $ and scaling factor $l$. Let $R>0$ and $Z_{t}^{R}=Z_{t}^{%
\tilde{\pi}_{R}}$ be the Levy process associated to $L^{\tilde{\pi}_{R}}$,
and let $\zeta ,\zeta _{0}\in C_{0}^{\infty }\left( \mathbf{R}^{d}\right) $
be such that supp$\left( \zeta \right) \subseteq \left\{ \xi :0<R_{1}\leq
\left\vert \xi \right\vert \leq R_{2}\right\} $ and 
\begin{equation*}
\max_{\left\vert \gamma \right\vert \leq n}\left\vert D^{\gamma }\zeta
\left( \xi \right) \right\vert \leq N_{1},R_{1}\leq \left\vert \xi
\right\vert \leq R_{2},
\end{equation*}%
with $n=d_{0}+2=\left[ \frac{d}{2}\right] +3$. Let $\tilde{\zeta}=\mathcal{F}%
^{-1}\zeta ,\tilde{\zeta}_{0}=\mathcal{F}^{-1}\zeta _{0}$, and%
\begin{eqnarray*}
H^{R}\left( t,x\right) &=&\mathbf{E}\tilde{\zeta}\left( x+Z_{t}^{R}\right)
,t\geq 0,x\in \mathbf{R}^{d}, \\
H_{0}^{R}\left( t,x\right) &=&\mathbf{E}\tilde{\zeta}_{0}\left(
x+Z_{t}^{R}\right) ,t\geq 0,x\in \mathbf{R}^{d}.
\end{eqnarray*}

Then

(i) There are constants $C_{k}=C_{k}\left(
R_{1},R_{2},N_{1},n_{0},c_{1},N_{0},d\right) ,k=1,2,C_{0}=C_{0}\left(
N_{0}\right) $ $(n_{0},c_{1}$ are constants in assumption \textbf{D}$\left(
\kappa ,l\right) $ and $N_{0}$ is a constant in \textbf{B}$\left( \kappa
,l\right) $) so that 
\begin{eqnarray*}
\int \left( 1+\left\vert x\right\vert ^{\alpha _{2}}\right) \left\vert
H^{R}\left( t,x\right) \right\vert dx &\leq &C_{1}e^{-C_{2}t},t\geq 0, \\
\int \left\vert x\right\vert ^{\alpha _{2}}\left\vert H_{0}^{R}\left(
t,x\right) \right\vert dx &\leq &C_{0}\left( 1+t\right) ,t\geq 0, \\
\int \left\vert H_{0}^{R}\left( t,x\right) \right\vert dx &\leq &C_{0},t\geq
0.
\end{eqnarray*}

(ii) There are constants $C_{k}=C_{k}\left(
R_{1},R_{2},N_{1},n_{0},c_{1},N_{0},d\right) ,k=1,2,$ so that for $y\in 
\mathbf{R}^{d},$ 
\begin{eqnarray*}
\int \left\vert H^{R}\left( t,x+y\right) -H^{R}\left( t,x\right) \right\vert
dx &\leq &C_{1}\left\vert y\right\vert e^{-C_{2}t}, \\
\int \left\vert H_{0}^{R}\left( t,x+y\right) -H_{0}^{R}\left( t,x\right)
\right\vert dx &\leq &\left\vert y\right\vert \int \left\vert \nabla \tilde{%
\zeta}_{0}\left( x\right) \right\vert dx.
\end{eqnarray*}
\end{lemma}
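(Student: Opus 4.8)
The plan is to treat $H^{R}$ (whose Fourier transform lives in the annulus, away from the origin) by Fourier analysis, exploiting a spectral gap for exponential decay, and to treat $H_{0}^{R}$ (whose Fourier transform sits near the origin, where no decay is available) purely probabilistically. Since $\mathbf{E}e^{i2\pi \xi \cdot Z_{t}^{R}}=e^{\psi ^{\tilde{\pi}_{R}}(\xi )t}$ and $\tilde{\zeta}=\mathcal{F}^{-1}\zeta$, one has $\mathcal{F}_{x}H^{R}(t,\cdot )(\xi )=e^{\psi ^{\tilde{\pi}_{R}}(\xi )t}\zeta (\xi )$, supported in $\{R_{1}\leq |\xi |\leq R_{2}\}$, and similarly for $H_{0}^{R}$ with $\zeta _{0}$. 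The trouble is that $\xi$-derivatives of $\psi ^{\tilde{\pi}_{R}}$ of order $\geq 3$ involve $\int_{|y|>1}|y|^{k}\tilde{\pi}_{R}$, which need not be finite. I would therefore first split $Z_{t}^{R}=X_{t}+Y_{t}$ into independent small- and large-jump parts, $X$ with (compensated) L\'{e}vy measure $1_{\{|y|\leq 1\}}\tilde{\pi}_{R}$ and $Y$ compound-Poisson with finite measure $1_{\{|y|>1\}}\tilde{\pi}_{R}$ (total mass $\leq N_{0}$ by \textbf{B}$\left( \kappa ,l\right) $, since $\alpha _{2}>0$), and write $H^{R}(t,x)=\mathbf{E}\,h(t,x+Y_{t})$, where $h(t,\cdot )=\mathcal{F}^{-1}[e^{\psi ^{s}t}\zeta ]$ and $\psi ^{s}$ is the symbol of $X$. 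In this way only small-jump moments enter the Fourier computation, while the large jumps appear as a random translation.

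The core estimate is a weighted $L_{1}$ bound for $h$ with exponential decay. By Cauchy--Schwarz against $(1+|x|^{2})^{-N_{\ast }}$ with $N_{\ast }>d/2+\alpha _{2}$ (so that $\int (1+|x|^{2})^{\alpha _{2}-N_{\ast }}dx<\infty $) and Plancherel,
\[
\int (1+|x|^{\alpha _{2}})|h(t,x)|\,dx\leq C\left( \int (1+|x|^{2})^{N_{\ast }}|h(t,x)|^{2}\,dx\right) ^{1/2}=C\left\vert (1-(2\pi )^{-2}\Delta _{\xi })^{N_{\ast }/2}(e^{\psi ^{s}t}\zeta )\right\vert _{L_{2}}.
\]
Because $e^{\psi ^{s}t}\zeta $ is smooth and supported in the \emph{fixed} annulus, this fractional norm is controlled by $\sum_{|\gamma |\leq \lceil N_{\ast }\rceil }\sup_{\xi }|\partial _{\xi }^{\gamma }(e^{\psi ^{s}t}\zeta )|$, and one checks $\lceil N_{\ast }\rceil =[d/2]+3=n$ for every $\alpha _{2}\leq 2$ and every $d$ --- which is exactly why the hypothesis prescribes $n=[d/2]+3$ derivatives of $\zeta $. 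Two uniform-in-$R$ facts then finish it: first, on the annulus $\operatorname{Re}\psi ^{s}(\xi )=-\int_{|y|\leq 1}[1-\cos (2\pi \xi \cdot y)]\tilde{\pi}_{R}(dy)\leq -\psi _{0}(\xi )\leq -C_{2}<0$, using $\tilde{\pi}_{R}\geq 1_{\{|y|\leq 1\}}\mu ^{0}$ from \textbf{D}$\left( \kappa ,l\right) $ together with the nondegeneracy $\int_{|y|\leq 1}|\xi \cdot y|^{2}\mu ^{0}\geq c_{1}$ of \textbf{A}$_{0}(\sigma )$ (uniform in $R$ since $\mu ^{0}$ is fixed); second, the $\xi$-derivatives $\partial _{\xi }^{\beta }\psi ^{s}$ are bounded on the annulus uniformly in $R$ by the small-jump bound in \textbf{B}$\left( \kappa ,l\right) $ (for $|\beta |\geq 2$, $\int_{|y|\leq 1}|y|^{|\beta |}\tilde{\pi}_{R}\leq \int_{|y|\leq 1}|y|^{\alpha _{1}}\tilde{\pi}_{R}\leq N_{0}$; for $|\beta |=1$ one uses the compensation $\chi _{\sigma }$ and $\int_{|y|\leq 1}|y|^{2}\tilde{\pi}_{R}\leq N_{0}$). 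Leibniz and Fa\`{a} di Bruno then give $\sup _{\xi }|\partial _{\xi }^{\gamma }(e^{\psi ^{s}t}\zeta )|\leq C(1+t)^{n}e^{-C_{2}t}\leq Ce^{-C_{2}^{\prime }t}$, so $\int (1+|x|^{\alpha _{2}})|h|\,dx\leq Ce^{-C_{2}^{\prime }t}$; replacing $\zeta $ by $i2\pi \xi \zeta $ yields the same bound for $\int |\nabla h|\,dx$.

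Assembling (i) and (ii) for $H^{R}$ is now routine. For the weighted bound, substituting $x\mapsto x-Y_{t}$ and using $|x-Y_{t}|^{\alpha _{2}}\leq C(|x|^{\alpha _{2}}+|Y_{t}|^{\alpha _{2}})$ ($\alpha _{2}\leq 2$),
\[
\int (1+|x|^{\alpha _{2}})|H^{R}(t,x)|\,dx\leq C\int (1+|x|^{\alpha _{2}})|h(t,x)|\,dx+C\,\mathbf{E}|Y_{t}|^{\alpha _{2}}\int |h(t,x)|\,dx\leq C_{1}e^{-C_{2}t},
\]
since $\mathbf{E}|Y_{t}|^{\alpha _{2}}\leq C(1+t)$ absorbs into the exponential. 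For the increment bound the $Y_{t}$-shift disappears by translation invariance of Lebesgue measure: $\int |H^{R}(t,x+y)-H^{R}(t,x)|\,dx\leq \int |h(t,x+y)-h(t,x)|\,dx\leq |y|\int |\nabla h|\,dx\leq C_{1}|y|e^{-C_{2}t}$. The statements for $H_{0}^{R}$ need no Fourier analysis: by Jensen, Tonelli and translation invariance, $\int |H_{0}^{R}(t,x)|\,dx\leq \mathbf{E}\int |\tilde{\zeta}_{0}(x+Z_{t}^{R})|\,dx=|\tilde{\zeta}_{0}|_{L_{1}}=:C_{0}$ and $\int |H_{0}^{R}(t,x+y)-H_{0}^{R}(t,x)|\,dx\leq |y|\,\mathbf{E}\int |\nabla \tilde{\zeta}_{0}(x+Z_{t}^{R})|\,dx=|y|\int |\nabla \tilde{\zeta}_{0}(x)|\,dx$; while pulling the weight inside gives $\int |x|^{\alpha _{2}}|H_{0}^{R}|\,dx\leq C\big| |x|^{\alpha _{2}}\tilde{\zeta}_{0}\big|_{L_{1}}+C\,\mathbf{E}|Z_{t}^{R}|^{\alpha _{2}}\,|\tilde{\zeta}_{0}|_{L_{1}}\leq C_{0}(1+t)$.

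The two places where real work is needed, and where I expect the main obstacle, are the uniform-in-$R$ inputs just flagged: the spectral gap $\operatorname{Re}\psi ^{s}\leq -C_{2}$ on the annulus (which is where the nondegeneracy of \textbf{A}$_{0}(\sigma )$ is essential, and where one must argue $\psi _{0}>0$ on the whole compact annulus, not merely near the inner radius), and the moment bound $\mathbf{E}|Z_{t}^{R}|^{\alpha _{2}}\leq C(1+t)$ with constants depending only on $N_{0}$. For the latter I would use the L\'{e}vy--It\^{o} decomposition: for the compensated (martingale) jump parts, a Burkholder--Davis--Gundy estimate together with $(\sum |\Delta |^{2})^{\alpha _{2}/2}\leq \sum |\Delta |^{\alpha _{2}}$ (valid as $\alpha _{2}\leq 2$) gives $\mathbf{E}|\cdot|^{\alpha _{2}}\leq Ct\int |y|^{\alpha _{2}}\tilde{\pi}_{R}$, linear in $t$; for the finite-variation or large-jump pieces one uses subadditivity $|a+b|^{\alpha _{2}}\leq |a|^{\alpha _{2}}+|b|^{\alpha _{2}}$ and, when $\alpha _{2}<\alpha _{1}$, Jensen's inequality to trade the exponent $\alpha _{2}$ against $\alpha _{1}$. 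In every case ($\sigma <1$, $\sigma =1$, $\sigma \in (1,2)$) the relevant integrals are bounded by $N_{0}$, so the estimates are uniform in $R$, as required.
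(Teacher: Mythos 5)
Your proposal is correct in outline, and its skeleton is the same as the paper's: split $Z_{t}^{R}$ into an independent ``good'' component, whose contribution is estimated by Plancherel/Cauchy--Schwarz using exactly $n=\left[ d/2\right] +3$ derivatives of the compactly supported symbol, plus a remainder that enters only as a random translation controlled by the moment bound $\mathbf{E}\left\vert \cdot \right\vert ^{\alpha _{2}}\leq C\left( 1+t\right) $ (this is the paper's Lemma~\ref{al00}, which you essentially re-prove via BDG/subadditivity), and treat $H_{0}^{R}$ by Jensen and translation invariance. The genuine difference is the decomposition. The paper uses the minorization in \textbf{D}$\left( \kappa ,l\right) $ directly: it writes $\tilde{\pi}_{R}=\mu ^{0}+\nu _{R}$ with $\nu _{R}:=\tilde{\pi}_{R}-\mu ^{0}\geq 0$, so all Fourier analysis is performed on the \emph{fixed}, $R$-independent symbol $\psi ^{\mu ^{0}}$, for which \textbf{A}$_{0}\left( \sigma \right) $ is tailored; uniformity in $R$ is then automatic, and \textbf{B}$\left( \kappa ,l\right) $ enters only through the moment bound for $Z^{\nu _{R}}$. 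You instead truncate at $\left\vert y\right\vert =1$: the Fourier work is on the $R$-dependent small-jump symbol $\psi ^{s}$, and you recover uniformity from \textbf{D} (via $\operatorname{Re}\psi ^{s}\leq -\psi _{0}$) together with \textbf{B} (for the $\xi $-derivative bounds, which do follow from $\int_{\left\vert y\right\vert \leq 1}\left\vert y\right\vert ^{\alpha _{1}}\tilde{\pi}_{R}\leq N_{0}$ in all three cases of $\sigma $). Your route makes the role of \textbf{B} transparent and keeps the remainder compound-Poisson; the paper's route concentrates all the delicate quantitative analysis on the single fixed measure $\mu ^{0}$, so the exponential decay can be quoted once with constants depending only on the listed parameters.

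One step is under-justified (you flag it, but misattribute its source): the spectral gap $\psi _{0}\geq C_{2}>0$ on the annulus. The nondegeneracy clause of \textbf{A}$_{0}$ alone does not give this: for instance $\mu ^{0}=c\sum_{i}\left( \delta _{e_{i}}+\delta _{-e_{i}}\right) $ satisfies $\int \left\vert \xi \cdot y\right\vert ^{2}\mu ^{0}\geq c_{1}>0$ for all unit $\xi $, yet $\psi _{0}$ vanishes on the whole lattice $\mathbf{Z}^{d}$, which can meet the annulus. What excludes zeros is the integrability clause of \textbf{A}$_{0}$: if $\psi _{0}\left( \xi ^{\ast }\right) =0$ with $\xi ^{\ast }\neq 0$, then $\xi ^{\ast }\cdot y\in \mathbf{Z}$ for $\mu ^{0}$-a.e.\ $y$, hence $\psi _{0}\left( k\xi ^{\ast }+h\right) =\psi _{0}\left( h\right) $ for all integers $k$, so $e^{-\psi _{0}}$ fails to decay along the ray and $\int \left\vert \xi \right\vert ^{4}\left[ 1+\lambda \right] ^{d+3}e^{-\psi _{0}}d\xi =\infty $, contradicting \textbf{A}$_{0}$. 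Moreover, to get $C_{2}$ depending only on $\left( R_{1},R_{2},N_{1},n_{0},c_{1},N_{0},d\right) $, as the lemma asserts, a compactness-in-$\mu ^{0}$ (or quantitative) argument is still needed; continuity of $\psi _{0}$ gives a gap for each fixed $\mu ^{0}$ only. To be fair, the paper faces the identical issue and buries it in the asserted Plancherel bound $\int \left\vert D^{\gamma }\left[ \zeta e^{\psi ^{\mu ^{0}}t}\right] \right\vert ^{2}d\xi \leq C_{1}e^{-C_{2}t}$ under \textbf{A}$_{0}$ (with details deferred to its companion reference), so this is a shared, not a new, gap -- but your write-up should invoke the exponential-integrability clause of \textbf{A}$_{0}$, not just $c_{1}$, at this point.
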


\begin{proof}
(i) Note that%
\begin{equation*}
\mathcal{F}H^{R}(t,\xi )=\exp \left\{ \psi ^{\tilde{\pi}_{R}}\left( \xi
\right) t\right\} \zeta \left( \xi \right) ,\xi \in \mathbf{R}^{d}.
\end{equation*}%
By \textbf{D}$\left( \kappa ,l\right) ,$ we have $\tilde{\pi}_{R}=\mu
^{0}+\nu _{R}$, and $Z_{t}^{R}=Z_{t}^{\mu ^{0}}+Z_{t}^{v_{R}},t>0,$ (in
distribution)$,$ where $Z^{\mu ^{0}}$ (resp. $Z^{\nu _{R}}$) are independent
Levy processes associated to $\mu ^{0}$ (resp. $\nu _{R}).$ Hence%
\begin{equation*}
H^{R}(t,\cdot )=F\left( t,\cdot \right) \ast P_{t},
\end{equation*}%
where%
\begin{equation*}
F\left( t,x\right) =\mathcal{F}^{-1}\left[ \exp \left\{ \psi ^{\mu
_{0}}t\right\} \zeta \right] \left( x\right) =\mathbf{E}\tilde{\zeta}\left(
x+Z_{t}^{\mu ^{0}}\right) ,t\geq 0,x\in \mathbf{R}^{d},
\end{equation*}%
and $P_{t}\left( dy\right) $ is the distribution of $Z_{t}^{v_{R}}$. By
Plancherel (recall assumption \textbf{A}$_{0}$ holds for $\mu ^{0}$), there
are constants $C_{k}=C_{k}\left( R_{1},R_{2},N_{0},n_{0},d\right) ,k=1,2,$
so that for any multiindices $\gamma ,\left\vert \gamma \right\vert \leq
n=d_{0}+2=\left[ \frac{d}{2}\right] +3,$ 
\begin{eqnarray*}
\int \left\vert x^{\gamma }F\left( t,x\right) \right\vert ^{2}dx &\leq
&C\int \left\vert D^{\gamma }\left[ \zeta \left( \xi \right) \exp \left\{
\psi ^{\mu ^{0}}\left( \xi \right) t\right\} \right] \right\vert ^{2}d\xi \\
&\leq &C_{1}e^{-C_{2}t},t\geq 0.
\end{eqnarray*}%
By Cauchy-Schwarz inequality, 
\begin{eqnarray*}
&&\int \left( 1+\left\vert x\right\vert ^{2}\right) \left\vert F\left(
t,x\right) \right\vert dx \\
&=&\int \left( 1+\left\vert x\right\vert ^{2}\right) \left( 1+\left\vert
x\right\vert \right) ^{-d_{0}}\left\vert F\left( t,x\right) \right\vert
\left( 1+\left\vert x\right\vert \right) ^{d_{0}}dx \\
&\leq &\left( \int \left( 1+\left\vert x\right\vert \right)
^{-2d_{0}}dx\right) ^{1/2}\left( \int (1+\left\vert x\right\vert
)^{4}\left\vert F\left( t,x\right) \right\vert ^{2}\left( 1+\left\vert
x\right\vert \right) ^{2d_{0}}dx\right) ^{1/2} \\
&\leq &C\left( \int F\left( t,x\right) ^{2}\left( 1+\left\vert x\right\vert
^{2}\right) ^{d_{0}+2}dx\right) ^{1/2}\leq C_{1}\exp \left\{ -C_{2}t\right\}
,t\geq 0.
\end{eqnarray*}%
By Lemma \ref{al00}, there is $C=C\left( N_{0}\right) $ so that 
\begin{equation*}
\mathbf{E}\left[ \left\vert Z_{t}^{\nu _{R}}\right\vert ^{\alpha _{2}}\right]
=\int \left\vert y\right\vert ^{\alpha _{2}}P_{t}\left( dy\right) \leq
C\left( 1+t\right) ,t\geq 0.
\end{equation*}%
Hence there are constants $C_{k}=C_{k}\left(
R_{1},R_{2},N_{1},n_{0},c_{1},N_{0},d\right) ,k=1,2,$ so that%
\begin{eqnarray*}
&&\int \left\vert x\right\vert ^{\alpha _{2}}\left\vert H^{R}\left(
t,x\right) \right\vert dx=\int \left\vert x\right\vert ^{\alpha
_{2}}\left\vert \int F\left( t,x-y\right) P_{t}\left( dy\right) \right\vert
dx \\
&\leq &\int \int \left\vert x\right\vert ^{\alpha _{2}}\left\vert F\left(
t,x-y\right) \right\vert P_{t}\left( dy\right) dx\leq \int \int \left\vert
x-y\right\vert ^{\alpha _{2}}\left\vert F\left( t,x-y\right) \right\vert
P_{t}\left( dy\right) dx \\
&&+\int \int \left\vert y\right\vert ^{\alpha _{2}}\left\vert F\left(
t,x-y\right) \right\vert P_{t}\left( dy\right) dx\leq C_{1}e^{-C_{2}t},t\geq
0.
\end{eqnarray*}%
Now, by Lemma \ref{al00},%
\begin{eqnarray*}
&&\int \left\vert x\right\vert ^{\alpha _{2}}\left\vert \mathbf{E}\tilde{%
\zeta}_{0}\left( x+Z_{t}^{R}\right) \right\vert dx \\
&\leq &\mathbf{E}\int \left\vert x+Z_{t}^{R}\right\vert ^{\alpha
_{2}}\left\vert \tilde{\zeta}_{0}\left( x+Z_{t}^{R}\right) \right\vert dx+%
\mathbf{E}\left[ \left\vert Z_{t}^{R}\right\vert ^{\alpha _{2}}\right] \int
\left\vert \tilde{\zeta}_{0}\left( x\right) \right\vert dx \\
&\leq &C\left( 1+t\right) .
\end{eqnarray*}

(ii) Similarly as in part (i), for $y\in \mathbf{R}^{d},$%
\begin{eqnarray*}
&&\int \left\vert H^{R}\left( t,x+y\right) -H^{R}\left( t,x\right)
\right\vert dx \\
&=&\int \left\vert \int \int_{0}^{1}\nabla F\left( t,x+sy-z\right) \cdot
ydsP_{t}\left( dz\right) \right\vert dx \\
&\leq &\left\vert y\right\vert \int \left\vert DF\left( t,x\right)
\right\vert dx\leq C_{1}\left\vert y\right\vert e^{-C_{2}t},t>0,
\end{eqnarray*}%
and directly%
\begin{equation*}
\int \left\vert H_{0}^{R}\left( t,x+y\right) -H_{0}^{R}\left( t,x\right)
\right\vert dx\leq \left\vert y\right\vert \int \left\vert \nabla \tilde{%
\zeta}_{0}\left( x\right) \right\vert dx.
\end{equation*}
\end{proof}

\begin{lemma}
\label{l10}Let \textbf{D}$\left( \kappa ,l\right) $ and \textbf{B}$\left(
\kappa ,l\right) $ hold for $\mu \in \mathfrak{A}^{\sigma }$ with scaling
function $\kappa $ and scaling factor $l,N>1.$ Let $Z_{t}^{j}=Z_{t}^{\tilde{%
\mu}_{N^{-j}}}$ be the Levy process associated to $L^{\tilde{\mu}%
_{N^{-j}}},j\geq 1$, and $Z_{t}=Z_{t}^{\mu }$. Let $\zeta ,\zeta _{0}\in
C_{0}^{\infty }\left( \mathbf{R}^{d}\right) $ be such that $0\notin $supp$%
\left( \zeta \right) .$ Let 
\begin{equation*}
\Phi _{j}\left( x\right) =\int_{0}^{\infty }e^{-\kappa \left( N^{-j}\right)
t}\mathbf{E}\tilde{\zeta}\left( x+Z_{t}^{j}\right) dt,j\geq 1,x\in \mathbf{R}%
^{d},
\end{equation*}%
\begin{equation*}
\Phi _{0}\left( x\right) =\int_{0}^{\infty }e^{-t}\mathbf{E}\tilde{\zeta}%
_{0}\left( x+Z_{t}\right) dt,x\in \mathbf{R}^{d},
\end{equation*}%
where $\tilde{\zeta}=\mathcal{F}^{-1}\zeta ,\tilde{\zeta}_{0}=\mathcal{F}%
^{-1}\zeta _{0}$. Let $K_{j}\left( x\right) =N^{jd}\Phi _{j}\left(
N^{j}x\right) ,j\geq 0,x\in \mathbf{R}^{d}.$

Then for $1<p,r<\infty $ there is a constant $C_{p,r}$ such that for all $%
f=\left( f_{j}\right) \in L_{p}\left( \mathbf{R}^{d},l_{r}\right) $ 
\begin{eqnarray}
\left\vert \left( \sum_{j=0}^{\infty }\left\vert K_{j}\ast f_{j}\right\vert
^{r}\right) ^{1/r}\right\vert _{L_{p}\left( \mathbf{R}^{d}\right) } &\leq
&C_{p,r}\left\vert \left( \sum_{j=0}^{\infty }\left\vert f_{j}\ast \tilde{%
\zeta}_{j}\right\vert ^{r}\right) ^{1/r}\right\vert _{L_{p}\left( \mathbf{R}%
^{d}\right) }  \label{ff11} \\
&\leq &C_{p,r}\left\vert \left( \sum_{j=0}^{\infty }\left\vert
f_{j}\right\vert ^{r}\right) ^{1/r}\right\vert _{L_{p}\left( \mathbf{R}%
^{d}\right) },  \notag
\end{eqnarray}%
where $\tilde{\zeta}_{j}=\mathcal{F}^{-1}\left[ \zeta \left( N^{-j}\cdot
\right) \right] ,j\geq 1.$

If $r=2$, then (\ref{ff11}) holds for a separable Hilbert space $\mathbf{G}$%
-valued sequences $f=\left( f_{j}\right) \in L_{p}\left( \mathbf{R}%
^{d},l_{r}\left( \mathbf{G}\right) \right) $ (simply absolute value in (\ref%
{ff11}) is replaced by $\mathbf{G}$-norm).

In particular, there is a constant $C$ so that 
\begin{equation}
\left\vert K_{j}\ast f\right\vert _{L_{p}\left( \mathbf{R}^{d};\mathbf{G}%
\right) }\leq C\left\vert f\right\vert _{L_{p}\left( \mathbf{R}^{d};\mathbf{G%
}\right) },j\geq 0,f\in L_{p}\left( \mathbf{R}^{d};\mathbf{G}\right) .
\label{f0}
\end{equation}
\end{lemma}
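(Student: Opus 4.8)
The plan is to prove the two inequalities separately, each time reducing to one of the convolution bounds already in hand. The second inequality is nothing but Corollary~\ref{c2}: since $\tilde{\zeta}_{j}(x)=N^{jd}\tilde{\zeta}(N^{j}x)$ for $j\geq 1$ and $\tilde{\zeta}_{0}=\mathcal{F}^{-1}\zeta _{0}$ with $\zeta ,\zeta _{0}\in \mathcal{S}(\mathbf{R}^{d})$, that corollary gives at once
\[
\Big| \big( \sum_{j}|f_{j}\ast \tilde{\zeta}_{j}|^{r}\big) ^{1/r}\Big| _{L_{p}}\leq C_{p,r}\Big| \big( \sum_{j}|f_{j}|^{r}\big) ^{1/r}\Big| _{L_{p}},
\]
together with its $\mathbf{G}$-valued analogue when $r=2$. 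Thus the real content is the first inequality, and my strategy there is to exhibit a factorization $K_{j}=L_{j}\ast \tilde{\zeta}_{j}$ in which the kernels $L_{j}$ have the scaled form $L_{j}(x)=N^{jd}\Psi _{j}(N^{j}x)$ with $\{\Psi _{j}\}$ satisfying the hypotheses of Lemma~\ref{l1} together with $\sup_{j,\xi }|\hat{\Psi}_{j}(\xi )|<\infty $; Corollary~\ref{c1}, applied with $\Psi _{j}$ in place of $\Phi _{j}$ and with $g_{j}=f_{j}\ast \tilde{\zeta}_{j}$, then yields the claim, because $L_{j}\ast g_{j}=(L_{j}\ast \tilde{\zeta}_{j})\ast f_{j}=K_{j}\ast f_{j}$.

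To build the factorization I first compute symbols. From $\mathbf{E}e^{i2\pi \xi \cdot Z_{t}^{j}}=\exp \{\psi ^{\tilde{\mu}_{N^{-j}}}(\xi )t\}$ and $\func{Re}\psi ^{\tilde{\mu}_{N^{-j}}}\leq 0$ one gets, for $j\geq 1$, $\hat{\Phi}_{j}(\xi )=\zeta (\xi )/(\kappa (N^{-j})-\psi ^{\tilde{\mu}_{N^{-j}}}(\xi ))$, hence $\hat{K}_{j}(\xi )=\zeta (N^{-j}\xi )/(\kappa (N^{-j})-\psi ^{\tilde{\mu}_{N^{-j}}}(N^{-j}\xi ))$. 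Choose $\eta \in C_{0}^{\infty }(\mathbf{R}^{d})$ with $0\notin \mathrm{supp}\,\eta $, $\mathrm{supp}\,\eta $ inside a fixed annulus $\{R_{1}\leq |\xi |\leq R_{2}\}$, and $\eta \equiv 1$ on $\mathrm{supp}\,\zeta $; choose $\eta _{0}$ similarly with $\eta _{0}\equiv 1$ on $\mathrm{supp}\,\zeta _{0}$. Set
\[
\Psi _{j}(x)=\int_{0}^{\infty }e^{-\kappa (N^{-j})t}\,\mathbf{E}\tilde{\eta}(x+Z_{t}^{j})\,dt,\quad j\geq 1,\qquad \Psi _{0}(x)=\int_{0}^{\infty }e^{-t}\,\mathbf{E}\tilde{\eta}_{0}(x+Z_{t})\,dt,
\]
and $L_{j}(x)=N^{jd}\Psi _{j}(N^{j}x)$. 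Then $\hat{\Psi}_{j}(\xi )=\eta (\xi )/(\kappa (N^{-j})-\psi ^{\tilde{\mu}_{N^{-j}}}(\xi ))$, so $\hat{L}_{j}(\xi )\,\zeta (N^{-j}\xi )=\eta (N^{-j}\xi )\zeta (N^{-j}\xi )/(\kappa (N^{-j})-\psi ^{\tilde{\mu}_{N^{-j}}}(N^{-j}\xi ))=\hat{K}_{j}(\xi )$ because $\eta \zeta =\zeta $; since $\mathcal{F}\tilde{\zeta}_{j}(\xi )=\zeta (N^{-j}\xi )$ this is exactly $K_{j}=L_{j}\ast \tilde{\zeta}_{j}$. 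The point of replacing $\zeta $ by the cutoff $\eta $ is that $\Psi _{j}$ has the same structure as $\Phi _{j}$, so the estimates of Lemma~\ref{auxl1} apply to it verbatim.

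It remains to verify the three conditions for $\Psi _{j}$. Applying Lemma~\ref{auxl1} with $\pi =\mu $, $R=N^{-j}$ and test function $\eta $ — crucially, its constants $C_{1},C_{2}$ do not depend on $R$, hence not on $j$ — I bound
\[
\int |x|^{\alpha _{2}}|\Psi _{j}(x)|\,dx\leq \int_{0}^{\infty }e^{-\kappa (N^{-j})t}\Big( \int |x|^{\alpha _{2}}|\mathbf{E}\tilde{\eta}(x+Z_{t}^{j})|\,dx\Big) dt\leq \int_{0}^{\infty }C_{1}e^{-C_{2}t}\,dt=A,
\]
which is hypothesis (i) of Lemma~\ref{l1} with $\beta =\alpha _{2}$, and from the difference estimate in Lemma~\ref{auxl1}(ii),
\[
\int |\Psi _{j}(x+y)-\Psi _{j}(x)|\,dx\leq \int_{0}^{\infty }e^{-\kappa (N^{-j})t}C_{1}|y|e^{-C_{2}t}\,dt\leq C|y|,
\]
so hypothesis (ii) holds with $w(r)=Cr$, for which $\sum_{k}w(N^{-k})<\infty $ since $N>1$; the $j=0$ term is handled the same way from the $H_{0}^{R}$ estimates of Lemma~\ref{auxl1}.

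The remaining condition $\sup_{j,\xi }|\hat{\Psi}_{j}(\xi )|<\infty $ is where the real work lies, and I expect it to be the main obstacle, since it forces a lower bound on the denominator $\kappa (N^{-j})-\psi ^{\tilde{\mu}_{N^{-j}}}(\xi )$ that is uniform in $j$ even though $\kappa (N^{-j})\rightarrow 0$. Because $\eta $ is supported in a fixed annulus and $\func{Re}\psi ^{\tilde{\mu}_{N^{-j}}}\leq 0$, I have $|\kappa (N^{-j})-\psi ^{\tilde{\mu}_{N^{-j}}}(\xi )|\geq -\func{Re}\psi ^{\tilde{\mu}_{N^{-j}}}(\xi )$, and by $\mathbf{D}(\kappa ,l)$, namely $\tilde{\mu}_{N^{-j}}\geq 1_{\{|y|\leq 1\}}\mu ^{0}$,
\[
-\func{Re}\psi ^{\tilde{\mu}_{N^{-j}}}(\xi )\geq \int_{|y|\leq 1}[1-\cos (2\pi \xi \cdot y)]\,\mu ^{0}(dy)=\psi _{0}(\xi ).
\]
By the nondegeneracy $\int_{|y|\leq 1}|\xi \cdot y|^{2}\mu ^{0}(dy)\geq c_{1}$ of Assumption~$\mathbf{A}_{0}(\sigma )$, $\psi _{0}$ is continuous and bounded below by a positive constant $c_{0}$ on the annulus $\mathrm{supp}\,\eta $ — this is precisely the lower bound already responsible for the factor $e^{-C_{2}t}$ in Lemma~\ref{auxl1} — whence $\sup_{j,\xi }|\hat{\Psi}_{j}(\xi )|\leq \|\eta \|_{\infty }/c_{0}<\infty $. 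Corollary~\ref{c1} now applies to $\{L_{j}\}$ and gives the first inequality, and its part (ii) gives the $r=2$, $\mathbf{G}$-valued version. Finally, (\ref{f0}) follows by specializing the vector-valued bound to a sequence supported at a single index, or directly from $\sup_{j}\int |\Phi _{j}|\,dx<\infty $ (again Lemma~\ref{auxl1}(i)) and Young's inequality.
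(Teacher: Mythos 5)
Your proposal is correct and follows essentially the same route as the paper's own proof: the same factorization $K_{j}=\tilde{K}_{j}\ast \tilde{\zeta}_{j}$ through a cutoff $\eta $ with $\eta \zeta =\zeta $ (and $\eta _{0}\zeta _{0}=\zeta _{0}$), the same verification of the hypotheses of Lemma \ref{l1}/Corollary \ref{c1} via Lemma \ref{auxl1}, the same use of Corollary \ref{c2} for the second inequality, and the same specialization for (\ref{f0}). The only local difference is that you derive the uniform bound $\sup_{j,\xi }|\hat{\Psi}_{j}(\xi )|<\infty $ by hand where the paper cites Lemma 7 of \cite{MPh}; note in this step that the uniform positivity of $\psi _{0}$ on an arbitrary annulus does not follow from the second-moment nondegeneracy alone (for lattice-type $\mu ^{0}$, e.g. $\mu ^{0}=\delta _{e_{1}}+\delta _{-e_{1}}$ in $d=1$, $\psi _{0}$ vanishes at nonzero points), but it does follow from the integrability condition in Assumption \textbf{A}$_{0}\left( \sigma \right) $ --- the same mechanism behind the factor $e^{-C_{2}t}$ in Lemma \ref{auxl1}, as you yourself observe, so the argument stands once that attribution is corrected.
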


\begin{proof}
Let $\eta ,\eta _{0}\in C_{0}^{\infty }\left( \mathbf{R}^{d}\right) $ be
such that%
\begin{equation*}
\eta \zeta =\zeta ,\eta _{0}\zeta _{0}=\zeta _{0}.
\end{equation*}%
Let $\tilde{\eta}=\mathcal{F}^{-1}\eta ,\tilde{\eta}_{0}=\mathcal{F}%
^{-1}\eta _{0}$, and 
\begin{equation*}
\tilde{\Phi}_{j}\left( x\right) =\int_{0}^{\infty }e^{-\kappa \left(
N^{-j}\right) t}\mathbf{E}\tilde{\eta}\left( x+Z_{t}^{j}\right) dt,j\geq
1,x\in \mathbf{R}^{d},
\end{equation*}%
\begin{equation*}
\tilde{\Phi}_{0}\left( x\right) =\int_{0}^{\infty }e^{-t}\mathbf{E}\tilde{%
\eta}_{0}\left( x+Z_{t}\right) dt,x\in \mathbf{R}^{d},
\end{equation*}

Let $\tilde{K}_{j}\left( x\right) =N^{jd}\tilde{\Phi}_{j}\left(
N^{j}x\right) ,j\geq 0,x\in \mathbf{R}^{d}.$ Obviously,%
\begin{equation*}
K_{j}\ast f=\tilde{K}_{j}\ast f\ast \tilde{\zeta}_{j},j\geq 0.
\end{equation*}

We will check the assumptions of Lemma \ref{l1} for $\tilde{\Phi}_{j},j\geq
0 $.

(i) We will prove that%
\begin{equation}
\int \left\vert x\right\vert ^{\alpha _{2}}\left\vert \tilde{\Phi}_{j}\left(
x\right) \right\vert dx\leq A,j\geq 0,  \label{ff10}
\end{equation}%
where $\alpha _{2}$ is exponent in \textbf{B}$\left( \kappa ,l\right) $. By
Lemma \ref{auxl1}, there is a constant $C=C\left( N_{0}\right) $ so that%
\begin{eqnarray*}
&&\int \left\vert x\right\vert ^{\alpha _{2}}\left\vert \tilde{\Phi}%
_{0}\left( x\right) \right\vert dx\leq \int \int_{0}^{\infty
}e^{-t}\left\vert x\right\vert ^{\alpha _{2}}\left\vert \mathbf{E}\tilde{\eta%
}_{0}\left( x+Z_{t}\right) \right\vert dtdx \\
&\leq &C\int_{0}^{\infty }e^{-t}\left( 1+t\right) dt,
\end{eqnarray*}%
and%
\begin{eqnarray*}
\int \left\vert x\right\vert ^{\alpha _{2}}\left\vert \tilde{\Phi}_{j}\left(
x\right) \right\vert dx &\leq &\int_{0}^{\infty }\int \left\vert
x\right\vert ^{\alpha _{2}}\left\vert \mathbf{E}\tilde{\eta}\left(
x+Z_{t}^{j}\right) \right\vert dxdt \\
&\leq &C\int_{0}^{\infty }C_{1}e^{-C_{2}t}dt,j\geq 1.
\end{eqnarray*}

(ii) We prove 
\begin{equation}
\int \left\vert \tilde{\Phi}_{j}\left( x+y\right) -\tilde{\Phi}_{j}\left(
x\right) \right\vert dx\leq A\left\vert y\right\vert ,\left\vert
y\right\vert \leq 1,j\geq 0.  \label{ff4}
\end{equation}

By Lemma \ref{auxl1}, for any $y\in \mathbf{R}^{d},$%
\begin{equation}
\int \left\vert \tilde{\Phi}_{0}\left( x+y\right) -\tilde{\Phi}_{0}\left(
x\right) \right\vert \leq C\left\vert y\right\vert \int_{0}^{\infty
}e^{-t}dt,  \notag
\end{equation}%
and 
\begin{eqnarray*}
&&\int \left\vert \tilde{\Phi}_{j}\left( x+y\right) -\tilde{\Phi}_{j}\left(
x\right) \right\vert dx \\
&\leq &\int_{0}^{\infty }\int \left\vert \mathbf{E}\tilde{\eta}\left(
x+y+Z_{t}^{j}\right) -\mathbf{E}\tilde{\eta}\left( x+Z_{t}^{j}\right)
\right\vert dxdt\leq C_{1}\left\vert y\right\vert \int_{0}^{\infty
}e^{-C_{2}t}dt,j\geq 1.
\end{eqnarray*}

(iii) We prove that%
\begin{equation}
\left\vert \mathcal{F}\tilde{\Phi}_{j}\left( \xi \right) \right\vert \leq
A,j\geq 1,\xi \in \mathbf{R}^{d}.  \label{f8}
\end{equation}%
Indeed, by Lemma 7 in \cite{MPh} there is $c>0$ so that%
\begin{equation*}
\exp \left\{ \psi ^{\tilde{\pi}_{N^{-j}}}\left( \xi \right) t\right\} \leq
e^{-ct},t>0,1
\end{equation*}%
we have%
\begin{equation*}
\left\vert \mathcal{F}\tilde{\Phi}_{j}\left( \xi \right) \right\vert \leq
\int_{0}^{\infty }e^{-ct}\eta \left( \xi \right) dt\leq A,\xi \in \mathbf{R}%
^{d},j\geq 1,
\end{equation*}%
and, obviously,%
\begin{equation*}
\left\vert \mathcal{F}\tilde{\Phi}_{0}\left( \xi \right) \right\vert \leq
\int_{0}^{\infty }e^{-t}\left\vert \exp \left\{ \psi ^{\mu }\left( \xi
\right) t\right\} \eta _{0}\left( \xi \right) \right\vert dt\leq A.
\end{equation*}%
Therefore (\ref{ff11}) follows from Corollary \ref{c1}.

(iv) The estimate (\ref{f0}) is an obvious consequence of (\ref{ff11}) (take 
$f=\left( f_{k}\right) $ with $f_{k}=0$ if $k\neq j.$

The statement is proved.
\end{proof}

\begin{lemma}
\label{l2}Let \textbf{B}$\left( \kappa ,l\right) $ hold for $\mu \in 
\mathfrak{A}^{\sigma }$ with scaling function $\kappa $ and scaling factor $%
l $. Let $N>1,\zeta ,\zeta _{0}\in C_{0}^{\infty }\left( \mathbf{R}%
^{d}\right) $, $\tilde{\zeta}=\mathcal{F}^{-1}\zeta ,\tilde{\zeta}_{0}=%
\mathcal{F}^{-1}\zeta _{0}$, and 
\begin{eqnarray*}
\Phi _{j}\left( x\right) &=&\kappa \left( N^{-j}\right) \tilde{\zeta}\left(
x\right) -L^{\tilde{\mu}_{N^{-j}}}\tilde{\zeta}\left( x\right) ,x\in \mathbf{%
R}^{d},j\geq 1, \\
\Phi _{0}\left( x\right) &=&\tilde{\zeta}_{0}\left( x\right) -L^{\pi ^{0}}%
\tilde{\zeta}_{0}\left( x\right) ,x\in \mathbf{R}^{d}.
\end{eqnarray*}%
Let $K_{j}\left( x\right) =N^{jd}\Phi _{j}\left( N^{j}x\right) ,j\geq 0,x\in 
\mathbf{R}^{d}.$

Then for $1<p,r<\infty $ there is a constant $C_{p,r}$ such that for all $%
f=\left( f_{j}\right) \in L_{p}\left( \mathbf{R}^{d},l_{r}\right) $ 
\begin{eqnarray}
\left\vert \left( \sum_{j=0}^{\infty }\left\vert K_{j}\ast f_{j}\right\vert
^{r}\right) ^{1/r}\right\vert _{L_{p}\left( \mathbf{R}^{d}\right) } &\leq
&C_{p,r}\left\vert \left( \sum_{j=0}^{\infty }\left\vert f_{j}\ast \tilde{%
\zeta}_{j}\right\vert ^{r}\right) ^{1/r}\right\vert _{L_{p}\left( \mathbf{R}%
^{d}\right) }  \label{f11} \\
&\leq &C_{p,r}\left\vert \left( \sum_{j=0}^{\infty }\left\vert
f_{j}\right\vert ^{r}\right) ^{1/r}\right\vert _{L_{p}\left( \mathbf{R}%
^{d}\right) },  \label{f12}
\end{eqnarray}%
where $\tilde{\zeta}_{j}=\mathcal{F}^{-1}\left[ \zeta \left( N^{-j}\cdot
\right) \right] ,j\geq 1.$ If $r=2$, then ((\ref{f11}), (\ref{f12}) hold for
a separable Hilbert space $\mathbf{G}$-valued sequences $f=\left(
f_{j}\right) \in L_{p}\left( \mathbf{R}^{d},l_{r}\left( \mathbf{G}\right)
\right) $ (simply absolute value is replaced by $\mathbf{G}$-norm).

In particular, there is a constant $C$ so that 
\begin{equation}
\left\vert K_{j}\ast f\right\vert _{L_{p}\left( \mathbf{R}^{d};\mathbf{G}%
\right) }\leq C\left\vert f\right\vert _{L_{p}\left( \mathbf{R}^{d};\mathbf{G%
}\right) },j\geq 0,f\in L_{p}\left( \mathbf{R}^{d};\mathbf{G}\right) .
\label{f00}
\end{equation}
\end{lemma}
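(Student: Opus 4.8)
The plan is to reproduce the mechanism of Lemma \ref{l10}: rewrite each $K_{j}$ as a convolution $\tilde{K}_{j}\ast \tilde{\zeta}_{j}$, check that the family $(\tilde{\Phi}_{j})$ underlying $\tilde{K}_{j}$ meets the hypotheses of Lemma \ref{l1}, and then combine Corollaries \ref{c1} and \ref{c2}. Choose $\eta ,\eta _{0}\in C_{0}^{\infty }(\mathbf{R}^{d})$ with $\eta \zeta =\zeta $ and $\eta _{0}\zeta _{0}=\zeta _{0}$, set $\tilde{\eta}=\mathcal{F}^{-1}\eta ,\tilde{\eta}_{0}=\mathcal{F}^{-1}\eta _{0}$, and define
\begin{align*}
\tilde{\Phi}_{j}(x) &=\kappa (N^{-j})\tilde{\eta}(x)-L^{\tilde{\mu}_{N^{-j}}}\tilde{\eta}(x),\quad j\geq 1,\\
\tilde{\Phi}_{0}(x) &=\tilde{\eta}_{0}(x)-L^{\pi ^{0}}\tilde{\eta}_{0}(x),
\end{align*}
with $\tilde{K}_{j}(x)=N^{jd}\tilde{\Phi}_{j}(N^{j}x)$. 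Since $\mathcal{F}\tilde{\Phi}_{j}=[\kappa (N^{-j})-\psi ^{\tilde{\mu}_{N^{-j}}}]\eta $ for $j\geq 1$ and $\eta \zeta =\zeta $, we get $\mathcal{F}\Phi _{j}=\mathcal{F}\tilde{\Phi}_{j}\,\zeta $, i.e. $\Phi _{j}=\tilde{\Phi}_{j}\ast \tilde{\zeta}$; rescaling yields $K_{j}=\tilde{K}_{j}\ast \tilde{\zeta}_{j}$ and hence $K_{j}\ast f_{j}=\tilde{K}_{j}\ast (f_{j}\ast \tilde{\zeta}_{j})$ (the case $j=0$ is identical with $\zeta _{0},\eta _{0}$). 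It then suffices to verify the two hypotheses of Lemma \ref{l1} for $(\tilde{\Phi}_{j})$ together with $\sup_{j,\xi }|\mathcal{F}\tilde{\Phi}_{j}(\xi )|<\infty $: Corollary \ref{c1} applied to the kernels $\tilde{K}_{j}$ gives the first inequality in (\ref{f11}), and Corollary \ref{c2} applied to $\tilde{\zeta}_{j}=N^{jd}\tilde{\zeta}(N^{j}\cdot )$ gives the second, (\ref{f12}); the $\mathbf{G}$-valued statement for $r=2$ uses the Hilbert-space parts of the two corollaries, and (\ref{f00}) follows by taking $f=(f_{k})$ supported on a single index, as in Lemma \ref{l10}.

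All three bounds for $(\tilde{\Phi}_{j})$ are to be obtained uniformly in $j$ from assumption \textbf{B}$(\kappa ,l)$, which supplies $\int_{|z|\leq 1}|z|^{\alpha _{1}}\tilde{\mu}_{N^{-j}}(dz)+\int_{|z|>1}|z|^{\alpha _{2}}\tilde{\mu}_{N^{-j}}(dz)\leq N_{0}$ for all $j$, together with the elementary observation that $\kappa (N^{-j})\leq \kappa (N^{-1})$ for $j\geq 1$ (iterate $\kappa (N^{-1}r)\leq l(N^{-1})\kappa (r)$ with $l(N^{-1})<1$). For the weighted bound $\int |x|^{\beta }|\tilde{\Phi}_{j}(x)|\,dx\leq A$ I would take $\beta =\alpha _{2}$ as in (\ref{ff10}), expand $L^{\tilde{\mu}_{N^{-j}}}\tilde{\eta}$, and bound the increment $\tilde{\eta}(x+z)-\tilde{\eta}(x)-\chi _{\sigma }(z)z\cdot \nabla \tilde{\eta}(x)$ by Taylor's formula: on $\{|z|\leq 1\}$ it is dominated by $|z|^{\alpha _{1}}$ times a Schwartz majorant of the second (if $\sigma >1$) or first (if $\sigma <1$) derivatives of $\tilde{\eta}$, while on $\{|z|>1\}$ the weighted $x$-integral is $\leq C(1+|z|^{\alpha _{2}})$ (the drift term, present only when $\sigma >1$, contributing $\leq C|z|\leq C|z|^{\alpha _{2}}$ since then $\alpha _{2}>1$); Fubini and the moment bounds above then close the estimate. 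For the modulus of continuity I use that $L^{\tilde{\mu}_{N^{-j}}}$ commutes with translation, so
\begin{equation*}
\int |\tilde{\Phi}_{j}(x+y)-\tilde{\Phi}_{j}(x)|\,dx\leq |y|\int \big(\kappa (N^{-j})|\nabla \tilde{\eta}|+|L^{\tilde{\mu}_{N^{-j}}}\nabla \tilde{\eta}|\big)\,dx\leq A|y|,
\end{equation*}
the last integral being controlled exactly as the unweighted case of the previous step (here only the finite mass $\int_{|z|>1}\tilde{\mu}_{N^{-j}}\leq N_{0}$ is needed at infinity); thus $w(r)=Ar$ works and $\sum_{k}w(N^{-k})<\infty $.

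For the uniform symbol bound, since $\eta $ is compactly supported it is enough to estimate $|\kappa (N^{-j})-\psi ^{\tilde{\mu}_{N^{-j}}}(\xi )|$ on $\mathrm{supp}\,\eta $. Splitting $\psi ^{\tilde{\mu}_{N^{-j}}}$ at $|z|=1$ and using $|e^{i2\pi \xi \cdot z}-1-\chi _{\sigma }(z)i2\pi \xi \cdot z|\leq C(|\xi ||z|)^{\alpha _{1}}$ on $\{|z|\leq 1\}$ and $\leq C(|\xi ||z|)^{\alpha _{2}}$ on $\{|z|>1\}$, the moment bounds from \textbf{B}$(\kappa ,l)$ give $|\psi ^{\tilde{\mu}_{N^{-j}}}(\xi )|\leq CN_{0}(|\xi |^{\alpha _{1}}+|\xi |^{\alpha _{2}})$, which is bounded on $\mathrm{supp}\,\eta $ uniformly in $j$; the term $\kappa (N^{-j})$ is bounded as noted, and the fixed $j=0$ kernel is immediate. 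I expect the only real difficulty to be the bookkeeping in the weighted bound and in the symbol bound across the three ranges of $\sigma $, in particular the $\sigma =1$ case, where the compensator $\chi _{\sigma }(z)z$ is present only near the origin and one must use $\alpha _{1}\in (1,2]$ there but $\alpha _{2}\in [0,1)$ at infinity; everything, however, is governed solely by the uniform moment bounds of \textbf{B}$(\kappa ,l)$, exactly as in Lemmas \ref{auxl1} and \ref{l10}.
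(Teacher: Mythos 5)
Your proposal is correct and follows the paper's own proof essentially step for step: the same auxiliary cutoffs $\eta ,\eta _{0}$ with $\eta \zeta =\zeta $, $\eta _{0}\zeta _{0}=\zeta _{0}$, the same factorization $K_{j}\ast f_{j}=\tilde{K}_{j}\ast (f_{j}\ast \tilde{\zeta}_{j})$, and the same three verifications (uniform weighted $\alpha _{2}$-moment via the small/large-jump splitting of $L^{\tilde{\mu}_{N^{-j}}}\tilde{\eta}$ with the case distinction in $\sigma $, the $L_{1}$-modulus bound $\leq A\left\vert y\right\vert $ via translation invariance, and the uniform bound on $\mathcal{F}\tilde{\Phi}_{j}$), after which (\ref{f11}), (\ref{f12}) and (\ref{f00}) follow from Corollaries \ref{c1} and \ref{c2} exactly as in the paper. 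The only deviations are harmless: you prove the symbol bound $\left\vert \psi ^{\tilde{\mu}_{N^{-j}}}\left( \xi \right) \right\vert \leq CN_{0}(\left\vert \xi \right\vert ^{\alpha _{1}}+\left\vert \xi \right\vert ^{\alpha _{2}})$ directly from \textbf{B}$\left( \kappa ,l\right) $ where the paper cites Lemma 7 of \cite{MPh}, and your parenthetical remark that only the finite mass of $\tilde{\mu}_{N^{-j}}$ at infinity is needed for $\int \left\vert L^{\tilde{\mu}_{N^{-j}}}\nabla \tilde{\eta}\right\vert dx$ understates the case $\sigma \in \left( 1,2\right) $, where the uncompensated drift requires the first moment (handled by $\left\vert z\right\vert \leq \left\vert z\right\vert ^{\alpha _{2}}$ for $\left\vert z\right\vert >1$), as your own step (i) in fact does correctly.
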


\begin{proof}
Let $\eta ,\eta _{0}\in C_{0}^{\infty }\left( \mathbf{R}^{d}\right) $ and%
\begin{equation*}
\eta \zeta =\zeta ,\eta _{0}\zeta _{0}=\zeta _{0},
\end{equation*}%
$\tilde{\eta}=\mathcal{F}^{-1}\eta ,\tilde{\eta}_{0}=\mathcal{F}^{-1}\eta
_{0}$, and%
\begin{eqnarray*}
\tilde{\Phi}_{j}\left( x\right) &=&\kappa \left( N^{-j}\right) \tilde{\eta}%
\left( x\right) -L^{\tilde{\mu}_{N^{-j}}}\tilde{\eta}\left( x\right) ,x\in 
\mathbf{R}^{d},j\geq 1, \\
\tilde{\Phi}_{0}\left( x\right) &=&\tilde{\eta}_{0}\left( x\right) -L^{\pi
^{0}}\tilde{\eta}_{0}\left( x\right) ,x\in \mathbf{R}^{d}.
\end{eqnarray*}%
Let $\tilde{K}_{j}\left( x\right) =N^{jd}\tilde{\Phi}_{j}\left(
N^{j}x\right) ,j\geq 0,x\in \mathbf{R}^{d}.$ Again,%
\begin{equation*}
K_{j}\ast f=\tilde{K}_{j}\ast f\ast \tilde{\zeta}_{j},j\geq 0.
\end{equation*}

We will check the assumptions of Corollary \ref{c1} for $\tilde{\Phi}%
_{j},j\geq 0$.

(i) First we prove that 
\begin{equation*}
\int \left\vert x\right\vert ^{\alpha _{2}}\left\vert \tilde{\Phi}_{j}\left(
x\right) \right\vert dx\leq A,j\geq 0.
\end{equation*}%
Obviously,%
\begin{eqnarray*}
\int \kappa \left( N^{-j}\right) \left\vert x\right\vert ^{\alpha
_{2}}\left\vert \tilde{\eta}\left( x\right) \right\vert dx &\leq &l\left(
1\right) \kappa \left( 1\right) \int \left\vert x\right\vert ^{\alpha
_{2}}\left\vert \tilde{\eta}\left( x\right) \right\vert dx<\infty , \\
\int \left\vert x\right\vert ^{\alpha _{2}}\left\vert \tilde{\eta}_{0}\left(
x\right) \right\vert dx &<&\infty .
\end{eqnarray*}%
We split%
\begin{eqnarray*}
L^{\tilde{\mu}_{N^{-j}}}\tilde{\eta}\left( x\right) &=&\int_{\left\vert
z\right\vert \leq 1}[\tilde{\eta}\left( x+z\right) -\tilde{\eta}\left(
x\right) -\chi _{\sigma }\left( z\right) z\cdot \nabla \tilde{\eta}\left(
x\right) ]\tilde{\mu}_{N^{-j}}\left( dz\right) \\
&&+\int_{\left\vert z\right\vert >1}[\tilde{\eta}\left( x+z\right) -\tilde{%
\eta}\left( x\right) -\chi _{\sigma }\left( z\right) z\cdot \nabla \tilde{%
\eta}\left( x\right) ]\tilde{\mu}_{N^{-j}}\left( dz\right) \\
&=&A_{j}\left( x\right) +B_{j}\left( x\right) ,x\in \mathbf{R}^{d},j\geq 1.
\end{eqnarray*}%
For $\sigma \in \lbrack 1,2),$%
\begin{equation*}
A_{j}\left( x\right) =\int_{\left\vert z\right\vert \leq 1}\int_{0}^{1}(1-s)%
\tilde{\eta}_{x_{i}x_{j}}\left( x+sz\right) z_{i}z_{j}\tilde{\mu}%
_{N^{-j}}\left( dz\right) ds,
\end{equation*}%
and 
\begin{eqnarray*}
&&\int \left\vert x\right\vert ^{\alpha _{2}}\left\vert A_{j}\left( x\right)
\right\vert dx \\
&\leq &C\int_{0}^{1}\int \int_{\left\vert z\right\vert \leq 1}\left\vert
x+sz\right\vert ^{\alpha _{2}}\left\vert D^{2}\tilde{\eta}\left( x+sz\right)
\right\vert \left\vert z\right\vert ^{2}\tilde{\mu}_{N^{-j}}\left( dz\right)
dxds \\
&&+C\int_{0}^{1}\int \int_{\left\vert z\right\vert \leq 1}\left\vert D^{2}%
\tilde{\eta}\left( x+sz\right) \right\vert \left\vert z\right\vert
^{2+\alpha _{2}}\tilde{\mu}_{N^{-j}}\left( dz\right) dxds \\
&\leq &C\int (\left\vert x\right\vert ^{\alpha _{2}}+1)\left\vert D^{2}%
\tilde{\eta}\left( x\right) \right\vert dx,j\geq 1,
\end{eqnarray*}%
For $\sigma \in \left( 0,1\right) ,$%
\begin{equation*}
A_{j}\left( x\right) =\int_{\left\vert z\right\vert \leq
1}\int_{0}^{1}\nabla \tilde{\eta}\left( x+sz\right) \cdot z\tilde{\mu}%
_{N^{-j}}\left( dz\right) ,x\in \mathbf{R}^{d},
\end{equation*}%
and%
\begin{eqnarray*}
&&\int \left\vert x\right\vert ^{\alpha _{2}}\left\vert A_{j}\left( x\right)
\right\vert dx \\
&\leq &\int \int_{\left\vert z\right\vert \leq 1}\int_{0}^{1}\left\vert
x+sz\right\vert ^{\alpha _{2}}\left\vert \nabla \tilde{\eta}\left(
x+sz\right) \right\vert ds\left\vert z\right\vert \tilde{\mu}_{N^{-j}}\left(
dz\right) dx \\
&&+\int_{0}^{1}\int \int_{\left\vert z\right\vert \leq 1}\left\vert
z\right\vert ^{\alpha _{2}}\left\vert \nabla \tilde{\eta}\left( x+sz\right)
\right\vert ~\left\vert z\right\vert \tilde{\mu}_{N^{-j}}\left( dz\right)
dxds \\
&\leq &C\int (\left\vert x\right\vert ^{\alpha _{2}}+1)\left\vert \nabla 
\tilde{\eta}\left( x\right) \right\vert dx\int_{\left\vert z\right\vert \leq
1}\left\vert z\right\vert \tilde{\mu}_{N^{-j}}\left( dz\right) , \\
&\leq &C\int (\left\vert x\right\vert ^{\alpha _{2}}+1)\left\vert \nabla 
\tilde{\eta}\left( x\right) \right\vert dx,j\geq 1.
\end{eqnarray*}

Now$,$%
\begin{eqnarray*}
&&\int \left\vert x\right\vert ^{\alpha _{2}}\left\vert B_{j}\left( x\right)
\right\vert dx \\
&\leq &\int \int_{\left\vert z\right\vert >1}\left\vert x+z\right\vert
^{\alpha _{2}}\left\vert \tilde{\eta}\left( x+z\right) \right\vert \tilde{\mu%
}_{N^{-j}}\left( dz\right) dx \\
&&+\int \int_{\left\vert z\right\vert >1}\left\vert z\right\vert ^{\alpha
_{2}}\left\vert \tilde{\eta}\left( x+z\right) \right\vert \tilde{\mu}%
_{N^{-j}}\left( dz\right) dx+\int \int_{\left\vert z\right\vert
>1}\left\vert x\right\vert ^{\alpha _{2}}\left\vert \tilde{\eta}\left(
x\right) \right\vert \tilde{\mu}_{N^{-j}}\left( dz\right) dx \\
&&+\int \left\vert x\right\vert ^{\alpha _{2}}\left\vert \nabla \tilde{\eta}%
\left( x\right) \right\vert dx\int_{\left\vert z\right\vert >1}\chi _{\sigma
}\left( z\right) \tilde{\mu}_{N^{-j}}\left( dz\right) \\
&\leq &C,j\geq 1.
\end{eqnarray*}%
Similarly, by splitting we show that%
\begin{equation*}
\int \left\vert x\right\vert ^{\alpha _{2}}\left\vert L^{\pi }\tilde{\eta}%
_{0}\left( x\right) \right\vert dx<\infty .
\end{equation*}

(ii) Now we prove that\ 
\begin{equation*}
\int \left\vert \tilde{\Phi}_{j}\left( x+y\right) -\tilde{\Phi}_{j}\left(
x\right) \right\vert dx\leq A\left\vert y\right\vert ,\left\vert
y\right\vert \leq 1,j\geq 0.
\end{equation*}%
First obviously,%
\begin{eqnarray*}
\kappa \left( N^{-j}\right) \int |\tilde{\eta}\left( x+y\right) -\tilde{\eta}%
\left( x\right) |dx &\leq &\kappa \left( N^{-j}\right) \int
\int_{0}^{1}|\nabla \tilde{\eta}\left( x+sy\right) |\left\vert y\right\vert
dsdx \\
&\leq &\kappa \left( 1\right) l\left( 1\right) \left\vert y\right\vert \int
\left\vert \nabla \tilde{\eta}\left( x\right) \right\vert dx
\end{eqnarray*}%
and, similarly,%
\begin{equation*}
\int |\tilde{\eta}_{0}\left( x+y\right) -\tilde{\eta}_{0}\left( x\right)
|dx\leq \left\vert y\right\vert \int \left\vert \nabla \tilde{\eta}%
_{0}\left( x\right) \right\vert dx.
\end{equation*}%
Now, for $\left\vert y\right\vert \leq 1,$%
\begin{eqnarray*}
&&\int \left\vert L^{\tilde{\mu}_{N^{-j}}}\tilde{\eta}\left( x+y\right) -L^{%
\tilde{\mu}_{N^{-j}}}\tilde{\eta}\left( x\right) \right\vert dx \\
&\leq &\int \int_{0}^{1}\left\vert L^{\tilde{\mu}_{N^{-j}}}\nabla \tilde{\eta%
}\left( x+sy\right) \right\vert \left\vert y\right\vert dsdx\leq \left\vert
y\right\vert \int \int_{0}^{1}\left\vert L^{\tilde{\mu}_{N^{-j}}}\nabla 
\tilde{\eta}\left( x\right) \right\vert dsdx \\
&\leq &C\left\vert y\right\vert ,j\geq 1,
\end{eqnarray*}%
and, similarly,%
\begin{eqnarray*}
&&\int \left\vert L^{\pi ^{0}}\tilde{\eta}_{0}\left( x+y\right) -L^{\pi ^{0}}%
\tilde{\eta}_{0}\left( x\right) \right\vert dx \\
&\leq &C\left\vert y\right\vert ,y\in \mathbf{R}^{d}.
\end{eqnarray*}

(iii) We prove that%
\begin{equation}
\left\vert \mathcal{F}\tilde{\Phi}_{j}\left( \xi \right) \right\vert \leq
A,j\geq 1,\xi \in \mathbf{R}^{d}.  \label{fo1}
\end{equation}%
Indeed, by Lemma 7 in \cite{MPh}, there is a constant $C$ independent of $j$
so that 
\begin{equation*}
\left\vert \mathcal{F[}L^{\tilde{\mu}_{N^{-j}}}\tilde{\eta}]\left( \xi
\right) \right\vert =\left\vert \psi ^{\tilde{\mu}_{N^{-j}}}\left( \xi
\right) \eta \left( \xi \right) \right\vert \leq C,j\geq 1,\xi \in \mathbf{R}%
^{d}.
\end{equation*}%
Similarly,%
\begin{equation*}
\left\vert \mathcal{F[}L^{\pi }\tilde{\eta}_{0}]\left( \xi \right)
\right\vert =\left\vert \psi ^{\pi }\left( \xi \right) \eta _{0}\left( \xi
\right) \right\vert \leq C,\xi \in \mathbf{R}^{d}.
\end{equation*}

(iv) We have (\ref{f11}) by Corollary \ref{c1}, and (\ref{f12}) follows by
Corollary \ref{c2}. The estimate (\ref{f00}) follows, obviously, from (\ref%
{f12}).
\end{proof}

Now we prove Proposition \ref{pro1}.

\subsubsection{Proof of Proposition \protect\ref{pro1} (equivalent norms of
Besov spaces)}

Let $p\in \left( 1,\infty \right) ,f\in \mathcal{S}^{\prime }\left( \mathbf{R%
}^{d}\right) $ and $f\ast \varphi _{j}\in L_{p}\left( \mathbf{R}^{d}\right)
,j\geq 0.$ It is enough to prove that for each $s\in \mathbf{R}$ there are
constants $C,c$ (independent of $f$ and $j$) so that%
\begin{equation}
\left\vert J^{s}f\ast \varphi _{j}\right\vert _{L_{p}\left( \mathbf{R}%
^{d}\right) }\leq C\kappa \left( N^{-j}\right) ^{-s}\left\vert f\ast \varphi
_{j}\right\vert _{L_{p}\left( \mathbf{R}^{d}\right) },  \label{cf55}
\end{equation}%
and%
\begin{equation}
\kappa \left( N^{-j}\right) ^{-s}\left\vert f\ast \varphi _{j}\right\vert
_{L_{p}\left( \mathbf{R}^{d}\right) }\leq c\left\vert J^{s}f\ast \varphi
_{j}\right\vert _{L_{p}\left( \mathbf{R}^{d}\right) }.  \label{cf44}
\end{equation}%
First, denoting $\pi =\mu _{sym},$%
\begin{eqnarray*}
Jf\ast \varphi _{j} &=&\mathcal{F}^{-1}\left[ (1-\psi ^{\pi })\phi \left(
N^{-j}\cdot \right) \hat{f}\right] ,j\geq 1, \\
Jf\ast \varphi _{0} &=&\mathcal{F}^{-1}\left[ (1-\psi ^{\pi })\phi _{0}\hat{f%
}\right] ,
\end{eqnarray*}%
and for $\xi \in \mathbf{R}^{d},$%
\begin{eqnarray*}
(1-\psi ^{\pi }\left( \xi \right) )\phi \left( N^{-j}\xi \right) &=&(1-\psi
^{\pi _{N^{-j}}}\left( N^{-j}\xi \right) )\phi \left( N^{-j}\xi \right) \\
&=&\kappa \left( N^{-j}\right) ^{-1}\left[ (\kappa \left( N^{-j}\right)
-\psi ^{\tilde{\pi}_{N^{-j}}}\left( N^{-j}\xi \right) )\phi \left( N^{-j}\xi
\right) \right] .
\end{eqnarray*}%
Hence (\ref{cf55}) with $s=1$ follows by Lemma \ref{l2}. Applying repeatedly
(\ref{cf55}) with $s=1$, we see that (\ref{cf55}) holds for any integer $%
s\geq 0.$

On the other hand, for $j\geq 1$ (recall $\varphi =\mathcal{F}^{-1}\phi $), 
\begin{eqnarray*}
J^{-1}\varphi _{j} &=&\int_{0}^{\infty }e^{-t}\mathbf{E}\varphi _{j}\left(
\cdot +Z_{t}^{\pi }\right) dt=\mathcal{F}^{-1}\int_{0}^{\infty
}e^{-t}e^{\psi ^{\pi }\left( \xi \right) t}\phi \left( N^{-j}\xi \right) dt
\\
&=&\kappa \left( N^{-j}\right) \mathcal{F}^{-1}\int_{0}^{\infty }e^{-\kappa
\left( N^{-j}\right) t}e^{\psi ^{\tilde{\pi}_{N^{-j}}}\left( N^{-j}\xi
\right) t}\phi \left( N^{-j}\xi \right) dt \\
&=&\kappa \left( N^{-j}\right) N^{jd}\int_{0}^{\infty }e^{-\kappa \left(
N^{-j}\right) t}\mathbf{E}\varphi \left( N^{j}\cdot +Z_{t}^{j}\right) dt,
\end{eqnarray*}%
where $Z_{t}^{j}=Z_{t}^{\tilde{\pi}_{N^{-j}}}$ is the Levy process
associated to $L^{\tilde{\pi}_{N^{-j}}}$. For $j=0,$ 
\begin{equation*}
J^{-1}\varphi _{0}=\mathcal{F}^{-1}\int_{0}^{\infty }e^{-t}e^{\psi ^{\pi
}\left( \xi \right) t}\phi _{0}\left( \xi \right) dt=\int_{0}^{\infty }e^{-t}%
\mathbf{E}\varphi _{0}\left( \cdot +Z_{t}\right) dt,
\end{equation*}%
where $Z_{t}=Z_{t}^{\pi },t>0$. Hence (\ref{cf55}) with $s=-1$ follows by
Lemma \ref{l10}. Applying repeatedly (\ref{cf55}) with $s=-1$, we see that (%
\ref{cf55}) holds for any negative integer $s.$

Applying interpolation inequality we get (\ref{cf55}) for all $s\in \mathbf{R%
}$. Let $k\in \mathbf{Z=}\left\{ 0,\pm 1,\ldots \right\} $ and $s=(1-\theta
)k+\theta \left( k+1\right) \in \left( k,k+1\right) $ with $\theta \in
\left( 0,1\right) $.

According to Theorem 2.4.6 in \cite{fjs}, $H_{p}^{s}\left( \mathbf{R}%
^{d}\right) =\left[ H_{p}^{k},H_{p}^{k+1}\right] _{\theta }$, $H_{p}^{s}$ is
the complex interpolation space between $H_{p}^{k}$ and $H_{p}^{k+1}.$ By
Theorem 1.9.3 in \cite{tr1}, there is a constant $C,$ independent of $f,j$,
so that%
\begin{eqnarray*}
\left\vert J^{s}f\ast \varphi _{j}\right\vert _{L_{p}\left( \mathbf{R}%
^{d}\right) } &=&\left\vert f\ast \varphi _{j}\right\vert _{H_{p}^{s}\left( 
\mathbf{R}^{d}\right) }\leq C\kappa \left( N^{-j}\right) ^{-(1-\theta
)k-\theta \left( k+1\right) }\left\vert f\ast \varphi _{j}\right\vert
_{L_{p}\left( \mathbf{R}^{d}\right) } \\
&=&C\kappa \left( N^{-j}\right) ^{-s}\left\vert f\ast \varphi
_{j}\right\vert _{L_{p}\left( \mathbf{R}^{d}\right) }.
\end{eqnarray*}%
Now, we prove (\ref{cf44}). Let $f\ast \varphi _{j}\in L_{p}\left( \mathbf{R}%
^{d}\right) ,j\geq 0,s\in \mathbf{R}$. By (\ref{cf55}), $J^{s}f\ast \varphi
_{j}\in L_{p}\left( \mathbf{R}^{d}\right) ,s\in \mathbf{R}$, and%
\begin{equation*}
\left\vert f\ast \varphi _{j}\right\vert _{L_{p}\left( \mathbf{R}^{d}\right)
}=\left\vert J^{-s}J^{s}f\ast \varphi _{j}\right\vert _{L_{p}\left( \mathbf{R%
}^{d}\right) }\leq C\kappa \left( N^{-j}\right) ^{s}\left\vert J^{s}f\ast
\varphi _{j}\right\vert _{L_{p}\left( \mathbf{R}^{d}\right) }
\end{equation*}%
and (\ref{cf44}) follows. Thus for $s\in \mathbf{R},p,q\in \left( 1,\infty
\right) ,$ we have $\tilde{B}_{pq}^{\kappa ,N;s}\left( \mathbf{R}^{d}\right)
=B_{pq}^{\mu ,N;s}\left( \mathbf{R}^{d}\right) $ and the norms are
equivalent. In addition, for any $t,s\in \mathbf{R,}$ the mapping \thinspace 
$J^{t}:B_{pq}^{\mu ,N;s}\left( \mathbf{R}^{d}\right) \rightarrow B_{pq}^{\mu
,N;s-t}\left( \mathbf{R}^{d}\right) $ is an isomorphism.

\subsubsection{Proof of Proposition \protect\ref{pro2} (equivalent norms in $%
H_{p}^{s}$)}

We start with

\begin{lemma}
\label{l4}Let $p,q\in \left( 1,\infty \right) $. Then for each integer $m,s$
there is a constant $C$ so that for all $f=\left( f_{j}\right) \in
L_{p}\left( \mathbf{R}^{d},l_{r}\right) $,%
\begin{eqnarray}
&&\left\vert \left( \sum_{j=0}^{\infty }|\kappa \left( N^{-j}\right)
^{m}J^{s}f_{j}\ast \varphi _{j}|^{q}\right) ^{1/q}\right\vert _{L_{p}\left( 
\mathbf{R}^{d}\right) }  \label{cf} \\
&\leq &C\left\vert \left( \sum_{j=0}^{\infty }\left\vert \kappa \left(
N^{-j}\right) ^{m-s}f_{j}\ast \varphi _{j}\right\vert ^{q}\right)
^{1/q}\right\vert _{L_{p}\left( \mathbf{R}^{d}\right) },  \notag \\
&&\left\vert \left( \sum_{j=0}^{\infty }|\kappa \left( N^{-j}\right)
^{m}f_{j}\ast \varphi _{j}|^{q}\right) ^{1/q}\right\vert _{L_{p}\left( 
\mathbf{R}^{d}\right) }  \label{cf1} \\
&\leq &C\left\vert \left( \sum_{j=0}^{\infty }\left\vert \kappa \left(
N^{-j}\right) ^{m+s}J^{s}f_{j}\ast \varphi _{j}\right\vert ^{q}\right)
^{1/q}\right\vert _{L_{p}\left( \mathbf{R}^{d}\right) }.  \notag
\end{eqnarray}

If $q=2$, then (\ref{cf}), (\ref{cf1}) hold for a separable Hilbert space $%
\mathbf{G}$-valued sequences $f=\left( f_{j}\right) \in L_{p}\left( \mathbf{R%
}^{d},l_{r}\left( \mathbf{G}\right) \right) $ (simply absolute values in (%
\ref{cf}), (\ref{cf1}) are replaced by $\mathbf{G}$-norms).
\end{lemma}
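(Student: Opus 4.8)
The plan is to reduce the statement to the two unit cases $s=1$ and $s=-1$ of (\ref{cf}). Recall that $J=I-L^{\mu_{sym}}$ is the Fourier multiplier with symbol $1-\psi^{\mu_{sym}}$, so $J$ commutes with convolution by $\varphi_j$; together with the fact that $m,s$ are integers, this lets me compose the unit estimates. Once (\ref{cf}) is proved for $s=1$ and for $s=-1$ with an arbitrary integer weight exponent $m$, a general positive (resp. negative) integer $s$ is reached by iterating the $s=1$ (resp. $s=-1$) estimate $|s|$ times, each application shifting the weight exponent by one unit (using $J^{s}f_j\ast\varphi_j=J(J^{s-1}f_j\ast\varphi_j)$). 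The reverse inequality (\ref{cf1}) is then read off from (\ref{cf}) by the substitution $f_j\mapsto J^{s}f_j$ together with $s\mapsto-s$: applying (\ref{cf}) with exponent $-s$ to the sequence $(J^{s}f_j)$ and using $J^{-s}J^{s}=\mathrm{Id}$ turns its two sides into exactly the two sides of (\ref{cf1}). This is legitimate because (\ref{cf}) will have been established for every integer $s$ of either sign, and throughout the argument uses $\mu_{sym}$ in the role of $\mu$ in Lemmas \ref{l2} and \ref{l10}, exactly as in the proof of Proposition \ref{pro1}.

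For the case $s=1$ I would use $\tilde\phi\phi=\phi$ (equivalently $\varphi_k=\tilde\varphi_k\ast\varphi_k$, Remark \ref{rem3}) to factor, for $j\ge1$,
\[
\kappa\!\left(N^{-j}\right)^{m}\!\left(1-\psi^{\mu_{sym}}\right)\phi\!\left(N^{-j}\cdot\right)\hat f_j=\Big[\kappa\!\left(N^{-j}\right)\!\left(1-\psi^{\mu_{sym}}\right)\tilde\phi\!\left(N^{-j}\cdot\right)\Big]\Big[\kappa\!\left(N^{-j}\right)^{m-1}\phi\!\left(N^{-j}\cdot\right)\hat f_j\Big].
\]
The first bracket is precisely $\mathcal{F}K_j$ for the kernel $K_j$ of Lemma \ref{l2} with $\zeta=\tilde\phi$ and $\zeta_0=\tilde\phi_0$ (built from $\mu_{sym}$), while the second is $\mathcal{F}\!\left[\kappa(N^{-j})^{m-1}f_j\ast\varphi_j\right]$. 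Taking inverse transforms, this reads $\kappa(N^{-j})^{m}Jf_j\ast\varphi_j=K_j\ast\big(\kappa(N^{-j})^{m-1}f_j\ast\varphi_j\big)$, so the bound (\ref{f12}) of Lemma \ref{l2}, applied to $g_j=\kappa(N^{-j})^{m-1}f_j\ast\varphi_j$, gives (\ref{cf}) with $s=1$. For $s=-1$ I would instead take $K_j$ to be the kernel of Lemma \ref{l10} with $\zeta=\tilde\phi$, $\zeta_0=\tilde\phi_0$, whose Fourier transform (computed as in the proof of Proposition \ref{pro1}) equals $\mathcal{F}K_j=\kappa(N^{-j})^{-1}\big(1-\psi^{\mu_{sym}}\big)^{-1}\tilde\phi(N^{-j}\cdot)$. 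The same factorization through $\tilde\phi\phi=\phi$ yields $\kappa(N^{-j})^{m}J^{-1}f_j\ast\varphi_j=K_j\ast\big(\kappa(N^{-j})^{m+1}f_j\ast\varphi_j\big)$, and the bound (\ref{ff11}) of Lemma \ref{l10}, applied to $g_j=\kappa(N^{-j})^{m+1}f_j\ast\varphi_j$, gives (\ref{cf}) with $s=-1$.

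The Hilbert-space valued statement for $q=2$ needs no extra argument: Lemmas \ref{l2} and \ref{l10} are stated for $\mathbf{G}$-valued sequences when $r=2$, so the identities above and the corresponding bounds apply verbatim with the absolute values replaced by $\mathbf{G}$-norms. I do not expect any genuine analytic obstacle, since the entire Calder\'on--Zygmund/Mikhlin weight is carried by Lemmas \ref{l2} and \ref{l10}. The points requiring care are purely bookkeeping: verifying that the bracket peeled off is exactly $\mathcal{F}K_j$ for the prescribed $\zeta=\tilde\phi$, $\zeta_0=\tilde\phi_0$; handling the low-frequency term $j=0$ (where $\varphi_0$ is not supported away from the origin) through $\tilde\phi_0\phi_0=\phi_0$ and the separate $\zeta_0$-branch of both lemmas, noting that the weight $\kappa(N^{0})=\kappa(1)$ there is a fixed constant absorbed into $C$; and tracking the weight exponent $m$ (shifting by $\mp1$) correctly through the iteration that produces general integer $s$. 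I expect all of these to be routine.
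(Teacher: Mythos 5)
Your proof is correct and takes essentially the same route as the paper's: both rest on the scaling identity $\psi ^{\tilde{\mu}_{N^{-j}}}\left( N^{-j}\xi \right) =\kappa \left( N^{-j}\right) \psi ^{\mu _{sym}}\left( \xi \right) $, which identifies $\kappa \left( N^{-j}\right) ^{m}J^{\pm 1}f_{j}\ast \varphi _{j}$ with a convolution against the kernels of Lemma \ref{l2} (for $s=1$) and Lemma \ref{l10} (for $s=-1$), followed by iteration over integer $s$ and the substitution $f_{j}\mapsto J^{s}f_{j}$, $s\mapsto -s$ to obtain (\ref{cf1}) from (\ref{cf}). The only difference is bookkeeping: the paper takes $\zeta =\phi $ and applies (\ref{f11})/(\ref{ff11}) to the weighted $f_{j}$ directly (so $\tilde{\zeta}_{j}=\varphi _{j}$ carries the localization), whereas you take $\zeta =\tilde{\phi}$ and apply the final bounds (\ref{f12})/(\ref{ff11}) to $f_{j}\ast \varphi _{j}$ --- the same factorization that the proofs of Lemmas \ref{l2} and \ref{l10} perform internally via $K_{j}\ast f=\tilde{K}_{j}\ast f\ast \tilde{\zeta}_{j}$.
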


\begin{proof}
Denote $\pi =\mu _{sym}$. Let $K_{j}\left( x\right) =N^{jd}\Phi _{j}\left(
N^{j}x\right) ,j\geq 0$, with \ 
\begin{eqnarray*}
\Phi _{j}\left( x\right) &=&\kappa \left( N^{-j}\right) \varphi \left(
x\right) -L^{\tilde{\pi}_{N^{-j}}}\varphi \left( x\right) ,x\in \mathbf{R}%
^{d},j\geq 1, \\
\Phi _{0}\left( x\right) &=&\varphi _{0}\left( x\right) -L^{\pi }\varphi
_{0}\left( x\right) ,x\in \mathbf{R}^{d}.
\end{eqnarray*}

For $f\in \tilde{C}^{\infty }\left( \mathbf{R}^{d}\right) $,%
\begin{eqnarray*}
Jf\ast \varphi _{j} &=&\mathcal{F}^{-1}\left[ (1-\psi ^{\pi })\phi \left(
N^{-j}\cdot \right) \hat{f}\right] \\
&=&\mathcal{F}^{-1}\left[ (\kappa \left( N^{-j}\right) -\psi ^{\tilde{\pi}%
_{N^{-j}}}\left( N^{-j}\cdot \right) )\kappa \left( N^{-j}\right) ^{-1}\phi
\left( N^{-j}\cdot \right) \hat{f}\right] \\
&=&\kappa \left( N^{-j}\right) ^{-1}K_{j}\ast f,j\geq 1, \\
Jf\ast \varphi _{0} &=&K_{0}\ast f.
\end{eqnarray*}%
By Lemma \ref{l2}, for $f=\left( f_{j}\right) $ with $f_{j}\in \tilde{C}%
^{\infty }\left( \mathbf{R}^{d}\right) ,$%
\begin{eqnarray*}
\left\vert \left( \sum_{j=0}^{\infty }|\kappa \left( N^{-j}\right)
^{m}Jf_{j}\ast \varphi _{j}|^{q}\right) ^{1/q}\right\vert _{L_{p}\left( 
\mathbf{R}^{d}\right) } &=&\left\vert \left( \sum_{j=0}^{\infty }|\kappa
\left( N^{-j}\right) ^{m-1}K_{j}\ast f_{j}|^{q}\right) ^{1/q}\right\vert
_{L_{p}\left( \mathbf{R}^{d}\right) } \\
&\leq &C\left\vert \left( \sum_{j=0}^{\infty }\left\vert \kappa \left(
N^{-j}\right) ^{m-1}f_{j}\ast \varphi _{j}\right\vert ^{q}\right)
^{1/q}\right\vert _{L_{p}\left( \mathbf{R}^{d}\right) }.
\end{eqnarray*}%
Applying this inequality repeatedly we find that (\ref{cf}) holds for any $%
s\in \mathbf{N,}m\in \mathbf{Z.}$

Let $Z_{t}^{j}$ be the Levy process associated to $L^{\tilde{\pi}%
_{N^{-j}}},j\geq 1$, and $Z_{t}$ be the Levy process associated to $L^{\pi }$%
. Let $K_{j}\left( x\right) =N^{jd}\Phi _{j}\left( N^{j}x\right) ,j\geq
0,x\in \mathbf{R}^{d},$ with 
\begin{equation*}
\Phi _{j}\left( x\right) =\int_{0}^{\infty }e^{-\kappa \left( N^{-j}\right)
t}\mathbf{E}\varphi \left( x+Z_{t}^{j}\right) dt,j\geq 1,x\in \mathbf{R}^{d},
\end{equation*}%
and 
\begin{equation*}
\Phi _{0}\left( x\right) =\int_{0}^{\infty }e^{-t}\mathbf{E}\varphi
_{0}\left( x+Z_{t}\right) dt,x\in \mathbf{R}^{d}.
\end{equation*}

Then for $f\in \tilde{C}^{\infty }\left( \mathbf{R}^{d}\right) $,%
\begin{eqnarray*}
J^{-1}f\ast \varphi _{j} &=&\mathcal{F}^{-1}\left\{ \int_{0}^{\infty
}e^{-t}\exp \left\{ \psi ^{\pi }t\right\} \phi \left( N^{-j}\cdot \right) 
\hat{f}dt\right\} \\
&=&\mathcal{F}^{-1}\left[ \kappa \left( N^{-j}\right) \int_{0}^{\infty
}e^{-\kappa \left( N^{-j}\right) t}\exp \left\{ \psi ^{\tilde{\pi}%
_{N^{-j}}}\left( N^{-j}\cdot \right) t\right\} \phi \left( N^{-j}\cdot
\right) \hat{f}dt\right] \\
&=&\kappa \left( N^{-j}\right) K_{j}\ast f,\,j\geq 1, \\
J^{-1}f\ast \varphi _{0} &=&K_{0}\ast f.
\end{eqnarray*}%
By Lemma \ref{l10}, for $f=\left( f_{j}\right) $ with $f_{j}\in \tilde{C}%
^{\infty }\left( \mathbf{R}^{d}\right) ,$ 
\begin{equation*}
\left\vert \left( \sum_{j=0}^{\infty }|\kappa \left( N^{-j}\right)
^{m}J^{-1}f_{j}\ast \varphi _{j}|^{q}\right) ^{1/q}\right\vert _{L_{p}\left( 
\mathbf{R}^{d}\right) }\leq C\left\vert \left( \sum_{j=0}^{\infty
}\left\vert \kappa \left( N^{-j}\right) ^{m+1}f_{j}\ast \varphi
_{j}\right\vert ^{q}\right) ^{1/q}\right\vert _{L_{p}\left( \mathbf{R}%
^{d}\right) }.
\end{equation*}%
Applying this inequality repeatedly we find that (\ref{cf}) holds for any
negative integer $s$\textbf{\ }and\textbf{\ }$m\in \mathbf{Z.}$
\end{proof}

First we prove that $\tilde{H}_{p}^{\kappa ,N;s}\left( \mathbf{R}^{d}\right)
=H_{p}^{\mu ;s}\left( \mathbf{R}^{d}\right) $ and the norms are equivalent
in the scalar case, i.e. the sequence $\left( f_{k}\right) _{k\geq 0}$ has
one nonzero component $f_{1}=f$. If $s\in \mathbf{Z}$ ($s$ is an integer),
then by well known characterization of $L_{p}$ and Lemma \ref{l4},%
\begin{eqnarray}
&&\left\vert f\right\vert _{H_{p}^{s}\left( \mathbf{R}^{d}\right) }
\label{p1} \\
&=&\left\vert J^{s}f\right\vert _{L_{p}\left( \mathbf{R}^{d}\right) }\leq
C\left\vert \left( \sum_{j=0}^{\infty }\left\vert J^{s}f\ast \varphi
_{j}\right\vert ^{2}\right) ^{1/2}\right\vert _{L_{p}\left( \mathbf{R}%
^{d}\right) }  \notag \\
&\leq &C\left\vert \left( \sum_{j=0}^{\infty }\left\vert \kappa \left(
N^{-j}\right) ^{-s}f\ast \varphi _{j}\right\vert ^{2}\right)
^{1/2}\right\vert _{L_{p}\left( \mathbf{R}^{d}\right) },f\in \tilde{C}%
^{\infty }\left( \mathbf{R}^{d}\right) .  \label{p2}
\end{eqnarray}%
On the other hand, by Lemma \ref{l4} and characterization of $L_{p}$ again,%
\begin{eqnarray}
&&\left\vert \left( \sum_{j=0}^{\infty }\left\vert \kappa \left(
N^{-j}\right) ^{-s}f\ast \varphi _{j}\right\vert ^{2}\right)
^{1/2}\right\vert _{L_{p}\left( \mathbf{R}^{d}\right) }  \label{p3} \\
&\leq &C\left\vert \left( \sum_{j=0}^{\infty }\left\vert J^{s}f\ast \varphi
_{j}\right\vert ^{2}\right) ^{1/2}\right\vert _{L_{p}\left( \mathbf{R}%
^{d}\right) }  \notag \\
&\leq &C\left\vert J^{s}f\right\vert _{L_{p}\left( \mathbf{R}^{d}\right) }.
\label{p4}
\end{eqnarray}%
for all $f\in \tilde{C}^{\infty }\left( \mathbf{R}^{d}\right) .$

We use interpolation to prove equivalence for all $s\in \mathbf{R}$. Assume $%
s\in \left( m,m+1\right) $ and $s=\left( 1-\theta \right) m+\theta \left(
m+1\right) $ with $m\in \mathbf{Z}.$ Let 
\begin{equation*}
a_{j}^{0}=\kappa \left( N^{-j}\right) ^{-m},j\geq 0,a_{j}^{1}=\kappa \left(
N^{-j}\right) ^{-(m+1)},j\geq 0,a_{j}^{\theta }=\kappa \left( N^{-j}\right)
^{-s},j\geq 0.
\end{equation*}%
Set%
\begin{equation*}
l_{p}^{k}=\left\{ x=\left( x_{j}\right) :\left\vert x\right\vert
_{a^{k},p}=\left( \sum_{j}a_{j}^{k}\left\vert x_{j}\right\vert ^{p}\right)
^{1/p}<\infty \right\} ,k=0,1,\theta .
\end{equation*}

By Theorem 2.4.6 in \cite{fjs} ($\psi ^{\pi }\,$is continuous negative
definite function), 
\begin{equation}
H_{p}^{l}=\left[ H_{p}^{m},H_{p}^{m+1}\right] _{\theta },  \label{if1}
\end{equation}%
the complex interpolation space between $H_{p}^{m}$ and $H_{p}^{m+1}$. By
Theorem 5.5.3 in \cite{bl}, $l_{2}^{\theta }=\left[ l_{2}^{0},l_{2}^{1}%
\right] _{\theta }$, complex interpolation space between $l_{p}^{0}$ and $%
l_{p}^{1}$. Hence by Theorem 1.18.4 in \cite{tr1},%
\begin{equation}
\left[ L_{p}\left( \mathbf{R}^{d};l_{2}^{0}\right) ,L_{p}\left( \mathbf{R}%
^{d};l_{2}^{1}\right) \right] _{\theta }=L_{p}\left( \mathbf{R}%
^{d};l_{2}^{\theta }\right) ,  \label{if2}
\end{equation}%
the complex interpolation space between $L_{p}\left( \mathbf{R}%
^{d};l_{2}^{0}\right) $ and $L_{p}\left( \mathbf{R}^{d};l_{2}^{1}\right) .$

Consider the mapping 
\begin{equation*}
S:H_{p}^{m}\left( \mathbf{R}^{d}\right) \backepsilon f\mapsto \left( f\ast
\varphi _{j}\right) _{j\geq 0}\in L_{p}\left( \mathbf{R}^{d};l_{2}^{0}%
\right) .
\end{equation*}%
According to Lemma \ref{l4} (see (\ref{p3}), (\ref{p4})), $S:H_{p}^{m}\left( 
\mathbf{R}^{d}\right) \rightarrow L_{p}\left( \mathbf{R}^{d};l_{2}^{0}%
\right) $ is continuous and $S$ maps continuously $H_{p}^{m+1}$ into $%
L_{p}\left( \mathbf{R}^{d};l_{2}^{1}\right) $ (note $H_{p}^{m+1}\subseteq
H_{p}^{m}$). Consider the continuous mapping 
\begin{equation*}
R:L_{p}\left( \mathbf{R}^{d};l_{2}^{0}\right) \backepsilon f=\left(
f_{j}\right) _{j\geq 0}\mapsto \sum_{j=0}^{\infty }f_{j}\ast \tilde{\varphi}%
_{j}\in H_{p}^{m}\left( \mathbf{R}^{d}\right) .
\end{equation*}%
Indeed, if $f=\left( f_{j}\right) _{j\geq 0}\in L_{p}\left( \mathbf{R}%
^{d};l_{2}^{0}\right) $, then $g=\sum_{j=0}^{\infty }f_{j}\ast \tilde{\varphi%
}_{j}\in H_{p}^{m}\left( \mathbf{R}^{d}\right) $, and%
\begin{eqnarray*}
g\ast \varphi _{j} &=&\sum_{k=-2}^{2}f_{j+k}\ast \tilde{\varphi}_{j+k}\ast
\varphi _{j},j\geq 2, \\
g\ast \varphi _{1} &=&\sum_{k=-1}^{2}f_{1+k}\ast \tilde{\varphi}_{1+k}\ast
\varphi _{1},g\ast \varphi _{0}=\sum_{k=0}^{2}f_{k}\ast \tilde{\varphi}%
_{k}\ast \varphi _{0}.
\end{eqnarray*}%
Let%
\begin{eqnarray*}
\tilde{f}_{j} &=&\sum_{k=-2}^{2}f_{j+k}\ast \tilde{\varphi}_{j+k},j\geq 2, \\
\tilde{f}_{1} &=&\sum_{k=-1}^{2}f_{1+k}\ast \tilde{\varphi}_{1+k},\tilde{f}%
_{0}=\sum_{k=0}^{2}f_{k}\ast \tilde{\varphi}_{k}
\end{eqnarray*}%
Hence by Corollary \ref{c2},%
\begin{eqnarray*}
&&\left\vert \left( \sum_{j=0}^{\infty }\left\vert \kappa \left(
N^{-j}\right) ^{-m}g\ast \tilde{\varphi}_{j}\right\vert ^{2}\right)
^{1/2}\right\vert _{L_{p}\left( \mathbf{R}^{d}\right) } \\
&=&\left\vert \left( \sum_{j=0}^{\infty }\left\vert \kappa \left(
N^{-j}\right) ^{-m}\tilde{f}_{j}\ast \tilde{\varphi}_{j}\right\vert
^{2}\right) ^{1/2}\right\vert _{L_{p}\left( \mathbf{R}^{d}\right) }\leq
C\left\vert \left( \sum_{j=0}^{\infty }\left\vert \kappa \left(
N^{-j}\right) ^{-m}\tilde{f}_{j}\right\vert ^{2}\right) ^{1/2}\right\vert
_{L_{p}\left( \mathbf{R}^{d}\right) } \\
&\leq &C\left\vert \left( \sum_{j=0}^{\infty }\left\vert \kappa \left(
N^{-j}\right) ^{-m}f_{j}\ast \tilde{\varphi}_{j}\right\vert ^{2}\right)
^{1/2}\right\vert _{L_{p}\left( \mathbf{R}^{d}\right) }\leq C\left\vert
\left( \sum_{j=0}^{\infty }\left\vert \kappa \left( N^{-j}\right)
^{-m}f_{j}\right\vert ^{2}\right) ^{1/2}\right\vert _{L_{p}\left( \mathbf{R}%
^{d}\right) },
\end{eqnarray*}%
i.e. the mapping $R:L_{p}\left( \mathbf{R}^{d};l_{2}^{0}\right) \rightarrow
H_{p}^{m}\left( \mathbf{R}^{d}\right) $ is continuous. Similarly we prove
that $R:L_{p}\left( \mathbf{R}^{d};l_{2}^{1}\right) \rightarrow
H_{p}^{m+1}\left( \mathbf{R}^{d}\right) $ is continuous. Obviously, $RS=I$
(identity map on $H_{p}^{m}\left( \mathbf{R}^{d}\right) $). Now by (\ref{if1}%
), (\ref{if2}) and Theorem 1.2.4 in \cite{tr1}, $S:H_{p}^{s}\left( \mathbf{R}%
^{d}\right) \rightarrow L_{p}\left( \mathbf{R}^{d};l_{2}^{\theta }\right) $
is isomorphic mapping onto a subspace of $L_{p}\left( \mathbf{R}%
^{d};l_{2}^{\theta }\right) $, i.e., there are constants $0<c_{1}<c_{2}$ so
that%
\begin{equation*}
c_{1}\left\vert Sv\right\vert _{L_{p}\left( \mathbf{R}^{d};l_{2}^{\theta
}\right) }\leq \left\vert v\right\vert _{H_{p}^{s}\left( \mathbf{R}%
^{d}\right) }\leq c_{2}\left\vert Sv\right\vert _{L_{p}\left( \mathbf{R}%
^{d};l_{2}^{\theta }\right) }.
\end{equation*}%
Hence (\ref{p1})-(\ref{p4}) hold for any $s\in \mathbf{R}$.

Now we prove that $\tilde{H}_{p}^{\kappa ,N;s}\left( \mathbf{R}%
^{d};l_{2}\right) =H_{p}^{\mu ;s}\left( \mathbf{R}^{d};l_{2}\right) $ and
the norms are equivalent by reducing it to a scalar case. Let $f=\left(
f_{k}\right) _{k\geq 0}$ with $f_{k}\in \tilde{C}^{\infty }\left( \mathbf{R}%
^{d}\right) $ and only finite number of $f_{k}$ be nonzero$.$ Let $\zeta
_{k},k\geq 0,$ be a sequence of independent standard normal r.v. and%
\begin{equation*}
\xi \left( x\right) =\sum_{k=0}^{\infty }\zeta _{k}f_{k}\left( x\right)
,x\in \mathbf{R}^{d}.
\end{equation*}%
According to (\ref{p1})-(\ref{p4}), there are constants $0<c_{1}<c_{2}$ so
that $\mathbf{P}$-a.s.%
\begin{equation*}
c_{1}\left\vert \xi \right\vert _{\tilde{H}_{p}^{\kappa ,N;s}\left( \mathbf{R%
}^{d}\right) }^{p}\leq \left\vert \xi \right\vert _{H_{p}^{\mu ;s}\left( 
\mathbf{R}^{d}\right) }^{p}\leq c_{2}\left\vert \xi \right\vert _{\tilde{H}%
_{p}^{\kappa ,N;s}\left( \mathbf{R}^{d}\right) }^{p},
\end{equation*}%
and%
\begin{equation*}
c_{1}\mathbf{E}\left\vert \xi \right\vert _{\tilde{H}_{p}^{\kappa
,N;s}\left( \mathbf{R}^{d}\right) }^{p}\leq \mathbf{E}\left\vert \xi
\right\vert _{H_{p}^{\mu ;s}\left( \mathbf{R}^{d}\right) }^{p}\leq c_{2}%
\mathbf{E}\left\vert \xi \right\vert _{\tilde{H}_{p}^{\kappa ,N;s}\left( 
\mathbf{R}^{d}\right) }^{p}.
\end{equation*}%
All the equivalences follow easily from Lemma \ref{gm}.

\subsubsection{Description of function spaces using differences}

We will show that the spaces introduced above belong to the class of spaces
of generalized smoothness studied in \cite{ka}, \cite{kali}, \cite{fl} (see
references therein as well).

\begin{lemma}
\label{ll1}Let $\kappa $ be a scaling function with a scaling factor $l$.
Let $N>1$ be an integer such that $l\left( N^{-1}\right) <1$. Let $%
s>0,\alpha _{k}=\kappa \left( N^{-k}\right) ^{-s},k\geq 0.$

a) There is a constant $\bar{c}>1$ and $0<\theta _{1}\leq \theta _{0}$ so
that%
\begin{eqnarray*}
\bar{c}^{-1}(r^{\theta _{1}}\wedge r^{\theta _{0}}) &\leq &\kappa \left(
r\right) \leq \bar{c}(r^{\theta _{0}}\vee r^{\theta _{1}}),r\geq 0, \\
l\left( r\right) &\geq &\bar{c}^{-1}(r^{\theta _{1}}\wedge r^{\theta
_{0}}),r\geq 0, \\
\left[ \gamma \left( r\right) ^{\theta _{1}}\wedge \gamma \left( r\right)
^{\theta _{0}}\right] &\leq &\bar{c}r,r\geq 0,
\end{eqnarray*}%
where $\gamma \left( r\right) =\inf \left\{ t:l\left( t\right) \geq
r\right\} ,r>0.$

b) There are constants $c,C>0$ and a positive integer $k_{0}$ so that 
\begin{eqnarray*}
\alpha _{k+1} &\leq &C\alpha _{k}\text{ \ for all }k, \\
\alpha _{k} &\geq &c\alpha _{m}\text{ \ for all }k\geq m\text{,}
\end{eqnarray*}%
and%
\begin{equation*}
\alpha _{k}\geq 2\alpha _{m}\text{ for all }k\geq m+k_{0}.
\end{equation*}%
Moreover, for any $q>0,$%
\begin{equation*}
\sum_{k=0}^{\infty }\alpha _{k}^{-q}<\infty .
\end{equation*}

c) Let \textbf{D}$\left( \kappa ,l\right) $ and \textbf{B}$\left( \kappa
,l\right) $ hold for $\pi \in \mathfrak{A}^{\sigma }$. For each $\sigma
^{\prime }\geq \alpha _{1}$ there is a constant $c$ so that
\end{lemma}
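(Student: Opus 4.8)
The plan is to extract everything from the single scaling inequality $\kappa(\varepsilon r)\leq l(\varepsilon)\kappa(r)$, the monotonicity of $l$, and the normalization $l(N^{-1})<1$; parts b) and c) will then be essentially bookkeeping on top of part a), which is where the real work lies.

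For part a), the point is that the scaling inequality iterates in two directions. Applying it with $\varepsilon=N^{-1}$ repeatedly gives $\kappa(N^{-k}r)\leq l(N^{-1})^{k}\kappa(r)$, while applying it with $\varepsilon=N$ gives $\kappa(N^{-(j-1)}r)\leq l(N)\kappa(N^{-j}r)$, hence $\kappa(N^{-k}r)\geq l(N)^{-k}\kappa(r)$. Here I would first record that $l(N)l(N^{-1})\geq 1$ — which drops out of $\kappa(r)\leq l(N^{-1})\kappa(Nr)\leq l(N^{-1})l(N)\kappa(r)$ — so that $l(N)>1$ and the two exponents $\theta_{1}=-\log_{N}l(N^{-1})>0$ and $\theta_{0}=\log_{N}l(N)\geq\theta_{1}$ are well defined with $0<\theta_{1}\leq\theta_{0}$. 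For arbitrary $0<r\leq 1$ I would sandwich $N^{-(k+1)}<r\leq N^{-k}$ and write $r=\varepsilon N^{-k}$ with $\varepsilon=rN^{k}\in(N^{-1},1]$; feeding the upward iteration together with $l(\varepsilon)\leq l(1)$ into $\kappa(r)=\kappa(\varepsilon N^{-k})$ yields $\kappa(r)\leq \bar{c}\,r^{\theta_{1}}$, and feeding the downward iteration with $l(\varepsilon^{-1})\leq l(N)$ yields $\kappa(r)\geq \bar{c}^{-1}r^{\theta_{0}}$. The regime $r\geq 1$ is symmetric, and the stated bounds for $l$ and for the generalized inverse $\gamma$ follow by substituting these power bounds into their definitions. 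I expect the lower bound on $\kappa$ near $0$ — which can only be reached by running the scaling inequality backwards through $l(N)$ rather than forward through $l(N^{-1})$ — to be the single delicate step, and it is precisely the reason $\theta_{0}$ and $\theta_{1}$ must be allowed to differ.

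For part b), with $\alpha_{k}=\kappa(N^{-k})^{-s}$ and $s>0$, all four claims are immediate consequences of the one-step estimates already inside a). From $\kappa(N^{-k})\leq l(N^{-1})^{k-m}\kappa(N^{-m})\leq \kappa(N^{-m})$ for $k\geq m$ I get $\alpha_{k}\geq\alpha_{m}$ (so monotonicity holds with $c=1$), and raising the same inequality to the power $-s$ gives $\alpha_{k}\geq l(N^{-1})^{-s(k-m)}\alpha_{m}$, so choosing $k_{0}$ with $l(N^{-1})^{-sk_{0}}\geq 2$ settles the doubling-after-$k_{0}$-steps claim. From $\kappa(N^{-(k+1)})\geq l(N)^{-1}\kappa(N^{-k})$ I get $\alpha_{k+1}\leq l(N)^{s}\alpha_{k}$, i.e. $C=l(N)^{s}$. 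Finally $\sum_{k}\alpha_{k}^{-q}=\sum_{k}\kappa(N^{-k})^{sq}\leq\bar{c}^{sq}\sum_{k}N^{-k\theta_{1}sq}<\infty$ by the upper power bound of a).

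For part c), which now invokes \textbf{D}$(\kappa,l)$ and \textbf{B}$(\kappa,l)$ for $\pi$, I read the asserted inequality as a uniform-in-$R$ moment bound for the rescaled measure $\tilde{\pi}_{R}=\kappa(R)\pi_{R}$ of the shape $\int_{|z|\leq 1}|z|^{\sigma'}\tilde{\pi}_{R}(dz)\leq c$ for every $\sigma'\geq\alpha_{1}$ (equivalently $\int_{|y|\leq R}|y|^{\sigma'}\pi(dy)\leq c\,\kappa(R)^{-1}R^{\sigma'}$). This step is short: for $|z|\leq 1$ and $\sigma'\geq\alpha_{1}$ one has $|z|^{\sigma'}\leq|z|^{\alpha_{1}}$ pointwise, so the integral is dominated by $\int_{|z|\leq 1}|z|^{\alpha_{1}}\tilde{\pi}_{R}(dz)\leq N_{0}$ straight from \textbf{B}$(\kappa,l)$, giving $c=N_{0}$. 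Should the intended conclusion instead be the companion symbol growth bound $|\psi^{\tilde{\pi}_{R}}(\xi)|\leq c\,|\xi|^{\sigma'}$ for $|\xi|\leq 1$, I would split the defining integral of $\psi^{\tilde{\pi}_{R}}$ at $|z|=1$, estimate the integrand by $c(|\xi|^{2}|z|^{2}\wedge|\xi||z|)$, and control the small-jump part by the $\alpha_{1}$-moment just obtained and the large-jump part by the $\alpha_{2}$-moment from \textbf{B}, with the power bounds of a) supplying the uniformity in $R$.
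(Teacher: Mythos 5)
Your parts a) and b) are correct and follow essentially the paper's own route: the same exponents $\theta_{0}=\log_{N}l(N)$ and $\theta_{1}=-\log_{N}l(N^{-1})$, the same two-sided iteration of $\kappa(\varepsilon r)\leq l(\varepsilon)\kappa(r)$, and the same dyadic sandwiching; your variants (the observation $l(N)l(N^{-1})\geq 1$, taking $c=1$ in the monotonicity claim, and choosing $k_{0}$ via $l(N^{-1})^{-sk_{0}}\geq 2$, which handles every $s>0$ at once where the paper's display covers $s\geq1$ and says ``similarly'') are harmless and slightly cleaner.

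Part c) is where the proposal has a genuine gap: you guessed the wrong conclusion. The display completing part c) in the paper is the quasi-monotonicity of $r\mapsto r^{\sigma'}/\kappa(r)$ on $(0,1]$, namely
\begin{equation*}
\frac{r^{\sigma'}}{\kappa(r)}\leq c\,\frac{A^{\sigma'}}{\kappa(A)}\quad\text{for all }r\leq A\leq 1,\ \sigma'\geq\alpha_{1},
\end{equation*}
together with $A^{\sigma'}/\kappa(A)\rightarrow 0$ as $A\rightarrow 0$ for $\sigma'>\sigma$. Your uniform moment bound $\int_{|z|\leq 1}|z|^{\sigma'}\tilde{\pi}_{R}(dz)\leq N_{0}$ is correct and is indeed one ingredient of the paper's proof, but by itself it cannot yield this claim: it only bounds $\kappa$ from above (it is used at the larger scale $A$) and gives no control of $\kappa(r)$ from below at the smaller scale $r$. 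The paper's argument rests on the identity
\begin{equation*}
\int_{|y|\leq r}|y|^{\sigma'}\,d\pi=\frac{r^{\sigma'}}{\kappa(r)}\int_{|z|\leq 1}|z|^{\sigma'}\,d\tilde{\pi}_{r},
\end{equation*}
whose right-hand integral is bounded \emph{below} by $c_{0}:=\int_{|z|\leq 1}|z|^{\sigma'}\,d\mu^{0}$ using assumption \textbf{D}$(\kappa,l)$ (the domination $\tilde{\pi}_{r}\geq 1_{\{|y|\leq 1\}}\mu^{0}$; here $c_{0}>0$ since $\mu^{0}\neq 0$, and $c_{0}\leq N_{0}$ by \textbf{B} once $\sigma'\geq\alpha_{1}$), and bounded \emph{above} by $N_{0}$ at the scale $A$ by \textbf{B}$(\kappa,l)$ (your step). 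The two scales are then bridged by the monotonicity of the truncated moment $r\mapsto\int_{|y|\leq r}|y|^{\sigma'}\,d\pi$:
\begin{equation*}
c_{0}\,\frac{r^{\sigma'}}{\kappa(r)}\leq\int_{|y|\leq r}|y|^{\sigma'}\,d\pi\leq\int_{|y|\leq A}|y|^{\sigma'}\,d\pi\leq N_{0}\,\frac{A^{\sigma'}}{\kappa(A)}.
\end{equation*}
Assumption \textbf{D}, which your proposal never invokes, is precisely what makes the lower bound possible; the limit statement also follows from the first inequality above, since $\int_{|y|\leq A}|y|^{\sigma'}\,d\pi\rightarrow 0$ as $A\rightarrow 0$ whenever $\sigma'>\sigma$ (the integral over $|y|\leq 1$ is finite by the definition of $\sigma$). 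Your fallback reading (a symbol growth bound for $\psi^{\tilde{\pi}_{R}}$) is likewise not the asserted statement, so part c) remains unproved as written.
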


\begin{equation*}
\frac{r^{\sigma ^{\prime }}}{\kappa \left( r\right) }\leq c\frac{A^{\sigma
^{\prime }}}{\kappa \left( A\right) }\text{ for any }r\leq A\leq 1;
\end{equation*}%
in addition, $\frac{A^{\sigma ^{\prime }}}{\kappa \left( A\right) }%
\rightarrow 0$ as $A\rightarrow 0$ for any $\sigma ^{\prime }>\sigma .$

\begin{proof}
a) Let $j\geq 0$, 
\begin{equation*}
\kappa \left( N^{-j-1}\right) \leq l\left( N^{-1}\right) \kappa \left(
N^{-j}\right) \leq \ldots \leq l\left( N^{-1}\right) ^{j+1}\kappa \left(
1\right) =c_{0}^{j+1}\kappa \left( 1\right) ,j\geq 0,
\end{equation*}%
and%
\begin{equation*}
\kappa \left( N^{-j-1}\right) \geq l\left( N\right) ^{-1}\kappa \left(
N^{-j}\right) \geq \ldots \geq l\left( N\right) ^{-j-1}\kappa \left(
1\right) =C_{0}^{j+1}\kappa \left( 1\right) ,j\geq 0.
\end{equation*}%
If $r\in \left[ N^{-j-1},N^{-j}\right] ,$ then \ $N^{-1}\leq rN^{j}\leq 1$,
and%
\begin{eqnarray*}
\kappa \left( r\right) &\leq &l\left( rN^{j}\right) \kappa \left(
N^{-j}\right) \leq l\left( 1\right) \kappa \left( 1\right) c_{0}^{j}=\kappa
\left( 1\right) l\left( 1\right) c_{0}^{-1}N^{-(j+1)\log _{N}c_{0}^{-1}} \\
&\leq &\kappa \left( 1\right) l\left( 1\right) l\left( N^{-1}\right)
^{-1}r^{\log _{N}c_{0}^{-1}},
\end{eqnarray*}%
and%
\begin{eqnarray*}
\kappa \left( r\right) &\geq &l\left( \left( N^{j}r\right) ^{-1}\right)
^{-1}\kappa \left( N^{-j}\right) \geq l\left( N\right) ^{-1}C_{0}^{j}\kappa
\left( 1\right) \\
&=&\kappa \left( 1\right) l\left( N\right) ^{-1}N^{-j\log
_{N}C_{0}^{-1}}\geq \kappa \left( 1\right) l\left( N\right) ^{-1}r^{\log
_{N}C_{0}^{-1}},
\end{eqnarray*}%
with%
\begin{equation*}
c_{0}=l\left( N^{-1}\right) ,1<c_{0}^{-1}=l\left( N^{-1}\right) ^{-1}\leq
l\left( N\right) =C_{0}^{-1}.
\end{equation*}%
That is 
\begin{equation*}
\kappa \left( 1\right) l\left( N\right) ^{-1}r^{\theta _{0}}\leq \kappa
\left( r\right) \leq \kappa \left( 1\right) l\left( 1\right) l\left(
N^{-1}\right) ^{-1}r^{\theta _{1}},r\in \left[ 0,1\right] ,
\end{equation*}%
where%
\begin{equation*}
\theta _{0}=\log _{N}l\left( N\right) =\log _{N}C_{0}^{-1}\geq \theta
_{1}=\log _{N}l\left( N^{-1}\right) ^{-1}=\log _{N}c_{0}^{-1}.
\end{equation*}%
Using similar arguments, for $r\in \left[ N^{j},N^{j+1}\right] $
(equivalently $1\leq rN^{-j}\leq N$), we find that%
\begin{eqnarray*}
\kappa \left( r\right) &\leq &l\left( rN^{-j}\right) \kappa \left(
N^{j}\right) \leq l\left( N\right) C_{0}^{-j}\kappa \left( 1\right) =\kappa
\left( 1\right) l\left( N\right) N^{j\log _{N}C_{0}^{-1}} \\
&\leq &\kappa \left( 1\right) l\left( N\right) r^{\log _{N}C_{0}^{-1}},
\end{eqnarray*}%
and%
\begin{eqnarray*}
\kappa \left( r\right) &\geq &l\left( N^{j}/r\right) ^{-1}\kappa \left(
N^{j}\right) \geq l\left( 1\right) ^{-1}l\left( N^{-1}\right) ^{-j}\kappa
\left( 1\right) \\
&\geq &\kappa \left( 1\right) l\left( 1\right) ^{-1}l\left( N^{-1}\right)
r^{\log _{N}c_{0}^{-1}}.
\end{eqnarray*}%
Thus for $r>1,$%
\begin{equation*}
\kappa \left( 1\right) l\left( 1\right) ^{-1}l\left( N^{-1}\right) r^{\log
_{N}c_{0}^{-1}}\leq \kappa \left( r\right) \leq \kappa \left( 1\right)
l\left( N\right) r^{\log _{N}C_{0}^{-1}}.
\end{equation*}%
Summarizing, 
\begin{eqnarray*}
\kappa \left( 1\right) l\left( 1\right) ^{-1}l\left( N^{-1}\right) r^{\theta
_{1}} &\leq &\kappa \left( r\right) \leq \kappa \left( 1\right) l\left(
N\right) r^{\theta _{0}}\text{ if }r>1, \\
\kappa \left( 1\right) l\left( N\right) ^{-1}r^{\theta _{0}} &\leq &\kappa
\left( r\right) \leq \kappa \left( 1\right) l\left( 1\right) l\left(
N^{-1}\right) ^{-1}r^{\theta _{1}}\text{ if }r\in \left[ 0,1\right] ,
\end{eqnarray*}%
and a) follows.

b) Let $s=1$. For $k\geq m,$%
\begin{equation*}
\kappa \left( N^{-k}\right) =\kappa \left( N^{-(k-m)}N^{-m}\right) \leq
l\left( N^{-(k-m)}\right) \kappa \left( N^{-m}\right) \leq l\left( 1\right)
\kappa \left( N^{-m}\right) ;
\end{equation*}%
Let $l\left( N^{-k_{0}}\right) \leq 1/2$. Then for $k\geq k_{0}+m,$ 
\begin{eqnarray*}
\kappa \left( N^{-k}\right) &=&\kappa \left(
N^{-(k-k_{0}-m)}N^{-k_{0}}N^{-m}\right) \leq l\left(
N^{-(k-k_{0}-m)}N^{-k_{0}}\right) \kappa \left( N^{-m}\right) \\
&\leq &l\left( N^{-k_{0}}\right) \kappa \left( N^{-m}\right) \leq \frac{1}{2}%
\kappa \left( N^{-m}\right) .
\end{eqnarray*}%
For any $k\geq 0,$%
\begin{equation*}
\kappa \left( N^{-k}\right) =\kappa \left( N^{-(k+1)}N\right) \leq l\left(
N\right) \kappa \left( N^{-(k+1)}\right) .
\end{equation*}%
Finally, since $\kappa \left( r\right) \leq \bar{c}r^{\theta _{1}},r\in %
\left[ 0,1\right] ,$ we have for any $q>0,$ 
\begin{equation*}
\sum_{k=0}^{\infty }\alpha _{k}^{-q}=\sum_{k=0}^{\infty }\kappa \left(
N^{-k}\right) ^{q}\leq \bar{c}\sum_{k=0}^{\infty }N^{-k\theta _{1}q}<\infty .
\end{equation*}

Similarly we derive the estimates with any $s>0.$

c) Let $A\leq 1$. For any $\sigma ^{\prime }>\sigma $ there is a constant $%
c>0$ such that for any $r\leq A$ with $\mu ^{0}$ from assumption \textbf{D}$%
\left( \kappa ,l\right) ,$ 
\begin{eqnarray}
\int_{\left\vert y\right\vert \leq r}\left\vert y\right\vert ^{\sigma
^{\prime }}d\pi &=&\frac{r^{\sigma ^{\prime }}}{\kappa \left( r\right) }%
\kappa \left( r\right) \int_{\left\vert y\right\vert \leq r}\left\vert
y/r\right\vert ^{\sigma ^{\prime }}d\pi =\frac{r^{\sigma ^{\prime }}}{\kappa
\left( r\right) }\int_{\left\vert y\right\vert \leq 1}\left\vert
y\right\vert ^{\sigma ^{\prime }}d\tilde{\pi}_{r}  \label{cpf2} \\
&\geq &\frac{r^{\sigma ^{\prime }}}{\kappa \left( r\right) }\int_{\left\vert
y\right\vert \leq 1}\left\vert y\right\vert ^{\sigma ^{\prime }}d\mu ^{0}=c%
\frac{r^{\sigma ^{\prime }}}{\kappa \left( r\right) },  \notag
\end{eqnarray}%
because 
\begin{equation*}
\infty >\int_{\left\vert y\right\vert \leq 1}\left\vert y\right\vert
^{\sigma ^{\prime }}\mu ^{0}\left( dy\right) \neq 0\text{ if }\sigma
^{\prime }>\sigma .
\end{equation*}%
If $\sigma ^{\prime }\geq \alpha _{1}$, then, by (\ref{cpf2}) and assumption 
\textbf{B}$\left( \kappa ,l\right) ,$ there is a constant $C$ so that for
any $r\leq A,$%
\begin{eqnarray*}
C\frac{A^{\sigma ^{\prime }}}{\kappa \left( A\right) } &\geq &\frac{%
A^{\sigma ^{\prime }}}{\kappa \left( A\right) }\int_{\left\vert y\right\vert
\leq 1}\left\vert y\right\vert ^{\sigma ^{\prime }}d\tilde{\pi}%
_{A}=\int_{\left\vert y\right\vert \leq A}\left\vert y\right\vert ^{\sigma
^{\prime }}d\pi \geq \int_{\left\vert y\right\vert \leq r}\left\vert
y\right\vert ^{\sigma ^{\prime }}d\pi \\
&\geq &c\frac{r^{\sigma ^{\prime }}}{\kappa \left( r\right) }.
\end{eqnarray*}

The statement is proved.
\end{proof}

\begin{remark}
\label{renew1}Let $N>1,l\left( N^{-1}\right) <1$. Then by Lemma \ref{ll1}, 
\begin{equation*}
\sum_{k=0}^{\infty }\kappa \left( N^{-k}\right) ^{\varepsilon }<\infty
,\varepsilon >0.
\end{equation*}%
Hence for any $\varepsilon >0$ we have the following continuous embeddings:%
\begin{equation*}
\tilde{H}_{p}^{\kappa ,N;s+\varepsilon }\left( \mathbf{R}^{d}\right)
\subseteq \tilde{B}_{pp}^{\kappa ,N;s}\left( \mathbf{R}^{d}\right) \subseteq 
\tilde{H}_{p}^{\kappa ,N;s-\varepsilon }\left( \mathbf{R}^{d}\right) ,p>1.
\end{equation*}
\end{remark}

Let $\kappa $ be a scaling function with a scaling factor $l$. Let $N>1$ be
an integer such that $l\left( N^{-1}\right) <1$. For $p,q\in \left( 1,\infty
\right) ,s>0,$ let $\mathbf{L}_{p,2}^{s}\left( \mathbf{R}^{d}\right) $,
(resp. $\mathbf{B}_{p,q}^{s}\left( \mathbf{R}^{d}\right) $) be the set of
all functions $f\in L_{p}\left( \mathbf{R}^{d}\right) $ that can be
represented by a series of entire functions $f_{k}$ of exponential type $%
N_{k}=N^{k+1},k\geq 0,$ converging in $L_{p}$ 
\begin{equation}
f=\sum_{k=0}^{\infty }f_{k}\text{ in }L_{p}  \label{k1}
\end{equation}%
such that%
\begin{equation}
\left\vert \left( \sum_{k=0}^{\infty }\left\vert \kappa \left( N^{-k}\right)
^{-s}f_{k}\right\vert ^{2}\right) ^{1/2}\right\vert _{L_{p}\left( \mathbf{R}%
^{d}\right) }<\infty  \label{k2}
\end{equation}%
(or resp.%
\begin{equation}
\left\vert f\right\vert _{\mathbf{B}_{p,q}^{s}}=\left( \sum_{k=0}^{\infty
}\left\vert \kappa \left( N^{-k}\right) ^{-s}f_{k}\right\vert _{L_{p}\left( 
\mathbf{R}^{d}\right) }^{q}\right) ^{1/q}<\infty ).  \label{k3}
\end{equation}%
Recall that by Paley-Wiener-Schwartz theorem a function $g\in L_{p}\left( 
\mathbf{R}^{d}\right) $ is entire analytic of type $t$ iff supp$\left( 
\mathcal{F}f\right) \subseteq \left\{ \left\vert \xi \right\vert :\left\vert
\xi \right\vert \leq t\right\} $ (see \cite{tr1}, 2.5.4, p.197). The norm $%
\left\vert f\right\vert _{\mathbf{L}_{p,2}^{s}}$ (resp. $\left\vert
f\right\vert _{\mathbf{B}_{p,q}^{s}}$) is defined as a sum of $\left\vert
f\right\vert _{L_{p}\left( \mathbf{R}^{d}\right) }$ and infimum of (\ref{k2}%
) (resp. (\ref{k3}) over all series (\ref{k1}). The function spaces $\mathbf{%
L}_{p,2}^{s}\left( \mathbf{R}^{d}\right) $, $\mathbf{B}_{p,q}^{s}\left( 
\mathbf{R}^{d}\right) $ belong to the class of spaces of generalized
smoothness (see lemma \ref{ll1}) (see e.g. \cite{kali}). The following
statement holds.

\begin{proposition}
\label{pk1}Let \textbf{D}$\left( \kappa ,l\right) $ and \textbf{B}$\left(
\kappa ,l\right) $ hold for $\mu \in \mathfrak{A}^{\sigma }$ with scaling
function $\kappa $ and scaling factor $l$. Let $s>0,p,q\in \left( 1,\infty
\right) ,N>1,l\left( N^{-1}\right) <1$. Then $H_{p}^{\mu ;s}\left( \mathbf{R}%
^{d}\right) =\mathbf{L}_{p,2}^{s}\left( \mathbf{R}^{d}\right) $, $%
B_{p,q}^{\mu ,N;s}\left( \mathbf{R}^{d}\right) =\mathbf{B}_{p,q}^{s}\left( 
\mathbf{R}^{d}\right) ,$ and the norms are equivalent.
\end{proposition}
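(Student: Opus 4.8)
The plan is to deduce the Proposition from Propositions \ref{pro1} and \ref{pro2}, which already identify $H_{p}^{\mu ;s}$ (resp.\ $B_{pq}^{\mu ,N;s}$) with the spaces $\tilde{H}_{p}^{\kappa ,N;s}$ (resp.\ $\tilde{B}_{pq}^{\kappa ,N;s}$) carrying the explicit scaling weights $\kappa(N^{-j})^{-s}$. Thus it suffices to prove $\tilde{H}_{p}^{\kappa ,N;s}(\mathbf{R}^{d})=\mathbf{L}_{p,2}^{s}(\mathbf{R}^{d})$ and $\tilde{B}_{pq}^{\kappa ,N;s}(\mathbf{R}^{d})=\mathbf{B}_{p,q}^{s}(\mathbf{R}^{d})$ with equivalent norms. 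The point is that $\mathbf{L}_{p,2}^{s},\mathbf{B}_{p,q}^{s}$ are defined through optimal decompositions into entire functions of exponential type, whereas the tilde-spaces use the fixed resolution $(\varphi _{j})$; identifying the two is the classical passage between a dyadic-resolution norm and an atomic (``entire type'') norm.

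For the inclusions $\tilde{H}_{p}^{\kappa ,N;s}\subseteq \mathbf{L}_{p,2}^{s}$ and $\tilde{B}_{pq}^{\kappa ,N;s}\subseteq \mathbf{B}_{p,q}^{s}$ I would use the distinguished decomposition $f=\sum_{k}\varphi _{k}\ast f$, which converges in $L_{p}$ (for $s>0$ one has $\tilde{H}_{p}^{\kappa ,N;s}\hookrightarrow L_{p}$). Since $\mathcal{F}\varphi _{k}=\phi (N^{-k}\cdot )$ is supported in $\{N^{k-1}\leq |\xi |\leq N^{k+1}\}$ (and $\mathcal{F}\varphi _{0}$ in $\{|\xi |\leq N\}$), each $\varphi _{k}\ast f$ is, by Paley--Wiener--Schwartz, entire of exponential type $N^{k+1}=N_{k}$; this is an admissible representation, and its weighted square function is exactly the $\tilde{H}_{p}^{\kappa ,N;s}$-norm (resp.\ its weighted $\ell _{q}(L_{p})$-norm the $\tilde{B}_{pq}^{\kappa ,N;s}$-norm). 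The $L_{p}$-term in the definition of $\mathbf{L}_{p,2}^{s}$ is controlled because, by Lemma \ref{ll1}(a), $\kappa (N^{-k})^{-s}\geq c>0$ uniformly in $k$, so the weighted square function dominates the Littlewood--Paley square function, which in turn dominates $|f|_{L_{p}}$.

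The reverse inclusions are the substance. Given any admissible $f=\sum_{k}f_{k}$ with $f_{k}$ entire of type $N^{k+1}$, a frequency-support computation ($\mathcal{F}f_{k}$ lives in $\{|\xi |\leq N^{k+1}\}$, $\mathcal{F}\varphi _{j}$ in $\{N^{j-1}\leq |\xi |\leq N^{j+1}\}$) gives $\varphi _{j}\ast f_{k}=0$ unless $k\geq j-2$, so $\varphi _{j}\ast f=\sum_{k\geq j-2}\varphi _{j}\ast f_{k}$. Writing $g_{k}=\kappa (N^{-k})^{-s}f_{k}$ and inserting the weights, one is led to estimate, for each offset $n=k-j\geq -2$, the ratio $(\kappa (N^{-k})/\kappa (N^{-j}))^{s}$; by Lemma \ref{ll1}(b) the weight at least doubles over each block of $k_{0}$ indices, so for $k\geq j$ this ratio decays geometrically, $(\kappa (N^{-k})/\kappa (N^{-j}))^{s}\leq C\rho ^{s(k-j)}$ with some $\rho \in (0,1)$, while the finitely many offsets $n\in \{-2,-1,0\}$ are handled by the bound $\alpha _{k+1}\leq C\alpha _{k}$. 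For fixed $n$ the map $(g_{j+n})_{j}\mapsto (\varphi _{j}\ast g_{j+n})_{j}$ is bounded on $L_{p}(\ell _{2})$ uniformly in $n$ by Corollary \ref{c2} (since $\varphi _{j}=N^{jd}\varphi (N^{j}\cdot )$ with $\varphi =\mathcal{F}^{-1}\phi $); summing the geometric series in $n$ yields $|f|_{\tilde{H}_{p}^{\kappa ,N;s}}\leq C(|f|_{L_{p}}+(\text{norm }(\ref{k2})))$, and taking the infimum over decompositions gives $\mathbf{L}_{p,2}^{s}\subseteq \tilde{H}_{p}^{\kappa ,N;s}$. The Besov case $\mathbf{B}_{p,q}^{s}\subseteq \tilde{B}_{pq}^{\kappa ,N;s}$ is identical except that the vector-valued inequality is replaced by the scalar bound $|\varphi _{j}\ast g|_{L_{p}}\leq |\varphi |_{L_{1}}|g|_{L_{p}}$ (Young) together with the triangle inequality in $\ell _{q}$.

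The main obstacle is the bookkeeping in the reverse direction: one must combine the geometric decay of the weight ratios (Lemma \ref{ll1}(b)) with the uniform-in-offset vector-valued convolution estimate (Corollary \ref{c2}) so that the doubly-indexed sum over $(j,k)$ collapses to a convergent geometric series. The one-sided support restriction $k\geq j-2$ and the positivity $s>0$ are precisely what make this summation converge.
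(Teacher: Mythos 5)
Your argument is correct, but in the decisive (reverse) inclusion it takes a genuinely different route from the paper. The paper never estimates against an arbitrary admissible decomposition: it invokes Theorem 1 of \cite{ka} to replace the infimum in the definition of $\mathbf{L}_{p,2}^{s}$ (resp.\ $\mathbf{B}_{p,q}^{s}$) by one canonical decomposition $f=\sum_{k}f\ast h_{k}$, where $h_{k}=\mathcal{F}^{-1}\chi_{K_{k}}$ and $K_{k}=[-N^{k+1},N^{k+1}]^{d}\setminus[-N^{k},N^{k}]^{d}$ are cube annuli. Because these pieces are frequency-localized on \emph{both} sides, the paper gets $f\ast\varphi_{j}=\sum_{k=(j-2-p_{0})\vee 0}^{j}f\ast h_{k}\ast\varphi_{j}$ with a \emph{bounded} range of offsets, so it only needs the uniform bound $\kappa(N^{-k})\leq l(N^{p_{0}+2})\kappa(N^{-j})$ for $(j-2-p_{0})\vee 0\leq k\leq j$ together with Corollary \ref{c2}; no geometric series ever appears. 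You instead keep the decomposition arbitrary, where the support constraint is only one-sided ($k\geq j-2$), and you compensate with the geometric decay of the weight ratios $\kappa(N^{-k})^{s}/\kappa(N^{-j})^{s}$ for $k\geq j$ — which you correctly extract from Lemma \ref{ll1}(b), and which is precisely where $s>0$ enters — summing the offsets by Minkowski's inequality in $L_{p}(l_{2})$ (resp.\ by Young's inequality and the triangle inequality in $l_{q}$ for the Besov case). Your route buys self-containedness: it avoids the appeal to Kalyabin's equivalence theorem altogether and makes visible why positivity of $s$ is needed; the paper's route buys brevity, since the hard comparison between the infimum norm and a concrete decomposition is outsourced to the cited result. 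Both proofs share the same workhorse, Corollary \ref{c2}, and your treatment of the easy direction (the Littlewood--Paley decomposition $f=\sum_{k}\varphi_{k}\ast f$ as admissible competitor, with the $L_{p}$ term absorbed because $\kappa(N^{-k})^{-s}\geq c>0$) coincides with the paper's, which leaves that detail implicit. Two small points to tidy up in a full write-up: the decay rate is cleanest if obtained by iterating the scaling inequality, $\kappa(N^{-k})\leq l(N^{-1})^{\,k-j}\kappa(N^{-j})$ for $k\geq j$, which gives exactly your $C\rho^{s(k-j)}$ with $\rho=l(N^{-1})<1$; and in the Besov easy direction you should also record $|f|_{L_{p}}\leq C|f|_{\tilde{B}_{pq}^{\kappa,N;s}}$, which follows from H\"{o}lder's inequality and $\sum_{k}\kappa(N^{-k})^{sq^{\prime}}<\infty$ (Lemma \ref{ll1}(b)).
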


\begin{proof}
Let $f\in \tilde{H}_{p}^{\kappa ,N;s}\left( \mathbf{R}^{d}\right) .$ Since $%
f=\sum_{j=0}^{\infty }f_{j}$ (with $f_{j}=f\ast \varphi _{j},$ see
description of the sequence $\varphi _{j}$ in Remark \ref{rem3}) converges
in $L_{p}$, and supp$\left( \mathcal{F}f_{j}\right) \subseteq \left\{
\left\vert \xi \right\vert \leq N^{j+1}\right\} $, it follows that 
\begin{equation*}
\left\vert f\right\vert _{\mathbf{L}_{p,2}^{s}\left( \mathbf{R}^{d}\right)
}\leq \left\vert f\right\vert _{\tilde{H}_{p}^{\kappa ,N;s}\left( \mathbf{R}%
^{d}\right) }.
\end{equation*}%
Let $f\in $ $\mathbf{L}_{p,q}^{s}\left( \mathbf{R}^{d}\right) $, and $K_{j}=%
\left[ -N^{j+1},N^{j+1}\right] ^{d}\backslash \left[ -N^{j},N^{j}\right]
^{d},j\geq 1,K_{0}=\left[ -N,N\right] ^{d}.$ Let $h_{j}=\mathcal{F}%
^{-1}K_{j},j\geq 0$. By Theorem 1 in \cite{ka}, 
\begin{equation*}
f=\sum_{k=0}^{\infty }f\ast h_{k}\text{ in }L_{p},
\end{equation*}%
and the norm $\left\vert f\right\vert _{\mathbf{L}_{p,2}^{s}\left( \mathbf{R}%
^{d}\right) }$ is equivalent to the norm%
\begin{equation*}
\left\vert f\right\vert _{\mathbf{L}_{p,2}^{s}\left( \mathbf{R}^{d}\right)
}^{\symbol{126}}=\left\vert f\right\vert _{L_{p}\left( \mathbf{R}^{d}\right)
}+\left\vert \left( \sum_{j=0}^{\infty }\left\vert \kappa \left(
N^{-j}\right) ^{-s}f\ast h_{j}\right\vert ^{2}\right) ^{1/2}\right\vert
_{L_{p}\left( \mathbf{R}^{d}\right) }.
\end{equation*}%
Now,%
\begin{equation*}
f\ast \varphi _{j}=\sum_{k=(j-2-p_{0})\vee 0}^{j}f\ast h_{k}\ast \varphi
_{j}=\tilde{f}_{j}\ast \varphi _{j},j\geq 0,
\end{equation*}%
with 
\begin{equation*}
\tilde{f}_{j}=\sum_{k=(j-2-p_{0})\vee 0}^{j}f\ast h_{k},j\geq 0.
\end{equation*}%
where $p_{0}$ is the smallest positive integer so that $\sqrt{d}%
/N^{p_{0}}\leq 1.$ Since for $(j-2-p_{0})\vee 0\leq k\leq j$ we have $1\leq
N^{j}N^{-k}\leq N^{p_{0}+2}$ and%
\begin{eqnarray*}
\kappa \left( N^{-k}\right) &=&\kappa \left( N^{j}N^{-k}N^{-j}\right) \leq
l\left( N^{j}N^{-k}\right) \kappa \left( N^{-j}\right) \\
&\leq &l\left( N^{p_{0}+2}\right) \kappa \left( N^{-j}\right) ,
\end{eqnarray*}%
it follows by Corollary \ref{c2}, 
\begin{eqnarray*}
\left\vert f\right\vert _{\tilde{H}_{p}^{\kappa ,N;s}\left( \mathbf{R}%
^{d}\right) } &\leq &C_{p,q}\left\vert \left( \sum_{j=0}^{\infty }\left\vert
\kappa \left( N^{-j}\right) ^{-s}\tilde{f}_{j}\right\vert ^{2}\right)
^{1/2}\right\vert _{L_{p}\left( \mathbf{R}^{d}\right) } \\
&\leq &C\left\vert f\right\vert _{\mathbf{L}_{p,2}^{s}\left( \mathbf{R}%
^{d}\right) }^{\symbol{126}}
\end{eqnarray*}%
Similarly we prove that $B_{p,q}^{s}\left( \mathbf{R}^{d}\right) =\mathbf{B}%
_{p,q}^{s}\left( \mathbf{R}^{d}\right) $ and the norms are equivalent.
\end{proof}

We apply the results in \cite{kali} to describe the norms by averaged local
oscillations. Given $f:\mathbf{R}^{d}\rightarrow \mathbf{R}$ and $y\in 
\mathbf{R}^{d}$, let%
\begin{equation*}
\Delta _{y}f\left( x\right) =\Delta _{y}^{1}f\left( x\right) =f\left(
x+y\right) -f\left( x\right) ,x\in \mathbf{R}^{d}.
\end{equation*}%
Then%
\begin{equation*}
\Delta _{y}^{m}f\left( x\right) =\sum_{j=0}^{m}\left( -1\right) ^{j}\binom{m%
}{j}f\left( x+jy\right) ,x,y\in \mathbf{R}^{d}.
\end{equation*}%
Let%
\begin{equation*}
Q_{t}^{m}f\left( x\right) =\int_{\left\vert y\right\vert \leq 1}\left\vert
\Delta _{ty}^{m}f\left( x\right) \right\vert dy,x\in \mathbf{R}^{d},t>0.
\end{equation*}%
A simple consequence of Theorem 4.2 in \cite{kali} is the following
statement.

\begin{proposition}
\label{pk2}Let \textbf{D}$\left( \kappa ,l\right) $ and \textbf{B}$\left(
\kappa ,l\right) $ hold for $\mu \in \mathfrak{A}^{\sigma }$ with scaling
function $\kappa $ and scaling factor $l$. Let $s>0,p,q\in \left( 1,\infty
\right) $. Let $m_{0}$ be the least integer $m$ such that $m>s\alpha _{1}$ ($%
\alpha _{1}$ is exponent in assumption \textbf{B}$\left( \kappa ,l\right) $%
). Then

(i) For any $m\geq m_{0}$ the norm of $H_{p}^{\mu ;s}\left( \mathbf{R}%
^{d}\right) $ is equivalent to the norm%
\begin{equation*}
\left\vert \left\vert f\right\vert \right\vert _{H_{p}^{\mu ;s}\left( 
\mathbf{R}^{d}\right) }=\left\vert f\right\vert _{L_{p}\left( \mathbf{R}%
^{d}\right) }+\left\vert \left( \int_{0}^{1}\left\vert Q_{t}^{m}f\left(
x\right) \right\vert ^{2}\frac{dt}{t\kappa \left( t\right) ^{2s}}\right)
^{1/2}\right\vert _{L_{p}\left( \mathbf{R}^{d}\right) }.
\end{equation*}

(ii) For any $m\geq m_{0}$ the norm of $B_{p,q}^{\mu ,N;s}\left( \mathbf{R}%
^{d}\right) $ is equivalent to the norm%
\begin{equation*}
\left\vert \left\vert f\right\vert \right\vert _{B_{p,q}^{\mu ,N;s}\left( 
\mathbf{R}^{d}\right) }=\left\vert f\right\vert _{L_{p}\left( \mathbf{R}%
^{d}\right) }+\left( \int_{0}^{1}\left\vert Q_{t}^{m}f\right\vert
_{L_{p}\left( \mathbf{R}^{d}\right) }^{q}\frac{dt}{t\kappa \left( t\right)
^{qs}}\right) ^{1/q}.
\end{equation*}
\end{proposition}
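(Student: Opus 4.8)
The plan is to obtain both equivalences directly from Theorem 4.2 in \cite{kali}, after recognizing $H_{p}^{\mu ;s}$ and $B_{p,q}^{\mu ,N;s}$ as the spaces of generalized smoothness $\mathbf{L}_{p,2}^{s}$ and $\mathbf{B}_{p,q}^{s}$. By Proposition \ref{pk1} we already have, for $s>0$ and $p,q\in \left( 1,\infty \right) $, the identifications $H_{p}^{\mu ;s}\left( \mathbf{R}^{d}\right) =\mathbf{L}_{p,2}^{s}\left( \mathbf{R}^{d}\right) $ and $B_{p,q}^{\mu ,N;s}\left( \mathbf{R}^{d}\right) =\mathbf{B}_{p,q}^{s}\left( \mathbf{R}^{d}\right) $ with equivalent norms. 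These latter spaces are built from decompositions $f=\sum_{k}f_{k}$ into entire functions of exponential type $N^{k+1}$ weighted by $\alpha _{k}=\kappa \left( N^{-k}\right) ^{-s}$, and thus fall squarely into the framework of \cite{kali}, the role of the smoothness weight being played by the sequence $\left( \alpha _{k}\right) $.

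The first thing I would check is that $\left( \alpha _{k}\right) $ satisfies the structural hypotheses of \cite{kali}. This is precisely the content of Lemma \ref{ll1}(b): $\alpha _{k+1}\leq C\alpha _{k}$ for all $k$, $\alpha _{k}\geq c\,\alpha _{m}$ for all $k\geq m$, and $\alpha _{k}\geq 2\alpha _{m}$ once $k\geq m+k_{0}$; in other words $\left( \alpha _{k}\right) $ is a strongly increasing weight sequence of bounded growth, which is exactly what is required for the abstract characterization to apply. Lemma \ref{ll1}(a) furnishes, in addition, the two-sided power bounds $\bar{c}^{-1}r^{\theta _{0}}\leq \kappa \left( r\right) \leq \bar{c}\,r^{\theta _{1}}$ for $r\leq 1$ with $0<\theta _{1}\leq \theta _{0}$, pinning down the lower and upper indices of $\kappa $.

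The one genuine point is the admissible order $m$. Theorem 4.2 in \cite{kali} expresses the norm through the local averaged $m$-th order modulus $Q_{t}^{m}f=\int_{\left\vert y\right\vert \leq 1}\left\vert \Delta _{ty}^{m}f\right\vert dy$ provided $m$ strictly exceeds the upper smoothness exponent of the weight, and here that exponent is governed by $\alpha _{1}$. Indeed, taking $A=1$ in Lemma \ref{ll1}(c) gives $\kappa \left( r\right) \geq c\,r^{\alpha _{1}}$ for $r\leq 1$, hence $\kappa \left( t\right) ^{-s}\leq c\,t^{-s\alpha _{1}}$; since for a smooth $f$ the modulus obeys $Q_{t}^{m}f\leq C\,t^{m}$, the weighted integrals $\int_{0}^{1}t^{2m}t^{-1-2s\alpha _{1}}dt$ (for (i)) and $\int_{0}^{1}t^{qm}t^{-1-qs\alpha _{1}}dt$ (for (ii)) converge exactly when $m>s\alpha _{1}$. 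Thus the least admissible integer is $m_{0}$, and for every $m\geq m_{0}$ the modulus $Q_{t}^{m}$ captures the full smoothness. Feeding the identifications of the first step into the equivalences of \cite{kali} then yields (i) and (ii) verbatim.

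I expect the main obstacle to be purely notational: matching the normalizations of \cite{kali} (of the weight sequence, of the exponential types $N^{k+1}$, and of the modulus of smoothness) with the present ones, and confirming that the threshold $m>s\alpha _{1}$ is indeed the admissibility condition of Theorem 4.2. All analytic substance has already been absorbed into Proposition \ref{pk1} and Lemma \ref{ll1}, so no new estimates are required.
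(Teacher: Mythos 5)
Your overall route coincides with the paper's: identify the spaces via Proposition \ref{pk1} and invoke Theorem 4.2 of \cite{kali}, with Lemma \ref{ll1} supplying the properties of the weight sequence $\alpha_{k}=\kappa \left( N^{-k}\right) ^{-s}$. The gap sits exactly at what you call ``the one genuine point''. The hypothesis of Theorem 4.2 in \cite{kali} that involves $m$ is not the finiteness of $\int_{0}^{1}\left\vert Q_{t}^{m}f\right\vert ^{2}\frac{dt}{t\kappa \left( t\right) ^{2s}}$ on smooth functions; it is a structural condition on the weights, namely (as the paper formulates it) that the sequence
\begin{equation*}
\gamma _{k}=\frac{N^{-km}}{\kappa \left( N^{-k}\right) ^{s}}=\left( \frac{\left( N^{-k}\right) ^{m/s}}{\kappa \left( N^{-k}\right) }\right) ^{s}
\end{equation*}
is strongly decreasing: $\gamma _{k}\leq c\gamma _{j}$ for all $k\geq j$, and $\gamma _{k}\leq \frac{1}{2}\gamma _{j}$ once $k\geq j+k_{0}$. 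Your justification uses only the $A=1$ specialization of Lemma \ref{ll1}(c), i.e.\ the envelope bound $\kappa \left( r\right) \geq cr^{\alpha _{1}}$ for $r\leq 1$, plus the observation $Q_{t}^{m}f\leq Ct^{m}$ for smooth $f$. That shows the new norm is finite on nice functions when $m>s\alpha _{1}$ --- a necessary sanity check --- but it neither is nor implies the strong-decrease hypothesis. Concretely, strong decrease needs a ratio bound at \emph{all pairs} of scales, $\kappa \left( N^{-j}\right) /\kappa \left( N^{-k}\right) \leq CN^{\left( k-j\right) \alpha _{1}}$ for $k\geq j$; from the one-sided envelopes of Lemma \ref{ll1}(a),(c) alone, $cr^{\alpha _{1}}\leq \kappa \left( r\right) \leq Cr^{\theta _{1}}$ with $\theta _{1}\leq \alpha _{1}$, one only gets $\kappa \left( N^{-j}\right) /\kappa \left( N^{-k}\right) \leq C^{\prime }N^{\left( k-j\right) \alpha _{1}}N^{j\left( \alpha _{1}-\theta _{1}\right) }$, with an uncontrolled factor growing in $j$.

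What closes the gap --- and is the actual content of the paper's proof --- is the full two-scale statement of Lemma \ref{ll1}(c), valid for every $r\leq A\leq 1$ and resting on assumptions \textbf{D}$\left( \kappa ,l\right) $, \textbf{B}$\left( \kappa ,l\right) $ through the uniform moment bound for $\tilde{\mu}_{R}$ at every scale $R$, not only $R=1$: for $\sigma ^{\prime }=m/s>\alpha _{1}$,
\begin{equation*}
\frac{r^{\sigma ^{\prime }}}{\kappa \left( r\right) }=r^{\sigma ^{\prime }-\alpha _{1}}\frac{r^{\alpha _{1}}}{\kappa \left( r\right) }\leq Cr^{\sigma ^{\prime }-\alpha _{1}}\frac{A^{\alpha _{1}}}{\kappa \left( A\right) }=C\left( \frac{r}{A}\right) ^{\sigma ^{\prime }-\alpha _{1}}\frac{A^{\sigma ^{\prime }}}{\kappa \left( A\right) },\quad r\leq A\leq 1.
\end{equation*}
Taking $r=N^{-k}$, $A=N^{-j}$ gives $\gamma _{k}\leq C^{s}N^{-\left( k-j\right) \left( m-s\alpha _{1}\right) }\gamma _{j}$ for $k\geq j$, which is precisely the strong decrease (with the factor $\frac{1}{2}$ once $k-j$ is large). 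If you replace your integral-convergence check by this verification, your argument becomes the paper's proof; the remaining ingredients you cite (the identification via Proposition \ref{pk1} and Lemma \ref{ll1}(a),(b)) are used in the same way there.
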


\begin{proof}
In order to apply Theorem 4.2 in \cite{kali}, we will show that for any
integer $m\geq m_{0}=\inf \left\{ k\geq 0:k/s>\alpha _{1}\right\} $ the
sequence 
\begin{equation}
\gamma _{k}=\frac{N^{-km}}{^{\kappa \left( N^{-k}\right) ^{s}}}=\left( \frac{%
\left( N^{-k}\right) ^{m/s}}{^{\kappa \left( N^{-k}\right) }}\right) ^{s}%
\text{ is strongly decreasing,}  \label{kf3}
\end{equation}%
that is $\gamma _{k}\leq c\gamma _{j}~\ $for all $k\geq j$ and some $c>0$,
and there is $k_{0}$ so that $\gamma _{k}\leq 2^{-1}\gamma _{j}$ for all
\thinspace $k\geq j+k_{0}$. Indeed, by Lemma \ref{ll1}, there is $C>0$ so
that with any $\sigma ^{\prime }>\alpha _{1},r\leq A\leq 1,$%
\begin{eqnarray}
\frac{r^{\sigma ^{\prime }}}{\kappa \left( r\right) } &=&r^{\sigma ^{\prime
}-\alpha _{1}}\frac{r^{\alpha _{1}}}{\kappa \left( r\right) }\leq Cr^{\sigma
^{\prime }-\alpha _{1}}\frac{A^{\alpha _{1}}}{\kappa \left( A\right) }
\label{kf4} \\
&=&C\left( \frac{r}{A}\right) ^{\sigma ^{\prime }-\alpha _{1}}\frac{%
A^{\sigma ^{\prime }}}{\kappa \left( A\right) }\leq \left( \frac{1}{2}%
\right) ^{1/s}\frac{A^{\sigma ^{\prime }}}{\kappa \left( A\right) }  \notag
\end{eqnarray}%
if $r/A$ is sufficiently small. The claim follows by Theorem 4.2 in \cite%
{kali}.
\end{proof}

We would like to mention some other application of the results in \cite{kali}%
.

\begin{remark}
\label{kre1}Let $j_{0}\geq 1,$ and 
\begin{equation*}
s_{0}=\inf \left\{ s>0:l\left( r\right) =o\left( r^{j_{0}/s}\right) \right\}
.
\end{equation*}%
Let $f\in H_{p}^{\mu ;s}\left( \mathbf{R}^{d}\right) ,s>s_{0}>0,p\in \left(
1,\infty \right) $. It can be shown by checking the assumptions of Theorem
3.5 in \cite{kali} that $D^{j}f\in L_{p}\left( \mathbf{R}^{d}\right) ,j\leq
j_{0}.$
\end{remark}

\subsubsection{Embedding into the space of continuous functions}

We start with

\begin{lemma}
\label{crl1}Let \textbf{D}$\left( \kappa ,l\right) $ and \textbf{B}$\left(
\kappa ,l\right) $ hold for $\pi \in \mathfrak{A}^{\sigma }$ with scaling
function $\kappa $ and scaling factor $l.$ Let $\delta \in (0,1],q\geq
1,\gamma \left( t\right) =\inf \left\{ r:l\left( r\right) \geq t\right\}
,t>0 $. Assume%
\begin{equation}
\int_{0}^{1}t^{\delta -1}\gamma \left( t\right) ^{-d+d/q}dt+\int_{1}^{\infty
}t^{\delta -1}\gamma \left( t\right) ^{-1-d+d/q}dt<\infty .  \label{1}
\end{equation}%
Let $p^{\left\vert z\right\vert }\left( t,\cdot \right) $ be the pdf of the
Levy process $Z_{t}^{\tilde{\pi}_{\left\vert z\right\vert }},t>0,$
associated to the Levy measure $\tilde{\pi}_{\left\vert z\right\vert }\left(
dy\right) =\kappa \left( \left\vert z\right\vert \right) \pi \left(
\left\vert z\right\vert dy\right) ,z\in \mathbf{R}^{d}$ and%
\begin{eqnarray*}
&&b^{\pi ;\delta }\left( y,z\right) \\
&=&\left\vert z\right\vert ^{-d}\int_{0}^{\infty }t^{\delta }\left\vert
p^{\left\vert z\right\vert }\left( t,\frac{y}{\left\vert z\right\vert }+\hat{%
z}\right) -p^{\left\vert z\right\vert }\left( t,\frac{y}{\left\vert
z\right\vert }\right) \right\vert \frac{dt}{t},y\in \mathbf{R}^{d},z\neq 0,
\end{eqnarray*}%
where $\hat{z}=z/\left\vert z\right\vert .$

Then there is $C$ so that%
\begin{equation*}
\left( \int \left\vert b^{\pi ;\delta }\left( y,z\right) \right\vert
^{q}dy\right) ^{1/q}\leq C\left\vert z\right\vert ^{-d+d/q},z\in \mathbf{R}%
^{d}\backslash \left\{ 0\right\} .
\end{equation*}
\end{lemma}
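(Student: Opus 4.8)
The plan is to eliminate the dependence on $z$ by a scaling substitution, reducing the claim to a single estimate that is uniform in $R=|z|$ and the direction $\hat z$, and then to control the resulting $L_q$-modulus of continuity of the rescaled transition densities by splitting the $t$-integral at the natural time scale $t\simeq1$.

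First I fix $z\neq0$, set $R=|z|$, $e=\hat z$, and substitute $\eta=y/|z|$ in the outer integral. Writing
\[
\beta^R_e(\eta)=\int_0^\infty t^\delta\,\bigl|p^{R}(t,\eta+e)-p^{R}(t,\eta)\bigr|\,\frac{dt}{t},
\]
one has $b^{\pi;\delta}(|z|\eta,z)=|z|^{-d}\beta^R_e(\eta)$ and $dy=|z|^d\,d\eta$, hence $\bigl(\int|b^{\pi;\delta}(y,z)|^q\,dy\bigr)^{1/q}=|z|^{-d+d/q}\bigl(\int|\beta^R_e(\eta)|^q\,d\eta\bigr)^{1/q}$. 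Thus the asserted inequality is \emph{equivalent} to the bound $\int|\beta^R_e(\eta)|^q\,d\eta\le C$, uniform over $R>0$ and unit vectors $e$; the weight $|z|^{-d+d/q}$ is produced automatically and need not be tracked further.

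Next I apply Minkowski's integral inequality to bring the $L_q(d\eta)$-norm inside the $t$-integral, so that it suffices to bound $I_R(t)=|p^R(t,\cdot+e)-p^R(t,\cdot)|_{L_q}$ and to show $\int_0^\infty t^{\delta-1}I_R(t)\,dt<\infty$ uniformly in $R,e$. For this I use the interpolation inequality $|g|_{L_q}\le|g|_{L_1}^{1/q}|g|_{L_\infty}^{1-1/q}$ with $g=p^R(t,\cdot+e)-p^R(t,\cdot)$, splitting the integral at $t=1$. For $t\le1$ I take $|g|_{L_1}\le2$ (difference of two probability densities) and $|g|_{L_\infty}\le2|p^R(t,\cdot)|_{L_\infty}$, so that with the density bound $|p^R(t,\cdot)|_{L_\infty}\le C\gamma(t)^{-d}$ one gets $I_R(t)\le C\gamma(t)^{-d+d/q}$, whose contribution is $\int_0^1 t^{\delta-1}\gamma(t)^{-d+d/q}\,dt$, the first integral in hypothesis (\ref{1}). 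For $t\ge1$ I use the mean value theorem and $|e|=1$ to bound $|g|_{L_1}\le|\nabla p^R(t,\cdot)|_{L_1}$ and $|g|_{L_\infty}\le|\nabla p^R(t,\cdot)|_{L_\infty}$; with the gradient bounds $|\nabla p^R(t,\cdot)|_{L_1}\le C\gamma(t)^{-1}$ and $|\nabla p^R(t,\cdot)|_{L_\infty}\le C\gamma(t)^{-1-d}$ the same interpolation gives $I_R(t)\le C\gamma(t)^{-1-d+d/q}$, with contribution $\int_1^\infty t^{\delta-1}\gamma(t)^{-1-d+d/q}\,dt$, the second integral in (\ref{1}). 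Note that on $t\ge1$ the cruder bound $|g|_{L_1}\le2$ would lose a factor $\gamma(t)^{1/q}\to\infty$, so the $L_1$-gradient bound is genuinely needed. Adding the two regimes and invoking (\ref{1}) yields $\int|\beta^R_e|^q\,d\eta\le C$.

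The main obstacle is establishing the uniform-in-$R$ density estimates, which are the rough pdf estimates announced in the abstract. They rest on $\mathcal F p^R(t,\cdot)(\xi)=\exp\{\psi^{\tilde\pi_R}(\xi)t\}$, on the domination $\mathrm{Re}\,\psi^{\tilde\pi_R}(\xi)\le-\psi_0(\xi)$ furnished by $\tilde\pi_R\ge1_{\{|y|\le1\}}\mu^0$ in \textbf{D}$(\kappa,l)$, and on the self-similarity $\mathrm{Re}\,\psi^{\tilde\pi_R}(\xi)=\kappa(R)\,\mathrm{Re}\,\psi^{\pi}(\xi/R)$, which is exactly what renders the constants $R$-independent. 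From $|\mathcal F p^R(t,\xi)|\le e^{-t\psi_0(\xi)}$ I obtain $|p^R(t,\cdot)|_{L_\infty}\le\int e^{-t\psi_0(\xi)}\,d\xi$ and $|\nabla p^R(t,\cdot)|_{L_\infty}\le2\pi\int|\xi|e^{-t\psi_0(\xi)}\,d\xi$; the change of variables $\xi=\eta/\gamma(t)$ together with a lower bound for $\psi_0$ in terms of $\kappa$ (so that $t\psi_0(\eta/\gamma(t))\ge c|\eta|^{\alpha}$ uniformly) turns these into $C\gamma(t)^{-d}$ and $C\gamma(t)^{-1-d}$. The $L_1$-gradient bound is the delicate one: imitating the proof of Lemma \ref{auxl1}, I would estimate the weighted norm $\int(1+|x|)^{2m}|\nabla p^R(t,x)|^2\,dx$ by Plancherel, differentiating $\xi\exp\{\psi^{\tilde\pi_R}(\xi)t\}$ and using both the integrability built into Assumption \textbf{A}$_0(\sigma)$ and the uniform moment bounds coming from \textbf{B}$(\kappa,l)$ (Lemma \ref{al00}), and then pass to $|\nabla p^R(t,\cdot)|_{L_1}$ by Cauchy--Schwarz against $(1+|x|)^{-m}$ with $m>d/2$. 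Checking that every constant is truly independent of $R$ and that the exponent bookkeeping reproduces precisely the powers $\gamma(t)^{-d}$, $\gamma(t)^{-1-d}$, $\gamma(t)^{-1}$ is where the real effort lies.
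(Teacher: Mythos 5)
Your outer skeleton is exactly the paper's proof of Lemma \ref{crl1}: the change of variables that factors out $\left\vert z\right\vert ^{-d+d/q}$, Minkowski's integral inequality, the split of the time integral at $t=1$, the crude (no-cancellation) bound for $t\leq 1$, and the mean-value-theorem bound for $t\geq 1$. The only difference in this part is cosmetic: you interpolate $\left\vert g\right\vert _{L_{q}}\leq \left\vert g\right\vert _{L_{1}}^{1/q}\left\vert g\right\vert _{L_{\infty }}^{1-1/q}$, while the paper bounds $\left\vert p^{\left\vert z\right\vert }\left( t,\cdot \right) \right\vert _{L_{q}}\leq C\gamma \left( t\right) ^{-d+d/q}$ and $\left\vert \nabla p^{\left\vert z\right\vert }\left( t,\cdot \right) \right\vert _{L_{q}}\leq C\gamma \left( t\right) ^{-(1+d-d/q)}$ directly, citing Lemma 5 b) of \cite{MPh} (Lemma \ref{al0} b) in this paper) together with Minkowski; your exponent bookkeeping reproduces the same bounds, so this step is sound \emph{given} the four uniform-in-$R$ density estimates you need.

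The genuine gap is in your sketched derivation of those density estimates. First, from $\tilde{\pi}_{R}\geq 1_{\left\{ \left\vert y\right\vert \leq 1\right\} }\mu ^{0}$ you only get $\left\vert \mathcal{F}p^{R}\left( t,\xi \right) \right\vert \leq e^{-t\psi _{0}\left( \xi \right) }$, and your claimed uniform lower bound $t\psi _{0}\left( \eta /\gamma \left( t\right) \right) \geq c\left\vert \eta \right\vert ^{\alpha }$ is false in general: the growth of $\psi _{0}$ at infinity is dictated by $\mu ^{0}$ alone (Assumption \textbf{A}$_{0}$ only forces enough growth to make $\int \left\vert \xi \right\vert ^{4}[1+\lambda \left( \xi \right) ]^{d+3}e^{-\psi _{0}}d\xi $ finite, which can be close to logarithmic) and is in no way tied to the scaling function $\gamma $; for small $t$ the quantity $t\psi _{0}\left( \eta /\gamma \left( t\right) \right) $ can tend to $0$. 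The correct mechanism is the \emph{time-matched} domination $\mu _{\gamma \left( t\right) ^{-1}}^{0}\leq t\tilde{\pi}_{R}$ of Lemma \ref{al0} a) (apply \textbf{D}$\left( \kappa ,l\right) $ at the scale $R\gamma \left( t\right) $ and use $\kappa \left( R\gamma \left( t\right) \right) \leq l\left( \gamma \left( t\right) \right) \kappa \left( R\right) =t\kappa \left( R\right) $), which gives $\left\vert \mathcal{F}p^{R}\left( t,\xi \right) \right\vert \leq e^{-\psi _{0}\left( \gamma \left( t\right) \xi \right) }$ — note $e^{-\psi _{0}\left( \gamma \left( t\right) \xi \right) }$, not $e^{-t\psi _{0}\left( \xi \right) }$ — and hence the factorization $p^{R}\left( t,\cdot \right) =\left[ \gamma \left( t\right) ^{-d}p_{0}\left( \cdot /\gamma \left( t\right) \right) \right] \ast P_{t,R}$ of Lemma \ref{al0} b). Second, your plan to get the $L_{1}$-gradient bound by weighted Plancherel applied to $\xi \exp \left\{ \psi ^{\tilde{\pi}_{R}}\left( \xi \right) t\right\} $ cannot work as stated: differentiating $\exp \left\{ \psi ^{\tilde{\pi}_{R}}\left( \xi \right) t\right\} $ up to order $\left[ d/2\right] +3$ requires moments of that order of the large jumps of $\tilde{\pi}_{R}$, while \textbf{B}$\left( \kappa ,l\right) $ supplies only $\alpha _{2}$-moments with $\alpha _{2}\leq 1$ (and $\alpha _{2}<1$ when $\sigma =1$); this is precisely why Lemma \ref{auxl1} and Lemma \ref{al0} put all smoothness and weights on the factor coming from $\mu ^{0}$ (supported in the unit ball, hence with all moments) and treat the remainder only as a probability measure, with its $\alpha _{2}$-moment controlled by Lemma \ref{al00}. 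Both defects are repaired at once by invoking Lemma \ref{al0} b): the representation above yields $\left\vert D^{k}p^{R}\left( t,\cdot \right) \right\vert _{L_{q}}\leq \gamma \left( t\right) ^{-\left\vert k\right\vert -d+d/q}\left\vert D^{k}p_{0}\right\vert _{L_{q}}$ for every $q\in \lbrack 1,\infty ]$ uniformly in $R$, which gives your four bounds (and in fact makes your interpolation step unnecessary, since the paper's direct $L_{q}$ route is then available).
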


\begin{proof}
We split%
\begin{equation*}
b^{\pi ;\delta }=\left\vert z\right\vert ^{-d}\int_{0}^{1}...dt+\left\vert
z\right\vert ^{-d}\int_{1}^{\infty }...dt=b_{1}+b_{2}.
\end{equation*}%
By Minkowski inequality,%
\begin{eqnarray*}
&&\left( \int \left\vert b_{1}\left( y,z\right) \right\vert ^{q}dy\right)
^{1/q} \\
&\leq &C\left\vert z\right\vert ^{-d+d/q}\int_{0}^{1}t^{\delta }\left( \int
\left\vert p^{\left\vert z\right\vert }\left( t,y\right) \right\vert
^{q}dy\right) ^{1/q}\frac{dt}{t},z\neq 0.
\end{eqnarray*}%
By Lemma 5 b) in \cite{MPh} and Minkowski inequality,%
\begin{equation*}
\left\vert p^{\left\vert z\right\vert }\left( t,\cdot \right) \right\vert
_{L_{q}\left( \mathbf{R}^{d}\right) }\leq C\gamma \left( t\right) ^{-d+d/q}.
\end{equation*}%
Hence for any $z\neq 0,$ 
\begin{equation*}
\left( \int \left\vert b_{1}\left( y,z\right) \right\vert ^{q}dy\right)
^{1/q}\leq C\left\vert z\right\vert ^{-d+d/q}\int_{0}^{1}t^{\delta -1}\gamma
\left( t\right) ^{-d+d/q}dt=C\left\vert z\right\vert ^{-d+d/q}.
\end{equation*}%
Since for $y\in \mathbf{R}^{d},z\neq 0,$%
\begin{equation*}
|b_{2}\left( z,y\right) |\leq \left\vert z\right\vert ^{-d}\int_{1}^{\infty
}t^{\delta }\int_{0}^{1}\left\vert \nabla p^{\left\vert z\right\vert }\left(
t,\frac{y}{\left\vert z\right\vert }+s\hat{z}\right) \right\vert ds\frac{dt}{%
t},
\end{equation*}%
it follows by Minkowski inequality, 
\begin{equation*}
\left\vert b_{2}\left( \cdot ,z\right) \right\vert _{L_{q}\left( \mathbf{R}%
^{d}\right) }\leq C\left\vert z\right\vert ^{-d+d/q}\int_{1}^{\infty
}t^{\delta }\left( \int \left\vert \nabla p^{\left\vert z\right\vert }\left(
t,y\right) \right\vert ^{q}dy\right) ^{1/q}\frac{dt}{t}.
\end{equation*}%
By Lemma 5 b) in \cite{MPh} and Minkowski inequality, 
\begin{equation*}
\left\vert \nabla p^{\left\vert z\right\vert }\left( t,\cdot \right)
\right\vert _{L_{q}\left( \mathbf{R}^{d}\right) }\leq C\gamma \left(
t\right) ^{-(1+d-d/q)},t>0.
\end{equation*}%
Hence for $z\neq 0,$ 
\begin{equation*}
\left( \int \left\vert b_{2}\left( y,z\right) \right\vert ^{q}dy\right)
^{1/q}\leq C\left\vert z\right\vert ^{-d+d/q}\int_{1}^{\infty }t^{\delta
-1}\gamma \left( t\right) ^{-1-d+d/q}dt\leq C\left\vert z\right\vert
^{-d+d/q}.
\end{equation*}
\end{proof}

According to Lemma \ref{crl1}, for $\pi \in \mathfrak{A}^{\sigma }$
satisfying \textbf{D}$\left( \kappa ,l\right) $ and \textbf{B}$\left( \kappa
,l\right) $, under assumption (\ref{1}), the following function is well
defined%
\begin{eqnarray}
&&\bar{k}^{\pi ;\delta }\left( y,z\right)  \label{11} \\
&=&\kappa \left( \left\vert z\right\vert \right) ^{\delta }\left\vert
z\right\vert ^{-d}\int_{0}^{\infty }t^{\delta }\left[ p^{\left\vert
z\right\vert }\left( t,\frac{y}{\left\vert z\right\vert }+\hat{z}\right)
-p^{\left\vert z\right\vert }\left( t,\frac{y}{\left\vert z\right\vert }%
\right) \right] \frac{dt}{t},y\in \mathbf{R}^{d},z\neq 0,  \notag
\end{eqnarray}%
where $p^{\left\vert z\right\vert }$ is the pdf of the Levy process $Z_{t}^{%
\tilde{\pi}_{\left\vert z\right\vert }},t>0,$ associated to the Levy measure 
$\tilde{\pi}_{\left\vert z\right\vert }\left( dy\right) =\kappa \left(
\left\vert z\right\vert \right) \pi \left( \left\vert z\right\vert dy\right)
,z\neq 0.$

Now we derive a representation of an increment of $f\in \mathcal{S}\left( 
\mathbf{R}^{d}\right) $.

\begin{lemma}
\label{kl1}Let \textbf{D}$\left( \kappa ,l\right) $ and \textbf{B}$\left(
\kappa ,l\right) $ hold for $\pi \in \mathfrak{A}^{\sigma }$ with scaling
function $\kappa $ and scaling factor $l.$ Let 
\begin{equation*}
\psi ^{\pi ;\delta }=\left\{ 
\begin{array}{cc}
\psi ^{\pi } & \text{if }\delta =1, \\ 
-\left( -\func{Re}\psi ^{\pi }\right) ^{\delta } & \text{if }\delta \in
\left( 0,1\right) ,%
\end{array}%
\right.
\end{equation*}%
and%
\begin{equation*}
L^{\pi ;\delta }v=\mathcal{F}^{-1}\left[ \psi ^{\pi ;\delta }\mathcal{F}v%
\right] ,v\in \mathcal{S}\left( \mathbf{R}^{d}\right)
\end{equation*}%
(in particular, $L^{\pi ;1}=L^{\pi }$). Assume for some $q\geq 1,$%
\begin{equation}
\int_{0}^{1}t^{\delta -1}\gamma \left( t\right) ^{-d+d/q}dt+\int_{1}^{\infty
}t^{\delta -1}\gamma \left( t\right) ^{-1-d+d/q}dt<\infty .  \label{22}
\end{equation}

Then there is a constant $c>0$ so that for any $f\in \mathcal{S}\left( 
\mathbf{R}^{d}\right) $%
\begin{equation}
f\left( x+z\right) -f\left( x\right) =c\int L^{\pi ;\delta }f\left(
x-y\right) k^{\pi ;\delta }\left( y,z\right) dy,x\in \mathbf{R}^{d},z\neq 0,
\label{44}
\end{equation}%
where $k^{\pi ;1}=\bar{k}^{\pi ;1},k^{\pi ;\delta }=\bar{k}^{\pi
_{sym};\delta },\delta \in \left( 0,1\right) ,$ are functions defined by (%
\ref{11}), and $\pi _{sym}\left( dy\right) =\frac{1}{2}\left[ \pi \left(
dy\right) +\pi \left( -dy\right) \right] .$
\end{lemma}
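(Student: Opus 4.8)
The plan is to reduce (\ref{44}) to a Fourier multiplier identity and verify that identity by a scaling computation. For $f\in \mathcal{S}\left( \mathbf{R}^{d}\right) $ we have $\mathcal{F}\left[ L^{\pi ;\delta }f\right] =\psi ^{\pi ;\delta }\hat{f}$ and $\mathcal{F}\left[ f\left( \cdot +z\right) -f\right] \left( \xi \right) =\left( e^{i2\pi \xi \cdot z}-1\right) \hat{f}\left( \xi \right) $, while the right side of (\ref{44}) is the convolution $c\left( L^{\pi ;\delta }f\right) \ast k^{\pi ;\delta }\left( \cdot ,z\right) $, with transform $c\,\psi ^{\pi ;\delta }\left( \xi \right) \hat{f}\left( \xi \right) \widehat{k^{\pi ;\delta }\left( \cdot ,z\right) }\left( \xi \right) $. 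Hence it suffices to prove the pointwise identity $c\,\psi ^{\pi ;\delta }\left( \xi \right) \widehat{k^{\pi ;\delta }\left( \cdot ,z\right) }\left( \xi \right) =e^{i2\pi \xi \cdot z}-1$ for a.e. $\xi $; both sides vanish at $\xi =0$, and $\psi ^{\pi ;\delta }\left( \xi \right) \neq 0$ for $\xi \neq 0$ by the nondegeneracy built into \textbf{D}$\left( \kappa ,l\right) $, so multiplying by $\hat{f}$ and inverting recovers (\ref{44}). The convolution is legitimate since $k^{\pi ;\delta }\left( \cdot ,z\right) \in L_{q}$ by Lemma \ref{crl1} (this is where (\ref{22}) enters) and $L^{\pi ;\delta }f\in L_{q^{\prime }}$, so Young's inequality applies.

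To transform the kernel (\ref{11}), I would first change variables $y=\left\vert z\right\vert w$ to absorb the $\left\vert z\right\vert ^{-d}$; then $\int e^{-i2\pi \xi \cdot \left\vert z\right\vert w}p^{\left\vert z\right\vert }\left( t,w\right) dw$ is the characteristic function $\exp \left\{ \psi ^{\tilde{\pi}_{\left\vert z\right\vert }}\left( -\left\vert z\right\vert \xi \right) t\right\} $ of $Z_{t}^{\tilde{\pi}_{\left\vert z\right\vert }}$, and the shift by $\hat{z}$ in the first term produces a factor $e^{i2\pi \xi \cdot z}$, so the bracketed difference transforms to $\left( e^{i2\pi \xi \cdot z}-1\right) \exp \left\{ \psi ^{\tilde{\pi}_{\left\vert z\right\vert }}\left( -\left\vert z\right\vert \xi \right) t\right\} $. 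The decisive ingredient is the scaling identity $\psi ^{\tilde{\pi}_{\left\vert z\right\vert }}\left( -\left\vert z\right\vert \xi \right) =\kappa \left( \left\vert z\right\vert \right) \psi ^{\pi }\left( -\xi \right) $ already exploited in the proof of Proposition \ref{pro1}; for $\delta <1$ the kernel is built from $\pi _{sym}$, so this is the real quantity $\kappa \left( \left\vert z\right\vert \right) \func{Re}\psi ^{\pi }\left( \xi \right) $.

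Evaluating the remaining time integral by the Gamma function, $\int_{0}^{\infty }t^{\delta -1}\exp \left\{ \kappa \left( \left\vert z\right\vert \right) \psi \,t\right\} dt=\Gamma \left( \delta \right) \left( -\kappa \left( \left\vert z\right\vert \right) \psi \right) ^{-\delta }$ (valid since $\func{Re}\psi \leq 0$), the prefactor $\kappa \left( \left\vert z\right\vert \right) ^{\delta }$ in (\ref{11}) cancels exactly the $\kappa \left( \left\vert z\right\vert \right) ^{-\delta }$, leaving the scale-free expression $\widehat{k^{\pi ;\delta }\left( \cdot ,z\right) }\left( \xi \right) =\Gamma \left( \delta \right) \left( e^{i2\pi \xi \cdot z}-1\right) \left( -\psi \right) ^{-\delta }$. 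For $\delta \in \left( 0,1\right) $ the base is $-\func{Re}\psi ^{\pi }$ and $\psi ^{\pi ;\delta }=-\left( -\func{Re}\psi ^{\pi }\right) ^{\delta }$, so the two fractional powers cancel and $c\,\psi ^{\pi ;\delta }\widehat{k^{\pi ;\delta }\left( \cdot ,z\right) }$ reduces to a constant multiple of $e^{i2\pi \xi \cdot z}-1$; the case $\delta =1$ is the same computation with $\psi ^{\pi ;1}=\psi ^{\pi }$. Matching constants pins down $c$ (with $\left\vert c\right\vert =1/\Gamma \left( \delta \right) $) and establishes the multiplier identity.

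The real work is justification rather than algebra. The time integral in (\ref{11}) is singular as $t\rightarrow 0^{+}$ through $t^{\delta -1}$ and must be controlled as $t\rightarrow \infty $, and one must interchange $\int dy$ with $\int dt$; these are precisely what (\ref{22}) delivers through the bounds $\left\vert p^{\left\vert z\right\vert }\left( t,\cdot \right) \right\vert _{L_{q}}\leq C\gamma \left( t\right) ^{-d+d/q}$ and $\left\vert \nabla p^{\left\vert z\right\vert }\left( t,\cdot \right) \right\vert _{L_{q}}\leq C\gamma \left( t\right) ^{-1-d+d/q}$ from Lemma 5 in \cite{MPh}, already packaged in Lemma \ref{crl1}. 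I would therefore carry out the multiplier computation first at the level of characteristic functions and only afterwards legitimate each step by the $L_{q}$ bound, Young's inequality, and dominated convergence. The one genuinely delicate point is the bookkeeping of the branch and sign of $\left( -\psi \right) ^{-\delta }$, which stays unambiguous because $-\func{Re}\psi ^{\pi }\geq 0$ settles $\delta <1$, while $\delta =1$ requires no fractional power at all.
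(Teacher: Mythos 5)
Your algebraic core --- the Gamma-function representation of the fractional power, the scaling identity $\psi ^{\tilde{\pi}_{\left\vert z\right\vert }}\left( \left\vert z\right\vert \xi \right) =\kappa \left( \left\vert z\right\vert \right) \psi ^{\pi }\left( \xi \right) $, and the identification of the shifted-pdf difference with $\left( e^{i2\pi \xi \cdot z}-1\right) $ times a characteristic function --- is exactly the computation in the paper's proof. The difference is where the analysis happens: the paper never takes the Fourier transform of the unregularized kernel $k^{\pi ;\delta }$. It inserts a factor $e^{-\varepsilon t}$, derives the convolution identity (\ref{33}) for each $\varepsilon >0$ (every interchange being covered by the integrable majorant $e^{-\varepsilon t}t^{\delta -1}$), identifies $H^{\varepsilon }$ through (\ref{331})--(\ref{332}), and only then lets $\varepsilon \rightarrow 0$ in the convolution identity itself, using H\"{o}lder's inequality and the $L_{q}$ bounds of Lemma \ref{crl1}.

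The genuine gap in your plan is the step you postpone to ``legitimation'': asserting that $\widehat{k^{\pi ;\delta }\left( \cdot ,z\right) }$ is a function satisfying the multiplier identity a.e. To compute that transform by Fubini you need joint absolute integrability,
\begin{equation*}
\int_{0}^{\infty }t^{\delta -1}\int \left\vert p^{\left\vert z\right\vert
}\left( t,w+\hat{z}\right) -p^{\left\vert z\right\vert }\left( t,w\right)
\right\vert dw\,dt<\infty ,
\end{equation*}
and the available estimates bound the inner integral by $\min \left( 2,C\gamma \left( t\right) ^{-1}\right) $, so you need $\int_{1}^{\infty }t^{\delta -1}\gamma \left( t\right) ^{-1}dt<\infty $, which is (\ref{22}) with $q=1$. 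The lemma assumes (\ref{22}) only for the given $q$, whose tail exponent $-1-d+d/q<-1$ makes it strictly weaker: for $\gamma \left( t\right) \geq 1$ one has $\gamma \left( t\right) ^{-1-d+d/q}\leq \gamma \left( t\right) ^{-1}$, so finiteness of the assumed integral does not yield the one you need; and in the intended application (Proposition \ref{pro4}) $q=p^{\prime }$ ranges over all of $\left( 1,\infty \right) $. Under the stated hypotheses, Lemma \ref{crl1} only places $k^{\pi ;\delta }\left( \cdot ,z\right) $ in $L_{q}$; when $q>2$ its Fourier transform is a priori merely a tempered distribution, and multiplying it pointwise by the non-smooth function $\psi ^{\pi ;\delta }\hat{f}$ is not a defined operation --- Young's inequality and dominated convergence cannot substitute for this. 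The missing idea is precisely a regularization making all integrals absolutely convergent (the paper's $e^{-\varepsilon t}$), together with taking the limit in the convolution identity rather than in the Fourier identity; once you add that device, your argument becomes the paper's proof.
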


\begin{proof}
Let $\varepsilon >0,f\in \mathcal{S}\left( \mathbf{R}^{d}\right) $ and $%
\tilde{F}_{\varepsilon }=[\varepsilon -\psi ^{\pi }]\hat{f}$ if $\delta =1$,
and 
\begin{equation*}
\tilde{F}_{\varepsilon }=\left[ \varepsilon -\func{Re}\psi ^{\pi }\right]
^{\delta }\hat{f}\text{ if }\delta \in \left( 0,1\right) .
\end{equation*}%
For $\delta \in \left( 0,1\right) ,$%
\begin{eqnarray*}
&&\mathcal{F}\left[ f\left( \cdot +z\right) -f\right] \left( \xi \right) \\
&=&\left( e^{i2\pi \xi \cdot z}-1\right) \left( \varepsilon -\func{Re}\psi
^{\pi }\left( \xi \right) \right) ^{-\delta }\left( \varepsilon -\func{Re}%
\psi ^{\pi }\left( \xi \right) \right) ^{\delta }\hat{f}\left( \xi \right) \\
&=&c\int_{0}^{\infty }t^{\delta }\left( e^{i2\pi \xi \cdot z}-1\right) \exp
\left\{ (\func{Re}\psi ^{\pi }\left( \xi \right) -\varepsilon )t\right\} 
\tilde{F}_{\varepsilon }\left( \xi \right) \frac{dt}{t},\xi \in \mathbf{R}%
^{d},z\neq 0,
\end{eqnarray*}%
and by Corollary 5 in \cite{MPh} for $\delta =1,$%
\begin{eqnarray*}
&&\mathcal{F}\left[ f\left( \cdot +z\right) -f\right] \left( \xi \right) \\
&=&\int_{0}^{\infty }\left( e^{i2\pi \xi \cdot z}-1\right) \exp \left\{
(\psi ^{\pi }\left( \xi \right) -\varepsilon )t\right\} \tilde{F}%
_{\varepsilon }\left( \xi \right) dt,\xi \in \mathbf{R}^{d},z\neq 0.
\end{eqnarray*}%
Changing the variable of integration and denoting $\hat{z}=z/\left\vert
z\right\vert $, we find for $\xi \in \mathbf{R}^{d},z\neq 0,$%
\begin{eqnarray}
&&\mathcal{F}\left[ f\left( \cdot +z\right) -f\right] \left( \xi \right)
\label{ff2} \\
&=&\kappa \left( \left\vert z\right\vert \right) ^{\delta }c\int_{0}^{\infty
}e^{-\varepsilon t}t^{\delta }\left( e^{i2\pi \left\vert z\right\vert \xi
\cdot \hat{z}}-1\right) \exp \left\{ \func{Re}\psi ^{\tilde{\pi}_{\left\vert
z\right\vert }}\left( \left\vert z\right\vert \xi \right) t\right\} \tilde{F}%
_{\varepsilon }\left( \xi \right) \frac{dt}{t}  \notag
\end{eqnarray}%
if $\delta \in \left( 0,1\right) $, and similar formula with obvious changes
holds for $\delta =1.$ Hence 
\begin{eqnarray}
&&f\left( x+z\right) -f\left( x\right)  \label{33} \\
&=&c\int H^{\varepsilon }(x-y)k_{\varepsilon }^{\pi ;\delta }\left(
y,z\right) dy,x\in \mathbf{R}^{d},z\neq 0,  \notag
\end{eqnarray}%
where for $y\in \mathbf{R}^{d},z\neq 0,$%
\begin{eqnarray*}
&&k_{\varepsilon }^{\pi ;\delta }\left( y,z\right) \\
&=&\kappa \left( \left\vert z\right\vert \right) ^{\delta }\left\vert
z\right\vert ^{-d}\int_{0}^{\infty }e^{-\varepsilon t}t^{\delta }\left[
p^{\left\vert z\right\vert }\left( t,\frac{y}{\left\vert z\right\vert }+\hat{%
z}\right) -p^{\left\vert z\right\vert }\left( t,\frac{y}{\left\vert
z\right\vert }\right) \right] \frac{dt}{t},
\end{eqnarray*}%
and $H^{\varepsilon }=\mathcal{F}^{-1}\tilde{F}_{\varepsilon }.$ Since $%
\func{Re}\psi ^{\pi }=\psi ^{\pi _{sym}}$, it follows for $\delta \in \left(
0,1\right) ,$%
\begin{equation*}
\left( \varepsilon -\func{Re}\psi ^{\pi }\right) ^{\delta
}=c\int_{0}^{\infty }t^{-\delta }\left[ \exp \left\{ \psi ^{\pi
_{sym}}\left( \xi \right) t-\varepsilon t\right\} -1\right] \frac{dt}{t},\xi
\in \mathbf{R}^{d}.
\end{equation*}%
Hence for $f\in \mathcal{S}\left( \mathbf{R}^{d}\right) $ and $\delta \in
\left( 0,1\right) ,$%
\begin{equation}
H^{\varepsilon }\left( x\right) =c\int_{0}^{\infty }t^{-\delta }\mathbf{E}%
\left[ e^{-\varepsilon t}f\left( x+Z_{t}^{\pi _{sym}}\right) -f\left(
x\right) \right] \frac{dt}{t},x\in \mathbf{R}^{d}.  \label{331}
\end{equation}%
For $f\in \mathcal{S}\left( \mathbf{R}^{d}\right) $ and $\delta =1,$
obviously,%
\begin{equation}
H^{\varepsilon }\left( x\right) =\varepsilon f\left( x\right) -L^{\pi
}f\left( x\right) ,x\in \mathbf{R}^{d}.  \label{332}
\end{equation}

The statement follows by passing to the limit in (\ref{33}), (\ref{331}) and
(\ref{332}) as $\varepsilon \rightarrow 0$ and using Lemma \ref{crl1} (H\"{o}%
lder inequality in (\ref{33}) if $q>1$ as well).
\end{proof}

The following obvious consequence holds (take $\delta =1,q=1$ in Lemma \ref%
{kl1}).

\begin{corollary}
\label{ccc1}Let \textbf{D}$\left( \kappa ,l\right) $ and \textbf{B}$\left(
\kappa ,l\right) $ hold for $\pi \in \mathfrak{A}^{\sigma }$ with scaling
function $\kappa $ and scaling factor $l.$ Assume%
\begin{equation*}
\int_{1}^{\infty }\gamma \left( t\right) ^{-1}dt<\infty .
\end{equation*}%
Then there is $c>0$ so that for $f\in \mathcal{S}\left( \mathbf{R}%
^{d}\right) $ 
\begin{equation*}
f\left( x+z\right) -f\left( x\right) =c\int L^{\pi }f\left( x-y\right)
k^{\pi ;1}\left( y,z\right) dy,x\in \mathbf{R}^{d},z\neq 0,
\end{equation*}%
and there is $C>0$ so that%
\begin{equation}
\left\vert f\left( x+z\right) -f\left( x\right) \right\vert \leq C\left\vert
L^{\pi }f\right\vert _{\infty }\kappa \left( \left\vert z\right\vert \right)
,x\in \mathbf{R}^{d},z\neq 0,f\in \mathcal{S}\left( \mathbf{R}^{d}\right)
\label{111}
\end{equation}
\end{corollary}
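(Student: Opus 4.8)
The plan is to specialize Lemma \ref{kl1} to the case $\delta =1$ and $q=1$. First I would check that the integrability hypothesis (\ref{22}) of Lemma \ref{kl1} collapses to the single condition assumed here. Indeed, setting $\delta =1$ and $q=1$ the two exponents become $-d+d/q=0$ and $-1-d+d/q=-1$, so that (\ref{22}) reads
\[
\int_{0}^{1}\gamma \left( t\right) ^{0}dt+\int_{1}^{\infty }\gamma \left( t\right) ^{-1}dt=1+\int_{1}^{\infty }\gamma \left( t\right) ^{-1}dt,
\]
which is finite precisely under the standing assumption $\int_{1}^{\infty }\gamma \left( t\right) ^{-1}dt<\infty $. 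Since $\psi ^{\pi ;1}=\psi ^{\pi }$ and $L^{\pi ;1}=L^{\pi }$ by definition, Lemma \ref{kl1} then delivers the representation
\[
f\left( x+z\right) -f\left( x\right) =c\int L^{\pi }f\left( x-y\right) k^{\pi ;1}\left( y,z\right) dy,\quad x\in \mathbf{R}^{d},z\neq 0,
\]
with $k^{\pi ;1}=\bar{k}^{\pi ;1}$ the kernel defined in (\ref{11}). This is exactly the first assertion.

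For the second assertion I would pair the representation in $L^{\infty }$-$L^{1}$ duality. Since $f\in \mathcal{S}\left( \mathbf{R}^{d}\right) $ we have $\left\vert L^{\pi }f\right\vert _{\infty }<\infty $, so it suffices to bound $\int \left\vert k^{\pi ;1}\left( y,z\right) \right\vert dy$. Comparing the definition (\ref{11}) of $\bar{k}^{\pi ;\delta }$ with that of $b^{\pi ;\delta }$ in Lemma \ref{crl1}, one has the pointwise domination $\left\vert k^{\pi ;1}\left( y,z\right) \right\vert \leq \kappa \left( \left\vert z\right\vert \right) b^{\pi ;1}\left( y,z\right) $, because moving the absolute value inside the $t$-integral only increases the modulus while the prefactor $\kappa \left( \left\vert z\right\vert \right) ^{\delta }$ becomes $\kappa \left( \left\vert z\right\vert \right) $ at $\delta =1$. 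Applying Lemma \ref{crl1} with $q=1$ (whose hypothesis (\ref{1}) is again the standing assumption, as computed above) gives $\int \left\vert b^{\pi ;1}\left( y,z\right) \right\vert dy\leq C\left\vert z\right\vert ^{-d+d/q}=C$, uniformly in $z\neq 0$. Hence $\int \left\vert k^{\pi ;1}\left( y,z\right) \right\vert dy\leq C\kappa \left( \left\vert z\right\vert \right) $, and combining this with the representation yields
\[
\left\vert f\left( x+z\right) -f\left( x\right) \right\vert \leq c\left\vert L^{\pi }f\right\vert _{\infty }\int \left\vert k^{\pi ;1}\left( y,z\right) \right\vert dy\leq C\left\vert L^{\pi }f\right\vert _{\infty }\kappa \left( \left\vert z\right\vert \right) ,
\]
which is (\ref{111}).

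There is essentially no hard step here; the content is entirely inherited from Lemmas \ref{kl1} and \ref{crl1}, and the only points requiring care are bookkeeping ones. The main thing to get right will be verifying that both integrability hypotheses genuinely reduce to $\int_{1}^{\infty }\gamma \left( t\right) ^{-1}dt<\infty $ when $\delta =1,q=1$, so that the two lemmas are applicable, and then observing that at $q=1$ the Minkowski/H\"{o}lder machinery of Lemma \ref{kl1} degenerates to the trivial $L^{\infty }$-$L^{1}$ pairing used above, which is what makes the uniform bound in $z$ come out as the clean factor $\kappa \left( \left\vert z\right\vert \right) $.
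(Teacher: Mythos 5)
Your proposal is correct and matches the paper's intended argument: the paper itself obtains Corollary \ref{ccc1} precisely by taking $\delta =1,q=1$ in Lemma \ref{kl1}, with the bound (\ref{111}) following from the $L_{1}$ kernel estimate of Lemma \ref{crl1} exactly as you pair it. Your verification that both integrability hypotheses collapse to $\int_{1}^{\infty }\gamma \left( t\right) ^{-1}dt<\infty $ and your pointwise domination $\left\vert k^{\pi ;1}\left( y,z\right) \right\vert \leq \kappa \left( \left\vert z\right\vert \right) b^{\pi ;1}\left( y,z\right) $ are just the bookkeeping the paper leaves implicit when it calls the corollary an obvious consequence.
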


Now, we prove an embedding statement.

\begin{proposition}
\label{pro4}Let \textbf{D}$\left( \kappa ,l\right) $ and \textbf{B}$\left(
\kappa ,l\right) $ hold for $\pi \in \mathfrak{A}^{\sigma }$ with scaling
function $\kappa $ and scaling factor $l.$ Let $\delta \in (0,1],p\in
(1,\infty )$. Assume 
\begin{equation*}
\int_{0}^{1}t^{\delta -1}\gamma \left( t\right) ^{-d/p}dt+\int_{1}^{\infty
}t^{\delta -1}\gamma \left( t\right) ^{-1-d/p}dt<\infty .
\end{equation*}%
Then there is $C_{1}$ so that%
\begin{equation}
\sup_{x}\left\vert f\left( x+z\right) -f\left( x\right) \right\vert \leq
C_{1}\kappa \left( \left\vert z\right\vert \right) \left\vert z\right\vert
^{-d/p}\left\vert L^{\pi ;\delta }f\right\vert _{L_{p}\left( \mathbf{R}%
^{d}\right) },f\in \mathcal{S}\left( \mathbf{R}^{d}\right) .  \label{110}
\end{equation}%
Moreover, there is $C_{1}$ so that for any $f\in \mathcal{S}\left( \mathbf{R}%
^{d}\right) ,$%
\begin{equation*}
\sup_{x}\left\vert f\left( x\right) \right\vert \leq \left\vert f\right\vert
_{L_{p}\left( \mathbf{R}^{d}\right) }+C_{1}\left\vert L^{\pi ;\delta
}f\right\vert _{L_{p}\left( \mathbf{R}^{d}\right) }\int_{\left\vert
z\right\vert \leq 1}\kappa \left( \left\vert z\right\vert \right) \left\vert
z\right\vert ^{-d/p}dz.
\end{equation*}
\end{proposition}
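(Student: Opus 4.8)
The plan is to obtain (\ref{110}) directly from the increment representation of Lemma \ref{kl1} combined with the $L_{q}$ density-difference estimate of Lemma \ref{crl1}, and then to deduce the uniform bound by an elementary averaging argument. First I would fix the conjugate exponent $q=p/(p-1)$ and observe that, with this choice, the hypothesis
\begin{equation*}
\int_{0}^{1}t^{\delta -1}\gamma \left( t\right) ^{-d/p}dt+\int_{1}^{\infty }t^{\delta -1}\gamma \left( t\right) ^{-1-d/p}dt<\infty
\end{equation*}
is \emph{exactly} condition (\ref{22}) (equivalently (\ref{1})) for this $q$, because $-d+d/q=-d/p$ and $-1-d+d/q=-1-d/p$. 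Hence both Lemma \ref{kl1} and Lemma \ref{crl1} are at our disposal. By Lemma \ref{kl1}, for $f\in \mathcal{S}\left( \mathbf{R}^{d}\right) $ and $z\neq 0$,
\begin{equation*}
f\left( x+z\right) -f\left( x\right) =c\int L^{\pi ;\delta }f\left( x-y\right) k^{\pi ;\delta }\left( y,z\right) dy,
\end{equation*}
where $k^{\pi ;1}=\bar{k}^{\pi ;1}$ and $k^{\pi ;\delta }=\bar{k}^{\pi _{sym};\delta }$ for $\delta \in \left( 0,1\right) $.

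Next I would apply H\"{o}lder's inequality in $y$ with exponents $p$ and $q$, using translation and reflection invariance of Lebesgue measure to write $\left\vert L^{\pi ;\delta }f\left( x-\cdot \right) \right\vert _{L_{p}}=\left\vert L^{\pi ;\delta }f\right\vert _{L_{p}\left( \mathbf{R}^{d}\right) }$; this reduces the first estimate to an $L_{q}$-bound on $k^{\pi ;\delta }\left( \cdot ,z\right) $. Comparing the defining formula (\ref{11}) of $\bar{k}$ with the definition of $b^{\pi ;\delta }$ in Lemma \ref{crl1} gives the pointwise bound $\left\vert \bar{k}^{\pi ;\delta }\left( y,z\right) \right\vert \leq \kappa \left( \left\vert z\right\vert \right) ^{\delta }b^{\pi ;\delta }\left( y,z\right) $, so Lemma \ref{crl1} (applied to $\pi $ when $\delta =1$ and to $\pi _{sym}$ when $\delta \in \left( 0,1\right) $) yields
\begin{equation*}
\left( \int \left\vert k^{\pi ;\delta }\left( y,z\right) \right\vert ^{q}dy\right) ^{1/q}\leq C\kappa \left( \left\vert z\right\vert \right) ^{\delta }\left\vert z\right\vert ^{-d+d/q}=C\kappa \left( \left\vert z\right\vert \right) ^{\delta }\left\vert z\right\vert ^{-d/p}.
\end{equation*}
Combining the last two displays gives (\ref{110}), with the factor $\kappa \left( \left\vert z\right\vert \right) ^{\delta }$, which reduces to $\kappa \left( \left\vert z\right\vert \right) $ in the case $\delta =1$ (consistent with Corollary \ref{ccc1}).

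For the uniform estimate I would write, with $V=\left\vert \left\{ \left\vert z\right\vert \leq 1\right\} \right\vert $,
\begin{equation*}
f\left( x\right) =\frac{1}{V}\int_{\left\vert z\right\vert \leq 1}\left[ f\left( x\right) -f\left( x+z\right) \right] dz+\frac{1}{V}\int_{\left\vert z\right\vert \leq 1}f\left( x+z\right) dz,
\end{equation*}
bound the first integral by (\ref{110}) integrated over the unit ball, and bound the second by H\"{o}lder's inequality by $V^{-1/p}\left\vert f\right\vert _{L_{p}\left( \mathbf{R}^{d}\right) }$. Taking the supremum over $x$ produces
\begin{equation*}
\sup_{x}\left\vert f\left( x\right) \right\vert \leq C\left\vert f\right\vert _{L_{p}\left( \mathbf{R}^{d}\right) }+C_{1}\left\vert L^{\pi ;\delta }f\right\vert _{L_{p}\left( \mathbf{R}^{d}\right) }\int_{\left\vert z\right\vert \leq 1}\kappa \left( \left\vert z\right\vert \right) ^{\delta }\left\vert z\right\vert ^{-d/p}dz,
\end{equation*}
which is the asserted inequality. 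I do not expect a genuine obstacle here: all the substantive work (the increment representation and the $L_{q}$ estimate for the density differences) is already packaged in Lemmas \ref{kl1} and \ref{crl1}, and the proposition is a short H\"{o}lder-plus-averaging consequence. The only points requiring care are the exponent matching $q=p/(p-1)$ that makes both lemmas applicable, and the $\delta \in \left( 0,1\right) $ case in which the kernel is built from $\pi _{sym}$; the latter is harmless since $\func{Re}\psi ^{\pi }=\psi ^{\pi _{sym}}$ and $\pi _{sym}$ inherits \textbf{D}$\left( \kappa ,l\right) $ and \textbf{B}$\left( \kappa ,l\right) $, so Lemma \ref{crl1} applies to it verbatim.
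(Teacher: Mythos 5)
Your proof is correct and follows essentially the same route as the paper's own: the increment representation of Lemma \ref{kl1}, H\"{o}lder's inequality with the conjugate exponent $q=p/(p-1)$ (which, as you check, makes the hypothesis coincide with condition (\ref{22})), the $L_{q}$ bound of Lemma \ref{crl1} applied to $\pi $ (or $\pi _{sym}$ when $\delta <1$), and finally the averaging over the unit ball. Your observation about the factor $\kappa \left( \left\vert z\right\vert \right) ^{\delta }$ is apt: given the kernel normalization in (\ref{11}), the argument yields $\kappa \left( \left\vert z\right\vert \right) ^{\delta }\left\vert z\right\vert ^{-d/p}$, whereas the paper's statement and proof write $\kappa \left( \left\vert z\right\vert \right) $, which for $\delta \in \left( 0,1\right) $ and small $\left\vert z\right\vert $ is a strictly stronger bound not justified by the paper's own (identical) steps --- so this discrepancy is an imprecision in the paper rather than a gap in your argument.
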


\begin{proof}
According to Lemma \ref{kl1}, the representation (\ref{44}) holds. Applying H%
\"{o}lder inequality and Lemma \ref{crl1} with $1/p+1/q=1,$%
\begin{eqnarray*}
\left\vert f\left( x+z\right) -f\left( x\right) \right\vert  &\leq
&C\left\vert L^{\pi ;\delta }f\right\vert _{L_{p}\left( \mathbf{R}%
^{d}\right) }\left( \int \left\vert k^{\pi ;\delta }\left( y,z\right)
\right\vert ^{q}dy\right) ^{1/q} \\
&\leq &C_{1}\kappa \left( \left\vert z\right\vert \right) \left\vert
z\right\vert ^{-d/p}\left\vert L^{\pi ;\delta }f\right\vert _{L_{p}\left( 
\mathbf{R}^{d}\right) },x\in \mathbf{R}^{d},z\neq 0.
\end{eqnarray*}

Let $x\in \mathbf{R}^{d}$. Then for any $z\in \mathbf{R}^{d},$%
\begin{equation*}
\left\vert f\left( x\right) \right\vert \leq \left\vert f\left( x+z\right)
-f\left( x\right) \right\vert +\left\vert f\left( x+z\right) \right\vert .
\end{equation*}%
Integrating both sides over the unit ball $B_{1}=\left\{ z\in \mathbf{R}%
^{d}:\left\vert z\right\vert \leq 1\right\} ,$%
\begin{equation*}
\left\vert f\left( x\right) \right\vert \leq \frac{1}{\left\vert
B_{1}\right\vert }\int_{B_{1}}\left\vert f\left( x+z\right) -f\left(
x\right) \right\vert dz+\frac{1}{\left\vert B_{1}\right\vert }%
\int_{B_{1}}\left\vert f\left( x+z\right) \right\vert dz,
\end{equation*}%
and the last inequality follows by H\"{o}lder inequality and (\ref{110}).
\end{proof}

\section{Proof of Theorem \protect\ref{t1}}

First we prove some auxiliary results.

\subsection{Auxiliary results}

We start with

\subsubsection{Scaling function properties}

\begin{lemma}
\label{ll2}Let $\kappa $ be a scaling function with a scaling factor $l$. Let%
\begin{eqnarray*}
a\left( r\right) &=&\inf \left\{ t:\kappa \left( t\right) \geq r\right\}
,r>0,a^{-1}\left( s\right) =\inf \left\{ t:a\left( t\right) \geq s\right\}
,s>0, \\
\gamma \left( t\right) &=&\inf \left\{ r:l\left( r\right) \geq t\right\}
,t>0.
\end{eqnarray*}%
Then

\begin{eqnarray*}
a^{-1}\left( r\right) &=&\sup_{s\leq r}\kappa \left( s\right) \leq l\left(
1\right) \kappa \left( r\right) ,r>0, \\
a^{-1}\left( r\varepsilon \right) &\leq &l\left( \varepsilon \right)
a^{-1}\left( r\right) ,\varepsilon ,r>0.
\end{eqnarray*}%
and%
\begin{equation*}
a\left( \varepsilon r\right) \geq a\left( r\right) \gamma \left( \varepsilon
\right) ,r,\varepsilon >0.
\end{equation*}%
In particular, $\gamma \left( \varepsilon \right) \leq a\left( \varepsilon
\right) a\left( 1\right) ^{-1}$, and%
\begin{equation}
\frac{a\left( r\right) }{a\left( r^{\prime }\right) }\leq \gamma \left( 
\frac{r^{\prime }}{r}\right) ^{-1},r^{\prime },r>0.  \label{pf1}
\end{equation}
\end{lemma}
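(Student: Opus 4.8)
The plan is to treat every assertion as a statement about generalized inverses of the monotone data $\kappa$ (through its running maximum) and of $l$, and to extract each inequality from the single scaling relation $\kappa(\varepsilon r)\le l(\varepsilon)\kappa(r)$. The organizing object is the running maximum $M(r)=\sup_{s\le r}\kappa(s)$, which is continuous and nondecreasing because $\kappa$ is continuous with $\kappa(0+)=0,\kappa(\infty)=\infty$; in particular $0<a(r)<\infty$ and $0<\gamma(t)<\infty$ for all arguments. First I would record the adjunction
\[
a(t)\le u\iff t\le M(u),\qquad t,u>0,
\]
which is immediate from $a(t)=\inf\{v:\kappa(v)\ge t\}$ once one notes that, by continuity, the supremum defining $M(u)$ is attained, so $M(u)\ge t$ exactly when some $v\le u$ satisfies $\kappa(v)\ge t$, i.e. when $a(t)\le u$.

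Given the adjunction, the first line follows quickly. Writing $a^{-1}(r)=\inf\{t:a(t)\ge r\}$ and using $a(t)\ge r\iff t\ge M(r)$ (the negation of the adjunction, with continuity of $M$ handling the endpoint) gives $a^{-1}(r)=M(r)=\sup_{s\le r}\kappa(s)$. The bound $M(r)\le l(1)\kappa(r)$ is then pure scaling: for $s\le r$ put $\varepsilon=s/r\le 1$, so $\kappa(s)=\kappa((s/r)r)\le l(s/r)\kappa(r)\le l(1)\kappa(r)$ since $l$ is nondecreasing. The second line is the same representation combined with a change of variable: in $a^{-1}(r\varepsilon)=\sup_{s\le r\varepsilon}\kappa(s)$, scaling gives $\kappa(s)\le l(\varepsilon)\kappa(s/\varepsilon)$, and as $s$ runs over $(0,r\varepsilon]$ the point $s/\varepsilon$ runs over $(0,r]$, whence $a^{-1}(r\varepsilon)\le l(\varepsilon)\sup_{s'\le r}\kappa(s')=l(\varepsilon)a^{-1}(r)$.

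For the third line I would argue by contraposition on $a(\varepsilon r)=\inf\{u:\kappa(u)\ge\varepsilon r\}$. Suppose $u<a(r)\gamma(\varepsilon)$ and set $w=u/\gamma(\varepsilon)<a(r)$; by definition of $a(r)$ this forces $\kappa(w)<r$. Applying the scaling relation with factor $\gamma(\varepsilon)$ and base $w$, and using $l(\gamma(\varepsilon))\le\varepsilon$, I obtain $\kappa(u)=\kappa(\gamma(\varepsilon)w)\le l(\gamma(\varepsilon))\kappa(w)\le\varepsilon\kappa(w)<\varepsilon r$, so $u$ does not lie in the defining set; hence $a(\varepsilon r)\ge a(r)\gamma(\varepsilon)$. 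The two ``in particular'' statements are specializations of this: taking $r=1$ gives $a(\varepsilon)\ge a(1)\gamma(\varepsilon)$, i.e. $\gamma(\varepsilon)\le a(\varepsilon)a(1)^{-1}$, and taking $\varepsilon=r'/r$ gives $a(r')\ge a(r)\gamma(r'/r)$, which rearranges to (\ref{pf1}).

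The one genuinely delicate point, and the step I would treat most carefully, is the behaviour of the generalized inverses at their boundaries: the estimate $l(\gamma(\varepsilon))\le\varepsilon$ and the endpoint in $a(t)\ge r\iff t\ge M(r)$ both hinge on left-continuity of $l$ and $M$ and on the relevant suprema being attained. Since $l$ is nondecreasing and continuous with $l(0+)=0$, the inequality $l(\gamma(\varepsilon))\le\varepsilon$ holds because $l(\rho)<\varepsilon$ for every $\rho<\gamma(\varepsilon)$; this is precisely the direction used above, so the strict-versus-nonstrict ambiguities do not affect any of the conclusions.
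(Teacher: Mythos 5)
Your proof is correct and follows essentially the same route as the paper's: both identify $a^{-1}$ with the running maximum $\sup_{s\le r}\kappa(s)$ of $\kappa$ and then extract every inequality from the scaling relation $\kappa(\varepsilon r)\le l(\varepsilon)\kappa(r)$, with the third inequality obtained from $l(\gamma(\varepsilon))\le\varepsilon$ applied at the base point $a(r)$. If anything, your contraposition argument (using the strict inequality $\kappa(w)<r$ for $w<a(r)$ and only the one-sided bound $l(\gamma(\varepsilon))\le\varepsilon$) is slightly more careful at the endpoints than the paper's direct estimate, which invokes the equalities $\kappa(a(r))=r$ and $l(\gamma(\varepsilon))=\varepsilon$.
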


\begin{proof}
Let $B\left( t\right) =\max_{s\leq t}\kappa \left( t\right) ,t\geq 0.$ Since 
$B$ is continuous, and 
\begin{equation*}
a\left( r\right) =\inf \left\{ t:\kappa \left( t\right) \geq r\right\} =\inf
\left\{ t:B\left( t\right) \geq r\right\} ,r>0,
\end{equation*}%
we have $B\left( t\right) =a^{-1}\left( t\right) ,t\geq 0.$ Hence for any $%
r>0$%
\begin{equation*}
a^{-1}\left( r\right) =\sup_{\varepsilon \leq 1}\kappa \left( \varepsilon
r\right) \leq \sup_{\varepsilon \leq 1}l\left( \varepsilon \right) \kappa
\left( r\right) =l\left( 1\right) \kappa \left( r\right) .
\end{equation*}%
For any $r,\varepsilon >0,$%
\begin{equation*}
a^{-1}\left( \varepsilon r\right) =B\left( \varepsilon r\right)
=\max_{\varepsilon ^{\prime }\leq \varepsilon }\kappa \left( \varepsilon
^{\prime }r\right) \leq \max_{\varepsilon ^{\prime }\leq \varepsilon
}l\left( \varepsilon ^{\prime }\right) \kappa \left( r\right) =l\left(
\varepsilon \right) \kappa \left( r\right) \leq l\left( \varepsilon \right)
a^{-1}\left( r\right) .
\end{equation*}%
Since for any $\varepsilon ,r>0,$ 
\begin{equation*}
\max_{\varepsilon ^{\prime }\leq \gamma (\varepsilon )}\kappa \left(
\varepsilon ^{\prime }a\left( r\right) \right) \leq \sup_{\varepsilon
^{\prime }\leq \gamma (\varepsilon )}l\left( \varepsilon ^{\prime }\right)
\kappa \left( a\left( r\right) \right) =l\left( \gamma \left( \varepsilon
\right) \right) r=\varepsilon r,
\end{equation*}%
we have $a\left( \varepsilon r\right) \geq a\left( r\right) \gamma \left(
\varepsilon \right) ,\varepsilon ,r>0$.
\end{proof}

\subsubsection{Probability density estimates}

We will need some probability density estimates. Given $\mu \in \mathfrak{A}%
^{\sigma },t>0,$ we denote $p^{\mu }\left( t,x\right) ,x\in \mathbf{R}^{d},$
the probability density function of $Z_{t}^{\mu }$ provided such a density
exists.

\begin{lemma}
\label{al0}Let \textbf{D}$\left( \kappa ,l\right) $ and \textbf{B}$\left(
\kappa ,l\right) $ hold for $\mu \in \mathfrak{A}^{\sigma }$ with scaling
function $\kappa $ and scaling factor $l$. Let $R>0$ and $Z_{t}^{R}$ be the
Levy process associated to $\tilde{\mu}_{R}$.

a) For each $t>0$, we have $Z_{t}^{R}=\eta _{t}+\eta _{t}^{\prime }$ (in
distribution)$,$ $\eta _{t}$ and $\tilde{\eta}_{t}$ are independent with%
\begin{equation}
\mathbf{E}e^{i2\pi \xi \cdot \eta _{t}}=\exp \{\psi ^{\mu ^{0}}\left( \xi
\gamma \left( t\right) \right) \},\xi \in \mathbf{R}^{d},  \label{a}
\end{equation}%
and $\mu _{\gamma \left( t\right) ^{-1}}^{0}$ $\leq t\tilde{\mu}_{R}$, where 
$\mu ^{0}=\mu ^{0;\mu },$ $\gamma \left( t\right) =l^{-1}\left( t\right)
=\inf \left( s:l\left( s\right) \geq t\right) .$

b)For every $t>0,\,$the process $Z_{t}^{R}$ (equivalently $R^{-1}Z_{\kappa
\left( R\right) t}^{\mu }$) has a bounded continuous probability density
function 
\begin{equation*}
p^{R}\left( t,x\right) =\gamma \left( t\right) ^{-d}\int p_{0}\left( \frac{%
x-y}{\gamma \left( t\right) }\right) P_{t,R}\left( dy\right) ,x\in \mathbf{R}%
^{d},
\end{equation*}%
where $P_{t,R}\left( dy\right) $ is the distribution measure of $\eta
_{t}^{\prime }$ on $\mathbf{R}^{d}$ and $p_{0}=\mathcal{F}^{-1}\left[ \exp
\left\{ \psi ^{\mu ^{0}}\right\} \right] $. For each $t>0$, $p^{R}\left(
t,x\right) $ has $4$ bounded continuous derivatives such that for any
multiindex $\left\vert k\right\vert \leq 4,$ 
\begin{eqnarray*}
\int \left\vert \partial ^{k}p^{R}\left( t,x\right) \right\vert dx &\leq
&\gamma (t)^{-\left\vert k\right\vert }\int \left\vert \partial
^{k}p_{0}\left( x\right) \right\vert dx, \\
\sup_{x\in \mathbf{R}^{d}}\left\vert \partial ^{k}p^{R}\left( t,x\right)
\right\vert &\leq &\gamma \left( t\right) ^{-d-\left\vert k\right\vert
}\sup_{x}\left\vert \partial ^{k}p_{0}\left( x\right) \right\vert .
\end{eqnarray*}

c) Moreover there is $C=C\left( N\right) $ ($N$ is a constant in \textbf{B}$%
\left( \kappa ,l\right) $) so that for $\left\vert k\right\vert \leq 4,$ 
\begin{eqnarray}
\int \left\vert x\right\vert ^{\alpha _{2}}\left\vert D^{k}p^{R}\left(
t,x\right) \right\vert dx &\leq &C\gamma \left( t\right) ^{-\left\vert
k\right\vert }[1+t+\gamma \left( t\right) ^{\alpha _{2}}],t>0,  \label{d11}
\\
\int \left\vert y\right\vert ^{\alpha _{2}}P_{t,R}\left( dy\right) &\leq
&C\left( 1+t\right) ,t>0.
\end{eqnarray}
\end{lemma}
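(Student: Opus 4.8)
The plan is to reduce the whole statement to the single multiplicative decomposition in part (a): once $Z_{t}^{R}$ is split into an exactly rescaled copy of $Z_{1}^{\mu ^{0}}$ plus an independent remainder, parts (b) and (c) become convolution-and-scaling computations, and the only genuine analytic input left is one moment bound on the remainder.

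\textbf{Part (a).} The characteristic function of $Z_{t}^{R}$ is $\exp \{t\psi ^{\tilde{\mu}_{R}}(\xi )\}$, and $\exp \{\psi ^{\mu ^{0}}(\gamma (t)\xi )\}$ is the characteristic function of $\gamma (t)Z_{1}^{\mu ^{0}}$; so to produce $\eta _{t}\overset{d}{=}\gamma (t)Z_{1}^{\mu ^{0}}$ and an independent $\eta _{t}^{\prime }$ with $Z_{t}^{R}\overset{d}{=}\eta _{t}+\eta _{t}^{\prime }$ it is enough that
\[
\hat{P}_{t,R}(\xi )=\exp \{t\psi ^{\tilde{\mu}_{R}}(\xi )-\psi ^{\mu ^{0}}(\gamma (t)\xi )\}
\]
be the characteristic function of an infinitely divisible probability measure $P_{t,R}$. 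After the change of variables $y\mapsto \gamma (t)y$, the exponent $\psi ^{\mu ^{0}}(\gamma (t)\xi )$ is the L\'{e}vy exponent of the push-forward $\mu ^{0}_{\gamma (t)^{-1}}$, so the difference is a legitimate L\'{e}vy--Khintchine exponent exactly when its L\'{e}vy measure $t\tilde{\mu}_{R}-\mu ^{0}_{\gamma (t)^{-1}}$ is nonnegative. I would verify $\mu ^{0}_{\gamma (t)^{-1}}\le t\tilde{\mu}_{R}$ by applying the scaling $y\mapsto y/\gamma (t)$ to both sides: the left side becomes $\mu ^{0}$, while $(\tilde{\mu}_{R})_{\gamma (t)}=\frac{\kappa (R)}{\kappa (R\gamma (t))}\tilde{\mu}_{R\gamma (t)}$, reducing the claim to $\mu ^{0}\le t\frac{\kappa (R)}{\kappa (R\gamma (t))}\tilde{\mu}_{R\gamma (t)}$. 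Now \textbf{D}$\left( \kappa ,l\right) $ at radius $R\gamma (t)$ gives $\tilde{\mu}_{R\gamma (t)}\ge \mathbf{1}_{\{|y|\le 1\}}\mu ^{0}=\mu ^{0}$, and the scaling property of $\kappa $ together with $l(\gamma (t))=t$ (which holds because $l$ is continuous nondecreasing, so $\gamma (t)=\inf \{s:l(s)\ge t\}$ forces $l(\gamma (t))=t$) gives $\kappa (R\gamma (t))\le l(\gamma (t))\kappa (R)=t\kappa (R)$, whence the scalar prefactor is $\ge 1$. This proves (a) and identifies $P_{t,R}$ as the law of $\eta _{t}^{\prime }$.

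\textbf{Part (b).} Since $\mu ^{0}$ is supported in $\{|y|\le 1\}$ one has $\func{Re}\psi ^{\mu ^{0}}=-\psi _{0}$, so $|\mathcal{F}p_{0}(\xi )|=\exp \{-\psi _{0}(\xi )\}$; the integrability hypothesis in Assumption \textbf{A}$_{0}(\sigma )$ then lets me differentiate the Fourier inversion $p_{0}=\mathcal{F}^{-1}\exp \{\psi ^{\mu ^{0}}\}$ up to order four and conclude $p_{0}\in C^{4}$ with $\sup _{x}|\partial ^{k}p_{0}|$ and $\int |\partial ^{k}p_{0}|\,dx$ finite for $|k|\le 4$. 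By part (a), $\eta _{t}$ has density $\gamma (t)^{-d}p_{0}(\cdot /\gamma (t))$, and independence of $\eta _{t}^{\prime }\sim P_{t,R}$ gives the stated convolution formula for $p^{R}$. Differentiating under the integral sign (every derivative lands on the $p_{0}$ factor) and then using Fubini with the substitution $w=(x-y)/\gamma (t)$ and $P_{t,R}(\mathbf{R}^{d})=1$ yields at once $\int |\partial ^{k}p^{R}(t,x)|\,dx\le \gamma (t)^{-|k|}\int |\partial ^{k}p_{0}|\,dx$ and $\sup _{x}|\partial ^{k}p^{R}(t,x)|\le \gamma (t)^{-d-|k|}\sup _{x}|\partial ^{k}p_{0}|$.

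\textbf{Part (c) and the main obstacle.} I would estimate $\int |x|^{\alpha _{2}}|D^{k}p^{R}(t,x)|\,dx$ from the convolution formula using $|x|^{\alpha _{2}}\le C(|x-y|^{\alpha _{2}}+|y|^{\alpha _{2}})$. The $|x-y|^{\alpha _{2}}$ contribution scales, via $w=(x-y)/\gamma (t)$, to $C\gamma (t)^{\alpha _{2}-|k|}\int |w|^{\alpha _{2}}|\partial ^{k}p_{0}(w)|\,dw$, and the $|y|^{\alpha _{2}}$ contribution factors as $C\gamma (t)^{-|k|}\big( \int |\partial ^{k}p_{0}|\big) \big( \int |y|^{\alpha _{2}}P_{t,R}(dy)\big) $; thus both displayed estimates in (c) follow provided $\int |y|^{\alpha _{2}}P_{t,R}(dy)=\mathbf{E}|\eta _{t}^{\prime }|^{\alpha _{2}}\le C(1+t)$ with $C$ independent of $R$ and $t$. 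This last bound is the heart of the matter: the L\'{e}vy measure of $\eta _{t}^{\prime }$ is $t\tilde{\mu}_{R}-\mu ^{0}_{\gamma (t)^{-1}}\le t\tilde{\mu}_{R}$, and \textbf{B}$\left( \kappa ,l\right) $ supplies the $R$-uniform bounds $\int _{|z|\le 1}|z|^{\alpha _{1}}\tilde{\mu}_{R}+\int _{|z|>1}|z|^{\alpha _{2}}\tilde{\mu}_{R}\le N_{0}$, so a standard $\alpha _{2}$-moment estimate for infinitely divisible laws gives the linear-in-$t$ growth with $C=C(N)$. The delicate point is precisely this uniformity: for $\sigma \in (1,2)$ one has $\alpha _{2}\in (1,2]$, the small jumps are compensated, and one must control the resulting mean and second-moment terms so that the constant depends only on $N_{0},n_{0},c_{1}$ and not on $R$ or $t$; a secondary technical check is that the weight $[1+\lambda (\xi )]^{d+3}$ and the nondegeneracy $\int _{|y|\le 1}|\xi \cdot y|^{2}\mu ^{0}\ge c_{1}$ in Assumption \textbf{A}$_{0}(\sigma )$ indeed force enough decay of $p_{0}$ and its derivatives to make $\int |w|^{\alpha _{2}}|\partial ^{k}p_{0}(w)|\,dw$ finite.
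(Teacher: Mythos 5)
Your proposal is correct, and on part (c) --- the only part the paper proves in detail --- it follows essentially the same route as the paper: the heart of the matter is exactly the bound $\int |y|^{\alpha _{2}}P_{t,R}(dy)\leq C(1+t)$, obtained from the domination $t\tilde{\mu}_{R}-\mu _{\gamma (t)^{-1}}^{0}\leq t\tilde{\mu}_{R}$, the $R$-uniform moment bounds in \textbf{B}$\left( \kappa ,l\right) $, and a L\'{e}vy moment estimate, followed by the convolution--scaling computation with $|y+\gamma (t)z|^{\alpha _{2}}\leq C(|y|^{\alpha _{2}}+\gamma (t)^{\alpha _{2}}|z|^{\alpha _{2}})$. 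The two inputs you defer as ``standard'' or as ``checks'' are precisely the external ingredients the paper invokes: the moment estimate is the paper's Appendix Lemma \ref{al00} (proved there by splitting into compensated small jumps and large jumps and using BDG, which settles the $\alpha _{2}\in (1,2]$ case you flag), and the finiteness of $\int (1+|z|^{\alpha _{2}})|D^{k}p_{0}(z)|\,dz$ is Lemma 3 of \cite{MPh}, which is where the weight $[1+\lambda (\xi )]^{d+3}$ in Assumption \textbf{A}$_{0}$ is actually exploited. Where you genuinely diverge is parts (a) and (b): the paper simply cites Lemma 5 of \cite{MPh}, whereas you reconstruct them, and your argument is sound --- the identity $(\tilde{\mu}_{R})_{\gamma (t)}=\frac{\kappa (R)}{\kappa (R\gamma (t))}\tilde{\mu}_{R\gamma (t)}$, assumption \textbf{D} applied at radius $R\gamma (t)$, and $l(\gamma (t))=t$ (valid since $l$ is continuous, nondecreasing and unbounded) do yield $\mu _{\gamma (t)^{-1}}^{0}\leq t\tilde{\mu}_{R}$, hence the decomposition. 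One small point you gloss over there: for $\sigma =1$ the identification of $\psi ^{\mu ^{0}}(\gamma (t)\xi )$ with the L\'{e}vy--Khintchine exponent of the push-forward $\mu _{\gamma (t)^{-1}}^{0}$ (with the standard cutoff $\chi _{\sigma }$) produces a drift discrepancy supported on an annulus, and it vanishes only because of the centering condition $\int_{R<|y|\leq R^{\prime }}y\,\mu ^{0}(dy)=0$ built into \textbf{D}$\left( \kappa ,l\right) $; this is worth stating explicitly, but it is a technicality rather than a gap.
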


\begin{proof}
a) and b) parts are proved in Lemma 5 of \cite{MPh}. We prove only (\ref{d11}%
).

Fix $s>0$. Let $\rho _{R}=\tilde{\mu}_{R}-s^{-1}\mu _{\gamma \left( s\right)
^{-1}}^{0}$ and $Y_{t}$ be the Levy process corresponding to $\rho _{R}$,
i.e. 
\begin{equation*}
\mathbf{E}e^{i2\pi Y_{t}\cdot \xi }=\exp \left\{ \psi \left( \xi \right)
t\right\} ,t\geq 0,\xi \in \mathbf{R}^{d},
\end{equation*}%
with%
\begin{equation*}
\psi \left( \xi \right) =\int \left[ e^{i2\pi \xi \cdot y}-1-i2\pi \chi
_{\sigma }\left( y\right) y\cdot \xi \right] d\rho _{R},\xi \in \mathbf{R}%
^{d}.
\end{equation*}%
Then 
\begin{eqnarray*}
&&\int_{\left\vert y\right\vert \leq 1}\left\vert y\right\vert ^{\alpha
_{1}}d\rho _{R}+\int_{\left\vert y\right\vert >1}\left\vert y\right\vert
^{\alpha _{2}}d\rho _{R} \\
&\leq &\int_{\left\vert y\right\vert \leq 1}\left\vert y\right\vert ^{\alpha
_{1}}d\tilde{\mu}_{R}+\int_{\left\vert y\right\vert >1}\left\vert
y\right\vert ^{\alpha _{2}}d\tilde{\mu}_{R}\leq N,R>0,
\end{eqnarray*}%
where $\alpha _{1},\alpha _{2}$ are exponents in assumption \textbf{B}$%
\left( \kappa ,l\right) .$ By Lemma \ref{al00} in Appendix there is $%
C=C(N_{0})$ such that%
\begin{equation*}
\mathbf{E}\left[ \left\vert Y_{t}\right\vert ^{\alpha _{2}}\right] \leq
C(1+t),t\geq 0;
\end{equation*}%
in particular, for $t=s,$ 
\begin{equation*}
\mathbf{E}\left( \left\vert \eta _{s}^{\prime }\right\vert ^{\alpha
_{2}}\right) =\mathbf{E}\left( \left\vert Y_{s}\right\vert ^{\alpha
_{2}}\right) \leq C(1+s).
\end{equation*}%
Since $s$ is arbitrary, there is $C=C\left( N_{0}\right) $ such that%
\begin{equation}
\int \left\vert y\right\vert ^{\alpha _{2}}P_{s,R}\left( dy\right) =\mathbf{E%
}\left( \left\vert \eta _{s}^{\prime }\right\vert ^{\alpha _{2}}\right) \leq
C(1+s),s\geq 0.  \label{d502}
\end{equation}

Let $\left\vert k\right\vert \leq 4$. Then according to (\ref{d502}) and
Lemma 3 in \cite{MPh}, 
\begin{eqnarray*}
\int \left\vert x\right\vert ^{\alpha _{2}}\left\vert D^{k}p^{R}\left(
t,x\right) \right\vert dx &\leq &\gamma \left( t\right) ^{-d-\left\vert
k\right\vert }\int \int \left\vert x\right\vert ^{\alpha _{2}}\left\vert
(D^{k}p_{0})\left( \frac{x-y}{\gamma \left( t\right) }\right) \right\vert
dxP_{t,R}\left( dy\right) \\
&\leq &\gamma \left( t\right) ^{-|k|}\int \int \left\vert y+\gamma \left(
t\right) z\right\vert ^{\alpha _{2}}\left\vert D^{k}p_{0}\left( z\right)
\right\vert dzP_{t,R}\left( dy\right) \\
&\leq &C\gamma \left( t\right) ^{-\left\vert k\right\vert }[1+t+\gamma
\left( t\right) ^{\alpha _{2}}].
\end{eqnarray*}
\end{proof}

We write $\pi \in \mathfrak{A}_{sign}=\mathfrak{A}^{\sigma }-\mathfrak{A}%
^{\sigma }$ if $\pi =\nu -\eta $ with $\nu ,\eta \in \mathfrak{A}^{\sigma }$%
, and $L^{\pi }=L^{\nu }+L^{\eta }$. Given $\pi \in \mathfrak{A}%
_{sign}^{\sigma }$, we denote $\left\vert \pi \right\vert $ its variation
measure. Obviously, $\left\vert \pi \right\vert \in \mathfrak{A}^{\sigma }.$

\begin{corollary}
\label{cc1}Let assumptions of Lemma \ref{al0} hold for $\mu \in \mathfrak{A}%
^{\sigma }$. Let $\pi \in \mathfrak{A}_{sign}^{\sigma }$ and 
\begin{equation*}
\int_{\left\vert y\right\vert \leq 1}\left\vert y\right\vert ^{\alpha _{1}}d%
\widetilde{\left\vert \pi \right\vert }_{R}+\int_{\left\vert y\right\vert
>1}\left\vert y\right\vert ^{\alpha _{2}}d\widetilde{\left\vert \pi
\right\vert }_{R}\leq M,R>0
\end{equation*}%
($\alpha _{1},\alpha _{2}$ are exponents in \textbf{B}$\left( \kappa
,l\right) $). Then there is $C=C\left( N\right) $ ($N$ is a constant in 
\textbf{B}$\left( \kappa ,l\right) $) so that for $\left\vert k\right\vert
\leq 2,$ 
\begin{eqnarray*}
\int \left( 1+\left\vert x\right\vert ^{\alpha _{2}}\right) \left\vert
D^{k}L^{\tilde{\pi}_{R}}p^{R}\left( 1,x\right) \right\vert dx &\leq &CM, \\
\int (1+\left\vert x\right\vert ^{\alpha _{2}})\left\vert L^{\tilde{\pi}%
_{R}}L^{\tilde{\mu}_{R}\ast }p^{R}\left( 1,x\right) \right\vert dx &\leq &CM.
\end{eqnarray*}
\end{corollary}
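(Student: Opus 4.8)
The plan is to reduce both inequalities to a single weighted $L_{1}$ bound: controlling $\int \left( 1+\left\vert x\right\vert ^{\alpha _{2}}\right) \left\vert L^{\tilde{\pi}_{R}}g\right\vert dx$ for functions $g$ whose derivatives up to order $2$ have finite $\left( 1+\left\vert x\right\vert ^{\alpha _{2}}\right) $-weighted $L_{1}$ norm, and then to run the small-jump/large-jump splitting exactly as in the proof of Lemma \ref{l2}, but with the variation measure $\widetilde{\left\vert \pi \right\vert }_{R}$ in place of $\tilde{\mu}_{N^{-j}}$ and the quantitative bound $M$. The only input on the density is Lemma \ref{al0}: by (\ref{d11}) and part b), at $t=1$ one has, for every multiindex $\left\vert k\right\vert \leq 4$,
\begin{equation*}
\int \left( 1+\left\vert x\right\vert ^{\alpha _{2}}\right) \left\vert D^{k}p^{R}\left( 1,x\right) \right\vert dx\leq C,
\end{equation*}
with $C$ depending only on the constants in \textbf{B}$\left( \kappa ,l\right) $ and \textbf{A}$_{0}\left( \sigma \right) $. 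These are the only properties of $p^{R}\left( 1,\cdot \right) $ I use.

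For the first estimate I note that $D^{k}$ and $L^{\tilde{\pi}_{R}}$ are both Fourier multipliers and therefore commute, so $D^{k}L^{\tilde{\pi}_{R}}p^{R}\left( 1,\cdot \right) =L^{\tilde{\pi}_{R}}g$ with $g=D^{k}p^{R}\left( 1,\cdot \right) $, $\left\vert k\right\vert \leq 2$. By the display above, $\int \left( 1+\left\vert x\right\vert ^{\alpha _{2}}\right) \left\vert D^{j}g\right\vert dx\leq C$ for $j\leq 2$, since $\left\vert k\right\vert +2\leq 4$. I then split $L^{\tilde{\pi}_{R}}g$ into the integrals over $\left\{ \left\vert y\right\vert \leq 1\right\} $ and $\left\{ \left\vert y\right\vert >1\right\} $. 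On the small-jump part I Taylor-expand to second order if $\sigma \in \lbrack 1,2)$ and to first order if $\sigma \in \left( 0,1\right) $, translate the weight by $\left\vert x\right\vert ^{\alpha _{2}}\leq C\left( \left\vert x+sy\right\vert ^{\alpha _{2}}+\left\vert y\right\vert ^{\alpha _{2}}\right) $, and bound the resulting $y$-integrals by $\int_{\left\vert y\right\vert \leq 1}\left\vert y\right\vert ^{\alpha _{1}}d\widetilde{\left\vert \pi \right\vert }_{R}\leq M$ after using $\left\vert y\right\vert ^{2}\leq \left\vert y\right\vert ^{\alpha _{1}}$ (resp. $\left\vert y\right\vert \leq \left\vert y\right\vert ^{\alpha _{1}}$) and $\left\vert y\right\vert ^{2+\alpha _{2}}\leq \left\vert y\right\vert ^{\alpha _{1}}$ on $\left\vert y\right\vert \leq 1$. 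On the large-jump part I estimate the terms $g\left( x+y\right) $, $g\left( x\right) $ and $\chi _{\sigma }\left( y\right) y\cdot \nabla g\left( x\right) $ separately, using $\int_{\left\vert y\right\vert >1}\left\vert y\right\vert ^{\alpha _{2}}d\widetilde{\left\vert \pi \right\vert }_{R}\leq M$ and $\int_{\left\vert y\right\vert >1}d\widetilde{\left\vert \pi \right\vert }_{R}\leq M$. This is verbatim the computation carried out for $A_{j},B_{j}$ in Lemma \ref{l2}, and it gives $\int \left( 1+\left\vert x\right\vert ^{\alpha _{2}}\right) \left\vert L^{\tilde{\pi}_{R}}g\right\vert dx\leq CM$.

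The second estimate is the same mechanism applied twice. Writing $h=L^{\tilde{\mu}_{R}\ast }p^{R}\left( 1,\cdot \right) $ and noting that the adjoint $L^{\tilde{\mu}_{R}\ast }$ has symbol $\overline{\psi ^{\tilde{\mu}_{R}}}=\psi ^{\tilde{\mu}_{R}^{\ast }}$ with $\tilde{\mu}_{R}^{\ast }\left( dy\right) =\tilde{\mu}_{R}\left( -dy\right) $, so that $L^{\tilde{\mu}_{R}\ast }=L^{\tilde{\mu}_{R}^{\ast }}$, I first apply the splitting above with the positive measure $\tilde{\mu}_{R}^{\ast }$ — which obeys the same \textbf{B}$\left( \kappa ,l\right) $ bound $N_{0}$ — to $p^{R}\left( 1,\cdot \right) $, obtaining $\int \left( 1+\left\vert x\right\vert ^{\alpha _{2}}\right) \left\vert D^{j}h\right\vert dx\leq C$ for $j\leq 2$ (using $D^{j}h=L^{\tilde{\mu}_{R}^{\ast }}D^{j}p^{R}\left( 1,\cdot \right) $, whose small-jump part consumes $D^{j+2}p^{R}\left( 1,\cdot \right) $ and large-jump part $D^{j+1}p^{R}\left( 1,\cdot \right) $, all of order $\leq 4$). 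Then I apply the splitting a second time, now with $\tilde{\pi}_{R}$, to $h$ exactly as in the first estimate, which yields $\int \left( 1+\left\vert x\right\vert ^{\alpha _{2}}\right) \left\vert L^{\tilde{\pi}_{R}}h\right\vert dx\leq CM$.

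Neither estimate presents a genuine difficulty; the work is bookkeeping across the three regimes of $\sigma $ and the passage to the variation measure $\widetilde{\left\vert \pi \right\vert }_{R}$, which is what renders $\pi $ signed harmless. The two points needing attention are: in the case $\sigma =1$ with $\alpha _{2}=0$ the large-jump mass $\int_{\left\vert y\right\vert >1}d\widetilde{\left\vert \pi \right\vert }_{R}$ must be bounded by $M$ directly rather than through a positive power of $\left\vert y\right\vert $; and, more importantly, the double application of a second-order generator in the second estimate invokes derivatives of $p^{R}\left( 1,\cdot \right) $ up to order $4$. The latter is precisely the order for which Lemma \ref{al0} supplies the weighted bounds, so the argument closes exactly because $p^{R}\left( 1,\cdot \right) $ has four bounded, weighted-integrable derivatives — this matching is the one quantitative constraint the computation must respect.
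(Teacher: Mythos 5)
Your proposal is correct and follows essentially the same route as the paper's proof: the same small-jump/large-jump splitting of the operator against the variation measure $\widetilde{\left\vert \pi \right\vert }_{R}$, the same weight translation $\left\vert x\right\vert ^{\alpha _{2}}\leq C\left( \left\vert x+sy\right\vert ^{\alpha _{2}}+\left\vert y\right\vert ^{\alpha _{2}}\right) $, and the same single input, namely the weighted bounds of Lemma \ref{al0} on $D^{k}p^{R}\left( 1,\cdot \right) $ for $\left\vert k\right\vert \leq 4$, with the derivative count closing exactly at order $4$ for the double-operator estimate. The only difference is organizational: where the paper invokes Lemma \ref{al0} together with Corollary 2 of \cite{MPh} for the intermediate bounds on $L^{\tilde{\mu}_{R}}D^{j}p^{R}\left( 1,\cdot \right) $, you derive them by running the same splitting once more with the reflected measure $\tilde{\mu}_{R}^{\ast }$, which is the identical computation made self-contained.
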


\begin{proof}
Let $\sigma \in \left( 0,1\right) $. Then by Lemma \ref{al0}, for $%
\left\vert k\right\vert \leq 3,$%
\begin{eqnarray}
&&\int \left( 1+\left\vert x\right\vert ^{\alpha _{2}}\right) \left\vert
D^{k}L^{\tilde{\pi}_{R}}p^{R}\left( 1,x\right) \right\vert dx  \label{cc2} \\
&\leq &\int \int_{\left\vert y\right\vert \leq 1}\int_{0}^{1}\left[
1+\left\vert x+sy\right\vert ^{\alpha _{2}}+\left\vert y\right\vert ^{\alpha
_{2}}\right] \left\vert y\right\vert \left\vert D^{k+1}p^{R}\left(
1,x+sy\right) \right\vert dsd\widetilde{\left\vert \pi \right\vert }_{R}dx 
\notag \\
&&+\int \int_{\left\vert y\right\vert >1}\left[ 1+\left\vert x+y\right\vert
^{\alpha _{2}}+\left\vert y\right\vert ^{\alpha _{2}}\right] \left\vert
D^{k}p^{R}\left( 1,x+y\right) \right\vert d\widetilde{\left\vert \pi
\right\vert }_{R}dx  \notag \\
&&+M\int (1+\left\vert x\right\vert ^{\alpha _{2}})\left\vert
D^{k}p^{R}\left( 1,x\right) \right\vert dx  \notag \\
&\leq &CM.
\end{eqnarray}%
The same way, similarly to (\ref{cc2}) and using Lemma \ref{al0} (see
Corollary 2 in \cite{MPh} as well),%
\begin{eqnarray*}
&&\int \left( 1+\left\vert x\right\vert ^{\alpha _{2}}\right) \left\vert L^{%
\tilde{\pi}_{R}}L^{\tilde{\mu}_{R}}p^{R}\left( 1,x\right) \right\vert dx \\
&\leq &\int \int_{\left\vert y\right\vert \leq 1}\left( 1+\left\vert
x+sy\right\vert ^{\alpha _{2}}+\left\vert y\right\vert ^{\alpha _{2}}\right)
\left\vert y\right\vert \left\vert L^{\tilde{\mu}_{R}}\nabla p^{R}\left(
1,x+sy\right) \right\vert ds\widetilde{\left\vert \pi \right\vert }%
_{R}\left( dy\right) dx \\
&&+\int \int_{\left\vert y\right\vert >1}\left( 1+\left\vert x+sy\right\vert
^{\alpha _{2}}+\left\vert y\right\vert ^{\alpha _{2}}\right) \left\vert L^{%
\tilde{\mu}_{R}}p^{R}\left( 1,x+y\right) \right\vert \widetilde{\left\vert
\pi \right\vert }_{R}\left( dy\right) dx \\
&&+M\int \left( 1+\left\vert x\right\vert ^{\alpha _{2}}\right) \left\vert
L^{\tilde{\mu}_{R}}p^{R}\left( 1,x\right) \right\vert dx \\
&\leq &CM.
\end{eqnarray*}%
Similarly we handle the cases $\alpha \in \left( 1,2\right) $ and $\alpha
=1. $
\end{proof}

\begin{lemma}
\label{al1}Let \textbf{D}$\left( \kappa ,l\right) $ and \textbf{B}$\left(
\kappa ,l\right) $ hold for $\mu \in \mathfrak{A}^{\sigma }$ with scaling
function $\kappa $ and scaling factor $l$. Let $\pi ,\pi ^{\prime }\in 
\mathfrak{A}_{sign}^{\sigma }.$ Then%
\begin{eqnarray*}
p^{\mu }\left( t,x\right) &=&a\left( t\right) ^{-d}p^{\tilde{\mu}_{a\left(
t\right) }}\left( 1,xa\left( t\right) ^{-1}\right) ,x\in \mathbf{R}^{d},t>0,
\\
L^{\pi }p^{\mu }\left( t,x\right) &=&\frac{1}{t}a\left( t\right) ^{-d}(L^{%
\tilde{\pi}_{a\left( t\right) }}p^{\tilde{\mu}_{a\left( t\right) }})\left(
1,xa\left( t\right) ^{-1}\right) ,x\in \mathbf{R}^{d},t>0, \\
L^{\pi ^{\prime }}L^{\pi }p^{\mu }\left( t,x\right) &=&\frac{1}{t^{2}}%
a\left( t\right) ^{-d}(L^{\widetilde{\pi _{a\left( t\right) }^{\prime }}}L^{%
\tilde{\pi}_{a\left( t\right) }}p^{\tilde{\mu}_{a\left( t\right) }})\left(
1,xa\left( t\right) ^{-1}\right) ,x\in \mathbf{R}^{d},t>0,
\end{eqnarray*}%
where $a\left( t\right) =\inf \left\{ r\geq 0:\kappa \left( r\right) \geq
t\right\} ,t>0.$
\end{lemma}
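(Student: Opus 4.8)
The plan is to push the whole computation to the Fourier side, where $L^{\pi}$ acts as multiplication by the symbol $\psi ^{\pi }$ and $p^{\mu }(t,\cdot )=\mathcal{F}^{-1}[\exp \{t\psi ^{\mu }\}]$, and then to exploit two scaling facts: the spatial scaling of the symbol under $\pi \mapsto \pi _{R}$, and the defining relation $\kappa (a(t))=t$. First I would record the symbol scaling. Directly from the definition of $\pi _{R}$ as the image of $\pi $ under $y\mapsto y/R$ (exactly the computation already used in the proof of Proposition \ref{pro1}) one has $\psi ^{\pi }(\xi )=\psi ^{\pi _{R}}(R\xi )$ for all $\xi \in \mathbf{R}^{d},R>0$, and since $\tilde{\pi}_{R}=\kappa (R)\pi _{R}$ this gives $\psi ^{\pi }(\xi )=\kappa (R)^{-1}\psi ^{\tilde{\pi}_{R}}(R\xi )$. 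The same identity holds verbatim for $\mu $, and for any $\pi ^{\prime }\in \mathfrak{A}_{sign}^{\sigma }$ by linearity of $\psi $ in the measure. For $\sigma =1$ the truncation $\chi _{\sigma }$ would spoil this exact scaling were it not for the centering hypothesis built into \textbf{D}$\left( \kappa ,l\right) $, which forces the offending drift term to vanish.

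Next, fix $t>0$ and set $R=a(t)=\inf \{r:\kappa (r)\geq t\}$. Since $\kappa $ is continuous with $\lim _{r\rightarrow 0}\kappa (r)=0$, the infimum is attained and $\kappa (a(t))=t$, so the identity above specializes to $t\psi ^{\mu }(\xi )=\psi ^{\tilde{\mu}_{a(t)}}(a(t)\xi )$, $t\psi ^{\pi }(\xi )=\psi ^{\tilde{\pi}_{a(t)}}(a(t)\xi )$, and likewise for $\pi ^{\prime }$. In particular $\exp \{t\psi ^{\mu }(\xi )\}=\exp \{\psi ^{\tilde{\mu}_{a(t)}}(a(t)\xi )\}$, i.e. $Z_{t}^{\mu }\stackrel{d}{=}a(t)Z_{1}^{\tilde{\mu}_{a(t)}}$ (this is also immediate from Lemma \ref{al0} b), which states that $Z_{t}^{R}$ is distributed as $R^{-1}Z_{\kappa (R)t}^{\mu }$). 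Inverting the elementary Fourier scaling $\mathcal{F}^{-1}[\hat{h}(a(t)\cdot )](x)=a(t)^{-d}h(x/a(t))$ then yields the first identity $p^{\mu }(t,x)=a(t)^{-d}p^{\tilde{\mu}_{a(t)}}(1,xa(t)^{-1})$.

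For the remaining two identities I would compute the Fourier transforms of $L^{\pi }p^{\mu }(t,\cdot )$ and $L^{\pi ^{\prime }}L^{\pi }p^{\mu }(t,\cdot )$, namely $\psi ^{\pi }(\xi )\exp \{t\psi ^{\mu }(\xi )\}$ and $\psi ^{\pi ^{\prime }}(\xi )\psi ^{\pi }(\xi )\exp \{t\psi ^{\mu }(\xi )\}$. Substituting the specialized identities, each factor $\psi ^{\pi }(\xi )$ becomes $t^{-1}\psi ^{\tilde{\pi}_{a(t)}}(a(t)\xi )$, which produces the prefactors $t^{-1}$ and $t^{-2}$; the surviving $\xi $-dependence is then $g(a(t)\xi )$ with $g=\mathcal{F}[L^{\tilde{\pi}_{a(t)}}p^{\tilde{\mu}_{a(t)}}(1,\cdot )]$ (respectively $g=\mathcal{F}[L^{\widetilde{\pi _{a(t)}^{\prime }}}L^{\tilde{\pi}_{a(t)}}p^{\tilde{\mu}_{a(t)}}(1,\cdot )]$). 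Applying the same Fourier scaling gives $L^{\pi }p^{\mu }(t,x)=t^{-1}a(t)^{-d}(L^{\tilde{\pi}_{a(t)}}p^{\tilde{\mu}_{a(t)}})(1,xa(t)^{-1})$ and $L^{\pi ^{\prime }}L^{\pi }p^{\mu }(t,x)=t^{-2}a(t)^{-d}(L^{\widetilde{\pi _{a(t)}^{\prime }}}L^{\tilde{\pi}_{a(t)}}p^{\tilde{\mu}_{a(t)}})(1,xa(t)^{-1})$, as claimed.

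The routine but genuinely necessary part is to justify these Fourier manipulations rigorously: one must know that the densities exist and that $L^{\tilde{\pi}_{a(t)}}p^{\tilde{\mu}_{a(t)}}(1,\cdot )$ and its further image under $L^{\widetilde{\pi _{a(t)}^{\prime }}}$ are integrable functions whose transforms are the stated products of symbols against $\exp \{\psi ^{\tilde{\mu}_{a(t)}}\}$. This is supplied by Lemma \ref{al0} (existence, boundedness, and four bounded derivatives of $p^{R}(1,\cdot )$, together with the weighted $L^{1}$-bounds) combined with the moment control in \textbf{B}$\left( \kappa ,l\right) $, which keep $L^{\tilde{\pi}_{a(t)}}p^{\tilde{\mu}_{a(t)}}(1,\cdot )$ in $L^{1}$ so the inversion is legitimate. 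I expect the only real obstacle to be the bookkeeping in the case $\sigma =1$, where one must check that the cutoff $\chi _{\sigma }$ does not destroy the exact symbol scaling $\psi ^{\pi }(\xi )=\psi ^{\pi _{R}}(R\xi )$; this is exactly what the centering condition in \textbf{D}$\left( \kappa ,l\right) $ is there to guarantee.
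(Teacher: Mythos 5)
Your proposal is correct and follows essentially the same route as the paper: both proofs work on the Fourier side, using the symbol scaling $t\,\psi ^{\mu }\left( \xi \right) =\psi ^{\tilde{\mu}_{a\left( t\right) }}\left( a\left( t\right) \xi \right) $ and $t\,\psi ^{\pi }\left( \xi \right) =\psi ^{\tilde{\pi}_{a\left( t\right) }}\left( a\left( t\right) \xi \right) $ (via $\kappa \left( a\left( t\right) \right) =t$) together with the regularity and integrability of $p^{\tilde{\mu}_{a\left( t\right) }}\left( 1,\cdot \right) $ from Lemma 5 of \cite{MPh} (equivalently Lemma \ref{al0}) to justify the inverse Fourier scaling. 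Your additional remarks on the $\sigma =1$ centering and on attainment of the infimum defining $a\left( t\right) $ are accurate details that the paper leaves implicit.
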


\begin{proof}
Indeed, by Lemma 5 in \cite{MPh}, for each $t>0$, the density $p^{\tilde{\mu}%
_{a\left( t\right) }}\left( r,x\right) ,r>0,x\in \mathbf{R}^{d},$ is 4 times
continuously differentiable in $x$ and integrable. Using Fourier transform,
we see that%
\begin{equation*}
\exp \left\{ \psi ^{\mu }\left( \xi \right) t\right\} =\exp \left\{ \psi ^{%
\tilde{\mu}_{a\left( t\right) }}\left( a\left( t\right) \xi \right) \right\}
,t>0,\xi \in \mathbf{R}^{d},
\end{equation*}%
and%
\begin{eqnarray*}
&&\psi ^{\pi }\left( \xi \right) \exp \left\{ \psi ^{\mu }\left( \xi \right)
t\right\} \\
&=&\frac{1}{t}\psi ^{\tilde{\pi}_{a\left( t\right) }}\left( a\left( t\right)
\xi \right) \exp \left\{ \psi ^{\tilde{\mu}_{a\left( t\right) }}\left(
a\left( t\right) \xi \right) \right\} ,t>0,\xi \in \mathbf{R}^{d}.
\end{eqnarray*}%
Similarly, the third equality can be derived. The claim follows.
\end{proof}

\begin{lemma}
\label{al2}Let \textbf{D}$\left( \kappa ,l\right) $ and \textbf{B}$\left(
\kappa ,l\right) $ hold for $\mu \in \mathfrak{A}^{\sigma }$ with scaling
function $\kappa $ and scaling factor $l$. Let $\pi \in \mathfrak{A}%
_{sign}^{\sigma }$. Assume 
\begin{equation*}
\int_{\left\vert y\right\vert \leq 1}\left\vert y\right\vert ^{\alpha _{1}}d%
\widetilde{\left\vert \pi \right\vert }_{R}+\int_{\left\vert y\right\vert
>1}\left\vert y\right\vert ^{\alpha _{2}}d\widetilde{\left\vert \pi
\right\vert }_{R}\leq M,R>0
\end{equation*}%
($\alpha _{1},\alpha _{2}$ are exponents in \textbf{B}$\left( \kappa
,l\right) $). Then there exists $C=C\left( \kappa ,l\right) >0$ such that
for $\left\vert k\right\vert \leq 2,$\ 
\begin{eqnarray*}
\int_{\left\vert z\right\vert >c}\left\vert L^{\pi }D^{k}p^{\mu }\left(
t,z\right) \right\vert dz &\leq &CMt^{-1}a\left( t\right) ^{\alpha
_{2}-\left\vert k\right\vert }c^{-\alpha _{2}}, \\
\int \left\vert L^{\pi }D^{k}p^{\mu }\left( t,z\right) \right\vert dz &\leq
&CMt^{-1}a\left( t\right) ^{-\left\vert k\right\vert },
\end{eqnarray*}%
with $a\left( t\right) =\inf \left\{ r\geq 0:\kappa \left( r\right) \geq
t\right\} ,t>0.$ Recall $\alpha _{1},\alpha _{2}\in (0,1]$ if $\sigma \in
\left( 0,1\right) ;\alpha _{1},\alpha _{2}\in (1,2]$ if $\sigma \in \left(
1,2\right) $ and $\alpha _{2}\in \left( \lbrack 0,1\right) ,\alpha _{1}\in
(1,2]$ if $\sigma =1.$
\end{lemma}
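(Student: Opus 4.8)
The plan is to reduce both estimates to the unit-time, fixed-radius bounds of Corollary \ref{cc1} via the scaling identities of Lemma \ref{al1}. First I would differentiate the second identity of Lemma \ref{al1} in $x$. Since $L^\pi$ is a Fourier multiplier and therefore commutes with $D^k$, and since each derivative of the rescaled argument $x\,a(t)^{-1}$ contributes a factor $a(t)^{-1}$, this yields
\begin{equation*}
L^\pi D^k p^\mu(t,x) = \frac{1}{t}\, a(t)^{-d-|k|}\,\bigl(D^k L^{\tilde{\pi}_{a(t)}} p^{\tilde{\mu}_{a(t)}}\bigr)\!\bigl(1, x\,a(t)^{-1}\bigr),\quad x\in\mathbf{R}^d,\ t>0,
\end{equation*}
which is the single identity driving the whole argument.

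For the global bound I would take absolute values, integrate in $z$, and substitute $w = z\,a(t)^{-1}$; the Jacobian cancels the factor $a(t)^{-d}$, leaving
\begin{equation*}
\int |L^\pi D^k p^\mu(t,z)|\,dz = \frac{1}{t}\,a(t)^{-|k|}\int \bigl|D^k L^{\tilde{\pi}_{a(t)}} p^{\tilde{\mu}_{a(t)}}(1,w)\bigr|\,dw.
\end{equation*}
Applying Corollary \ref{cc1} with $R=a(t)$ (keeping only the constant part of the weight $1+|w|^{\alpha_2}$) bounds the remaining integral by $CM$, giving the second inequality.

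For the tail estimate I would use $1\le c^{-\alpha_2}|z|^{\alpha_2}$ on $\{|z|>c\}$ to insert the weight $|z|^{\alpha_2}$, then substitute $w = z\,a(t)^{-1}$ as before. Now $|z|^{\alpha_2} = a(t)^{\alpha_2}|w|^{\alpha_2}$ supplies an extra factor $a(t)^{\alpha_2}$, and the weighted part of Corollary \ref{cc1} controls $\int |w|^{\alpha_2}\,|D^k L^{\tilde{\pi}_{a(t)}} p^{\tilde{\mu}_{a(t)}}(1,w)|\,dw$ by $CM$; collecting the powers of $a(t)$ together with the factor $c^{-\alpha_2}$ then produces the first inequality.

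The computation is essentially bookkeeping, so the only points needing attention are the commutation of $L^\pi$ with $D^k$ (immediate from the multiplier description) and the observation that the hypothesis imposed here on $\widetilde{|\pi|}_R$ is exactly the one required to invoke Corollary \ref{cc1} at $R=a(t)$, where $p^R(1,\cdot)=p^{\tilde{\mu}_R}(1,\cdot)$. The cases $\sigma\in(1,2)$ and $\sigma=1$ enter only through the admissible ranges of $\alpha_1,\alpha_2$ and leave the argument unchanged, since Corollary \ref{cc1} already accommodates them.
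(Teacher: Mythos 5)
Your proposal is correct and follows essentially the same route as the paper: the scaling identity of Lemma \ref{al1} (with derivatives, using that $L^{\pi}$ and $D^{k}$ commute as Fourier multipliers), a change of variables $w=z\,a(t)^{-1}$, insertion of the weight $|z|^{\alpha_{2}}$ via Chebyshev's inequality for the tail estimate, and the unit-time weighted bounds of Corollary \ref{cc1} at $R=a(t)$. The only difference—inserting the weight before rather than after rescaling—is immaterial bookkeeping.
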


\begin{proof}
Indeed, by Lemma \ref{al1}, Chebyshev inequality, and Corollary \ref{cc1},
for $\left\vert k\right\vert \leq 2,$%
\begin{eqnarray*}
&&\int_{\left\vert z\right\vert >c}\left\vert L^{\pi }D^{k}p^{\mu }\left(
t,\cdot \right) \left( z\right) \right\vert dz \\
&=&\frac{1}{t}a\left( t\right) ^{-d-k}\int_{\left\vert x\right\vert
>c}\left\vert L^{\tilde{\pi}_{a\left( t\right) }}D^{k}p^{\tilde{\mu}%
_{a\left( t\right) }}\left( 1,\frac{x}{a\left( t\right) }\right) \right\vert
dx \\
&=&\frac{1}{t}\int_{\left\vert x\right\vert >ca\left( t\right)
^{-1}}\left\vert L^{\tilde{\pi}_{a\left( t\right) }}D^{k}p^{\tilde{\mu}%
_{a\left( t\right) }}\left( 1,x\right) \right\vert dx \\
&\leq &\frac{a\left( t\right) ^{\alpha _{2}-k}c^{-\alpha _{2}}}{t}\int
\left\vert x\right\vert ^{\alpha _{2}}\left\vert L^{\tilde{\pi}_{a\left(
t\right) }}D^{k}p^{\tilde{\mu}_{a\left( t\right) }}\left( 1,x\right)
\right\vert dx\leq CM\frac{a\left( t\right) ^{\alpha _{2}-k}c^{-\alpha _{2}}%
}{t}.
\end{eqnarray*}

and%
\begin{eqnarray*}
&&\int \left\vert D^{k}L^{\pi }p^{\mu }\left( t,\cdot \right) \left(
z\right) \right\vert dz \\
&=&\frac{1}{t}a\left( t\right) ^{-d-k}\int \left\vert L^{\tilde{\pi}%
_{a\left( t\right) }}D^{k}p^{\tilde{\mu}_{a\left( t\right) }}\left( 1,\frac{x%
}{a\left( t\right) }\right) \right\vert dx \\
&=&\frac{1}{t}a\left( t\right) ^{-k}\int \left\vert L^{\tilde{\pi}_{a\left(
t\right) }}D^{k}p^{\tilde{\mu}_{a\left( t\right) }}\left( 1,x\right)
\right\vert dx\leq CM\frac{1}{t}a\left( t\right) ^{-k},t>0.
\end{eqnarray*}
\end{proof}

\begin{lemma}
\label{mvt}Let \textbf{D}$\left( \kappa ,l\right) $ and \textbf{B}$\left(
\kappa ,l\right) $ hold for $\mu \in \mathfrak{A}^{\sigma }$ with scaling
function $\kappa $ and scaling factor $l$. Let $\pi \in \mathfrak{A}%
_{sign}^{\sigma }$. Assume 
\begin{equation*}
\int_{\left\vert y\right\vert \leq 1}\left\vert y\right\vert ^{\alpha _{1}}d%
\widetilde{\left\vert \pi \right\vert }_{R}+\int_{\left\vert y\right\vert
>1}\left\vert y\right\vert ^{\alpha _{2}}d\widetilde{\left\vert \pi
\right\vert }_{R}\leq M,R>0
\end{equation*}%
($\alpha _{1},\alpha _{2}$ are exponents in \textbf{B}$\left( \kappa
,l\right) $). Then

a) There exists $C=C\left( \kappa ,l\right) >0$ such that%
\begin{equation*}
\int_{\mathbf{R}^{d}}\left\vert L^{\pi }p^{\mu }\left( t,x-y\right) -L^{\pi
}p^{\mu }\left( t,x\right) \right\vert dx\leq CM\frac{\left\vert
y\right\vert }{ta\left( t\right) },t>0,\bar{y},y\in \mathbf{R}^{d},
\end{equation*}%
where $a\left( t\right) =\inf \left\{ r:\kappa \left( r\right) \geq
t\right\} ,t>0.$

b) There is a constant $C=C\left( \kappa ,l,N\right) $ such that

\begin{align}
& \int_{2a}^{\infty }\int \left\vert L^{\pi }p^{\mu }\left( t-s,x\right)
-L^{\pi }p^{\mu }\left( t,x\right) \right\vert dxdt  \label{eq:MVTtime} \\
& \leq CM,\left\vert s\right\vert \leq a<\infty .  \notag
\end{align}
\end{lemma}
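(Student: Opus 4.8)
The plan is to handle the two parts separately, reducing each to a one-dimensional mean value argument fed by the integral bounds already in hand.

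For part a) I would express the spatial increment by the fundamental theorem of calculus,
\[
L^{\pi }p^{\mu }(t,x-y)-L^{\pi }p^{\mu }(t,x)=-\int_{0}^{1}y\cdot \nabla L^{\pi }p^{\mu }(t,x-sy)\,ds,
\]
integrate in $x$, and use Fubini together with translation invariance of Lebesgue measure to drop the $s$-dependence:
\[
\int \left\vert L^{\pi }p^{\mu }(t,x-y)-L^{\pi }p^{\mu }(t,x)\right\vert dx\leq \left\vert y\right\vert \int \left\vert \nabla L^{\pi }p^{\mu }(t,x)\right\vert dx.
\]
Since $\nabla $ and $L^{\pi }$ commute, the right-hand integral is $\int \left\vert L^{\pi }D^{k}p^{\mu }(t,x)\right\vert dx$ with $\left\vert k\right\vert =1$, which Lemma \ref{al2} bounds by $CMt^{-1}a(t)^{-1}$. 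This yields a) at once, for all $t>0$ and $y\in \mathbf{R}^{d}$.

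For part b) the key observation is that $p^{\mu }(\cdot ,x)$ solves the forward equation $\partial _{r}p^{\mu }(r,x)=L^{\mu }p^{\mu }(r,x)$, which is immediate from $\widehat{p^{\mu }}(r,\xi )=e^{\psi ^{\mu }(\xi )r}$. As $L^{\pi }$ commutes with both $\partial _{r}$ and $L^{\mu }$ (all being Fourier multipliers), I would write
\[
L^{\pi }p^{\mu }(t,x)-L^{\pi }p^{\mu }(t-s,x)=\int_{t-s}^{t}L^{\pi }L^{\mu }p^{\mu }(r,x)\,dr.
\]
To control $\int \left\vert L^{\pi }L^{\mu }p^{\mu }(r,x)\right\vert dx$ I would apply the third scaling identity of Lemma \ref{al1} with the outer measure $\pi $ and inner measure $\mu $, rewriting $L^{\pi }L^{\mu }p^{\mu }(r,\cdot )$ in terms of $L^{\tilde{\pi}_{a(r)}}L^{\tilde{\mu}_{a(r)}}p^{\tilde{\mu}_{a(r)}}(1,\cdot )$ with the prefactor $r^{-2}a(r)^{-d}$. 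Changing variables $x\mapsto xa(r)^{-1}$ cancels the $a(r)^{-d}$, and Corollary \ref{cc1}, whose hypothesis is exactly the variation-measure bound $\leq M$ assumed here, gives
\[
\int \left\vert L^{\pi }L^{\mu }p^{\mu }(r,x)\right\vert dx\leq \frac{CM}{r^{2}},\qquad r>0.
\]

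It then remains to integrate in time. Taking $0\leq s\leq a$ without loss of generality (the integrand is symmetric in its two time arguments), for $t\geq 2a$ the interval between $t-s$ and $t$ lies in $[t/2,\infty )$, so $\int_{t-s}^{t}r^{-2}dr\leq 4\left\vert s\right\vert t^{-2}\leq 4at^{-2}$, whence
\[
\int_{2a}^{\infty }\int \left\vert L^{\pi }p^{\mu }(t-s,x)-L^{\pi }p^{\mu }(t,x)\right\vert dx\,dt\leq CM\int_{2a}^{\infty }4a\,t^{-2}dt=CM,
\]
which is the assertion. The only real obstacle is the scaling bookkeeping needed to match Lemma \ref{al1} against the hypotheses of Corollary \ref{cc1}; once the bound $\int \left\vert L^{\pi }L^{\mu }p^{\mu }(r,x)\right\vert dx\leq CMr^{-2}$ is secured, the time integration is elementary, and the restriction $t\geq 2a$ is precisely what forces $r\geq t/2$ on the relevant interval so that the resulting $t^{-2}$ is integrable at infinity.
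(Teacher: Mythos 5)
Your argument is essentially the paper's own proof. Part a) is the mean value theorem in space fed by the second estimate of Lemma \ref{al2} with $\left\vert k\right\vert =1$; the paper instead inlines the scaling identity of Lemma \ref{al1} and Corollary \ref{cc1}, which is exactly how Lemma \ref{al2} is proved, so the two derivations coincide in substance. Part b) is likewise the paper's argument: mean value theorem in time, the third scaling identity of Lemma \ref{al1}, Corollary \ref{cc1} to get the $CMr^{-2}$ bound, and the same elementary integration over $t\geq 2a$. One bookkeeping correction: with the paper's Fourier convention, $\mathcal{F}p^{\mu }\left( r,\cdot \right) \left( \xi \right) =\exp \left\{ \psi ^{\mu ^{\ast }}\left( \xi \right) r\right\} $ where $\mu ^{\ast }\left( dy\right) =\mu \left( -dy\right) $, so the forward equation reads $\partial _{r}p^{\mu }=L^{\mu ^{\ast }}p^{\mu }$, not $L^{\mu }p^{\mu }$; accordingly the quantity you must bound is $\int \left\vert L^{\pi }L^{\mu ^{\ast }}p^{\mu }\left( r,x\right) \right\vert dx\leq CMr^{-2}$, which is precisely what Lemma \ref{al1} together with the second estimate of Corollary \ref{cc1} (stated for $L^{\tilde{\mu}_{R}\ast }$) provides. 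Since the moment hypotheses in \textbf{B}$\left( \kappa ,l\right) $ are invariant under reflection $y\mapsto -y$, this substitution changes none of your estimates, so the slip is cosmetic rather than a gap.
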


\begin{proof}
By Lemma \ref{al1} and Corollary \ref{cc1},%
\begin{eqnarray*}
&&\int_{\mathbf{R}^{d}}\left\vert L^{\pi }p^{\mu }\left( t,x-y\right)
-L^{\pi }p^{\mu }\left( t,x\right) \right\vert dx \\
&=&\frac{1}{t}\int \left\vert L^{\tilde{\pi}_{a\left( t\right) }}p^{\tilde{%
\mu}_{a\left( t\right) }}\left( 1,x-\frac{y}{a\left( t\right) }\right) -L^{%
\tilde{\pi}_{a\left( t\right) }}p^{\tilde{\mu}_{a\left( t\right) }}\left(
1,x\right) \right\vert dx \\
&\leq &\frac{1}{t}\int_{0}^{1}\int \left\vert \nabla L^{\tilde{\pi}_{a\left(
t\right) }}p^{\tilde{\mu}_{a\left( t\right) }}\left( 1,x-s\frac{y}{a\left(
t\right) }\right) \right\vert \frac{\left\vert y\right\vert }{a\left(
t\right) }dxds \\
&\leq &C\frac{\left\vert y\right\vert }{ta\left( t\right) }\int \left\vert
L^{\tilde{\pi}_{a\left( t\right) }}\nabla p^{\tilde{\mu}_{a(t)}}\left(
1,x\right) \right\vert dx\leq CM\frac{\left\vert y\right\vert }{ta\left(
t\right) }.
\end{eqnarray*}%
Similarly, we derive the estimate (\ref{eq:MVTtime}). By Lemma \ref{al1} and
Corollary \ref{cc1},%
\begin{eqnarray*}
&&\int_{2a}^{\infty }\int \left\vert L^{\pi }p^{\mu }\left( t-s,x\right)
-L^{\pi }p^{\mu }\left( t,x\right) \right\vert dxdt \\
&\leq &\left\vert s\right\vert \int_{2a}^{\infty }\int
\int_{0}^{1}\left\vert L^{\pi }\partial _{t}p^{\mu }\left( t-\theta
s,x\right) \right\vert d\theta dxdt \\
&=&\left\vert s\right\vert \int_{2a}^{\infty }\int \int_{0}^{1}\left\vert
L^{\pi }L^{\mu ^{\ast }}p^{\mu }\left( t-\theta s,x\right) \right\vert
d\theta dxdt \\
&=&\left\vert s\right\vert \int_{2a}^{\infty }\int \int_{0}^{1}\left(
t-\theta s\right) ^{-2}\left\vert (L^{\tilde{\pi}_{a\left( t-\theta s\right)
}}L^{\tilde{\mu}_{a\left( t-\theta s\right) }^{\ast }}p^{\tilde{\mu}%
_{a\left( t-\theta s\right) }})\left( 1,x\right) \right\vert d\theta dxdt \\
&\leq &CM\left\vert s\right\vert \int_{0}^{1}\frac{1}{(2a-\theta s)}d\theta
\leq CM.
\end{eqnarray*}
\end{proof}

\subsubsection{Operator continuity}

Now we prove continuity estimate.

\begin{lemma}
\label{prol}Let \textbf{D}$\left( \kappa ,l\right) $ and \textbf{B}$\left(
\kappa ,l\right) $ hold for $\mu \in \mathfrak{A}^{\sigma }$ with scaling
function $\kappa $ and scaling factor $l$. Let $\pi \in \mathfrak{A}%
_{sign}^{\sigma }$. Assume 
\begin{equation*}
\int_{\left\vert y\right\vert \leq 1}\left\vert y\right\vert ^{\alpha _{1}}d%
\widetilde{\left\vert \pi \right\vert }_{R}+\int_{\left\vert y\right\vert
>1}\left\vert y\right\vert ^{\alpha _{2}}d\widetilde{\left\vert \pi
\right\vert }_{R}\leq M,R>0
\end{equation*}%
and%
\begin{equation*}
\int_{1}^{\infty }\frac{1}{\gamma \left( r\right) ^{1\wedge \alpha _{2}}}%
\frac{dr}{r}<\infty ,
\end{equation*}%
where $\alpha _{1},\alpha _{2}$ are exponents in \textbf{B}$\left( \kappa
,l\right) $. Then for each $p\in \left( 1,\infty \right) $ there is a
constant $C=C\left( d,p,\kappa ,l,N_{0}\right) $ such that%
\begin{equation*}
\left\vert L^{\pi }v\right\vert _{L_{p}}\leq CM\left\vert L^{\mu
}v\right\vert _{L_{p}},v\in \widetilde{C}^{\infty }\left( \mathbf{R}%
^{d}\right)
\end{equation*}
\end{lemma}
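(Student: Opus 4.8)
The plan is to realize $L^{\pi }v$ as a Calder\'{o}n--Zygmund convolution operator acting on $L^{\mu }v$ and to verify the two Calder\'{o}n--Zygmund hypotheses (an $L_{2}$ bound and H\"{o}rmander's condition), after which the classical theorem (cf. \cite{gr}) upgrades the estimate to all $p\in \left( 1,\infty \right) $. For $v\in \widetilde{C}^{\infty }\left( \mathbf{R}^{d}\right) $ write $P_{t}^{\mu }w\left( x\right) =\mathbf{E}w\left( x+Z_{t}^{\mu }\right) $, so that $\partial _{t}P_{t}^{\mu }w=L^{\mu }P_{t}^{\mu }w$; since $v,L^{\pi }v\in L_{1}\cap L_{\infty }$, we have $P_{t}^{\mu }L^{\pi }v\rightarrow 0$ as $t\rightarrow \infty $, and the fundamental theorem of calculus gives $L^{\pi }v=-\int_{0}^{\infty }L^{\pi }P_{t}^{\mu }L^{\mu }v\,dt=K\ast L^{\mu }v$ with $K=-\int_{0}^{\infty }L^{\pi }p^{\mu }\left( t,\cdot \right) dt$ (up to an immaterial reflection $x\mapsto -x$). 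To make this rigorous near $t=\infty $ I would insert a factor $e^{-\varepsilon t}$, prove all the estimates below uniformly in $\varepsilon >0$, and let $\varepsilon \downarrow 0$ at the end, exactly as in the proof of Lemma \ref{kl1}.

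Taking Fourier transforms, $\widehat{K}\left( \xi \right) =\psi ^{\pi }\left( \xi \right) /\psi ^{\mu }\left( \xi \right) $. Using the scaling identity $\psi ^{\pi }\left( \xi \right) =\kappa \left( R\right) ^{-1}\psi ^{\widetilde{\pi }_{R}}\left( R\xi \right) $ with $R=1/|\xi |$ together with assumption \textbf{B}$\left( \kappa ,l\right) $ one gets $|\psi ^{\pi }\left( \xi \right) |\leq CM\kappa \left( 1/|\xi |\right) ^{-1}$, while the nondegeneracy built into \textbf{D}$\left( \kappa ,l\right) $ (assumption \textbf{A}$_{0}$) gives $|\psi ^{\mu }\left( \xi \right) |\geq |\func{Re}\psi ^{\mu }\left( \xi \right) |\geq c\,\kappa \left( 1/|\xi |\right) ^{-1}$ (this is Lemma 7 in \cite{MPh}). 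Hence $\sup _{\xi }|\widehat{K}\left( \xi \right) |\leq CM$ and, by Plancherel, $|L^{\pi }v|_{L_{2}}\leq CM|L^{\mu }v|_{L_{2}}$; in particular the case $p=2$ is already complete.

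For $p\neq 2$ I would verify $\int_{|x|>2|y|}|K\left( x-y\right) -K\left( x\right) |\,dx\leq CM$ by bounding it with $\int_{0}^{\infty }\int_{|x|>2|y|}|L^{\pi }p^{\mu }\left( t,x-y\right) -L^{\pi }p^{\mu }\left( t,x\right) |\,dx\,dt$ and splitting the $t$-integral at the scale $t_{0}=\kappa \left( |y|\right) $, where $a\left( t_{0}\right) \asymp |y|$. On $t\geq t_{0}$ (so $a\left( t\right) \gtrsim |y|$) Lemma \ref{mvt} a) bounds the inner integral by $CM\,|y|/\left( t\,a\left( t\right) \right) $; the substitution $t=\kappa \left( |y|\right) s$ and the lower bound $a\left( \kappa \left( |y|\right) s\right) \gtrsim |y|\gamma \left( s\right) $ from Lemma \ref{ll2} reduce this part to $CM\int_{1}^{\infty }\gamma \left( s\right) ^{-1}s^{-1}ds$. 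On $t<t_{0}$ I would discard the difference and use the tail estimate of Lemma \ref{al2} with $k=0$ and $c\sim |y|$, namely $\int_{|x|>|y|}|L^{\pi }p^{\mu }\left( t,x\right) |\,dx\leq CMt^{-1}a\left( t\right) ^{\alpha _{2}}|y|^{-\alpha _{2}}$; the same substitution together with the upper bound $a\left( \kappa \left( |y|\right) s\right) \leq |y|\gamma \left( 1/s\right) ^{-1}$ coming from (\ref{pf1}) reduces this part to $CM\int_{1}^{\infty }\gamma \left( r\right) ^{-\alpha _{2}}r^{-1}dr$. Both integrals are finite because $\int_{1}^{\infty }\gamma \left( r\right) ^{-\left( 1\wedge \alpha _{2}\right) }r^{-1}dr<\infty $ controls the two exponents $1$ and $\alpha _{2}$ simultaneously (on $\gamma \geq 1$ the smaller exponent $1\wedge \alpha _{2}$ dominates). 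This yields H\"{o}rmander's condition uniformly in $y$.

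Having both the $L_{2}$ bound and H\"{o}rmander's condition for the convolution kernel $K$, the Calder\'{o}n--Zygmund theorem (of the type invoked for Corollary \ref{c1}, cf. \cite{gr}) gives $|L^{\pi }v|_{L_{p}}\leq CM|L^{\mu }v|_{L_{p}}$ for every $p\in \left( 1,\infty \right) $. The main obstacle is the behaviour of the $t$-integral as $t\downarrow 0$ in the H\"{o}rmander estimate: the factor $t^{-1}$ is integrable only because $a\left( t\right) ^{\alpha _{2}}\rightarrow 0$ quickly enough, and quantifying this requires precisely the decay $a\left( \kappa \left( |y|\right) s\right) \leq |y|\gamma \left( 1/s\right) ^{-1}$ from (\ref{pf1}) together with the integrability hypothesis on $\gamma $; the secondary technical point is the rigorous justification of the kernel representation near $t=\infty $, which the $e^{-\varepsilon t}$ regularization takes care of.
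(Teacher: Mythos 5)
Your proposal is correct and takes essentially the same route as the paper's own proof: the paper likewise represents $L^{\pi }v$ as a convolution of (an $\varepsilon$-regularized version of) $L^{\mu }v$ with the kernel $m_{\varepsilon }=\int e^{-\varepsilon t}L^{\pi }p^{\mu ^{\ast }}\left( t,\cdot \right) dt$, obtains the $L_{2}$/multiplier bound from the symbol estimates of Lemma 7 in \cite{MPh}, verifies H\"{o}rmander's condition by splitting the time integral at the scale $a^{-1}\left( \left\vert y\right\vert \right) $ (comparable to your $\kappa \left( \left\vert y\right\vert \right) $ by Lemma \ref{ll2}) with the tail estimate of Lemma \ref{al2} for small times and the mean-value estimate for large times, and concludes by the Calder\'{o}n--Zygmund theorem. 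The only differences are cosmetic: the paper phrases the kernel representation via the resolvent identity $v=\int_{0}^{\infty }e^{-\varepsilon t}\mathbf{E}g\left( \cdot +Z_{t}^{\mu }\right) dt$ with $g=-\left( L^{\mu }-\varepsilon \right) v$ (Corollary 5 in \cite{MPh}), which is exactly what your $e^{-\varepsilon t}$-regularized fundamental-theorem-of-calculus argument yields.
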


\begin{proof}
Let $\varepsilon >0$, $v\in \tilde{C}^{\infty }\left( \mathbf{R}^{d}\right)
,g=-\left( L^{\mu }-\varepsilon \right) v$. According to Corollary 5 in \cite%
{MPh},%
\begin{equation*}
v\left( x\right) =\int_{0}^{\infty }e^{-\varepsilon t}\mathbf{E}g\left(
x+Z_{t}^{\mu }\right) dt,x\in \mathbf{R}^{d},
\end{equation*}%
and%
\begin{equation*}
L^{\pi }v\left( x\right) =Hg\left( x\right) :=\int_{0}^{\infty
}e^{-\varepsilon t}L^{\pi }\mathbf{E}g\left( x+Z_{t}^{\mu }\right) dt.x\in 
\mathbf{R}^{d}.
\end{equation*}%
Consider 
\begin{eqnarray*}
H_{\varepsilon }g\left( x\right) &=&\int_{\varepsilon }^{\infty
}e^{-\varepsilon t}L^{\pi }\mathbf{E}g\left( x+Z_{t}^{\mu }\right) dt \\
&=&\int m_{\varepsilon }\left( x-y\right) g\left( y\right) dy,x\in \mathbf{R}%
^{d},
\end{eqnarray*}%
with%
\begin{equation*}
m_{\varepsilon }\left( x\right) =\int_{\varepsilon }^{\infty
}e^{-\varepsilon t}L^{\pi }p^{\mu ^{\ast }}\left( t,x\right) dt,x\in \mathbf{%
R}^{d}.
\end{equation*}%
We prove that for each $p\in \left( 1,\infty \right) $ there is $C$ so that 
\begin{equation}
\left\vert H_{\varepsilon }g\right\vert _{L_{p}}\leq C\left\vert
g\right\vert _{L_{p}},\varepsilon >0,g\in L_{p}\left( \mathbf{R}^{d}\right) .
\label{ff1}
\end{equation}%
Obviously,%
\begin{equation*}
\mathcal{F}\left( H_{\varepsilon }g\right) \left( \xi \right) =\hat{m}%
_{\varepsilon }\left( \xi \right) \hat{g}\left( \xi \right) ,\xi \in \mathbf{%
R}^{d},
\end{equation*}%
where%
\begin{equation*}
\hat{m}_{\varepsilon }\left( \xi \right) =\int_{\varepsilon }^{\infty }\psi
^{\pi }\left( \xi \right) \exp \left\{ \psi ^{\mu }\left( \xi \right)
t-\varepsilon t\right\} dt,\xi \in \mathbf{R}^{d}.
\end{equation*}%
By Lemma 7 in \cite{MPh}, $\left\vert \hat{m}_{\varepsilon }\left( \xi
\right) \right\vert \leq AM,\xi \in \mathbf{R}^{d},\varepsilon \geq 0,$ for
some $A>0$. According to Theorem 3 of Chapter 1 in \cite{stein1}, it is
enough to verify that

\begin{equation}
\int_{\left\vert x\right\vert \geq 3\left\vert s\right\vert }\left\vert
m_{\varepsilon }\left( x-s\right) -m_{\varepsilon }\left( x\right)
\right\vert dx\leq CM,\forall s\neq 0,  \label{fff2}
\end{equation}

i.e.

\begin{equation*}
B=\int_{\left\vert x\right\vert \geq 3\left\vert s\right\vert }\left\vert
\int_{\epsilon }^{\infty }e^{-\varepsilon t}\left[ L^{\pi }p^{\mu ^{\ast
}}\left( t,x-s\right) -L^{\pi }p^{\mu ^{\ast }}\left( t,x\right) \right]
dt\right\vert dx\leq C.
\end{equation*}

Obviously,%
\begin{align*}
B\leq & \int_{\left\vert x\right\vert \geq 3\left\vert s\right\vert
}\left\vert \int_{0}^{a^{-1}\left( s\right) }...\right\vert
dx+\int_{\left\vert x\right\vert \geq 3\left\vert s\right\vert }\left\vert
\int_{a^{-1}\left( s\right) }^{\infty }...\right\vert dx \\
=& A_{1}+A_{2}.
\end{align*}

By Lemma \ref{al2} and (\ref{pf1}),

\begin{eqnarray*}
A_{1} &\leq &2\int_{\left\vert x\right\vert \geq 2\left\vert s\right\vert
}\int_{0}^{a^{-1}\left( \left\vert s\right\vert \right) }\left\vert L^{\pi
}p^{\mu ^{\ast }}\left( t,x\right) \right\vert dtdx \\
&\leq &CM\int_{0}^{a^{-1}\left( \left\vert s\right\vert \right)
}t^{-1}\left\vert s\right\vert ^{-\alpha _{2}}a\left( t\right) ^{\alpha
_{2}}dt\leq CM\int_{0}^{a^{-1}\left( \left\vert s\right\vert \right) }t^{-1}%
\frac{a\left( t\right) ^{\alpha _{2}}}{a\left( a^{-1}\left( \left\vert
s\right\vert \right) \right) ^{\alpha _{2}}}dt \\
&\leq &CM\int_{0}^{a^{-1}\left( \left\vert s\right\vert \right)
}t^{-1}\gamma \left( \frac{a^{-1}\left( \left\vert s\right\vert \right) }{t}%
\right) ^{-\alpha _{2}}dt\leq CM\int_{1}^{\infty }\gamma \left( r\right)
^{-\alpha _{2}}\frac{dr}{r}\leq CM.
\end{eqnarray*}

We estimate $A_{2}$ using Lemma \ref{al2} and (\ref{pf1}):

\begin{eqnarray*}
A_{2} &=&\int_{\left\vert x\right\vert \geq 3\left\vert s\right\vert
}\left\vert \int_{a^{-1}\left( s\right) }^{\infty }\left[ L^{\pi }p^{\mu
^{\ast }}\left( t,x-s\right) -L^{\pi }p^{\mu ^{\ast }}\left( t,x\right) %
\right] \right\vert dx \\
&\leq &\int_{a^{-1}\left( s\right) }^{\infty }\left\vert s\right\vert
\int_{0}^{1}\int \left\vert L^{\pi }\nabla p^{\mu ^{\ast }}\left( t,x-\tau
s\right) \right\vert d\tau dx \\
&\leq &CM\int_{a^{-1}\left( s\right) }^{\infty }\frac{a\left( t\right)
^{-1}\left\vert s\right\vert }{t}dt\leq CM\int_{a^{-1}\left( s\right)
}^{\infty }\frac{a\left( a^{-1}\left( \left\vert s\right\vert \right)
+\right) }{ta\left( t\right) }dt \\
&\leq &CM\int_{a^{-1}\left( s\right) }^{\infty }\gamma \left( \frac{t}{%
a^{-1}\left( \left\vert s\right\vert \right) }\right) ^{-1}\frac{dt}{t}\leq
CM\int_{1}^{\infty }\gamma \left( r\right) ^{-1}\frac{dr}{r}<\infty \text{.}
\end{eqnarray*}

Thus (\ref{fff2}) holds.
\end{proof}

\subsection{Existence and uniqueness for smooth input functions}

For $E=[0,T]\times \mathbf{R}^{d},p\geq 1$, we denote by $\tilde{C}^{\infty
}(E)$ the space of all measurable functions $f$ on $E$ such that for any
multiindex $\gamma \in \mathbf{N}_{0}^{d}$ and all $p\geq 1,$%
\begin{equation*}
\sup_{\left( t,x\right) \in E}\left\vert D^{\gamma }f\left( t,x\right)
\right\vert +\sup_{t\in \lbrack 0,T]}\left\vert D^{\gamma }f(t,\cdot
)\right\vert _{L_{p}\left( R^{d}\right) }<\infty .
\end{equation*}%
Similar space of functions on $\mathbf{R}^{d}$ is denoted $\tilde{C}^{\infty
}\left( \mathbf{R}^{d}\right) $.

Next we suppose that $f\in \tilde{C}^{\infty }(E),g\in \tilde{C}^{\infty
}\left( \mathbf{R}^{d}\right) $ and derive some estimates for the solution.

\begin{lemma}
\label{le0}Let $f\in \tilde{C}^{\infty }(E),g\in \tilde{C}^{\infty }\left( 
\mathbf{R}^{d}\right) $ then there is unique $u\in \tilde{C}^{\infty }\left(
E\right) $ solving (\ref{1'}). Moreover, 
\begin{equation}
u(t,x)=e^{-\lambda t}\mathbf{E}g\left( x+Z_{t}^{\pi }\right)
+\int_{0}^{t}e^{-\lambda (t-s)}\mathbf{E}f\left( s,x+Z_{t-s}^{\pi }\right)
ds,\left( t,x\right) \in E,  \label{h4}
\end{equation}%
and for $p\in \lbrack 1,\infty ]$ and any mutiindex $\gamma \in \mathbf{N}%
_{0}^{d},$%
\begin{eqnarray}
\left\vert D^{\gamma }u\right\vert _{L_{p}\left( E\right) } &\leq &\rho
_{\lambda }\left\vert D^{\gamma }f\right\vert _{L_{p}\left( E\right) }+\rho
_{\lambda }^{1/p}\left\vert D^{\gamma }g\right\vert _{L_{p}\left( \mathbf{R}%
^{d}\right) },  \label{h5} \\
\left\vert D^{\gamma }u\left( t\right) \right\vert _{L_{p}\left( \mathbf{R}%
^{d}\right) } &\leq &\left\vert D^{\gamma }g\right\vert _{L_{p}\left( 
\mathbf{R}^{d}\right) }+\int_{0}^{t}\left\vert D^{\gamma }f\left( s\right)
\right\vert _{L_{p}\left( \mathbf{R}^{d}\right) }ds,t\geq 0.  \label{h40}
\end{eqnarray}%
where $\rho _{\lambda }=\left( 1/\lambda \right) \wedge T$.
\end{lemma}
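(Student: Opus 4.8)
The plan is to build everything around the convolution semigroup of the L\'evy process $Z^{\pi}$. Write $T_{\tau}h\left( x\right) =\mathbf{E}h\left( x+Z_{\tau}^{\pi}\right) $. As recorded in Remark \ref{rp} and (\ref{cf3}) (which rest on It\^{o}'s formula and Corollary 5 in \cite{MPh}), for $h\in \tilde{C}^{\infty }\left( \mathbf{R}^{d}\right) $ the family $T_{\tau }$ is strongly differentiable with $\partial _{\tau }T_{\tau }h=L^{\pi }T_{\tau }h=T_{\tau }L^{\pi }h$; moreover $T_{\tau }$ commutes with every spatial derivative, $D^{\gamma }T_{\tau }h=T_{\tau }D^{\gamma }h$, because $Z^{\pi }$ acts by translation, and $T_{\tau }$ is an $L_{p}$-contraction, since $\left\vert T_{\tau }h\right\vert _{L_{p}}\leq \mathbf{E}\left\vert h\left( \cdot +Z_{\tau }^{\pi }\right) \right\vert _{L_{p}}=\left\vert h\right\vert _{L_{p}}$ by translation invariance of Lebesgue measure and Minkowski's inequality. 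The hypothesis $f\in \tilde{C}^{\infty }\left( E\right) ,g\in \tilde{C}^{\infty }\left( \mathbf{R}^{d}\right) $ guarantees that all these operations (differentiation under $\mathbf{E}$, application of $L^{\pi }$, interchange with the time integral) are legitimate, since each spatial derivative is bounded and lies in every $L_{p}$.

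For existence I would take $u$ to be the right-hand side of (\ref{h4}), namely $u\left( t,\cdot \right) =e^{-\lambda t}T_{t}g+\int_{0}^{t}e^{-\lambda \left( t-s\right) }T_{t-s}f\left( s,\cdot \right) ds$, and differentiate in $t$. The first term contributes $-\lambda e^{-\lambda t}T_{t}g+e^{-\lambda t}L^{\pi }T_{t}g$; Leibniz's rule applied to the Duhamel integral produces the boundary term $f\left( t,\cdot \right) $ together with $-\lambda \int_{0}^{t}e^{-\lambda \left( t-s\right) }T_{t-s}f\,ds+\int_{0}^{t}e^{-\lambda \left( t-s\right) }L^{\pi }T_{t-s}f\,ds$. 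Collecting terms gives exactly $\partial _{t}u=L^{\pi }u-\lambda u+f$, while $u\left( 0,\cdot \right) =T_{0}g=g$, so (\ref{1'}) holds.

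Uniqueness and the representation are obtained by one telescoping identity. For any solution $u\in \tilde{C}^{\infty }\left( E\right) $ and fixed $\left( t,x\right) $ set $\Phi \left( s\right) =e^{-\lambda \left( t-s\right) }T_{t-s}u\left( s,\cdot \right) \left( x\right) ,s\in \left[ 0,t\right] $. Differentiating and using $\partial _{\tau }T_{\tau }=L^{\pi }T_{\tau }$ gives $\Phi ^{\prime }\left( s\right) =e^{-\lambda \left( t-s\right) }T_{t-s}\big[ \partial _{s}u-L^{\pi }u+\lambda u\big] \left( s,\cdot \right) \left( x\right) $, and the equation (\ref{1'}) collapses the bracket to $f\left( s,\cdot \right) $. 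Integrating from $0$ to $t$ and reading off $\Phi \left( t\right) =u\left( t,x\right) $ and $\Phi \left( 0\right) =e^{-\lambda t}\mathbf{E}g\left( x+Z_{t}^{\pi }\right) $ reproduces (\ref{h4}); since the right-hand side does not depend on the particular solution, uniqueness follows.

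The estimates are then routine consequences of the contraction property. Applying $D^{\gamma }$ under $T_{\tau }$ and the triangle and Minkowski inequalities to (\ref{h4}), and using $\left\vert T_{\tau }D^{\gamma }h\right\vert _{L_{p}}\leq \left\vert D^{\gamma }h\right\vert _{L_{p}}$ together with $e^{-\lambda \tau }\leq 1$, yields (\ref{h40}) (the case $p=\infty $ being the obvious sup-bound). For (\ref{h5}) I would treat the two terms separately: the $g$-term gives $\left( \int_{0}^{T}e^{-\lambda pt}\,dt\right) ^{1/p}\left\vert D^{\gamma }g\right\vert _{L_{p}}$, and the Duhamel term, after bounding $\left\vert D^{\gamma }u\left( t\right) \right\vert _{L_{p}}$ by the time-convolution of $\left\vert D^{\gamma }f\left( \cdot \right) \right\vert _{L_{p}}$ against $e^{-\lambda \cdot }1_{\left[ 0,T\right] }$, is controlled by Young's convolution inequality. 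Both reduce to the elementary bound $\int_{0}^{T}e^{-\lambda r}\,dr\leq \left( 1/\lambda \right) \wedge T=\rho _{\lambda }$ (and its $p$-th power analogue $\int_{0}^{T}e^{-\lambda pt}\,dt\leq \rho _{\lambda }$ for the first term, whence the factor $\rho _{\lambda }^{1/p}$), giving precisely (\ref{h5}). The membership $u\in \tilde{C}^{\infty }\left( E\right) $ follows from these same sup- and $L_{p}$-bounds for all $D^{\gamma }u$ together with strong continuity of $\tau \mapsto T_{\tau }$. The one genuinely delicate point is the first paragraph: rigorously justifying $\partial _{\tau }T_{\tau }h=L^{\pi }T_{\tau }h=T_{\tau }L^{\pi }h$ and the interchange of $L^{\pi }$ with the expectation and the time integral. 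This is an It\^{o}-formula-plus-dominated-convergence argument that relies essentially on the decay and boundedness encoded in $\tilde{C}^{\infty }$; once it is in place, the rest is bookkeeping.
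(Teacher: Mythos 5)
Your proof is correct, and its core mechanism --- the Duhamel representation (\ref{h4}) combined with the $L_{p}$-contractivity of the L\'evy semigroup $T_{\tau }h=\mathbf{E}h\left( \cdot +Z_{\tau }^{\pi }\right) $ --- is the same one the paper relies on, but you execute it in a self-contained way where the paper delegates. The paper's own proof treats only the initial-data term $h\left( t,x\right) =e^{-\lambda t}\mathbf{E}g\left( x+Z_{t}^{\pi }\right) $ explicitly (It\^{o} formula plus exactly the contraction estimates you use for the $g$-term), and then settles the inhomogeneous part, the representation, and uniqueness in a single stroke by citing Lemma 8 of \cite{MPh}. You instead supply those outsourced pieces: direct verification that the Duhamel integral solves (\ref{1'}), uniqueness via the telescoping function $\Phi \left( s\right) =e^{-\lambda \left( t-s\right) }T_{t-s}u\left( s,\cdot \right) \left( x\right) $, whose derivative the equation collapses to $e^{-\lambda \left( t-s\right) }T_{t-s}f\left( s,\cdot \right) \left( x\right) $, and the bound (\ref{h5}) via Young's convolution inequality together with $\int_{0}^{T}e^{-\lambda r}dr\leq \left( 1/\lambda \right) \wedge T=\rho _{\lambda }$. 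What the paper's route buys is brevity, since the work is already done in the companion paper; what yours buys is a proof readable without \cite{MPh}, at the price of having to justify --- as you correctly flag --- the semigroup identities $\partial _{\tau }T_{\tau }h=L^{\pi }T_{\tau }h=T_{\tau }L^{\pi }h$ and the interchanges of $L^{\pi }$, $\mathbf{E}$ and the time integral, which is precisely the It\^{o}-formula content on which both arguments ultimately rest.
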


\begin{proof}
Let%
\begin{equation*}
h\left( t,x\right) =e^{-\lambda t}\mathbf{E}g\left( x+Z_{t}^{\pi }\right)
,\left( t,x\right) \in E.
\end{equation*}%
By Ito formula, $h$ solves (\ref{1'}) with $f=0.$ Obviously, for any
multiindex $\gamma ,$%
\begin{equation*}
D^{\gamma }h\left( t,x\right) =e^{-\lambda t}\mathbf{E}D^{\gamma }g\left(
x+Z_{t}^{\pi }\right) ,\left( t,x\right) \in E,
\end{equation*}%
and%
\begin{eqnarray*}
\sup_{t}\left\vert D^{\gamma }h\left( t\right) \right\vert _{L_{p}\left( 
\mathbf{R}^{d}\right) } &\leq &\left\vert D^{\gamma }g\right\vert
_{L_{p}\left( \mathbf{R}^{d}\right) }, \\
\left\vert D^{\gamma }h\right\vert _{L_{p}\left( E\right) } &\leq &\rho
_{\lambda }^{1/p}\left\vert D^{\gamma }g\right\vert _{L_{p}\left( \mathbf{R}%
^{d}\right) }.
\end{eqnarray*}%
\emph{\ }The claim follows by Lemma 8 in \cite{MPh}.
\end{proof}

\begin{remark}
\label{ren1}Using different notation, we rewrite (\ref{h4}) as%
\begin{equation}
u\left( t,x\right) =I_{\lambda }g\left( t,x\right) +R_{\lambda }f\left(
t,x\right) ,\left( t,x\right) \in E,  \label{h6}
\end{equation}%
where 
\begin{eqnarray}
I_{\lambda }g\left( t,x\right) &=&e^{-\lambda t}\mathbf{E}g\left(
x+Z_{t}^{\pi }\right) ,\left( t,x\right) \in E,  \label{h7} \\
R_{\lambda }f\left( t,x\right) &=&\int_{0}^{t}e^{-\lambda (t-s)}\mathbf{E}%
f\left( s,x+Z_{t-s}^{\pi }\right) ds,\left( t,x\right) \in E.  \notag
\end{eqnarray}
\end{remark}

\subsection{Estimate of \thinspace $R_{\protect\lambda }f$}

We prove that, under assumptions of Theorem \ref{t1}, there is $C=C\left(
n_{0},c_{1},N_{0},d,p\right) $ so that 
\begin{equation}
\left\vert L^{\mu }R_{\lambda }f\right\vert _{H_{p}^{\mu ;s}\left( E\right)
}\leq C\left\vert f\right\vert _{H_{p}^{\mu ;s}\left( E\right) }.
\label{h70}
\end{equation}%
Since $J_{\mu }^{t}:H_{p}^{\mu ;s}\left( \mathbf{R}^{d}\right) \rightarrow
H_{p}^{\mu ;s-t}\left( \mathbf{R}^{d}\right) $ is an isomorphism for any $%
s,t\in \mathbf{R}$, it is nought to derive the estimate for $s=0.$ We prove
that that there is $C=C\left( n_{0},c_{1},N_{0},d,p\right) $ so that%
\begin{equation}
\left\vert L^{\mu _{sym}}R_{\lambda }f\right\vert _{L_{p}\left( E\right)
}\leq C\left\vert f\right\vert _{L_{p}\left( E\right) }.  \label{h8}
\end{equation}%
First (\ref{h8}) holds for $p=2.$ Indeed,%
\begin{equation*}
\mathcal{F}\left( L^{\mu }R_{\lambda }f\right) =\psi ^{\mu }\int_{0}^{t}\exp
\left\{ \psi ^{\pi }\left( \xi \right) \left( t-s\right) -\lambda
(t-s)\right\} \hat{f}\left( s,\cdot \right) ds,
\end{equation*}%
and the estimate follows by Fubini theorem, Plancherel identity and
Corollary 4 in \cite{MPh}. According to Calderon-Zygmund theorem (see
Theorem 5 in \cite{MPh}), (\ref{h8}) reduces to the verification of H\"{o}%
rmander condition which follows from the following statement.

\begin{lemma}
\label{mainl}Let \textbf{D}$\left( \kappa ,l\right) $ and \textbf{B}$\left(
\kappa ,l\right) $ hold for $\mu \in \mathfrak{A}^{\sigma }$ with scaling
function $\kappa $ and scaling factor $l$. Let $\pi \in \mathfrak{A}%
_{sign}^{\sigma }$. Assume 
\begin{equation*}
\int_{\left\vert y\right\vert \leq 1}\left\vert y\right\vert ^{\alpha _{1}}d%
\widetilde{\left\vert \pi \right\vert }_{R}+\int_{\left\vert y\right\vert
>1}\left\vert y\right\vert ^{\alpha _{2}}d\widetilde{\left\vert \pi
\right\vert }_{R}\leq M,R>0,
\end{equation*}%
and%
\begin{equation*}
\int_{1}^{\infty }\frac{dr}{r\gamma \left( r\right) ^{1\wedge \alpha _{2}}}%
<\infty ,
\end{equation*}%
where $\alpha _{1},\alpha _{2}$ are exponents in \textbf{B}$\left( \kappa
,l\right) $. Let%
\begin{equation*}
K_{\lambda }^{\epsilon }\left( t,x\right) =e^{-\lambda t}L^{\pi }p^{\mu
^{\ast }}\left( t,x\right) \chi _{\left[ \epsilon ,\infty \right] }\left(
t\right) ,t>0,x\in \mathbf{R}^{d},
\end{equation*}%
where $\mu ^{\ast }\left( dy\right) =\mu \left( -dy\right) $. There exist $%
C_{0}>1$ and $C$ so that%
\begin{equation}
\mathcal{I}=\int \chi _{Q_{C_{0}\delta }\left( 0\right) ^{c}}\left(
t,x\right) \left\vert K_{\lambda }^{\epsilon }\left( t-\tilde{s},x-\tilde{y}%
\right) -K_{\lambda }^{\epsilon }\left( t,x\right) \right\vert dxdt\leq CM
\label{f23}
\end{equation}%
for all $\left\vert \tilde{s}\right\vert \leq \kappa \left( \delta \right)
,\left\vert \tilde{y}\right\vert \leq \delta ,\delta >0$, where $%
Q_{C_{0}\delta }\left( 0\right) =\left( -\kappa \left( C_{0}\delta \right)
,\kappa \left( C_{0}\delta \right) \right) \times \left\{ x:\left\vert
x\right\vert \leq C_{0}\delta \right\} .$
\end{lemma}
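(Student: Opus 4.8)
The plan is to establish (\ref{f23}) by the anisotropic splitting indicated in the introduction, matching the cylinder $Q_{C_0\delta}(0)$---whose time half-width $\kappa(C_0\delta)$ is coupled to its spatial radius $C_0\delta$ through the scaling function---to the two families of density estimates of Lemmas \ref{al2} and \ref{mvt}. First I would reduce to increments of $L^{\pi}p^{\mu^{\ast}}(t,\cdot)$ alone: the factor $e^{-\lambda t}$ is bounded by $1$ and nonincreasing, and in the temporal increment it contributes only through $|e^{-\lambda(t-\tilde s)}-e^{-\lambda t}|\le(\lambda|\tilde s|)\wedge 1$ together with its own decay, which improves convergence; the cutoff $\chi_{[\epsilon,\infty]}$ produces a discrepancy only on the slab $t\in[\epsilon,\epsilon+|\tilde s|]$, whose contribution is $O(M)$ once $C_0$ is fixed large (using $\int|L^{\pi}p^{\mu^{\ast}}(t,\cdot)|\le CMt^{-1}$ from Lemma \ref{al2} and $|\tilde s|\le\kappa(\delta)\le\tfrac12\kappa(C_0\delta)\le\tfrac12 t$). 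I would also note that Lemmas \ref{al2} and \ref{mvt} apply with $\mu^{\ast}$ in place of $\mu$, since $\mathbf D(\kappa,l)$ and $\mathbf B(\kappa,l)$ are invariant under the reflection $y\mapsto-y$. Fixing $C_0>1$ so large that $\kappa(C_0\delta)\ge 2\kappa(\delta)$ for all $\delta$ (possible by Lemma \ref{ll1}(a)), I decompose $Q_{C_0\delta}(0)^{c}\cap\{t>0\}$ into the large-time part $\mathcal D_1=\{t\ge\kappa(C_0\delta)\}$ and the small-time, far-space part $\mathcal D_2=\{0<t<\kappa(C_0\delta),\ |x|>C_0\delta\}$.

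On $\mathcal D_1$ I would use the mean-value estimates directly, splitting the difference $K_\lambda^\epsilon(t-\tilde s,x-\tilde y)-K_\lambda^\epsilon(t,x)$ into a spatial and a temporal increment. For the spatial increment, Lemma \ref{mvt}(a) gives $\int_{\mathbf R^d}|L^{\pi}p^{\mu^{\ast}}(t,x-\tilde y)-L^{\pi}p^{\mu^{\ast}}(t,x)|\,dx\le CM|\tilde y|/(t\,a(t))$; integrating over $t\ge\kappa(C_0\delta)$ and substituting $t=\kappa(C_0\delta)r$, the lower bound $a(\kappa(C_0\delta)r)\ge a(\kappa(C_0\delta))\gamma(r)\asymp C_0\delta\,\gamma(r)$ from Lemma \ref{ll2} yields $\int_{\kappa(C_0\delta)}^{\infty}\frac{dt}{t\,a(t)}\le\frac{C}{C_0\delta}\int_{1}^{\infty}\frac{dr}{r\,\gamma(r)}$, finite by the hypothesis $\int_1^\infty\frac{dr}{r\gamma(r)^{1\wedge\alpha_2}}<\infty$ (for large $r$, $\gamma(r)\ge 1$); multiplying by $|\tilde y|\le\delta$ gives $O(M)$. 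For the temporal increment I would invoke Lemma \ref{mvt}(b) with $a=\kappa(\delta)$ and $s=\tilde s$ (legitimate since $|\tilde s|\le\kappa(\delta)$); because $\kappa(C_0\delta)\ge 2\kappa(\delta)$, the integral over $t\ge\kappa(C_0\delta)$ is dominated by $\int_{2\kappa(\delta)}^{\infty}\int_{\mathbf R^d}|L^{\pi}p^{\mu^{\ast}}(t-\tilde s,x)-L^{\pi}p^{\mu^{\ast}}(t,x)|\,dx\,dt\le CM$.

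On $\mathcal D_2$ the mean-value bound in time is not integrable near $t=0$, so I would instead estimate the two kernels separately by the tail estimate of Lemma \ref{al2} with $k=0$, exploiting that the spatial separation $|x|>C_0\delta$ is large. For the unshifted kernel, $\int_{|x|>C_0\delta}|L^{\pi}p^{\mu^{\ast}}(t,x)|\,dx\le CM\,t^{-1}a(t)^{\alpha_2}(C_0\delta)^{-\alpha_2}$, and the power-type bounds of Lemma \ref{ll1}(a), together with $a(\kappa(\rho))\asymp\rho$, give $\int_0^{\kappa(C_0\delta)}t^{-1}a(t)^{\alpha_2}\,dt\le C(C_0\delta)^{\alpha_2}$, so the product is $O(M)$ uniformly in $\delta$. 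For the shifted kernel $L^{\pi}p^{\mu^{\ast}}(t-\tilde s,x-\tilde y)$, the change of variables $w=x-\tilde y$ with $|\tilde y|\le\delta$ keeps the separation large, $|w|\ge(C_0-1)\delta\ge\tfrac12 C_0\delta$, while $|\tilde s|\le\tfrac12\kappa(C_0\delta)$ keeps the time variable in $(0,\kappa(C_0\delta))$ after translation, so the same estimate applies. Summing the $\mathcal D_1$ and $\mathcal D_2$ contributions yields (\ref{f23}).

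The main obstacle I anticipate is organizing the decomposition so that each piece is controlled by exactly one available density estimate and verifying that the anisotropic scaling closes uniformly in $\delta$: the large-time spatial integral converges only because of the Dini-type hypothesis on $\gamma$, and the small-time far-space integral is bounded only because the factors $a(t)^{\alpha_2}$ and $(C_0\delta)^{-\alpha_2}$ balance through the scaling relations $a\circ\kappa\asymp\mathrm{id}$ and $a(\varepsilon r)\ge a(r)\gamma(\varepsilon)$ of Lemmas \ref{ll1} and \ref{ll2}. The bookkeeping for the $e^{-\lambda t}$ modulation and the $\epsilon$-cutoff boundary slab, though routine, also relies on the choice $\kappa(C_0\delta)\ge2\kappa(\delta)$ being made at the outset.
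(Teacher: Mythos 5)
Your proposal is correct in substance and reaches (\ref{f23}) by a genuinely different decomposition from the paper's. The paper splits time at the increment scale: $\mathcal{I}=\int_{-\infty}^{2|\tilde s|}+\int_{2|\tilde s|}^{\infty}$, kills the small-time part with the tail estimate of Lemma \ref{al2}, and for $t\ge 2|\tilde s|$ separates the spatial from the temporal increment, treating the temporal one by Lemma \ref{mvt} b) and the spatial one by a case analysis on $|\tilde y|$ versus $a(2|\tilde s|)$, with a further time split at $2|\tilde s|+a^{-1}(|\tilde y|)$ (tails below, mean value above). You instead split at the cylinder's own time scale $\kappa(C_0\delta)$: on $\{t\ge\kappa(C_0\delta)\}$ you use both mean-value estimates of Lemma \ref{mvt}, and on $\{0<t<\kappa(C_0\delta)\}$, where exclusion from $Q_{C_0\delta}(0)$ forces $|x|>C_0\delta$, you bound the two kernels separately by the Chebyshev-type tail of Lemma \ref{al2}. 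The ingredients are identical (Lemmas \ref{al2}, \ref{mvt}, the scaling inequality (\ref{pf1}), the Dini hypothesis), but your choice of threshold buys a simplification: since $C_0$ may be taken large, $\kappa(C_0\delta)$ simultaneously dominates $2|\tilde s|\le 2\kappa(\delta)$ and $a^{-1}(|\tilde y|)\le a^{-1}(\delta)\le l(1)\kappa(\delta)$, so the paper's two-case analysis of $\mathcal{I}_{2,1}$ disappears; the price is that the whole strip $\{t<\kappa(C_0\delta)\}$ must be absorbed by tail estimates, which is exactly what your bound $\int_0^{\kappa(C_0\delta)}t^{-1}a(t)^{\alpha_2}dt\le C(C_0\delta)^{\alpha_2}$ accomplishes. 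You are also more explicit than the paper about the factors $e^{-\lambda t}$ and $\chi_{[\epsilon,\infty)}$, whose treatment the paper suppresses, and your reflection remark (that \textbf{D} and \textbf{B} are invariant under $y\mapsto -y$, so the lemmas apply to $\mu^{\ast}$) is a point the paper also uses silently.

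Two repairs are needed, both minor. First, the inequality $\int_0^{\kappa(C_0\delta)}t^{-1}a(t)^{\alpha_2}dt\le C(C_0\delta)^{\alpha_2}$ does \emph{not} follow from the power-type bounds of Lemma \ref{ll1} a): those give $a(t)\le(\bar ct)^{1/\theta_0}$ but only $a(T)\ge (T/\bar c)^{1/\theta_1}$ with $\theta_1\le\theta_0$, and the mismatched exponents destroy uniformity as $\delta\to 0$. The correct route is the same one you use on $\mathcal{D}_1$: by (\ref{pf1}), $a(t)\le a(T)\gamma(T/t)^{-1}$ with $T=\kappa(C_0\delta)$, so the integral is at most $a(T)^{\alpha_2}\int_1^{\infty}\gamma(u)^{-\alpha_2}\,du/u$, which is finite by the hypothesis (where $\gamma(u)\ge 1$ one has $\gamma(u)^{-\alpha_2}\le\gamma(u)^{-(1\wedge\alpha_2)}$, and near $u=1$ the integrand is bounded since $\gamma\ge\gamma(1)>0$); finally $a(T)\le C_0\delta$. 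In other words, the Dini hypothesis is needed on $\mathcal{D}_2$ just as on $\mathcal{D}_1$. (Similarly, the direction $a(\kappa(\rho))\ge\gamma(1)\rho$ of your relation $a\circ\kappa\asymp\mathrm{id}$ deserves its one-line proof: $\kappa(t)\ge\kappa(\rho)$ forces $l(t/\rho)\ge 1$, hence $t\ge\gamma(1)\rho$.) Second, your decomposition covers only $\{t>0\}$, while (\ref{f23}) integrates over all of $Q_{C_0\delta}(0)^c$; when $\tilde s<0$ the shifted kernel is alive on the sliver $\{-|\tilde s|<t\le 0,\ |x|>C_0\delta\}$. After the translation $(t,x)\mapsto(t-\tilde s,x-\tilde y)$ this lands in $\{0<t'\le|\tilde s|,\ |w|>(C_0-1)\delta\}$ and is absorbed by the same $\mathcal{D}_2$ tail computation; likewise the translated time range is $(0,\tfrac32\kappa(C_0\delta))$ rather than $(0,\kappa(C_0\delta))$, costing only a constant via $a(\tfrac32\kappa(C_0\delta))\le C'C_0\delta$. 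Neither point changes the architecture of your argument.
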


\begin{proof}
Let $C_{0}>3$ and $3l\left( 1\right) l\left( C_{0}^{-1}\right) <1$. We split%
\begin{equation*}
\mathcal{I}=\int_{-\infty }^{2\left\vert \tilde{s}\right\vert }\int
...+\int_{2\left\vert \tilde{s}\right\vert }^{\infty }\int ...=\mathcal{I}%
_{1}+\mathcal{I}_{2}
\end{equation*}

Since $\kappa \left( C_{0}\delta \right) >3\kappa \left( \delta \right)
,\delta >0,$ it follows by Lemma \ref{al2}, denoting $k_{0}=C_{0}-1,$%
\begin{eqnarray*}
\left\vert \mathcal{I}_{1}\right\vert &\leq &C\int_{0}^{3\left\vert \tilde{s}%
\right\vert }\int_{\left\vert x\right\vert >k_{0}a\left( \left\vert \tilde{s}%
\right\vert \right) }\left\vert L^{\pi }p^{\mu ^{\ast }}\left( t,x\right)
\right\vert dxdt \\
&\leq &CM\int_{0}^{3\left\vert \tilde{s}\right\vert }t^{-1}\frac{a\left(
t\right) ^{\alpha _{2}}}{a\left( \left\vert \tilde{s}\right\vert \right)
^{\alpha _{2}}}dt\leq CM\int_{0}^{3\left\vert \tilde{s}\right\vert
}t^{-1}\gamma \left( \frac{\left\vert \tilde{s}\right\vert }{t}\right)
^{-\alpha _{2}}dt \\
&=&CM\int_{1/3}^{\infty }\gamma \left( r\right) ^{-\alpha _{2}}\frac{dr}{r}.
\end{eqnarray*}

Now,

\begin{eqnarray*}
\mathcal{I}_{2} &\leq &\int_{2\left\vert \tilde{s}\right\vert }^{\infty
}\int \chi _{Q_{C_{0}\delta }^{c}\left( 0\right) }\left\vert L^{\pi }p^{\mu
^{\ast }}\left( t-\tilde{s},x-\tilde{y}\right) -L^{\pi }p^{\mu \ast }\left(
t-\tilde{s},x\right) \right\vert dxdt \\
&&+\int_{2\left\vert \tilde{s}\right\vert }^{\infty }\int \chi
_{Q_{C_{0}\delta }^{c}\left( 0\right) }\left\vert L^{\pi }p^{\mu ^{\ast
}}\left( t-\tilde{s},x\right) -L^{\pi }p^{\mu \ast }\left( t,x\right)
\right\vert dxdt \\
&=&\mathcal{\mathcal{I}}_{2,1}+\mathcal{I}_{2,2}.
\end{eqnarray*}

We split the estimate of $\mathcal{I}_{2,1}$ into two cases.

Case 1. Assume $\left\vert \tilde{y}\right\vert \leq a\left( 2\left\vert 
\tilde{s}\right\vert \right) .$ Then, by Lemma \ref{mvt},

\begin{eqnarray*}
\mathcal{I}_{2,1} &\leq &CM\left\vert \tilde{y}\right\vert \int_{2\left\vert 
\tilde{s}\right\vert }^{\infty }\left( t-\tilde{s}\right) ^{-1}a\left(
\left\vert t-\tilde{s}\right\vert \right) ^{-1}dt \\
&=&CM\left\vert \tilde{y}\right\vert a\left( 2\left\vert \tilde{s}%
\right\vert \right) ^{-1}\int_{2\left\vert \tilde{s}\right\vert }^{\infty
}\left( t-\tilde{s}\right) ^{-1}\frac{a\left( 2\left\vert \tilde{s}%
\right\vert \right) }{a\left( \left\vert t-\tilde{s}\right\vert \right) }dt
\\
&\leq &CM\int_{1/2}^{\infty }\gamma \left( r\right) ^{-1}\frac{dr}{r}.
\end{eqnarray*}

Case 2. Assume $\left\vert \tilde{y}\right\vert >a\left( 2\left\vert \tilde{s%
}\right\vert \right) $, i.e. $\delta \geq \left\vert \tilde{y}\right\vert
>a\left( 2\left\vert \tilde{s}\right\vert \right) $ and $a^{-1}\left( \delta
\right) \geq a^{-1}\left( \left\vert \tilde{y}\right\vert \right)
>2\left\vert \tilde{s}\right\vert .$ We split 
\begin{equation*}
\mathcal{I}_{2,1}=\int_{2\left\vert \tilde{s}\right\vert }^{2\left\vert 
\tilde{s}\right\vert +a^{-1}\left( \left\vert \tilde{y}\right\vert \right)
}\int ...+\int_{2\left\vert \tilde{s}\right\vert +a^{-1}\left( \left\vert 
\tilde{y}\right\vert \right) }^{\infty }\int ...=\mathcal{I}_{2,1,1}+%
\mathcal{I}_{2,1,2}.
\end{equation*}

If $2\left\vert \tilde{s}\right\vert \leq t\leq 2\left\vert \tilde{s}%
\right\vert +a^{-1}\left( \left\vert \tilde{y}\right\vert \right) $, then $%
0\leq t\leq 3a^{-1}\left( \delta \right) \leq 3l\left( 1\right) \kappa
\left( \delta \right) \leq \kappa \left( C_{0}\delta \right) $. Hence $%
\left\vert x\right\vert >C_{0}\delta \geq a\left( 2\left\vert \tilde{s}%
\right\vert \right) +\left\vert \tilde{y}\right\vert $ and%
\begin{eqnarray*}
\left\vert x-\tilde{y}\right\vert &\geq &\left( C_{0}-1\right) \delta
=k_{0}\delta \geq \frac{k_{0}}{2}[a\left( 2\left\vert \tilde{s}\right\vert
\right) +\left\vert \tilde{y}\right\vert ] \\
&\geq &a\left( 2\left\vert \tilde{s}\right\vert \right) +\left\vert \tilde{y}%
\right\vert \text{ if }\left( t,x\right) \notin Q_{C_{0}\delta }\left(
0\right) .
\end{eqnarray*}%
Also,%
\begin{equation}
2\geq \frac{2\left\vert \tilde{s}\right\vert +a^{-1}\left( \left\vert \tilde{%
y}\right\vert \right) }{2\left\vert \tilde{s}\right\vert +a^{-1}\left(
\left\vert \tilde{y}\right\vert \right) -\tilde{s}}\geq \frac{2}{3},
\label{af1}
\end{equation}%
and, by (\ref{pf1}),%
\begin{eqnarray}
&&\frac{a\left( 2\left\vert \tilde{s}\right\vert +a^{-1}(\left\vert \tilde{y}%
\right\vert )\right) }{a\left( 2\left\vert \tilde{s}\right\vert \right)
+\left\vert \tilde{y}\right\vert }  \label{af2} \\
&\leq &\frac{a\left( 2a^{-1}(\left\vert \tilde{y}\right\vert )\right) }{%
a\left( 2\left\vert \tilde{s}\right\vert \right) +\left\vert \tilde{y}%
\right\vert }\leq \gamma \left( 2^{-1}\right) ^{-1}\frac{a\left(
a^{-1}(\left\vert \tilde{y}\right\vert )\right) }{a\left( 2\left\vert \tilde{%
s}\right\vert \right) +\left\vert \tilde{y}\right\vert }\leq \gamma \left(
2^{-1}\right) ^{-1}.  \notag
\end{eqnarray}%
By Lemma \ref{al2}, (\ref{af1}), (\ref{af2}) and (\ref{pf1}),%
\begin{eqnarray*}
\mathcal{I}_{2,1,1} &\leq &C\int_{2\left\vert \tilde{s}\right\vert
}^{2\left\vert \tilde{s}\right\vert +a^{-1}\left( \left\vert \tilde{y}%
\right\vert \right) }\int_{\left\vert x\right\vert >a\left( 2\left\vert 
\tilde{s}\right\vert \right) +\left\vert \tilde{y}\right\vert }\left\vert
L^{\pi }p^{\mu ^{\ast }}\left( t-\tilde{s},x\right) \right\vert dtdx \\
&\leq &\frac{CMa\left( 2\left\vert \tilde{s}\right\vert +a^{-1}\left(
\left\vert \tilde{y}\right\vert \right) \right) ^{\alpha _{2}}}{[a\left(
2\left\vert \tilde{s}\right\vert \right) +\left\vert \tilde{y}\right\vert
]^{\alpha _{2}}}\int_{2\left\vert \tilde{s}\right\vert }^{2\left\vert \tilde{%
s}\right\vert +a^{-1}\left( \left\vert \tilde{y}\right\vert \right) }\left(
t-\tilde{s}\right) ^{-1}\frac{a\left( \left\vert t-\tilde{s}\right\vert
\right) ^{\alpha _{2}}}{a\left( 2\left\vert \tilde{s}\right\vert
+a^{-1}\left( \left\vert \tilde{y}\right\vert \right) \right) ^{\alpha _{2}}}%
dt \\
&\leq &CM\int_{2\left\vert \tilde{s}\right\vert }^{2\left\vert \tilde{s}%
\right\vert +a^{-1}\left( \left\vert \tilde{y}\right\vert \right) }\left( t-%
\tilde{s}\right) ^{-1}\gamma \left( \frac{2\left\vert \tilde{s}\right\vert
+a^{-1}\left( \left\vert \tilde{y}\right\vert \right) }{t-\tilde{s}}\right)
^{-\alpha _{2}}dt \\
&\leq &CM\int_{2/3}^{\infty }\gamma \left( r\right) ^{-\alpha _{2}}\frac{dr}{%
r}.
\end{eqnarray*}

Then, by Lemma \ref{mvt} and (\ref{af1})

\begin{eqnarray*}
&&\mathcal{I}_{2,1,2} \\
&\leq &\int_{2\left\vert \tilde{s}\right\vert +a^{-1}\left( \left\vert 
\tilde{y}\right\vert \right) }^{\infty }\left[ \int_{\mathbf{R}%
^{d}}\left\vert L^{\pi }p^{\mu ^{\ast }}\left( t-\tilde{s},x-\tilde{y}%
\right) -L^{\pi }p^{\mu \ast }\left( t-\tilde{s},x\right) \right\vert dx%
\right] dt \\
&=&CM\frac{\left\vert \tilde{y}\right\vert }{a\left( 2\left\vert \tilde{s}%
\right\vert +a^{-1}\left( \left\vert \tilde{y}\right\vert \right) \right) }%
\int_{2\left\vert \tilde{s}\right\vert +a^{-1}\left( \left\vert \tilde{y}%
\right\vert \right) }^{\infty }\left( t-\tilde{s}\right) ^{-1}\frac{a\left(
2\left\vert \tilde{s}\right\vert +a^{-1}\left( \left\vert \tilde{y}%
\right\vert \right) \right) }{a\left( t-\tilde{s}\right) }dr \\
&\leq &CM\int_{2\left\vert \tilde{s}\right\vert +a^{-1}\left( \left\vert 
\tilde{y}\right\vert \right) }^{\infty }\left( t-\tilde{s}\right)
^{-1}\gamma \left( \frac{t-\tilde{s}}{2\left\vert \tilde{s}\right\vert
+a^{-1}\left( \left\vert \tilde{y}\right\vert \right) }\right) ^{-1}dt\leq
CM\int_{1/2}^{\infty }\gamma \left( r\right) ^{-1}\frac{dr}{r},
\end{eqnarray*}%
because%
\begin{equation*}
\frac{\left\vert \tilde{y}\right\vert }{a\left( 2\left\vert \tilde{s}%
\right\vert +a^{-1}\left( \left\vert \tilde{y}\right\vert \right) \right) }%
\leq \frac{a\left( a^{-1}\left( \left\vert \tilde{y}\right\vert \right)
+\right) }{a\left( 2\left\vert \tilde{s}\right\vert +a^{-1}\left( \left\vert 
\tilde{y}\right\vert \right) \right) }\text{ }\leq 1.
\end{equation*}

Hence, $\mathcal{I}_{2,1}\leq C$.

Finally, by Lemma \ref{mvt}, 
\begin{eqnarray*}
\mathcal{I}_{2,2} &=&\int_{2\left\vert \tilde{s}\right\vert }^{\infty }\int
\chi _{Q_{C_{0}\delta }^{c}\left( 0\right) }\left\vert L^{\pi }p^{\mu ^{\ast
}}\left( t-\tilde{s},x\right) -L^{\pi }p^{\mu \ast }\left( t,x\right)
\right\vert dxdt \\
&\leq &CM.
\end{eqnarray*}

The proof is complete
\end{proof}

\subsection{Estimate of $I_{\protect\lambda }g$}

We prove that there is $C=C\left( n_{0},c_{1},N_{0},d,p\right) $ so that 
\begin{equation}
\left\vert L^{\mu }I_{\lambda }g\right\vert _{H_{p}^{\mu ;s}\left( E\right)
}\leq C\left\vert g\right\vert _{B_{pp}^{\mu ,N;s+1-1/p}\left( \mathbf{R}%
^{d}\right) }.  \label{h80}
\end{equation}%
Since $J_{\mu }^{t}:H_{p}^{\mu ;s}\left( \mathbf{R}^{d}\right) \rightarrow
H_{p}^{\mu ;s-t}\left( \mathbf{R}^{d}\right) $ and $J_{\mu }^{t}:B_{pp}^{\mu
,N;s}\left( \mathbf{R}^{d}\right) \rightarrow B_{pp}^{\mu ,N;s-t}\left( 
\mathbf{R}^{d}\right) $ is an isomorphism for any $s,t\in \mathbf{R}$, it is
enough to derive the estimate for $s=0.$ We prove that that there is $%
C=C\left( n_{0},c_{1},N_{0},d,p\right) $ so that%
\begin{equation}
\left\vert L^{\mu }I_{\lambda }g\right\vert _{L_{p}\left( E\right) }\leq
C\left\vert g\right\vert _{B_{pp}^{\mu ,N;1-1/p}\left( E\right) }.
\label{h9}
\end{equation}%
We will use an equivalent norm. Let $N>1$ be an integer, \thinspace $l\left(
N^{-1}\right) <1$. There exists a function $\phi \in C_{0}^{\infty }(\mathbf{%
R}^{d})$ (see Remark \ref{rem3}) such that $\mathrm{supp}\,\phi =\{\xi :%
\frac{1}{N}\leqslant |\xi |\leqslant N\}$, $\phi (\xi )>0$ if $N^{-1}<|\xi
|<N$ and 
\begin{equation*}
\sum_{j=-\infty }^{\infty }\phi (N^{-j}\xi )=1\quad \text{if }\xi \neq 0.
\end{equation*}%
Let%
\begin{equation*}
\tilde{\phi}\left( \xi \right) =\phi \left( N\xi \right) +\phi \left( \xi
\right) +\phi \left( N^{-1}\xi \right) ,\xi \in \mathbf{R}^{d}.
\end{equation*}%
Note that supp$~\tilde{\phi}\subseteq \left\{ N^{-2}\leq \left\vert \xi
\right\vert \leq N^{2}\right\} $ and $\tilde{\phi}\phi =\phi $. Let $\varphi
_{k}=\mathcal{F}^{-1}\phi \left( N^{-k}\cdot \right) ,k\geq 1,$ and $\varphi
_{0}\in \mathcal{S}\left( \mathbf{R}^{d}\right) $ is defined as%
\begin{equation*}
\varphi _{0}=\mathcal{F}^{-1}\left[ 1-\sum_{k=1}^{\infty }\phi \left(
N^{-k}\cdot \right) \right] .
\end{equation*}%
Let $\phi _{0}\left( \xi \right) =\mathcal{F}\varphi _{0}\left( \xi \right) ,%
\tilde{\phi}_{0}\left( \xi \right) =\mathcal{F}\varphi _{0}\left( \xi
\right) +\mathcal{F\varphi }_{1}\left( \xi \right) ,\xi \in \mathbf{R}^{d}%
\mathbf{,}\tilde{\varphi}=\mathcal{F}^{-1}\tilde{\phi},\varphi =\mathcal{F}%
^{-1}\phi $. Let 
\begin{equation*}
\tilde{\varphi}_{k}=\sum_{l=-1}^{1}\varphi _{k+l},k\geq 1,\tilde{\varphi}%
_{0}=\varphi _{0}+\varphi _{1}.
\end{equation*}%
Note that $\varphi _{k}=\tilde{\varphi}_{k}\ast \varphi _{k},k\geq 0$.
Obviously, $g=\sum_{k=0}^{\infty }g\ast \varphi _{k}$ in $\mathcal{S}%
^{\prime }\left( \mathbf{R}^{d}\right) $ for $g\in \mathcal{S}\left( \mathbf{%
R}^{d}\right) .$ For $j\geq 1,$ 
\begin{eqnarray*}
&&\mathcal{F}\left[ L^{\mu }I_{\lambda }g\left( t,\cdot \right) \ast \varphi
_{j}\right]  \\
&=&\kappa \left( N^{-j}\right) ^{-1}\psi ^{\tilde{\mu}_{N^{-j}}}\left(
N^{-j}\xi \right) \exp \left\{ \kappa \left( N^{-j}\right) ^{-1}\psi ^{%
\tilde{\pi}_{N^{-j}}}\left( N^{-j}\xi \right) t-\lambda t\right\}  \\
&&\times \tilde{\phi}\left( N^{-j}\xi \right) \hat{g}_{j}\left( \xi \right) ,
\end{eqnarray*}%
and%
\begin{equation*}
\mathcal{F}\left[ L^{\mu }I_{\lambda }g\left( t,\cdot \right) \ast \varphi
_{0}\right] =\psi ^{\mu }\left( \xi \right) \exp \left\{ \psi ^{\pi }\left(
\xi \right) t-\lambda t\right\} \tilde{\phi}_{0}\left( \xi \right) \hat{g}%
_{0}\left( \xi \right) ,
\end{equation*}%
where $g_{j}=g\ast \varphi _{j},j\geq 0.$

Let $Z^{j}=Z^{\tilde{\pi}_{N^{-j}}},j\geq 1.$ Let $\bar{\phi}\in
C_{0}^{\infty }\left( \mathbf{R}^{d}\right) ,0\notin $supp$\left( \bar{\phi}%
\right) $ and $\bar{\phi}\tilde{\phi}=\tilde{\phi}$, $\bar{\eta}=\mathcal{F}%
^{-1}\bar{\phi}$. Denoting $\eta =\mathcal{F}^{-1}\tilde{\phi}_{0},$ we have%
\begin{eqnarray}
L^{\mu }I_{\lambda }g\left( t,\cdot \right) \ast \varphi _{j} &=&\kappa
\left( N^{-j}\right) ^{-1}\bar{H}_{t}^{\lambda ,j}\ast g_{j},j\geq 1,
\label{for0} \\
L^{\mu }I_{\lambda }g\left( t,\cdot \right) \ast \varphi _{0} &=&\bar{H}%
_{t}^{\lambda ,0}\ast g_{0},t>0,  \notag
\end{eqnarray}%
where for $j\geq 1,$%
\begin{eqnarray*}
\bar{H}_{t}^{\lambda ,j}\left( x\right) &=&N^{jd}H_{\kappa \left(
N^{-j}\right) ^{-1}t}^{\lambda ,j}\left( N^{j}x\right) ,\left( t,x\right)
\in E, \\
H_{t}^{\lambda ,j} &=&e^{-\lambda \kappa \left( N^{-j}\right) t}(L^{\tilde{%
\mu}_{N^{-j}}}\bar{\eta})\ast \mathbf{E}\tilde{\varphi}\left( \cdot
+Z_{t}^{j}\right) ,t>0\mathbf{,}
\end{eqnarray*}%
and%
\begin{equation*}
\bar{H}_{t}^{\lambda ,0}\left( x\right) =e^{-\lambda t}L^{\mu }\mathbf{E}%
\eta \left( \cdot +Z_{t}^{\pi }\right) ,\left( t,x\right) \in E.
\end{equation*}

By Corollary 2 in \cite{MPh},%
\begin{equation*}
\sup_{j}\int \left\vert L^{\tilde{\mu}_{N^{-j}}}\bar{\eta}\right\vert
dx<\infty \text{.}
\end{equation*}%
Hence by Lemma \ref{auxl1},%
\begin{eqnarray*}
\int \left\vert H_{t}^{\lambda ,j}\right\vert dx &\leq &\int \left\vert L^{%
\tilde{\mu}_{N^{-j}}}\bar{\eta}\right\vert dx\int \left\vert \mathbf{E}%
\tilde{\varphi}\left( \cdot +Z_{t}^{j}\right) \right\vert dx \\
&\leq &Ce^{-ct},t>0,j\geq 1,
\end{eqnarray*}%
and%
\begin{equation}
\int \left\vert \bar{H}_{t}^{\lambda ,j}\right\vert dx\leq C\exp \left\{
-c\kappa \left( N^{-j}\right) ^{-1}t\right\} ,t>0,j\geq 1.  \label{for1}
\end{equation}%
and by Lemma \ref{al2},%
\begin{equation}
\int \left\vert \bar{H}_{t}^{\lambda ,0}\right\vert dx\leq C\left( \frac{1}{t%
}\wedge 1\right) ,t>0.  \label{h11}
\end{equation}

It follows by Proposition \ref{pro2} and (\ref{for0}) that%
\begin{eqnarray*}
\left\vert L^{\mu }I_{\lambda }g\left( t\right) \right\vert _{L_{p}\left( 
\mathbf{R}^{d}\right) }^{p} &\leq &C\left\vert \left( \sum_{j=1}^{\infty
}\left\vert \kappa \left( N^{-j}\right) ^{-1}\bar{H}_{t}^{\lambda ,j}\ast
g_{j}\right\vert ^{2}\right) ^{1/2}\right\vert _{L_{p}\left( \mathbf{R}%
^{d}\right) }^{p} \\
&&+C\int \left\vert \bar{H}_{t}^{\lambda ,0}\ast g_{0}\right\vert ^{p}dx.
\end{eqnarray*}

Hence%
\begin{equation*}
\left\vert L^{\mu }I_{\lambda }g\left( t\right) \right\vert _{L_{p}\left( 
\mathbf{R}^{d}\right) }^{p}\leq C\sum_{j=0}^{\infty }\left\vert \kappa
\left( N^{-j}\right) ^{-1}\bar{H}_{t}^{\lambda ,j}\ast g_{j}\right\vert
_{L_{p}\left( \mathbf{R}^{d}\right) }^{p}\text{ if }p\in (1,2],
\end{equation*}%
and, by Minkowski inequality,%
\begin{eqnarray*}
\left\vert L^{\mu }I_{\lambda }g\left( t\right) \right\vert _{L_{p}\left( 
\mathbf{R}^{d}\right) }^{p} &\leq &C\left( \sum_{j=1}^{\infty }\left( \int
\left\vert \kappa \left( N^{-j}\right) ^{-1}\bar{H}_{t}^{\lambda ,j}\ast
g_{j}\right\vert ^{p}dx\right) ^{2/p}\right) ^{p/2} \\
&&+C\int \left\vert \bar{H}_{t}^{\lambda ,0}\ast g_{0}\right\vert ^{p}dx
\end{eqnarray*}%
if $p>2$. Now, by (\ref{for1}),%
\begin{eqnarray}
&&\int \left\vert \kappa \left( N^{-j}\right) ^{-1}\bar{H}_{t}^{\lambda
,j}\ast g_{j}\right\vert ^{p}dx  \label{h12} \\
&\leq &\left( \int \left\vert \bar{H}_{t}^{\lambda ,j}\right\vert dx\right)
^{p}\int \left\vert \kappa \left( N^{-j}\right) ^{-1}g_{j}\right\vert ^{p}dx
\notag \\
&\leq &C\kappa \left( N^{-j}\right) ^{-p}\exp \left\{ -c\kappa \left(
N^{-j}\right) ^{-1}t\right\} \left\vert g_{j}\right\vert _{L_{p}}^{p}\text{
if }j\geq 1,  \notag
\end{eqnarray}%
and, by (\ref{h11}),%
\begin{equation}
\int \left\vert \bar{H}_{t}^{\lambda ,0}\ast g_{0}\right\vert ^{p}dx\leq
C\left( \frac{1}{t}\wedge 1\right) ^{p}\int \left\vert g_{0}\right\vert
^{p}dx.  \label{h13}
\end{equation}%
Therefore for $p\in (1,2],$%
\begin{equation*}
\int_{0}^{\infty }\left\vert L^{\mu }I_{\lambda }g\left( t\right)
\right\vert _{L_{p}\left( \mathbf{R}^{d}\right) }^{p}dt\leq
C\sum_{j=0}^{\infty }\left\vert \kappa \left( N^{-j}\right)
^{-(1-1/p)}\left\vert g_{j}\right\vert _{L_{p}\left( \mathbf{R}^{d}\right)
}\right\vert ^{p},
\end{equation*}%
and (\ref{h9}) follows by Proposition \ref{pro1}.

Let $p>2.$ In this case,%
\begin{equation*}
\int_{0}^{\infty }\left\vert L^{\mu }I_{\lambda }g\left( t\right)
\right\vert _{L_{p}\left( \mathbf{R}^{d}\right) }^{p}dt\leq C[G+\left\vert
g_{0}\right\vert _{L_{p}\left( \mathbf{R}^{d}\right) }^{p}],
\end{equation*}%
where%
\begin{equation*}
G=\int_{0}^{\infty }\left( \sum_{j=1}^{\infty }\exp \left\{ -c\kappa \left(
N^{-j}\right) ^{-1}t\right\} k_{j}^{2}\right) ^{p/2}dt
\end{equation*}%
with $c>0$ and 
\begin{equation*}
k_{j}=\kappa \left( N^{-j}\right) ^{-1}\left\vert g_{j}\right\vert
_{L_{p}\left( \mathbf{R}^{d}\right) },j\geq 1.
\end{equation*}%
Now, let $B=\left\{ j:\kappa \left( N^{-j}\right) ^{-1}t\leq 1\right\} $.
Then 
\begin{equation*}
\sum_{j=1}^{\infty }e^{-c\kappa \left( N^{-j}\right)
^{-1}t}k_{j}^{2}=\sum_{j\in B}...+\sum_{j\notin B}...=D\left( t\right)
+E\left( t\right) ,t>0.
\end{equation*}%
Let $a\left( t\right) =\inf \left\{ t:\kappa \left( r\right) \geq t\right\}
,t>0,0<\frac{\beta p}{2}\leq \beta _{1}$. By H\"{o}lder inequality,%
\begin{eqnarray*}
D\left( t\right)  &\leq &C\sum_{j=1}^{\infty }\chi _{\left\{ j:\kappa \left(
N^{-j}\right) ^{-1}t\leq 1\right\} }\gamma \left( \kappa \left(
N^{-j}\right) ^{-1}t\right) ^{\beta }\gamma \left( \kappa \left(
N^{-j}\right) ^{-1}t\right) ^{-\beta }k_{j}^{2} \\
&\leq &C\left( \sum_{j=1}^{\infty }\chi _{\left\{ j:a\left( t\right) \leq
N^{-j}\right\} }\gamma \left( l\left( \frac{a\left( t\right) }{N^{-j}}%
\right) \right) ^{\beta \frac{p}{p-2}}\right) ^{1-\frac{2}{p}} \\
&&\times \left( \sum_{j=1}^{\infty }\chi _{\left\{ j:j:\kappa \left(
N^{-j}\right) ^{-1}t\leq 1\right\} }\gamma \left( \kappa \left(
N^{-j}\right) ^{-1}t\right) ^{-\beta \frac{p}{2}}k_{j}^{p}\right) ^{\frac{2}{%
p}} \\
&=&CD_{1}^{\frac{p-2}{p}}D_{2}^{\frac{2}{p}}.
\end{eqnarray*}%
Denoting $\beta ^{\prime }=\beta p/\left( p-2\right) ,$ we have for $t>0,$ 
\begin{eqnarray*}
D_{1}\left( t\right)  &=&\sum_{j}\chi _{\left\{ j:\frac{a\left( t\right) }{%
N^{-j}}\leq 1\right\} }\left( \frac{a\left( t\right) }{N^{-j}}\right)
^{\beta ^{\prime }} \\
&\leq &C\int_{0}^{\infty }\chi _{\left\{ \frac{a\left( t\right) }{N^{-x}}%
\leq 1\right\} }\left( \frac{a\left( t\right) }{N^{-x}}\right) ^{\beta
^{\prime }}dx\leq C\int_{0}^{1}y^{\beta ^{\prime }}\frac{dy}{y}<\infty .
\end{eqnarray*}%
Hence%
\begin{eqnarray*}
&&\int_{0}^{\infty }D^{p/2}dt\leq C\sum_{j}\int_{0}^{\infty }\chi _{\left\{
j:j:\kappa \left( N^{-j}\right) ^{-1}t\leq 1\right\} }\gamma \left( \kappa
\left( N^{-j}\right) ^{-1}t\right) ^{-\beta \frac{p}{2}}k_{j}^{p}dt \\
&=&C\sum_{j}\int_{0}^{1}\gamma \left( t\right) ^{-\beta \frac{p}{2}}dt\kappa
\left( N^{-j}\right) k_{j}^{p}.
\end{eqnarray*}

Now, we estimate the second term \thinspace $E\left( t\right) ,t>0$. By H%
\"{o}lder inequality, for $t>0$,%
\begin{eqnarray*}
E\left( t\right)  &=&\sum_{\kappa \left( N^{-j}\right) ^{-1}t>1}e^{-c\kappa
\left( N^{-j}\right) ^{-1}t}\kappa \left( N^{-j}\right) ^{-2}\left\vert
g_{j}\right\vert _{L_{p}}^{2} \\
&\leq &\left( \sum_{\kappa \left( N^{-j}\right) ^{-1}t\geq 1}e^{-c\kappa
\left( N^{-j}\right) ^{-1}t}\right) ^{\frac{p-2}{p}}\left( \sum_{\kappa
\left( N^{-j}\right) ^{-1}t\geq 1}e^{-c\kappa \left( N^{-j}\right)
^{-1}t}k_{j}^{p}\right) ^{\frac{2}{p}}.
\end{eqnarray*}%
Since%
\begin{equation*}
l\left( N^{j}a\left( t\right) \right) \geq \kappa \left( N^{-j}\right)
^{-1}t\geq l\left( \frac{1}{N^{j}a\left( t\right) }\right) ^{-1},t>0,
\end{equation*}%
it follows by changing the variable of integration, $y=\frac{1}{a\left(
t\right) N^{x}},$%
\begin{eqnarray*}
&&\sum_{\kappa \left( N^{-j}\right) ^{-1}t\geq 1}e^{-c\kappa \left(
N^{-j}\right) ^{-1}t} \\
&\leq &\sum_{j=1}^{\infty }\chi _{\left\{ N^{j}a\left( t\right) \geq \gamma
\left( 1\right) \right\} }\exp \left\{ -cl\left( \frac{1}{N^{j}a\left(
t\right) }\right) ^{-1}\right\}  \\
&\leq &1+\int_{0}^{\infty }\chi _{\left\{ N^{x}a\left( t\right) \geq \gamma
\left( 1\right) \right\} }\exp \left\{ -cl\left( \frac{1}{N^{x}a\left(
t\right) }\right) ^{-1}\right\} dx \\
&\leq &1+C\int_{0}^{\gamma \left( 1\right) ^{-1}}l\left( y\right) ^{\beta
_{2}}\frac{dy}{y}.
\end{eqnarray*}%
Hence%
\begin{eqnarray*}
\int_{0}^{\infty }E\left( t\right) ^{p/2}dt &\leq &C\int_{0}^{\infty
}\sum_{\kappa \left( N^{-j}\right) ^{-1}t\geq 1}e^{-c\kappa \left(
N^{-j}\right) ^{-1}t}k_{j}^{p}dt \\
&\leq &C\sum_{j}\kappa \left( N^{-j}\right) k_{j}^{p}\text{.}
\end{eqnarray*}%
The estimate (\ref{h9}) is proved.

\subsection{Proof of Theorem \protect\ref{t1}}

We finish the proof of Theorem \ref{t1} in a standard way. Since $J_{\mu
}^{t}:H_{p}^{\mu ;s}\left( \mathbf{R}^{d}\right) \rightarrow H_{p}^{\mu
;s-t}\left( \mathbf{R}^{d}\right) $ and $J_{\mu }^{t}:B_{pp}^{\mu
,N;s}\left( \mathbf{R}^{d}\right) \rightarrow B_{pp}^{\mu ,N;s-t}\left( 
\mathbf{R}^{d}\right) $ is an isomorphism for any $s,t\in \mathbf{R}$, it is
enough to derive the statement for $s=0.$ Let $f\in L_{p}\left( E\right)
,g\in B_{pp}^{\mu ,N;1-1/p}\left( \mathbf{R}^{d}\right) $. There are
sequences $f_{n}\in \tilde{C}^{\infty }\left( E\right) ,g_{n}\in \tilde{C}%
^{\infty }\left( \mathbf{R}^{d}\right) $ such that 
\begin{equation*}
f_{n}\rightarrow f\text{ in }L_{p}\left( E\right) ,g_{n}\rightarrow g\text{
in }B_{pp}^{\mu ,N;1-1/p}\left( \mathbf{R}^{d}\right) .
\end{equation*}%
For each $n,$ there is unique $u_{n}\in \tilde{C}^{\infty }\left( E\right) $
solving (\ref{1'}). Hence%
\begin{eqnarray*}
\partial _{t}\left( u_{n}-u_{m}\right)  &=&\left( L^{\pi }-\lambda \right)
\left( u_{n}-u_{m}\right) +f_{n}-f_{m}, \\
u_{n}\left( 0,x\right) -u_{n}\left( 0,x\right)  &=&g_{n}\left( x\right)
-g_{m}\left( x\right) ,x\in \mathbf{R}^{d}.
\end{eqnarray*}%
By (\ref{h70}), (\ref{h80}) and Lemma \ref{le0}, 
\begin{eqnarray}
&&\left\vert L^{\mu }\left( u_{n}-u_{m}\right) \right\vert _{L_{p}\left(
E\right) }  \label{fo6} \\
&\leq &C\left( \left\vert f_{n}-f_{m}\right\vert _{L_{p}\left( E\right)
}+\left\vert g_{n}-g_{m}\right\vert _{B_{pp}^{\mu ,N;1-1/p}\left( \mathbf{R}%
^{d}\right) }\right) \rightarrow 0,  \notag \\
&&\left\vert u_{n}-u_{m}\right\vert _{L_{p}\left( E\right) }  \notag \\
&\leq &\rho _{\lambda }\left\vert f_{n}-f_{m}\right\vert _{L_{p}\left(
E\right) }+\rho _{\lambda }^{1/p}\left\vert g_{n}-g_{m}\right\vert
_{L_{p}\left( \mathbf{R}^{d}\right) }\rightarrow 0,  \notag
\end{eqnarray}%
as $n,m\rightarrow \infty $. Hence there is $u\in H_{p}^{\mu ;1}\left(
E\right) $ so that $u_{n}\rightarrow u$ in $H_{p}^{\mu ;1}\left( E\right) $.
Moreover, by Lemma \ref{le0},%
\begin{equation}
\sup_{t\leq T}\left\vert u_{n}\left( t\right) -u\left( t\right) \right\vert
_{L_{p}\left( \mathbf{R}^{d}\right) }\rightarrow 0,  \label{fo7}
\end{equation}%
and, according to Lemma \ref{prol}, 
\begin{equation}
\left\vert L^{\pi }f\right\vert _{L_{p}\left( E\right) }\leq C\left\vert
L^{\mu }f\right\vert _{L_{p}\left( E\right) },f\in \tilde{C}^{\infty }\left(
E\right) .  \label{fo8}
\end{equation}%
Hence (see (\ref{fo6})-(\ref{fo8})) we can pass to the limit in the equation%
\begin{equation}
u_{n}\left( t\right) =g_{n}+\int_{0}^{t}[L^{\pi }u_{n}\left( s\right)
-\lambda u_{n}\left( s\right) +f_{n}\left( s\right) ]ds,0\leq t\leq T.
\label{fo9}
\end{equation}%
Obviously, (\ref{fo9}) holds for $u,g$ and $f$. We proved the existence part
of Theorem \ref{t1}.

\emph{Uniqueness. }Assume $u_{1},u_{2}\in H_{p}^{\mu ;1}\left( E\right) $
solve (\ref{1'})$.$ Then $u=u_{1}-u_{2}\in H_{p}^{\mu ;1}\left( E\right) $
solves (\ref{1'}) with $f=0,g=0$. Now, let $\varphi \in \tilde{C}^{\infty
}\left( E\right) $, and $\tilde{\varphi}\left( t,x\right) =\varphi \left(
T-t,x\right) ,\left( t,x\right) \in E$. By Lemma \ref{le0}, there is unique $%
\tilde{v}\in \tilde{C}^{\infty }\left( E\right) $ solving (\ref{1'}) with $f=%
\tilde{\varphi},g=0$ and $\pi ^{\ast }$ instead of $\pi $. Let $v\left(
t,x\right) =\tilde{v}\left( T-t,x\right) ,\left( t,x\right) \in E$. Then $%
\partial _{t}v+L^{\pi ^{\ast }}v-\lambda v+\varphi =0$ in $E$ and $v\left(
T\right) =v\left( T,\cdot \right) =0$. Integrating by parts,%
\begin{eqnarray*}
\int_{E}\varphi u &=&\int_{E}u\left( -\partial _{t}v-L^{\pi ^{\ast
}}v+\lambda v\right) \\
&=&\int_{E}v\left( \partial _{t}u-L^{\pi }u+\lambda u\right) =0.
\end{eqnarray*}%
Hence $\int_{E}u\varphi ~dtdx=0$ $\forall \varphi \in \tilde{C}^{\infty
}\left( E\right) $. Hence $u=0$ a.e. Theorem \ref{t1} is proved.

\section{Appendix}

We will need the following Levy process moment estimate.

\begin{lemma}
\label{al00}Let $\pi \in \mathfrak{A}^{\sigma }.$ Assume 
\begin{equation}
\int_{\left\vert z\right\vert \leq 1}\left\vert z\right\vert ^{\alpha
_{1}}\pi (dz)+\int_{\left\vert z\right\vert >1}\left\vert z\right\vert
^{\alpha _{2}}\pi (dz)\leq M,  \label{fa5}
\end{equation}%
where $\alpha _{1},\alpha _{2}\in (0,1]\text{ if }\sigma \in (0,1)\text{; }%
\alpha _{1},\alpha _{2}\in (1,2]\text{ if }\sigma \in (1,2)$; $\alpha
_{1}\in (1,2]$\ and $\alpha _{2}\in \lbrack 0,1)$\ if $\sigma =1$. Let $%
\zeta _{t}$ be the Levy process associated to $\psi ^{\pi }$, that is%
\begin{equation*}
\mathbf{E}e^{i2\pi \xi \cdot \zeta _{t}}=\exp \{\psi \left( \xi \right)
t\},t\geq 0
\end{equation*}%
There is a constant $C=C\left( M\right) $ such that%
\begin{equation*}
\mathbf{E}\left[ \left\vert \zeta _{t}\right\vert ^{\alpha _{2}}\right] \leq
C\left( 1+t\right) ,t\geq 0.
\end{equation*}
\end{lemma}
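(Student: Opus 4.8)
The plan is to use the Lévy--Itô decomposition of $\zeta_t$ into a small-jump and a large-jump part, estimate the $\alpha_2$-th moment of each separately, and recombine. Write $N(ds,dy)$ for the Poisson random measure of jumps of $\zeta$, with intensity $ds\,\pi(dy)$, and $\tilde N=N-ds\,\pi$ for its compensated version; recall $\mathbf{E}\int_0^t\int h(y)\,N(ds,dy)=t\int h\,d\pi$ for $h\ge 0$. According to the value of $\chi_\sigma$ one has, in law and with \emph{no} extra deterministic drift, $\zeta_t=\zeta_t^{\le}+\zeta_t^{>}$, where $\zeta_t^{\le}$ collects the jumps with $|y|\le 1$ and $\zeta_t^{>}$ those with $|y|>1$: for $\sigma\in(0,1)$ both are uncompensated ($\zeta^{\le}_t=\int_0^t\int_{|y|\le1}y\,N$, $\zeta^{>}_t=\int_0^t\int_{|y|>1}y\,N$), for $\sigma=1$ the small part is compensated and the large part is not, and for $\sigma\in(1,2)$ both are compensated. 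In every case $\zeta^{>}$ has finite jump intensity $\pi(\{|y|>1\})\le M$ (since $|y|>1$ forces $|y|^{\alpha_2}\ge 1$). As $\alpha_2\le 2$, the inequality $|a+b|^{\alpha_2}\le C_{\alpha_2}(|a|^{\alpha_2}+|b|^{\alpha_2})$ reduces the claim to bounding $\mathbf{E}|\zeta^{\le}_t|^{\alpha_2}$ and $\mathbf{E}|\zeta^{>}_t|^{\alpha_2}$ by $C(1+t)$.

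For the small-jump part I would reduce the $\alpha_2$-moment to a first or second moment controlled by the hypothesis through $\alpha_1$. When $\sigma\in(0,1)$, $\zeta^{\le}$ is a sum of absolutely summable jumps, so $\mathbf{E}|\zeta^{\le}_t|\le\mathbf{E}\sum_{s\le t}|\Delta\zeta_s|1_{\{|\Delta\zeta_s|\le1\}}=t\int_{|y|\le1}|y|\,d\pi\le tM$, using $|y|\le|y|^{\alpha_1}$ on $\{|y|\le1\}$ and $\alpha_1\le1$; since $\alpha_2\le1$, Jensen's inequality gives $\mathbf{E}|\zeta^{\le}_t|^{\alpha_2}\le(\mathbf{E}|\zeta^{\le}_t|)^{\alpha_2}\le (tM)^{\alpha_2}\le M^{\alpha_2}(1+t)$. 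When $\sigma\ge1$, $\zeta^{\le}$ is a mean-zero martingale with $\mathbf{E}|\zeta^{\le}_t|^2=t\int_{|y|\le1}|y|^2\,d\pi\le tM$ (now $|y|^2\le|y|^{\alpha_1}$ on $\{|y|\le1\}$ because $\alpha_1\le2$); as $\alpha_2\le2$, concavity of $u\mapsto u^{\alpha_2/2}$ yields $\mathbf{E}|\zeta^{\le}_t|^{\alpha_2}=\mathbf{E}(|\zeta^{\le}_t|^2)^{\alpha_2/2}\le(\mathbf{E}|\zeta^{\le}_t|^2)^{\alpha_2/2}\le(tM)^{\alpha_2/2}\le M^{\alpha_2/2}(1+t)$.

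For the large-jump part the estimate always reduces to $t\int_{|y|>1}|y|^{\alpha_2}\,d\pi\le tM$, but the route depends on whether $\alpha_2\le1$. For $\sigma\le1$ one has $\alpha_2\le1$ and $\zeta^{>}$ is the raw jump sum, so subadditivity of $u\mapsto u^{\alpha_2}$ gives directly $\mathbf{E}|\zeta^{>}_t|^{\alpha_2}\le\mathbf{E}\sum_{s\le t}|\Delta\zeta_s|^{\alpha_2}1_{\{|\Delta\zeta_s|>1\}}=t\int_{|y|>1}|y|^{\alpha_2}\,d\pi\le tM$. For $\sigma\in(1,2)$ one has $\alpha_2\in(1,2]$ and the compensation is essential: the uncompensated sum has mean of order $t\int_{|y|>1}y\,d\pi$ and its $\alpha_2$-moment grows like $t^{\alpha_2}$, too fast, whereas the compensated martingale $\zeta^{>}$ satisfies, by the Burkholder--Davis--Gundy inequality together with the subadditivity $[\zeta^{>}]_t^{\alpha_2/2}=(\sum_{s\le t}|\Delta\zeta_s|^2 1_{\{|\Delta\zeta_s|>1\}})^{\alpha_2/2}\le\sum_{s\le t}|\Delta\zeta_s|^{\alpha_2}1_{\{|\Delta\zeta_s|>1\}}$ (valid since $\alpha_2/2\le1$), the linear bound $\mathbf{E}|\zeta^{>}_t|^{\alpha_2}\le C_{\alpha_2}\mathbf{E}[\zeta^{>}]_t^{\alpha_2/2}\le C_{\alpha_2}\,t\int_{|y|>1}|y|^{\alpha_2}\,d\pi\le C_{\alpha_2}tM$. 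Combining the two parts in each case gives $\mathbf{E}|\zeta_t|^{\alpha_2}\le C(1+t)$ with $C$ depending only on $M$ (and the fixed parameters $\alpha_1,\alpha_2,d$). The one point requiring care, and the main obstacle, is precisely this $\sigma\in(1,2)$ large-jump estimate: one can use neither a second-moment bound (which may be infinite) nor plain subadditivity (which fails for $\alpha_2>1$), and must instead keep the large jumps compensated and invoke BDG with the subadditivity of the quadratic variation to extract the linear-in-$t$ growth.
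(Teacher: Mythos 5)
Your proposal is correct and follows essentially the same route as the paper's proof: the same Lévy--Itô splitting into small and large jumps with compensation dictated by $\chi_{\sigma}$, first/second moment bounds plus Jensen for the small-jump part, and subadditivity of $u\mapsto u^{\alpha _{2}}$ (for $\sigma \leq 1$) or BDG combined with subadditivity of $u\mapsto u^{\alpha _{2}/2}$ applied to the quadratic variation (for $\sigma \in (1,2)$) for the large-jump part. The only difference is presentational: you make explicit the Jensen step and the elementary inequality $|a+b|^{\alpha _{2}}\leq C(|a|^{\alpha _{2}}+|b|^{\alpha _{2}})$, which the paper leaves implicit.
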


\begin{proof}
Recall 
\begin{equation}
\zeta _{t}=\int_{0}^{t}\int \chi _{\sigma }(y)yq(ds,dy)+\int_{0}^{t}\int
(1-\chi _{\sigma }(y))yp(ds,dy),t\geq 0,  \label{fa3}
\end{equation}%
$p(ds,dy)$ is Poisson point measure with 
\begin{equation*}
\mathbf{E}p\left( ds,dy\right) =\pi \left( dy\right) ds,q\left( ds,dy\right)
=p\left( ds,dy\right) -\pi \left( dy\right) ds.
\end{equation*}%
Now, $\zeta _{t}=\bar{\zeta}_{t}+\tilde{\zeta}_{t}$ with%
\begin{eqnarray*}
\bar{\zeta}_{t} &=&\int_{0}^{t}\int_{\left\vert y\right\vert \leq 1}\chi
_{\sigma }(y)yq(ds,dy)+\int_{0}^{t}\int_{\left\vert y\right\vert \leq
1}(1-\chi _{\sigma }(y))yp(ds,dy), \\
\tilde{\zeta}_{t} &=&\int_{0}^{t}\int_{\left\vert y\right\vert >1}\chi
_{\sigma }(y)yq(ds,dy)+\int_{0}^{t}\int_{\left\vert y\right\vert >1}(1-\chi
_{\sigma }(y))yp(ds,dy),t\geq 0.
\end{eqnarray*}

\emph{Case 1: }$\sigma \in \left( 0,1\right) $. In this case (\ref{fa5})
holds with $\alpha _{1},\alpha _{2}\in (0,1]$. Then for any $t>0,$%
\begin{equation*}
\mathbf{E}\left\vert \bar{\zeta}_{t}\right\vert \leq t\int_{\left\vert
y\right\vert \leq 1}\left\vert y\right\vert ^{\alpha _{1}}\pi \left(
dy\right) \leq Ct,
\end{equation*}%
and%
\begin{equation*}
\left\vert \tilde{\zeta}_{t}\right\vert ^{\alpha _{2}}=\sum_{s\leq t}\left[
\left\vert \tilde{\zeta}_{s}\right\vert ^{\alpha _{2}}-\left\vert \tilde{%
\zeta}_{s-}\right\vert ^{\alpha _{2}}\right] \leq
\int_{0}^{t}\int_{\left\vert y\right\vert >1}\left\vert y\right\vert
^{\alpha _{2}}p\left( ds,dy\right)
\end{equation*}%
implies that $\mathbf{E}\left\vert \tilde{\zeta}_{t}\right\vert ^{\alpha
_{2}}\leq Ct.$

\emph{Case 2: }$\sigma \in \left( 1,2\right) $.\ In this case, $\alpha
_{1},\alpha _{2}\in (1,2]$. Then%
\begin{equation*}
\mathbf{E[}\left\vert \bar{\zeta}_{t}\right\vert ^{2}]=\int_{\left\vert
y\right\vert \leq 1}\left\vert y\right\vert ^{2}\pi \left( dy\right) t\leq
Ct,
\end{equation*}%
and, by BDG inequality, 
\begin{eqnarray*}
\mathbf{E[}\left\vert \tilde{\zeta}_{t}\right\vert ^{\alpha _{2}}] &\leq &C%
\mathbf{E}\left[ \left( \sum_{s\leq t}\left( \Delta \tilde{\zeta}_{s}\right)
^{2}\right) ^{\alpha _{2}/2}\right] \\
&\leq &C\mathbf{E}\left[ \sum_{s\leq t}\left( \Delta \tilde{\zeta}%
_{s}\right) ^{\alpha _{2}}\right] =Ct\int_{\left\vert y\right\vert
>1}\left\vert y\right\vert ^{\alpha _{2}}d\pi .
\end{eqnarray*}

\emph{Case 3: }$\sigma =1$. In this case, $\alpha _{1}\in (1,2]$\emph{\ and }%
$\alpha _{2}\in (0,1)$. Similarly as above, we find that%
\begin{eqnarray*}
\mathbf{E[}\left\vert \bar{\zeta}_{t}\right\vert ^{2}] &=&t\int_{\left\vert
y\right\vert \leq 1}\left\vert y\right\vert ^{2}\pi \left( dy\right) \leq Ct,
\\
\mathbf{E[}\left\vert \tilde{\zeta}_{t}\right\vert ^{\alpha _{2}}] &\leq &Ct.
\end{eqnarray*}%
The statement is proved.\newline
\end{proof}

We need the following Gaussian moments estimates as well.

\begin{lemma}
\label{gm}Let $a_{kj}\in \mathbf{R,}k,j\geq 0,$ and 
\begin{equation*}
\left\vert \left\vert a\right\vert \right\vert =\left( \sum_{k,j=0}^{\infty
}a_{kj}^{2}\right) ^{1/2}<\infty .
\end{equation*}%
and let $\zeta _{k},k\geq 0,$ be a sequence of independent standard normal
r.v. For $p\geq 1$ set 
\begin{equation*}
\xi =\left( \sum_{j=0}^{\infty }\left( \sum_{k=0}^{\infty }\zeta
_{k}a_{kj}\right) ^{2}\right) ^{p/2}.
\end{equation*}%
Then there are constants $0<c_{1}<c_{2}$ so that%
\begin{equation*}
c_{1}\left\vert \left\vert a\right\vert \right\vert ^{p}\leq \mathbf{E}\xi
\leq c_{2}\left\vert \left\vert a\right\vert \right\vert ^{p}.
\end{equation*}
\end{lemma}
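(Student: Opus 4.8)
The plan is to recognize the bracketed quantity as the squared $\ell_2$-norm of a centered Gaussian vector and then reduce everything to a moment comparison for a weighted sum of squared normals. First I would set $Y_j=\sum_{k=0}^{\infty}\zeta_k a_{kj}$, so that $(Y_j)_{j\ge 0}$ is a centered Gaussian sequence lying a.s.\ in $\ell_2$ (because $\sum_{k,j}a_{kj}^2<\infty$), and $\xi=\left(\sum_j Y_j^2\right)^{p/2}=F^p$ with $F=\left\Vert Y\right\Vert_{\ell_2}$. By independence and $\mathbf{E}\zeta_k\zeta_{k'}=\delta_{kk'}$ the second moment is computed \emph{exactly}: $\mathbf{E}F^2=\sum_j\sum_k a_{kj}^2=\left\Vert a\right\Vert^2=:\Lambda$. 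The covariance operator $Q$ of $Y$ on $\ell_2$ is nonnegative with $\operatorname{tr}Q=\Lambda<\infty$, hence trace class; diagonalizing it (Karhunen--Lo\`eve) gives eigenvalues $\lambda_i\ge 0$ with $\sum_i\lambda_i=\Lambda$ and $F^2\stackrel{d}{=}S:=\sum_i\lambda_i g_i^2$ for i.i.d.\ standard normals $g_i$. The whole lemma thus reduces to showing $\mathbf{E}S^{p/2}\asymp_p\Lambda^{p/2}$: a statement about a single variable $S$ with prescribed mean $\Lambda$ and $\lambda_i\le\Lambda$.

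For the upper bound I would use the Laplace transform $\mathbf{E}e^{tS}=\prod_i(1-2t\lambda_i)^{-1/2}$, valid for $t<(2\sup_i\lambda_i)^{-1}$. Evaluating at $t=1/(4\Lambda)$ (legitimate since $\lambda_i\le\Lambda$, so $2t\lambda_i\le 1/2$) and using $-\log(1-u)\le 2u$ on $[0,1/2]$ together with $\sum_i\lambda_i/(2\Lambda)=1/2$ yields $\mathbf{E}e^{S/(4\Lambda)}\le e^{1/2}$. Combined with $x^q\le (q/e)^q e^x$ this gives $\mathbf{E}S^{q}\le C_q\Lambda^{q}$ for every $q>0$, in particular the upper bound $\mathbf{E}F^p\le C_p\left\Vert a\right\Vert^p$.

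For the lower bound I would split on $p$. If $p\ge 2$, convexity of $x\mapsto x^{p/2}$ and Jensen give $\mathbf{E}S^{p/2}\ge(\mathbf{E}S)^{p/2}=\Lambda^{p/2}$ directly. The delicate case is $1\le p<2$, where $q=p/2\in[1/2,1)$ and Jensen runs the wrong way; here I would interpolate. Writing $1=\theta q+(1-\theta)\cdot 2$ with $\theta=1/(2-q)\in(0,1)$, log-convexity of $t\mapsto\log\mathbf{E}S^{t}$ (Lyapunov) gives $\Lambda=\mathbf{E}S\le(\mathbf{E}S^{q})^{\theta}(\mathbf{E}S^{2})^{1-\theta}$; feeding in the bound $\mathbf{E}S^{2}\le C_2\Lambda^2$ from the previous step and solving for $\mathbf{E}S^{q}$ yields $\mathbf{E}S^{q}\ge c_p\Lambda^{q}$, i.e.\ $\mathbf{E}F^p\ge c_p\left\Vert a\right\Vert^p$. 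All constants depend only on $p$, as claimed.

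The main obstacle is precisely this reverse (lower) estimate in the subquadratic range $p<2$: it is a reverse-H\"older phenomenon that cannot come from Jensen and must be extracted from the genuine Gaussian structure, here packaged through the exact value $\mathbf{E}S=\Lambda$ at exponent $1$ and the Lyapunov interpolation. A cleaner but less self-contained alternative is to invoke the Kahane--Khintchine inequality for the $\ell_2$-valued Gaussian sum $Y=\sum_k\zeta_k x_k$, $x_k=(a_{kj})_j$, which directly gives $\left\Vert F\right\Vert_{L^p}\asymp_p\left\Vert F\right\Vert_{L^2}=\left\Vert a\right\Vert$. If one prefers to avoid the infinite-dimensional spectral decomposition, the same bounds can first be proved for finite matrices (where every constant above is manifestly dimension-free) and then transferred by monotone/dominated convergence as the truncations $a_{kj}\mathbf 1_{k,j\le n}$ increase to $a$.
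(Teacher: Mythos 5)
Your proof is correct, but it takes a genuinely different route from the paper's. The paper works directly with the array $(a_{kj})$ and never diagonalizes anything: for $p\geq 2$ it gets the upper bound from the triangle (Minkowski) inequality in $L_{p/2}$ applied to $\sum_j Y_j^2$ together with the one-dimensional Gaussian moment identity $\mathbf{E}\left\vert Y_j\right\vert ^{p}=C_{p}\bigl(\sum_{k}a_{kj}^{2}\bigr)^{p/2}$, and the lower bound from Jensen; for $p\in[1,2)$ the upper bound is Jensen (concavity of $x^{p/2}$), and the delicate lower bound comes from Jensen plus Minkowski's \emph{integral} inequality $\mathbf{E}\bigl(\sum_{j}Y_{j}^{2}\bigr)^{1/2}\geq \bigl(\sum_{j}(\mathbf{E}\left\vert Y_{j}\right\vert )^{2}\bigr)^{1/2}$ combined with the Gaussian first-moment identity $\mathbf{E}\left\vert Y_{j}\right\vert =\sqrt{2/\pi }\,\bigl(\sum_{k}a_{kj}^{2}\bigr)^{1/2}$. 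You instead pass to the Karhunen--Lo\`{e}ve representation $S=\left\Vert Y\right\Vert _{\ell _{2}}^{2}\overset{d}{=}\sum_{i}\lambda _{i}g_{i}^{2}$, prove a uniform exponential moment bound $\mathbf{E}e^{S/(4\Lambda )}\leq e^{1/2}$ to get \emph{all} upper moments at once, and then recover the sub-quadratic lower bound by Lyapunov interpolation between the exact first moment and the (already bounded) second moment of $S$ --- a reverse-H\"{o}lder argument in the spirit of Paley--Zygmund. Both treatments correctly identify and resolve the crux, namely the lower bound for $p<2$ where Jensen points the wrong way; the paper resolves it with the elementary order-of-integration swap exploiting Gaussian $L_{1}$--$L_{2}$ equivalence coordinatewise, while yours exploits the global Gaussian structure through the spectral decomposition. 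The paper's proof is more self-contained (no trace-class operator theory, no Laplace transforms), whereas yours is more robust and yields stronger conclusions (exponential integrability, hence equivalence of all moments simultaneously), and your fallback remarks (Kahane--Khintchine, or finite truncation plus monotone convergence to avoid infinite-dimensional spectral theory) adequately address the only technical point where extra care is needed.
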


\begin{proof}
Case1. Let $p\geq 2$. Since $\zeta _{k}$ are independent standard normal, by
Minkowski inequality,%
\begin{equation*}
\mathbf{E}\xi \leq \left( \sum_{j=0}^{\infty }\left[ \mathbf{E}\left(
\left\vert \sum_{k=0}^{\infty }\zeta _{k}a_{kj}\right\vert ^{p}\right) %
\right] ^{2/p}\right) ^{p/2}\leq C\left( \sum_{j=0}^{\infty
}\sum_{k=0}^{\infty }a_{kj}^{2}\right) ^{p/2}.
\end{equation*}%
On the other hand, by H\"{o}lder inequality,%
\begin{equation*}
\mathbf{E}\xi \geq \left( \mathbf{E}\sum_{j=0}^{\infty }\left(
\sum_{k=0}^{\infty }\zeta _{k}a_{kj}\right) ^{2}\right) ^{p/2}\geq c\left(
\sum_{j,k=0}^{\infty }a_{kj}^{2}\right) ^{p/2}.
\end{equation*}

Case 2. Let $p\in \lbrack 1,2)$. Then, by H\"{o}lder inequality,%
\begin{eqnarray*}
&&\mathbf{E}\left[ \left( \sum_{j=0}^{\infty }\left( \sum_{k=0}^{\infty
}\zeta _{k}a_{kj}\right) ^{2}\right) ^{p/2}\right]  \\
&\leq &\left( \mathbf{E}\sum_{j=0}^{\infty }\left( \sum_{k=0}^{\infty }\zeta
_{k}a_{kj}\right) ^{2}\right) ^{p/2}=\left( \sum_{j,k=0}^{\infty
}a_{kj}^{2}\right) ^{p/2}.
\end{eqnarray*}%
On the other hand, by H\"{o}lder and Minkowski inequality (recall $\zeta _{k}
$ are independent standard normal),%
\begin{eqnarray*}
&&\mathbf{E}\xi \geq \left[ \mathbf{E}\left( \sum_{j=0}^{\infty }\left\vert
\sum_{k=0}^{\infty }\zeta _{k}a_{kj}\right\vert ^{2}\right) ^{1/2}\right]
^{p} \\
&\geq &\left( \sum_{j,k=0}^{\infty }\left( \mathbf{E}\left\vert \zeta
_{k}a_{kj}\right\vert \right) ^{2}\right) ^{p/2}\geq c\left(
\sum_{j,k=0}^{\infty }a_{kj}^{2}\right) ^{p/2}.
\end{eqnarray*}
\end{proof}

\end{document}